\newcommand{\N}{\mathbb N}
\newcommand{\F}{\mathbb F}
\newcommand{\Z}{\mathbb{Z}}
\newcommand{\C}{\mathbb{C}}
\newcommand{\Set}[0]{\mathrm{Set}}
\newcommand{\Top}[0]{\mathrm{Top}}
\newcommand{\Cat}[0]{\mathrm{Cat}}
\newcommand{\SSet}[0]{\mathrm{SSet}}
\newcommand{\Hom}[0]{\mathrm{Hom}}
\newcommand{\Ob}[0]{\mathrm{Ob}}
\newcommand{\mac}[0]{\mathcal {C}}
\newcommand{\mad}[0]{\mathcal{D}}
\newcommand{\inv}[0]{{-1}}
\newcommand{\oo}[0]{\otimes}
\newcommand{\id}[0]{\mathrm{id}}
\newcommand{\low}[2]{{#1}_{({#2})}}
\newcommand{\brhd}[0]{\blacktriangleright}
\newtheorem{theorem}{Theorem}[section]
\newtheorem*{theorem*}{Theorem}
\newtheorem{example}[theorem]{Example}
\newtheorem{lemma}[theorem]{Lemma}
\newtheorem{proposition}[theorem]{Proposition}
\newtheorem{corollary}[theorem]{Corollary}
\newtheorem{definition}[theorem]{Definition}
\newtheorem{remark}[theorem]{Remark}
\newcommand*\stdthebibliography{}
\let\stdthebibliography\thebibliography
\renewcommand{\thebibliography}[1]{%
\stdthebibliography{#1}\setlength{\itemsep}{-2pt}} 
\newlength\oversetwidth
\newlength\underwidth
\def\mytitle{Categorical generalisations of quantum double models}
\def\myauthors{A.-K.~Hirmer, C.~Meusburger}
\begin{document}

\begin{center}
  {\huge\mytitle}

  \vspace{1em}

  {\large
    Anna-Katharina Hirmer\footnote{{\tt anna-katharina.hirmer@fau.de}}\\
   Catherine Meusburger\footnote{{\tt catherine.meusburger@math.uni-erlangen.de}}
 }

 Department Mathematik \\
  Friedrich-Alexander-Universit\"at Erlangen-N\"urnberg \\
  Cauerstra\ss e 11, 91058 Erlangen, Germany\\[+2ex]

{June 9, 2023}

\begin{abstract}
We show that every involutive Hopf monoid in a complete and finitely cocomplete symmetric monoidal category gives rise to  invariants of oriented surfaces
 defined in terms of ribbon graphs. For every ribbon graph this yields an object in the category, defined up to isomorphism, that depends only on the homeomorphism class of the associated surface. This object is 
constructed via (co)equalisers and images and equipped with a mapping class group action. It  can be viewed as a categorical generalisation of the ground state of  Kitaev's quantum double model or of a representation variety for a surface. We apply the construction to group objects in cartesian monoidal categories, in particular to simplicial groups as group objects in $\SSet$ and to crossed modules as group objects in $\Cat$. The former yields a simplicial set consisting of representation varieties, the latter a groupoid whose sets of objects and morphisms are obtained from representation varieties. 
  \end{abstract}
\end{center}

\section{Introduction}
\label{sec:intro}

Constructions that assign algebraic or geometric objects  with mapping class group actions to oriented surfaces  are of interest in many contexts. 
They arise in 3d topological quantum field theories (TQFTs) of  Turaev-Viro-Barrett-Westbury   or Reshetikhin-Turaev type \cite{TV,BW,RT}  and are encoded in the weaker notion of a modular functor, see  \cite[Def.~5.1.1]{BK}. For a recent construction of modular functors from finite tensor categories, see Fuchs, Schweigert and Schaumann \cite{FSS}, for a classification  via  factorisation homology,  Brochier and Woike \cite{BrW}. 

Modular functors also arise  in Hamiltonian quantisation formalisms for  representation varieties. The associated mapping class group actions were first discovered in the combinatorial quantisation formalism by Alekseev, Grosse, Schomerus \cite{AGS1,AGS2,AS} and Buffenoir and Roche \cite{BR1,BR2}, subsequently related to factorisation homology by Ben-Zvi, Brochier and Jordan \cite{BBJa,BBJb} and studied by Faitg \cite{Fa18,Fa19}. 

Objects with mapping class group actions also arise from  correlators in conformal field theories, see  the work by Fuchs, Schweigert and Stigner  \cite{FS,FSS14}. They are also present 
in models from condensed matter physics and topological quantum computing such as Levin-Wen models \cite{LW1} and Kitaev's quantum double model \cite{Ki}.  
Due to the work of Lyubashenko \cite{Ly95a,Ly95b,Ly96}  it is well-understood how to construct projective mapping class group actions  from Hopf algebras in abelian ribbon categories. 

Many of these constructions are based on assignments of algebraic data to certain  graphs on surfaces, and they require linear categories   with duals,  often  also  abelian,  finite or semisimple. These restrictions are of course well-motivated from the context of TQFTs, in the quantisation of gauge theories or  in condensed matter physics. However, it  is also desirable to go beyond them. 

In this article we show that any involutive Hopf monoid $H$ in  a complete and finitely cocomplete symmetric monoidal category $\mac$ yields an invariant of  oriented surfaces. We compute this invariant for  examples, such as simplicial groups as Hopf monoids in $\SSet$ and crossed modules as Hopf monoids in $\Cat$. 

The construction is based on the choice of a ribbon graph. It  determines an  object in $\mac$, defined up to isomorphisms, that depends only  on the homeomorphism class of the    surface obtained by attaching discs to the faces of the graph. As Hopf monoids are categorical generalisations of Hopf algebras, it can be viewed as a categorical generalisation of Kitaev's quantum double model or  of representation varieties or moduli spaces of flat bundles on surfaces.

More precisely, we consider for  a ribbon graph the $\vert E \vert$-fold tensor product $H^{\oo E}$, where $E$ is the edge set of the graph. We  use the structure morphisms of the Hopf monoid to associate  $H$-module structures to its vertices and  $H$-comodule structures to its faces. This requires a choice of a marking for each vertex or face, and each  marking  defines a Yetter-Drinfeld module structure over $H$. 
The  object assigned to the graph is  obtained  by equalising the $H$-comodule structures, by coequalising the $H$-module structures and by combining them  via a categorical image. It generalises  the protected space or ground state of Kitaev's quantum double model, and we therefore call it the protected object. We then show

\begin{theorem*} (Theorem \ref{th:topinv}) The isomorphism class of the protected object for  a ribbon graph depends only on the homeomorphism class of the associated surface.
\end{theorem*}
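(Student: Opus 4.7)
The natural strategy is via local moves: identify a finite list of elementary transformations on ribbon graphs that generates the equivalence relation ``same associated closed surface up to homeomorphism,'' and show that the protected object is preserved, up to isomorphism, under each such move. A classical fact from the cellular theory of surfaces says that any two ribbon graphs whose associated surfaces are homeomorphic are connected by a finite sequence of the following moves:
(a) a change of marking at a single vertex or face;
(b) an edge subdivision, i.e.\ the insertion or removal of a bivalent vertex, together with its dual, the insertion or removal of a bigon face;
(c) the contraction of a non-loop edge, and, dually, the removal of an edge separating two distinct faces.
The theorem then reduces to constructing, for each such move, an isomorphism between the protected objects before and after; since only the isomorphism class is at stake, no naturality needs to be tracked along the way.

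For (a), a change of marking only permutes the tensor factors of $H^{\oo E}$ contributing to a given vertex action or face coaction and inserts antipodes and units. Because $\mac$ is symmetric monoidal and $S$ is involutive, this packages into an isomorphism of Yetter--Drinfeld data that descends through the equaliser over faces, the coequaliser over vertices, and the image. For (b), an inserted bivalent vertex supplies a new tensor factor $H$ and a new module action; the Hopf monoid axioms force this action to be split by the unit, so the corresponding coequaliser identifies the enlarged object with the original. The bigon case is dual, using the counit in an equaliser. Both cases reduce to direct diagram chases combining the Hopf monoid axioms with the universal properties of (co)equalisers and images.

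The principal difficulty is move (c). Contracting a non-loop edge $e$ with endpoints $v_1 \neq v_2$ replaces $H^{\oo E}$ by $H^{\oo(E\setminus\{e\})}$ and merges the two vertex actions at $v_1$ and $v_2$ into a single action at the new vertex, while the set of faces, and hence the coactions, changes only in bookkeeping. One must establish that coequalising the two separate $v_1$- and $v_2$-actions along the $e$-factor of $H$ agrees with coequalising only the combined action on the reduced tensor product, and that this equivalence is compatible with both the equaliser over faces and the passage to the image. The key ingredients are the multiplicativity of the comultiplication, which keeps the face coactions consistent when $e$ is removed, the involutivity of $S$, needed to flip the orientation of $e$ between the markings at $v_1$ and $v_2$, and the interaction of finite limits and colimits in $\mac$ with the monoidal structure through the Yetter--Drinfeld braiding; completeness and finite cocompleteness of $\mac$ ensure that all relevant (co)limits exist, while symmetry supplies the coherence isomorphisms. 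The dual deletion of a face-separating edge is handled by a symmetric computation on the equaliser side. Once invariance under (a), (b), (c) is secured, concatenating the resulting isomorphisms along any connecting sequence of moves gives the desired isomorphism of protected objects, and the isomorphism class is therefore a homeomorphism invariant of the associated surface.
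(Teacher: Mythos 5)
Your overall strategy---reduce to a generating set of local moves and show that each move induces an isomorphism of the protected object---is the same as the paper's, but the argument has gaps at both ends. On the combinatorial side, the ``classical fact'' you invoke needs care: for a one-vertex, one-face graph there are no non-loop edges to contract and no face-separating edges to delete, and subdividing an edge and then contracting half of it is the identity, so relating two such graphs with different chord-interleaving patterns (which exist already in genus $2$) forces you to use the \emph{inverse} of face-separating edge deletion (cutting a face by a new loop) followed by a deletion --- i.e.\ the polygon cut-and-paste move --- or, as the paper does, an explicit \emph{edge slide} move together with the chord-diagram normal form (Proposition \ref{prop:standardgraph}). You should either add slides to your list or verify that cut-and-paste is generated by your moves with inverses; as written the claim is not justified.

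On the algebraic side, two of your mechanisms are not the ones that actually work. For (a), moving the cilium does \emph{not} produce an automorphism of $H^{\oo E}$ intertwining the old and new vertex actions; what is true (Lemma \ref{lemma:InvariantsIndependentFromCilium}) is that the coequaliser of the old action also coequalises the new one, $\pi_v\circ\rhd_v'=\pi_v\circ(\epsilon\oo 1_{H^{\oo E}})$, which yields an isomorphism only at the level of invariants. More seriously, for (c) the contraction morphism $c_{\alpha,v}$ is not an isomorphism of $H^{\oo E}$ and does \emph{not} induce an isomorphism of the coinvariants $M^{coH}_{\mathcal F}$ --- only a morphism with a one-sided inverse $\eta_\alpha$ --- while dually an edge deletion induces only a one-sided inverse on the invariants; that the \emph{biinvariants} are nonetheless isomorphic is the content of Propositions \ref{prop:EdgeContractionInducesIso} and \ref{prop:LoopRemovalInducesIso}, proved via the universal property of the image and the explicit splittings, not by a diagram chase with ``interaction of limits and colimits with the monoidal structure.'' In particular your claim that edge deletion is handled ``by a symmetric computation on the equaliser side'' overlooks that the protected object is defined as an \emph{image} of $\pi\circ\iota$ rather than a coimage, so the formal duality between contraction and deletion is broken and the deletion case requires its own argument. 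You also need the compatibility of contractions with face coactions based at the contracted vertex, which again rests on the cilium-independence lemma; none of this is supplied in the proposal.
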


Essentially the same construction was  used by Meusburger and Vo\ss\, in  \cite{MV} to construct mapping class group actions from pivotal Hopf monoids in symmetric monoidal categories. These mapping class group actions are obtained from graphs with a single vertex and face  and act on the associated protected object. However, it was not established in \cite{MV} that this object is independent of the graph. By combining our results with the ones from \cite{MV} we obtain

\begin{theorem*} (Theorem \ref{th:mcgact})  The protected object for a Hopf monoid $H$ and a surface $\Sigma$ of genus $g\geq 1$ is equipped with an action of the mapping class group $\mathrm{Map}(\Sigma)$ by automorphisms. 
\end{theorem*}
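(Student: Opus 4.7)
The plan is to reduce to the standard single-vertex, single-face ribbon graph used in \cite{MV}, where the mapping class group action is already constructed, and then transport it to any other ribbon graph via the isomorphisms provided by Theorem \ref{th:topinv}. Concretely, for a closed oriented surface $\Sigma$ of genus $g\geq 1$, fix the standard ribbon graph $\Gamma_\Sigma$ with one vertex, one face and $2g$ edges encoded by the commutator word $\prod_{i=1}^g [a_i,b_i]$; gluing a disc along its unique face recovers $\Sigma$ up to homeomorphism.

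First I would invoke \cite{MV} to obtain, for every mapping class $\phi\in\mathrm{Map}(\Sigma)$, an automorphism $\rho_\phi$ of the protected object $P(\Gamma_\Sigma)$, together with the group relations $\rho_\psi\circ\rho_\phi=\rho_{\psi\phi}$. The hypotheses match those of the present paper: an involutive Hopf monoid in a symmetric monoidal category is in particular pivotal in the sense used in \cite{MV}, since $S\circ S=\id$ supplies the pivotal datum when the braiding is symmetric. The action is generated by the response of the tensor power $H^{\oo E}$ to the standard Dehn twist generators of $\mathrm{Map}(\Sigma)$, realised by edge slides and vertex/face relabellings on $\Gamma_\Sigma$, and one checks in \cite{MV} that these morphisms descend through the equalisers, coequalisers and images used to define $P(\Gamma_\Sigma)$.

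Second, Theorem \ref{th:topinv} supplies, for any other ribbon graph $\Gamma$ whose associated surface is homeomorphic to $\Sigma$, an isomorphism $f\colon P(\Gamma_\Sigma)\xrightarrow{\sim} P(\Gamma)$ in $\mac$. Transporting the action by $\rho^\Gamma_\phi:=f\circ \rho_\phi\circ f^{-1}$ yields an action of $\mathrm{Map}(\Sigma)$ on $P(\Gamma)$ by automorphisms. A different choice of $f$ produces a conjugate action, so the action is canonically attached to the isomorphism class of the protected object.

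The hard part is the compatibility step between \cite{MV} and the present framework: one must verify that the morphisms implementing the mapping class group generators in \cite{MV} are precisely the morphisms respected by the (co)equalisers and images defining the protected object here, and that the braid and Dehn twist relations in $\mathrm{Map}(\Sigma)$ hold on the nose rather than merely projectively. Both facts follow from \cite{MV}, but in the non-linear symmetric monoidal setting one should also confirm that no scalar ambiguity arises, so that the action is genuinely by automorphisms. The restriction $g\geq 1$ enters because the standard one-vertex–one-face graph becomes degenerate for the sphere, where $\mathrm{Map}(\Sigma)$ is trivial anyway.
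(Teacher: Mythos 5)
Your proposal is correct and follows essentially the same route as the paper: the action is imported from \cite{MV} (Theorems 9.2 and 9.5 there), using that an involutive Hopf monoid is pivotal with the unit as pivotal structure, and Theorem \ref{th:topinv} is what lets one regard the resulting action as attached to the surface rather than to the standard graph. The paper states this as a direct reformulation of \cite[Th.~9.5]{MV} in light of graph independence, which is exactly your two-step argument.
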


The remainder of this article is dedicated to the study of examples. For a finite-dimensional semisimple Hopf algebra $H$ as a Hopf monoid in  $\mathrm{Vect}_\C$ the protected object assigned to a surface coincides with the protected space of the associated quantum double model, as defined by Kitaev and by Buerschaper et a.~in \cite{Ki,BMCA}. However, our model is also defined in the non-semisimple case. For instance, for a group algebra $k[G]$  over a commutative ring $k$ as a Hopf monoid in $k$-Mod and a surface $\Sigma$ of genus $g\geq 1$, the protected object is the free $k$-module generated by the representation variety $\Hom(\pi_1(\Sigma), G)/G$.

A large class of examples of involutive  Hopf monoids  are group objects in cartesian monoidal categories, where the tensor product is a categorical product. In all our examples of this type, the protected object is given in terms of representation varieties. For a group $H$ as a Hopf monoid in $\Set$, the object assigned to a connected surface $\Sigma$ is the representation variety $\Hom(\pi_1(\Sigma), H)/H$.  For a simplicial group $H=(H_n)_{n\in\Z}$ as a Hopf monoid in $\SSet$, it is a simplicial set given by the representation varieties for the  groups $H_n$ and post-composition with the face maps and degeneracies. In this sense the construction can be viewed as a  generalisation of representation varieties from groups to group objects. The module structures at vertices correspond to the  group action and the comodule structures at faces of the graph to moment maps.

Our main example  is the case where the underlying symmetric monoidal category is the category $\Cat$ of small categories and functors between them. Group objects in $\Cat$ are precisely crossed modules. They are given by a group homomorphism $\partial: A\to B$ and an action $\brhd: B\times A \to A$ by group automorphisms, subject to some consistency conditions. As Cat is complete and cocomplete, the associated protected objects exist, but are difficult to determine concretely. 
We relate them to simplicial groups  via the nerve functor and its left adjoint, which yields an explicit description of the protected objects and their mapping class group actions.

\begin{theorem*} (Theorem \ref{th:kitaevexplicit}, Corollary \ref{cor:mapclasscross}) The protected object for a crossed module $(B, A,\brhd,\partial)$  as a group object in $\Cat$ and a  surface $\Sigma$ of genus $g\geq 1$ is a groupoid $\mathcal G$  with $\mathrm{Ob}\mathcal G=\Hom(\pi_1(\Sigma), B)/B$  and  with equivalence classes of group homomorphisms $\tau: \pi_1(\Sigma)\to A\rtimes B$ as morphisms. The action of the mapping class group $\mathrm{Map}(\Sigma)$ is induced by its action on $\Hom(\pi_1(\Sigma), A\rtimes B)/A\rtimes B$.
\end{theorem*}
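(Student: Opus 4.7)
The plan is to reduce to cases already treated in the paper (ordinary groups and simplicial groups) by exploiting the nerve functor $N\colon\Cat\to\SSet$ together with a convenient choice of ribbon graph. Invoking Theorem \ref{th:topinv}, I would work with the standard ribbon graph $\Gamma$ for $\Sigma$ having one vertex, one face and $2g$ edges, so that the face relation is the product of commutators $\prod_{i=1}^g [a_i,b_i]$. Writing the crossed module as a group object $\mathcal{H}$ in $\Cat$ with $\mathrm{Ob}\,\mathcal{H}=B$ and morphism set $A\rtimes B$, the tensor product $\mathcal{H}^{\oo 2g}$ is the cartesian power $\mathcal{H}^{2g}$, with object set $B^{2g}$ and morphism set $(A\rtimes B)^{2g}$. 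Since limits in $\Cat$ are computed levelwise on objects and on morphisms, equalising the face comodule structure against the trivial one produces a full subcategory of $\mathcal{H}^{2g}$ whose objects are group homomorphisms $\pi_1(\Sigma)\to B$ and whose morphisms are group homomorphisms $\pi_1(\Sigma)\to A\rtimes B$.

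The harder step is to coequalise the adjoint action of $\mathcal{H}$ at the vertex and form the image, because coequalisers in $\Cat$ are in general not computed levelwise. I would address this through the nerve functor $N$: it is fully faithful and sends $\mathcal{H}$ to a simplicial group with $(N\mathcal{H})_0=B$, $(N\mathcal{H})_1=A\rtimes B$, and higher levels encoding composition in $\mathcal{H}$. By the result for simplicial groups recalled in the introduction, the protected object for $N\mathcal{H}$ in $\SSet$ is the simplicial set whose $n$-simplices form the representation variety $\Hom(\pi_1(\Sigma),(N\mathcal{H})_n)/(N\mathcal{H})_n$. Direct inspection of its faces and degeneracies in low dimensions identifies it with the nerve of a groupoid $\mathcal{G}$ with the claimed object and morphism sets, the composition in $\mathcal{G}$ being inherited from that of $A\rtimes B$. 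The main technical point, and the real obstacle, is the compatibility of the $\Cat$- and $\SSet$-constructions: $N$ preserves the equalisation step as a right adjoint, and for the coequalisation and image steps one observes that the relevant action is a group action of $\mathcal{H}$ on itself by functors, whose quotient in $\Cat$ is detected by its nerve, so that fully faithfulness of $N$ identifies the protected object in $\Cat$ with $\mathcal{G}$.

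For Corollary \ref{cor:mapclasscross}, Theorem \ref{th:mcgact} provides an action of $\mathrm{Map}(\Sigma)$ on the protected object by automorphisms. Under the identification established above, this action at the morphism level is the standard action induced by $\mathrm{Map}(\Sigma)\to\mathrm{Out}(\pi_1(\Sigma))$ on $\Hom(\pi_1(\Sigma),A\rtimes B)/(A\rtimes B)$, and on objects it is its restriction along the projection $A\rtimes B\twoheadrightarrow B$ to $\Hom(\pi_1(\Sigma),B)/B$; these two restrictions agree on source and target maps by naturality, so they assemble into a groupoid automorphism.
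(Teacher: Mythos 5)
Your overall strategy (reduce to the standard one-vertex graph via Theorem \ref{th:topinv}, compute the equaliser levelwise in $\Cat$, and handle the coequaliser through the nerve) is the same as the paper's, but there is a genuine gap at the decisive step. The protected object in $\SSet$ for the simplicial group $N\mathcal H$, namely $X$ with $X_n=\Hom(\pi_1(\Sigma),(N\mathcal H)_n)/(N\mathcal H)_n$, is \emph{not} the nerve of a groupoid: the Segal map $X_2\to X_1\times_{X_0}X_1$ fails to be injective, because two composable pairs $(\tau_1,\tau_2)$ and $(\tau_1',\tau_2')$ whose components are separately conjugate need not be simultaneously conjugate. Consequently, "direct inspection in low dimensions" would lead you to take $X_1=\Hom(\pi_1(\Sigma),A\rtimes B)/(A\rtimes B)$ as the morphism set, which is wrong. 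The correct passage from $\SSet$ back to $\Cat$ is not via full faithfulness of $N$ (the nerve does not preserve coequalisers), but via the left adjoint $h$ applied to the $\SSet$-coequaliser (Lemma \ref{def:coequCat}); the relations $[f_2]\circ[f_1]=[f_2\circ f_1]$ coming from the non-degenerate $2$-simplices then force identifications of morphisms lying in \emph{different} $A\rtimes B$-orbits. This is exactly the equivalence relation $\tau_2\circ\tau_1\sim\tau_2'\circ\tau_1'$ in Theorem \ref{th:kitaevexplicit}, and it is genuinely coarser than conjugacy: in Example \ref{ex:minv} all nine morphisms between objects of the class $C_5$ collapse to a single morphism, whereas your groupoid would retain three conjugacy classes there. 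A second, smaller issue is that the image must also be computed directly in $\Cat$ (Proposition \ref{Proposition:ImageInCat}) rather than transported from $\SSet$, since neither $N$ nor $h$ preserves images; and the equaliser is a (non-full) subcategory, as coinvariance imposes the condition $F(f)=1_e$ on morphisms, not just on objects.

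Your treatment of the mapping class group action is essentially the paper's: Theorem \ref{th:mcgact} supplies the action, the group of morphisms of $H$ is $A\rtimes B$, so the edge-slide action on $H^{\times 2g}$ is the one for $A\rtimes B$ as a group object in $\Set$, and it descends to the quotient of $\Hom(\pi_1(\Sigma),A\rtimes B)/(A\rtimes B)$ that constitutes the protected object. That part is fine once the protected object itself has been identified correctly.
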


The equivalence classes of morphisms are given by the equivalence relation $\tau_1\circ \tau_2\sim \tau'_1\circ \tau'_2$ on the set of group homomorphisms $\tau: F_{2g}\to A\rtimes B$, whenever the composites exist and $\tau_1$, $\tau'_1$ and $\tau'_2$,$\tau_2$ are conjugate. We compute the protected object and the associated mapping class group action explicitly for some simple examples of crossed modules. 

To our knowledge this construction is new and differs substantially from the  constructions with crossed modules in  higher gauge theory settings such as the work of Martins and Picken \cite{MP2,MP} on higher holonomies, the work \cite{BCKM1,BCKM2} by Bullivant et.~al.~on higher lattices and topological phases, the work  \cite{KMM} by Koppen, Martins and Martin on topological phases from crossed modules of Hopf algebras and  the recent work \cite{SV} by Sozer and Virelizier on 3d homotopy quantum field theory.   
In those settings the structure maps of the crossed module often encode higher categorical structures in the topological data, such as homotopies between paths, or relate data  in triangulations or cell decompositions.  
In our approach they enter the formalism as data - a specific example of a group object - but the crossed module structure is not required 
to encode the topology or geometry. 

The article is structured as follows. Section \ref{sec:Hopf} introduces the algebraic background for the article. In Section \ref{sec:Hopfmonoids} we summarise the background on  Hopf monoids in symmetric monoidal categories. In Section \ref{subsec:modinv} we discuss their (co)modules and the construction of their (co)invariants via (co)equalisers and images in complete and finitely cocomplete symmetric monoidal categories. Section \ref{sec:ribbon} contains the required background on ribbon graphs and surfaces.

In Section \ref{sec:protected space} we formulate the categorical counterpart of Kitaev's quantum double model for an involutive Hopf monoid $H$ in a complete and finitely cocomplete symmetric monoidal category. This is a simple generalisation of the formulation in \cite{BMCA} for  finite-dimensional semisimple complex Hopf algebras, and an almost identical construction was used in \cite{MV}.  

For each ribbon graph we consider the tensor product $H^{\oo E}$, where $E$ is the edge set of the graph. We assign to each marked vertex a $H$-module structure and to each marked face an $H$-comodule structure on $H^{\oo E}$.  The protected object  is  constructed by (co)equalising these (co)module structures and taking an image.

In Section \ref{sec:graphindependence} we show that the protected object defined by a ribbon graph depends only on the homeomorphism class of the associated oriented surface $\Sigma$. 
We first demonstrate that moving the markings for the (co)module structures and edge reversals yield isomorphic protected objects. We then consider a number of graph transformations that are sufficient to reduce every connected ribbon graph to a standard graph and prove that these induce isomorphisms of the protected object. 

These sections are necessarily rather technical.
The reader primarily interested in the results may skip to the main theorem in Section \ref{subsec:topinv}, where we also treat some examples. In particular, we show that the protected object for a group $H$ as a Hopf monoid in $\Set$ and a connected surface $\Sigma$ is the representation variety $\Hom(\pi_1(\Sigma), H)/H$. We then consider  group algebras $H=k[G]$ and their duals $k[G]^*$ for a commutative ring $k$ as Hopf monoids in $k\text{-Mod}$. The associated protected objects are  the free $k$-module $\langle \Hom(\pi_1(\Sigma), G)/G\rangle_k$ and the set of maps $\Hom(\pi_1(\Sigma), G)/G\to k$. 

Section \ref{sec:sset} treats the example of a simplicial group $H=(H_n)_{n\in\N_0}$ as a Hopf monoid in $\SSet$. In this case, the protected object is a simplicial set given by the representation varieties $\Hom(\pi_1(\Sigma), H_n)/H_n$ and by post-composition with the face maps and degeneracies of $H$. This result is required for the construction of the protected object of a crossed module as a group object in $\Cat$.

The construction of the protected object for group objects in $\Cat$ is more involved and treated in Section \ref{sec:cat}.  We start by summarising the required background on crossed modules in Section \ref{subsec:crossed modules}. In Section \ref{subsec:coequcat} we discuss equalisers and coequalisers in $\Cat$ and summarise how the latter can be constructed via the nerve functor $N:\Cat\to \SSet$ and its left adjoint. In Section \ref{subsec:modcomodgrp} we apply these results to (co)equalise the (co)module structures over Hopf monoids in $\Cat$.  
In Section \ref{subsec:protected} we apply this to the (co)module structures associated with a ribbon graph and determine the protected object for the associated surface. We describe it explicitly and treat a simple example. 

In Section \ref{sec:mapclass} we describe the mapping class group action on the protected object. By combining our results with the ones from \cite{MV}, 
we obtain that the mapping class group of an oriented surface $\Sigma$ acts on the associated protected objects. We show that  in the case of a simplicial group $H=(H_n)_{n\in\N_0}$ this action is the one induced by its  action on the representation varieties $\Hom(\pi_1(\Sigma), H_n)/H_n$.
For the case of a crossed module  $(B, A,\brhd,\partial)$ as a group object in $\Cat$  we obtain a mapping class group action by invertible endofunctors on the associated groupoid, which is 
induced by the action on the representation variety $\Hom(\pi_1(\Sigma), A\rtimes B)/A\rtimes B$.

\section{Algebraic background}
\label{sec:Hopf}

\subsection{Involutive Hopf monoids}
\label{sec:Hopfmonoids}

Throughout the article $\mac$ is a symmetric monoidal category with unit object $e$ and  braidings $\tau_{X,Y}: X\oo Y\to Y\oo X$.  
We also suppose that $\mac$ is complete and finitely cocomplete. 
In  formulas, we suppress associators and unit constraints and  coherence data of monoidal functors.

\begin{definition}\label{def:hopfmonoid} $\quad$
\begin{compactenum}
\item A {\bf Hopf monoid} in $\mac$ is 
an object $H$ in $\mac$ together with morphisms $m: H\oo H\to H$, $\eta:e\to H$, $\Delta: H\to H\oo H$, $\epsilon: H\to e$ and $S: H\to H$, the multiplication, unit, comultiplication, counit and antipode,  such that
\begin{compactitem}
\item  the (co)multiplication satisfies the (co)associativity and (co)unitality conditions
\begin{align}\label{eq:counit}
&m\circ (m\oo 1_H)=m\circ (1_H\oo m), & &m\circ (\eta\oo 1_H)=m\circ (1_H\oo \eta)=1_H,\\
&(\Delta\oo 1_H)\circ\Delta=(1_H\oo \Delta)\circ \Delta, & &(\epsilon\oo 1_H)\circ\Delta
=(1_H\oo \epsilon)\circ \Delta=1_H,\nonumber
\end{align}
\item comultiplication and counit are monoid morphisms
\begin{align}
&\Delta\circ \eta=\eta\oo \eta, & &\Delta\circ m=(m\oo m)\circ (1_H\oo \tau_{H,H}\oo 1_H)\circ (\Delta\oo \Delta)\\
&\epsilon\circ\eta=1_e, & & \epsilon\circ m=\epsilon\oo \epsilon,\nonumber
\end{align}
\item $S$ satisfies the antipode condition
\begin{align}
m\circ (S\oo 1_H)\circ\Delta=m\circ (1_H\oo S)\circ \Delta=\eta\circ\epsilon.
\end{align}
\end{compactitem}
It is called {\bf involutive} if $S\circ S=1_H$.\\

\item A {\bf morphism of Hopf monoids} in $\mac$ is a morphism $f: H\to H'$ in $\mac$ with 
\begin{align}
&f\circ m=m'\circ (f\oo f), & &f\circ \eta=\eta', & &(f\oo f)\circ\Delta=\Delta'\circ f, & &\epsilon'\circ f=\epsilon.
\end{align}

\end{compactenum}

We denote by $\mathrm{Hopf}(\mac)$ the category of Hopf monoids and morphisms of Hopf monoids in $\mac$.
\end{definition}

The antipode of a Hopf monoid is unique, and it is an anti-monoid and anti-comonoid morphism
\begin{align}\label{eq:antialg}
&S\circ m=m^{op}\circ (S\oo S), & S\circ \eta=\eta, &  & (S\oo S)\circ \Delta=\Delta^{op}\circ S, & &\epsilon\circ S=\epsilon,
\end{align}
see for instance Porst \cite[Prop.~36]{Po}. If $H$ is involutive, the antipode satisfies the additional identities 
\begin{align}\label{eq:EightWithAntipode}
m^{op}\circ (S\oo 1_H)\circ\Delta=m^{op}\circ (1_H\oo S)\circ\Delta=\eta\circ\epsilon.
\end{align}

Every morphism of Hopf monoids $f: H\to H'$ satisfies $f\circ S=S'\circ f$. This follows  as for Hopf algebras by considering the convolution monoid $\Hom_\mac(H,H)$ with the  product $f\star g=m\circ (f\oo g)\circ \Delta$.  

In the following, we use generalised Sweedler notation for the coproduct in a Hopf monoid and write $\Delta(h)=\low h 1\oo \low h 2$, $(\Delta\oo 1_H)\circ\Delta(h)=(1_H\oo\Delta)\circ\Delta(h)=\low h 1\oo \low h 2\oo \low h 3$ etc. This is analogous to Sweedler notation for a Hopf algebra. It can  be viewed as a shorthand notation for a diagram that describes a morphism in a symmetric monoidal category, see \cite{MV} for examples. We also write $m^{(n)}: H^{\oo (n+1)}\to H$ and $\Delta^{(n)}: H\to H^{\oo (n+1)} $ for $n$-fold products and coproducts.

\begin{example} \label{ex:Hopf-monoids} $\quad$
\begin{compactenum}
\item For any commutative ring $k$ a  Hopf monoid in   $k$-Mod  is a Hopf algebra over $k$. In particular, for any field $\F$  a Hopf monoid in $\mathrm{Vect}_\F$ is a Hopf algebra over $\F$. 

\item For any finite group $G$ and commutative ring $k$, the group algebra $k[G]$ and its dual $k[G]^*$  are Hopf monoids in $k\mathrm{-Mod}$.
\item The tensor product  of two Hopf monoids  in $\mathcal C$ has a Hopf monoid structure  given by  the tensor product of (co)units, (co)multiplications  and antipodes and the braiding morphisms. Any tensor product of Hopf monoid morphisms is a morphism of Hopf monoids.

\item Every Hopf monoid $H=(H,m,\eta,\Delta,\epsilon, S)$ in a symmetric monoidal category $\mac$  defines a Hopf monoid $ H^*=(H,\Delta,\epsilon, m,\eta, S)$ in the symmetric monoidal category $\mac^{op}$. This generalises the dual Hopf algebra  in $\mathrm{Vect}_\F$.

\end{compactenum}
\end{example}

The following example yields many  subexamples, which are a focus in this article.

\begin{example}\label{ex:groupobjex} Let $(\mac, \times)$ be a cartesian monoidal category with terminal object $\bullet$. Let  $\epsilon_X: X\to \bullet$ be the terminal morphism  and    $\Delta_X: X\to X\times X$ the diagonal morphism for an object $X$. 

A Hopf monoid in $\mac$ is a {\bf group object} in $\mac$:   an object $H$ together with morphisms $m: H\times H\to H$, $\eta :\bullet\to H$ and $I:H\to H$
such that the following diagrams commute
\begin{align}\label{eq:groupobjdiags}
&\xymatrix{
H\times H\times H \ar[r]^{\quad 1_H\times m} \ar[d]_{m\times 1_H}& H\times H\ar[d]^m\\
H\times H\ar[r]_m & H
} &
&\xymatrix{H\cong \bullet\times H \ar[r]^{\;\eta\times 1_H}\ar[rd]_{1_H} & H\times H\ar[d]^m & \ar[l]_{1_H\times \eta} H\times \bullet\cong H \ar[ld]^{1_H}\\
& H
}\\
&\xymatrix{ H\times H \ar[rr]^{I\times 1_H} & & H\times H \ar[d]^m\\
H \ar[u]^{\Delta_H} \ar[r]^{\epsilon_H} & \bullet \ar[r]^{\eta}  & H
}
&
&\xymatrix{ 
H\ar[d]_{\Delta_H} \ar[r]^{\epsilon_H} & \bullet \ar[r]^{\eta}  & H\\
H\times H \ar[rr]^{1_H\times I} & & H\times H. \ar[u]_m
}\nonumber
\end{align}

A morphism of Hopf monoids  is a {\bf morphism of group objects}: a morphism $F: H\to H'$ with
\begin{align}\label{eq:groupmorphism}
 F\circ m=m'\circ (F\times F).
\end{align}
Note that this implies  $F\circ \eta=\eta'$ and $I'\circ F=F\circ I$.
\end{example}

\smallskip
\begin{example}\label{ex:groupobjects} $\quad$
\begin{compactenum}
\item A group object in the cartesian monoidal category 
$(\Set,\times)$ is a group.
\item A group object in the cartesian monoidal category 
$(\Top,\times)$ is a topological group.

\item A group object in the cartesian monoidal category $(\Cat,\times)$ of small categories and functors between them  is a crossed module (cf.~Definition \ref{Def:CrossedModule}). 
\item Let $G$ be a group  and $G\mathrm{-Set}=\Set^{\mathrm{B} G}$ the cartesian monoidal category of $G$-sets and $G$-equivariant maps. A 
group object in  $G\mathrm{-Set}$   is a group   with a $G$-action by  automorphisms. 
\item A group object in the cartesian monoidal category $\SSet=\Set^{\Delta^{op}}$ of simplicial sets and simplicial maps  is a simplicial group (cf.~Definition \ref{lem:simplicialgroup}). 

\end{compactenum}
\end{example}

The last two examples in Example \ref{ex:groupobjects} have counterparts for any  functor category $\mac^\mad$, where $\mad$ is small and $\mac$ symmetric monoidal. In this case the functor category $\mac^\mad$ inherits a symmetric monoidal structure from $\mac$, and we have 

\begin{lemma}\label{lemma:HopfMonoidsInFunctorCategory}
For any symmetric monoidal category $\mathcal C$ and a small category $\mathcal D$ the monoidal categories $\mathrm{Hopf}(\mac^\mad)$ and $\mathrm{Hopf}(\mac)^\mad$
are  symmetric monoidally equivalent.
\end{lemma}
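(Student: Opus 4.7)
The plan is to exhibit an explicit strict equivalence of symmetric monoidal categories in both directions and check that the relevant structures match componentwise. The essential observation is that the symmetric monoidal structure on $\mac^\mad$ is defined pointwise: for functors $F,G: \mad\to\mac$, one sets $(F\oo G)(d)=F(d)\oo G(d)$ and $\tau_{F,G}$ to be the natural transformation whose component at $d$ is $\tau_{F(d),G(d)}$. The unit object is the constant functor $\underline{e}$. Consequently, all axioms of a Hopf monoid in $\mac^\mad$ are equations between natural transformations, and such an equation holds if and only if it holds componentwise in $\mac$.

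First, I would define the candidate functor $\Phi:\mathrm{Hopf}(\mac^\mad)\to \mathrm{Hopf}(\mac)^\mad$ as follows. Given a Hopf monoid $(H,m,\eta,\Delta,\epsilon,S)$ in $\mac^\mad$, for each object $d\in\mad$ set $\Phi(H)(d):=(H(d),m_d,\eta_d,\Delta_d,\epsilon_d,S_d)$, where the subscripts denote the components at $d$ of the natural transformations. The pointwise verification of the axioms \eqref{eq:counit}--\eqref{eq:antialg} is immediate, using that tensor products and the braiding in $\mac^\mad$ are computed pointwise. For a morphism $f:d\to d'$ in $\mad$, the naturality squares of $m,\eta,\Delta,\epsilon$ say precisely that $H(f):H(d)\to H(d')$ is a Hopf monoid morphism in $\mac$; hence $\Phi(H)$ becomes a functor $\mad\to\mathrm{Hopf}(\mac)$. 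On morphisms, a Hopf monoid morphism $\alpha:H\to H'$ in $\mac^\mad$ is a natural transformation whose components are Hopf monoid morphisms in $\mac$, so $\Phi(\alpha)$ is a natural transformation $\Phi(H)\Rightarrow \Phi(H')$ in $\mathrm{Hopf}(\mac)^\mad$.

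Conversely, I would define $\Psi:\mathrm{Hopf}(\mac)^\mad\to \mathrm{Hopf}(\mac^\mad)$ by post-composing a functor $K:\mad\to\mathrm{Hopf}(\mac)$ with the forgetful functor $U:\mathrm{Hopf}(\mac)\to\mac$ to obtain the underlying functor $\Psi(K):=U\circ K$, and then assembling the structure morphisms componentwise: $m_d:=m_{K(d)}$ and similarly for $\eta,\Delta,\epsilon,S$. Functoriality of $K$ ensures naturality of these families, and the Hopf monoid axioms in $\mac^\mad$ again reduce componentwise to the axioms in $\mac$, which hold by assumption. It is clear from the constructions that $\Phi\circ\Psi$ and $\Psi\circ\Phi$ equal the respective identity functors on objects and morphisms, so $\Phi$ and $\Psi$ are mutually inverse isomorphisms of categories, hence in particular an equivalence.

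Finally, I would check that $\Phi$ (equivalently $\Psi$) is symmetric monoidal. The tensor product of two Hopf monoids (Example \ref{ex:Hopf-monoids}(3)) is defined pointwise via the braiding, both in $\mathrm{Hopf}(\mac^\mad)$ and in $\mathrm{Hopf}(\mac)^\mad$; since the braiding and tensor product of $\mac^\mad$ agree componentwise with those of $\mac$, the coherence isomorphisms $\Phi(H\oo H')\cong \Phi(H)\oo\Phi(H')$ and $\Phi(\underline{e})\cong \underline{e}$ are identities, and the hexagon and unit axioms for a symmetric monoidal functor are trivially satisfied. The only mildly subtle point, and the closest thing to an obstacle, is keeping track of the fact that the tensor product of Hopf monoid morphisms is again a Hopf monoid morphism (which involves the symmetry) so that $\mathrm{Hopf}(\mac^\mad)$ and $\mathrm{Hopf}(\mac)^\mad$ genuinely carry symmetric monoidal structures; but this is exactly what Example \ref{ex:Hopf-monoids}(3) provides, applied once in $\mac$ and once in $\mac^\mad$.
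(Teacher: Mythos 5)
Your construction coincides with the paper's own proof: the functor $\Phi$ is exactly the functor $R$ defined there (componentwise structure morphisms, naturality squares giving Hopf monoid morphisms $H(f)$), and the paper likewise notes the obvious inverse and that both functors are symmetric monoidal. Your write-up just spells out the componentwise verifications that the paper leaves implicit, so the proposal is correct and takes essentially the same route.
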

\begin{proof} The equivalence is given by
the functor $R: \mathrm{Hopf}(\mac^\mad)\to \mathrm{Hopf}(\mathcal C)^\mad$ that sends a Hopf monoid $(H, m,  \eta, \Delta, \epsilon, S)$ to the functor $K: \mathcal D\to \mathrm{Hopf}(\mathcal C)$ with $K(D)=H(D)$ and the component morphisms $m_D$, $\eta_D$, $\Delta_D$, $\epsilon_D$, $S_D$ for $D\in \text{Ob}(\mathcal D)$ and with $K(f)=H(f)$ for a morphism $f$ in $\mathcal D$. Hopf monoid morphisms  in $\mac^\mad$ are sent to themselves. The functor $R$ has an obvious inverse, and both functors  are symmetric monoidal.\end{proof}

Further examples are obtained by taking the images of Hopf monoids under symmetric monoidal functors. If both of the categories  are cartesian monoidal,  it is sufficient that 
the functor preserves finite products, which holds in particular for any right adjoint functor.

\begin{example}\label{ex:transport0} $\quad$
\begin{compactenum}
\item  Let $F: \mac\to\mac'$ be a symmetric monoidal functor. Then for every Hopf monoid $H$ in $\mac$ the image  $F(H)$ has a canonical Hopf monoid structure. 
\item If $\mac,\mac'$ are  cartesian monoidal categories and $F:\mac\to\mac'$ a functor that preserves finite products, then  $F$ is symmetric monoidal, and for every group object $H$ in $\mac$ the image $F(H)$ is a group object in $\mac'$.
\end{compactenum}
\end{example}

\subsection{(Co)modules and their (co)invariants}
\label{subsec:modinv}

As their definitions involve only structure maps, (co)modules  over Hopf monoids in symmetric monoidal categories are defined analogously to (co)modules  over Hopf algebras. The only difference is that linear maps are replaced by morphisms. 

\begin{definition}\label{def:module} Let $H$ be a Hopf monoid in  $\mac$.
\begin{compactenum}
\item An $H$-{\bf module} in $\mac$ is an object $M$ in $\mac$ with a morphism $\rhd: H\oo M\to M$ satisfying
\begin{align}
\rhd\circ (m\oo 1_M)=\rhd\circ (1_H\oo\rhd), \qquad \rhd \circ (\eta\oo 1_M)=1_M.
\end{align}
A {\bf morphism of $H$-modules} is a morphism $f: M\to M'$ in $\mac$ with 
$\rhd'\circ (1_H \oo f)=f\circ \rhd$.\\

\item An $H$-{\bf comodule} in $\mac$ is an object $M$ in $\mac$ with a morphism $\delta: M\to  H\oo M$ satisfying
\begin{align}\label{eq:comod}
(\Delta\oo 1_M)\circ \delta=(1_H\oo \delta)\circ \delta, \qquad (\epsilon\oo 1_M)\circ\delta=1_M.
\end{align}
A {\bf morphism of $H$-comodules} is a morphism $f: M\to M'$ in $\mac$ with 
$(1_H \oo f)\circ \delta=\delta'\circ f$.
\end{compactenum}
\end{definition}

There are analogous notions of right (co)modules and  bi(co)modules  and morphisms between them. 
Just as in the case of a Hopf algebra, there are also various compatibility conditions that can be imposed between modules and comodule structures. 
The most important one in the following is the one for Yetter-Drinfeld modules.

\begin{definition}\label{def:yetterdrinfeld}  Let $H$ be a Hopf monoid in  $\mac$. 
\begin{compactenum}
\item
A
 {\bf Yetter-Drinfeld module} over $H$ is a triple $(M,\rhd,\delta)$ such that $(M,\rhd)$ is an $H$-module, $(M,\delta)$ is an $H$-comodule and 
 \begin{align*}
 \delta\circ \rhd=(m^{(2)}\oo \rhd)\circ (1_{H^{\oo 2}}\circ \tau_{H,H}\oo 1_M)\circ (1_{H^{\oo 3}}\oo S\oo 1_M)\circ (1_H\oo \tau_{H^{\oo 2},H}\oo 1_M)\circ (\Delta^{(2)}\oo \delta).
 \end{align*}
\item A {\bf morphism of Yetter-Drinfeld modules} is a morphism $f: M\to M'$ that is a module and a comodule morphism.
\end{compactenum}
\end{definition}

In Sweedler notation with  the conventions $\delta(m)=\low m 0\oo \low m 1$ and $\Delta(h)=\low h 1\oo \low h 2$ the Yetter-Drinfeld module condition in Definition \ref{def:yetterdrinfeld}  reads
\begin{align}
(h\rhd m)_{(0)}\oo (h\rhd m)_{(1)}=\low h 1 \low m 0 S(\low h 3)\oo (\low h 2\rhd \low m 1).
\end{align}
Yetter-Drinfeld modules over group objects in cartesian monoidal categories are especially simple to describe. In this case, composing the coaction morphism $\delta: M\to H\times  M$ with the projection morphism $\pi_1: H\times M\to H$ yields a morphism $F=\pi_1\circ \delta: M\to H$ reminiscent of a moment map. The Yetter-Drinfeld module condition  states that this morphism intertwines the $H$-module structure on $M$ and the conjugation action of $H$ on itself.

\begin{example}
\label{Ex:YDAndHopfModulesInCat} Let $H$ be a group object in a cartesian monoidal category, $(M, \rhd)$  a module and  $(M, \delta)$  a comodule over $H$. 
Then $(M, \rhd, \delta)$ is a Yetter-Drinfeld module  over $H$
 iff the morphism  
 $F:=\pi_1\circ \delta: M \to H$ satisfies
\begin{align}\label{eq:yddefgrp}
F\circ \rhd=m^{(2)}\circ (1_H\times\tau_{H,F(M)})\circ (1_H\times I\times 1_{F(M)})\circ(\Delta_H\times F).
\end{align}
\end{example}

If the objects of $\mac$ are sets,  condition \eqref{eq:yddefgrp} reads $F(h\rhd m)=hF(m) h^\inv$ for all $h\in H$, $m\in M$. By an abuse of notation, we  sometimes write such formulas  for the general case to keep notation simple.

By Example \ref{ex:transport0} the images of Hopf monoids  under symmetric monoidal functors are Hopf monoids. Analogous statements hold for their (co)modules.

\begin{example}\label{Ex:groupobjectfunctor}  $\quad$
\begin{compactenum}
\item If $F: \mac\to\mac'$  is a symmetric monoidal functor and $M$ a (co)module over a Hopf monoid $H$ in $\mac$, then $F(M)$ is a (co)module over the Hopf monoid $F(H)$. 

\item Let $\mac,\mac'$ be cartesian monoidal categories and $F: \mac\to\mac'$ a functor that preserves finite products. Then for every (co)module $M$ over a group object $H$ in $\mac$ the image $F(M)$ is a (co)module over the group object $F(H)$.
\end{compactenum}
\end{example}

(Co)invariants of (co)modules  cannot be generalised directly from Hopf algebras over fields to Hopf monoids in symmetric monoidal categories. To obtain  generalised notions of (co)invariants, we require that the symmetric monoidal category $\mac$ has all equalisers and coequalisers.

\begin{definition}\label{def:invariants} \cite[Def.~2.6]{MV} Let $\mac$ be a symmetric monoidal category that has all equalisers and coequalisers, $H$ a Hopf monoid in $\mac$. 

\begin{compactenum}
\item The {\bf invariants} of an $H$-module $(M,\rhd)$ are the coequaliser $(M^{H},\pi)$ of $\rhd$ and $\epsilon\oo 1_{M}$: 
\begin{align}
	\nonumber
\xymatrix{  H\oo M  \ar@<-.5ex>[r]_{\quad\epsilon\oo 1_M} \ar@<.5ex>[r]^{\quad\rhd} & M \ar[r]^{\pi} & M^H.
}
\end{align} 

	\item The {\bf coinvariants} of an $H$-comodule $(M,\delta)$ are the equaliser $(M^{coH},\iota)$ of $\delta$ and $\eta \oo 1_{M}$:
\begin{align}
	\nonumber
\xymatrix{ M^{coH} \ar[r]^{\iota} & M  \ar@<-.5ex>[r]_{\eta\oo 1_M\quad} \ar@<.5ex>[r]^{\delta\quad} & H\oo M.
}
\end{align}

\end{compactenum}
\end{definition}

As expected, $H$-(co)module  morphisms induce morphisms between the (co)invariants. This follows directly from the universal properties of the (co)equalisers.

\begin{lemma}\label{Lemma:IsosAufInvarianten}\cite[Lemma 2.7]{MV}
Suppose that $\mathcal C$ has all equalisers and coequalisers and $H$ is a Hopf monoid in $\mac$. Then for 
every $H$-module morphism $f: (M, \triangleright) \to (M', \triangleright')$ there is a unique morphism $f^H: M^H \to M'^H $ with $f^H \circ \pi = \pi' \circ f$. Likewise, for every $H$-comodule morphism $f: (M, \delta) \to (M', \delta')$ there is a unique morphism $f^{coH}: M^{coH} \to M'^{coH}$ with $\iota' \circ f^{coH}= f\circ \iota$.
\end{lemma}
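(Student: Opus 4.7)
The plan is to invoke the universal properties of the coequaliser $M^H$ and the equaliser $M'^{coH}$ that define the (co)invariants, combined with the compatibility conditions built into the notions of $H$-module and $H$-comodule morphism.

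For the module case, I would start with the composite $\pi' \circ f : M \to M'^H$ and check that it coequalises the pair $\rhd, \epsilon \oo 1_M$. Using that $f$ is an $H$-module morphism, one has $f \circ \rhd = \rhd' \circ (1_H \oo f)$, and hence $\pi' \circ f \circ \rhd = \pi' \circ \rhd' \circ (1_H \oo f)$. Since $(M'^H, \pi')$ is the coequaliser of $\rhd'$ and $\epsilon \oo 1_{M'}$, the right-hand side equals $\pi' \circ (\epsilon \oo 1_{M'}) \circ (1_H \oo f) = \pi' \circ f \circ (\epsilon \oo 1_M)$, the last step being bifunctoriality of $\oo$. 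Thus $\pi' \circ f$ coequalises the pair, and the universal property of $(M^H, \pi)$ produces a unique morphism $f^H : M^H \to M'^H$ with $f^H \circ \pi = \pi' \circ f$.

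The comodule case is dual. I would consider $\iota' : M'^{coH} \to M'$ and the composite $f \circ \iota : M^{coH} \to M'$, and show it equalises $\delta'$ and $\eta \oo 1_{M'}$. Using that $f$ is a comodule morphism, one has $\delta' \circ f = (1_H \oo f) \circ \delta$, so $\delta' \circ f \circ \iota = (1_H \oo f) \circ \delta \circ \iota$. The equaliser property of $(M^{coH}, \iota)$ gives $\delta \circ \iota = (\eta \oo 1_M) \circ \iota$, and bifunctoriality of $\oo$ turns the right-hand side into $(\eta \oo 1_{M'}) \circ f \circ \iota$. Hence $f \circ \iota$ equalises the pair, and the universal property of $(M'^{coH}, \iota')$ yields a unique $f^{coH} : M^{coH} \to M'^{coH}$ with $\iota' \circ f^{coH} = f \circ \iota$.

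Neither direction presents a real obstacle: everything is a direct application of universal properties, and the only substantive input is the (co)module morphism condition on $f$ together with bifunctoriality of the tensor product. Uniqueness of $f^H$ and $f^{coH}$ is part of the universal property itself, so no separate argument is required.
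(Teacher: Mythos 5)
Your proof is correct and is exactly the argument the paper has in mind: the text preceding the lemma states that it "follows directly from the universal properties of the (co)equalisers," and your two computations (showing $\pi'\circ f$ coequalises $\rhd,\epsilon\oo 1_M$ and $f\circ\iota$ equalises $\delta',\eta\oo 1_{M'}$ via the (co)module morphism condition and bifunctoriality of $\oo$) are precisely the omitted details. Uniqueness is indeed built into the universal property, so nothing further is required.
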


Note that all definitions in this section are symmetric with respect to a Hopf monoid $H$ in $\mac$ and the dual Hopf monoid $ H^*$ in $\mac^{op}$ from Example \ref{ex:Hopf-monoids}. Modules and comodules over $H$ in $\mac$ correspond to comodules and modules over $ H^*$ in $\mac^{op}$, respectively, and the same holds for their (co)invariants. 
It is also directly apparent from the formula in Definition \ref{def:yetterdrinfeld} that Yetter-Drinfeld modules over $H$ correspond to Yetter-Drinfeld modules over $ H^*$.

For objects in a symmetric monoidal category $\mac$ that are both, modules and comodules over certain Hopf monoids in $\mac$, we combine the notion of invariants and coinvariants and impose both conditions. This requires that the category $\mac$ is equipped with {\em images}.  We work with a   general non-abelian notion of image, see Mitchell \cite[Sec.~I.10]{Mi} and Pareigis \cite[Sec.~1.13]{Pa}. There is an analogous notion of a {\em coimage}, which is  the image of the corresponding morphism   in  $\mac^{op}$, see \cite[Sec.~I.10]{Mi}.

An  {\em image} of a morphism $f: C\to C'$ in $\mac$ is an object $\mathrm{im}(f)$ together with a pair $(P,I)$ of a monomorphism $I: \mathrm{im}(f)\to C'$ and a morphism $P: C\to \mathrm{im}(f)$ with $I\circ P=f$ and the following universal property:   for any pair $(Q,J)$ of a monomorphism $J: X\to C'$ and a morphism $Q: C\to X$ with $J\circ Q=f$ there is a unique morphism $v:\mathrm{im}(f)\to X$ with $I=J\circ v$. 

Images are unique up to unique isomorphism. If $\mathcal C$ has all equalisers, then $P: C\to \mathrm{im}(f)$ is an epimorphism \cite[Prop.~10.1, Sec.~I.10]{Mi}. In an abelian category $\mathcal C$ this notion of  image coincides with the usual definition of an image as the kernel of the cokernel \cite[Lemma 3, Sec.~4.2]{Pa}. If $\mathcal C$ is complete, then all images exist, as any complete category has intersections \cite[Prop.~2.3, Sec.~II.2]{Mi}. This implies the existence of all images \cite[Sec.~I.10]{Mi}.

\begin{definition} \label{def:biinvariants} \cite[Def.~2.8]{MV}\footnote{Def.~2.8 in \cite{MV}  considers only the case $H=K$, as that is the only one required there.} Let $\mac$ be a complete and finitely cocomplete symmetric monoidal category and $H, K$  Hopf monoids in $\mac$. 
 The {\bf biinvariants} of an   $H$-module and  $K$-comodule $M$ are the image  of the morphism $\pi\circ\iota: M^{coK}\to M^H$  
\begin{align}\label{eq:imagedef}
\xymatrix{ M^{coK} \ar[rd]_P\ar[r]^\iota & M \ar[r]^\pi & M^H\\
& M_{inv}:=\mathrm{im}(\pi\circ \iota). \ar[ru]_{I}
}
\end{align}
\end{definition}

Requiring $\mathcal C$ to be complete and finitely cocomplete ensures the existence of invariants, coinvariants and biinvariants. Examples of such categories are $\mathrm{Set}$, $\mathrm{Top}$, $\mathrm{Grp}$, $\mathrm{Vect}_{\F}$, $\mathrm{Cat}$, $k\mathrm{-Mod}$ and the category $\mathrm{Ch}_{k\mathrm{-Mod}}$ of chain complexes of $k$-modules. 
For a small category $\mad$  and a complete and finitely cocomplete category $\mathcal C$ the category $\mac^\mad$ is also complete and finitely cocomplete, see for instance Pareigis \cite[Th.~1, Sec.~2.7]{Pa}. Hence,  $G\mathrm{-Set}$ and $\mathrm{SSet}$ also satisfy the requirement.

As discussed in \cite[Rem.~2.9]{MV} one could also consider the {\em coimage} of the morphism $\pi\circ \iota$ instead of its {\em image}.
This amounts to passing from  modules and comodules over the Hopf monoids $H,K$ in $\mac$ to comodules and modules over the Hopf monoids $H^*$, $K^*$  in $\mac^{op}$  from Example \ref{ex:Hopf-monoids}.

We illustrate (co)invariants and biinvariants with a few simple examples. (Co)invariants of (co)modules over Hopf monoids in $\SSet$ and $\Cat$ and the associated biinvariants for Yetter-Drinfeld modules are treated in Sections \ref{sec:sset} and \ref{subsec:modcomodgrp}, respectively.

\begin{example}\label{Ex:ImageInSet,Top}$\quad$
\begin{compactenum}
\item  A Hopf monoid $H$ in $\mac=\Set$ (in $\mac=\Top$) is a (topological) group $H$ and
\begin{compactitem}
\item an $H$-module is a (continuous) $H$-Set $\rhd: H\times M\to M$, 
\item  $M^H=\{H\rhd m\mid m\in M\}$  with  $\pi: M\to M^H$, $m\mapsto H\rhd m$ (and the  quotient topology),
\item  an $H$-comodule is given by a (continuous) map $F:M\to H$,
\item  $M^{coH}=F^\inv(1)$  with the inclusion $\iota: F^\inv(1)\to M$ (and  the subspace topology),

\item 
$
M_{inv}=\pi(F^\inv(1))=\{H\rhd m\mid F(m)=1\}
$
(with the final topology induced by $\pi$). 
\end{compactitem}

An $H$-module and $H$-comodule $(M,\rhd, F)$ is a  Yetter-Drinfeld module iff $F(h\rhd m)=hF(m)h^\inv$ for all $m\in M$, $h\in H$. \\[-1ex]

\item Let $G$ be a group and $H$ a group with a $G$-action by automorphisms, viewed  as a Hopf monoid in $G-\Set= \mathrm{Set}^{\mathrm{B}G}$. Then $H$-modules are $H\rtimes G$-sets, $H$-comodules are $G$-sets $M$ with $G$-equivariant maps $F: M\to H$ and 
\begin{compactitem}
\item $M^H=\{H\rhd m\mid m\in M\}$ is the orbit space for $H$ with the induced $G$-action and  $G$-equivariant canonical surjection $\pi: M\to M^H$,
\item  $M^{coH}=F^\inv(1)$ with the induced $G$-action and  $G$-equivariant inclusion $\iota: F^\inv(1)\to M$,
\item $M_{inv}=\pi(F^\inv(1))$ with the induced $G$-action. \\[-1ex]
\end{compactitem}

\item For a Hopf algebra $H$ over a commutative ring $k$ as a Hopf monoid in $k$-Mod, $H$-(co)modules and Yetter-Drinfeld modules are  (co)modules and Yetter-Drinfeld modules over $H$ in the usual sense. Their (co)invariants and biinvariants are 
\begin{compactitem}
\item $M^H=M/\langle \{h\rhd m-\epsilon(h) m\mid h\in H, m\in M\}\rangle$,
\item $M^{coH}=\{m\in M\mid \delta(m)=1\oo m\}$,
\item $M_{inv}=\pi(M^{coH})$.
\end{compactitem}
\end{compactenum}
\end{example}

While the coinvariants in Example \ref{Ex:ImageInSet,Top}, 3.~coincide with the usual  coinvariants for comodules over a Hopf algebra, 
the invariants form a quotient rather than a subset.
This distinction is irrelevant in the case of semisimple Hopf algebras, but not in general. As our definition is symmetric with respect to Hopf monoids in a symmetric monoidal category $\mac$ and the dual Hopf monoids in $\mac^{op}$, it is more natural in our setting.
The following example illustrates this. 

\begin{example} \label{Ex:ImageInKMod} For a finite group $G$ and a commutative ring $k$  the group algebra $k[G]$ and  its dual $k[G]^*$ are Hopf monoids in $k$-Mod.

For the group algebra $H=k[G]$ 
\begin{compactitem}
\item the invariants of a $H$-module $(M,\rhd)$ are $M^{H}=M/\langle \{g\rhd m-m\mid m\in M, g\in G\}\rangle$,
\item comodules are $G$-graded $k$-modules $M=\oplus_{g\in G} M_g$ with $\delta(m)=g\oo m$ for all $m\in M_g$,
\item their coinvariants are $M^{coH}=M_1$.
\end{compactitem}
A $k[G]$-module and comodule  $(M,\rhd, \delta)$ is a Yetter-Drinfeld module iff
$g\rhd M_h= M_{ghg^\inv}$ for all $g,h\in G$, and in this case
$M_{inv}\cong H_0(G, M_1)$.

For the dual Hopf monoid $H=k[G]^*$
\begin{compactitem}
\item modules are $G$-graded $k$-modules $M=\oplus_{g\in G} M_g$ with $\delta_g\rhd m=\delta_g(h) m$ for $m\in M_h$,
\item  their invariants are $M^H=M/(\oplus_{g\in G,g\neq 1} M_g)\cong M_1$,
\item comodules are $k[G]$-right modules $(M,\lhd)$ with $\delta(m)=\sum_{g\in G} \delta_g\oo (m\lhd g)$,
\item their coinvariants are $M^{coH}=\{m\in M\mid m\lhd g=m\,\forall g\in G\}$.
\end{compactitem}
A $k[G]^*$-module and comodule $(M,\rhd,\delta)$ is a Yetter-Drinfeld module iff $M_h\lhd g= M_{ghg^\inv}$ for all $g,h\in G$, and in this case $M_{inv}\cong H^0(G, M_1)$.

\end{example}

By Lemma \ref{Lemma:IsosAufInvarianten} morphisms of (co)modules over a Hopf monoid $H$ induce morphisms between their (co)invariants. The question if morphisms of both, modules and comodules, induce morphisms between the associated biinvariants is more subtle in general. It  is shown in \cite[Lemma 2.10]{MV} that this always holds for {\em isomorphisms}. As a direct generalisation  we have  in the notation of  \eqref{eq:imagedef}

\begin{lemma}\label{Lemma:IsomorphismOfBiinvariants}
Let $\mathcal C$ be complete and finitely cocomplete, $H$, $K$ Hopf monoids in $\mathcal C$ and $\Phi: M \to M'$  an isomorphism of $H$-modules and $K$-comodules. There is a unique morphism $\Phi_{inv}: M_{inv}\to M'_{inv}$ with $\pi'\circ \Phi\circ \iota= I'\circ \Phi_{inv}\circ P$, and $\Phi_{inv}$ is an isomorphism. 
\end{lemma}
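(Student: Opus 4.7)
The plan is to transport $\Phi$ through the (co)invariant constructions and then compare two natural factorisations of $\pi'\circ\Phi\circ\iota$ via the universal property of the image.

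First I would apply Lemma \ref{Lemma:IsosAufInvarianten} to the $H$-module and $K$-comodule morphism $\Phi$ and to its inverse $\Phi^\inv$. This produces morphisms $\Phi^H:M^H\to M'^H$ and $\Phi^{coK}:M^{coK}\to M'^{coK}$ with $\Phi^H\circ\pi=\pi'\circ\Phi$ and $\iota'\circ\Phi^{coK}=\Phi\circ\iota$, together with analogous morphisms induced by $\Phi^\inv$. The uniqueness clause of that lemma applied to the identities $\Phi\circ\Phi^\inv=1_{M'}$ and $\Phi^\inv\circ\Phi=1_M$ forces $(\Phi^\inv)^H$ and $(\Phi^\inv)^{coK}$ to be two-sided inverses of $\Phi^H$ and $\Phi^{coK}$, so both induced morphisms are isomorphisms.

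Combining these identities with the defining relations $\pi\circ\iota=I\circ P$ and $\pi'\circ\iota'=I'\circ P'$, the composite $g:=\pi'\circ\Phi\circ\iota:M^{coK}\to M'^H$ admits two factorisations
\begin{align*}
g=\Phi^H\circ I\circ P=I'\circ P'\circ\Phi^{coK}.
\end{align*}
In both, the right-hand factor ($P$ and $P'\circ\Phi^{coK}$) is an epimorphism, since $P$ and $P'$ are epi as $\mac$ has equalisers and $\Phi^{coK}$ is an isomorphism, while the left-hand factor ($\Phi^H\circ I$ and $I'$) is a monomorphism. Using that pre-composition at the source and post-composition at the target with an isomorphism preserve images, both pairs $(P,\Phi^H\circ I)$ and $(P'\circ\Phi^{coK},I')$ are therefore images of $g$.

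Uniqueness of images up to unique isomorphism now yields a unique isomorphism $\Phi_{inv}:M_{inv}\to M'_{inv}$ with $I'\circ\Phi_{inv}=\Phi^H\circ I$ and $\Phi_{inv}\circ P=P'\circ\Phi^{coK}$. Composing the first identity with $P$ on the right gives
\begin{align*}
I'\circ\Phi_{inv}\circ P=\Phi^H\circ I\circ P=\Phi^H\circ\pi\circ\iota=\pi'\circ\Phi\circ\iota,
\end{align*}
which is the required equation; uniqueness of $\Phi_{inv}$ as a morphism with this property follows by cancelling the monomorphism $I'$ on the left and the epimorphism $P$ on the right. The main technical point to verify carefully is that each of the two displayed factorisations really exhibits an image of $g$, which rests on unwinding the universal property stated before Definition \ref{def:biinvariants} and using that isomorphisms preserve images. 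I expect this to be the step where the most care is needed, but it poses no genuine obstacle under the standing assumption that $\mac$ is complete and finitely cocomplete.
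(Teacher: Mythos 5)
Your argument is correct: the induced maps $\Phi^H$ and $\Phi^{coK}$ are isomorphisms by the uniqueness clause of Lemma \ref{Lemma:IsosAufInvarianten}, both displayed factorisations of $\pi'\circ\Phi\circ\iota$ are indeed images (pre-/post-composition with an isomorphism preserves the universal property in Definition \ref{def:biinvariants}), and uniqueness of $\Phi_{inv}$ follows from $I'$ mono and $P$ epi. This is essentially the same image-universal-property argument the paper uses in its analogous constructions (e.g.\ the proofs of Propositions \ref{prop:EdgeContractionInducesIso} and \ref{prop:LoopRemovalInducesIso}) and in the cited Lemma 2.10 of \cite{MV}, so nothing further is needed.
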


\section{Ribbon graphs and surfaces}
\label{sec:ribbon}

In this section we summarise the background on {\em ribbon graphs}, also called {\em fat graphs} or {\em embedded graphs}, for more details we refer to the textbooks of Lando et. al. \cite{LZ} and Ellis-Monaghan and Moffatt \cite{EM}.
Throughout this article, all graphs are  {\em directed}  graphs with a finite number of vertices and edges.  In contrast to \cite{MV} we do not require that the graphs are   connected and allow  {\bf isolated vertices} with no incident edges.

\begin{definition} 
\label{def:ribbongraph} 
 A {\bf ribbon graph} is a graph with a cyclic ordering of the  edge ends at each vertex. 
\end{definition}

The  cyclic ordering of edge ends  at the vertices of a ribbon graph allows one to thicken its edges  to strips or ribbons and defines the faces of  
 the ribbon graph. 
 One says that a path in a ribbon graph {\bf turns maximally left at a vertex} if it  
 enters the vertex along an edge end and leaves it along an edge end that 
  comes directly before it with respect to the cyclic ordering. A {\bf face} of a ribbon graph  is  defined as a cyclic equivalence class of closed paths that turn maximally left at each vertex and traverse each edge at most once in each direction. Each isolated vertex is also viewed as a face, and such a face is called an {\bf isolated face}.
  
  In the following we denote by $V,E,F$ the sets of vertices, edges and faces of a ribbon graph and by $s(\alpha), t(\alpha)$ the starting and target vertex of an edge $\alpha$. We  say that two edge ends incident at a  vertex $v\in V$ are {\bf neighbours} or {\bf neighbouring} if one of them comes directly before or after the other with respect to the cyclic ordering at $v$.
An edge $\alpha$ with $s(\alpha)=t(\alpha)$ is called a {\bf loop}. A loop at $v$ whose starting and target end are neighbours  is called an {\bf isolated loop}. 
When drawing a  ribbon graph we take the cyclic  ordering of  edge ends at  vertices as the one in  the  drawing.

Ribbon graphs are directly related to embedded graphs on oriented surfaces. Every graph $\Gamma$ embedded into an oriented surface $\Sigma$  inherits a cyclic ordering of the edge ends at each vertex and hence a ribbon graph structure.  
 Attaching discs to the faces of the ribbon graph $\Gamma$ yields an oriented surface $\Sigma_\Gamma$ such that the connected components of $\Sigma_\Gamma\setminus \Gamma$ are discs and in bijection with faces of $\Gamma$, see Figure \ref{Ex:GraphTorus}. If $\Gamma$ is embedded into an oriented surface $\Sigma$, the surface $\Sigma_\Gamma$  is homeomorphic to $\Sigma$ iff each connected component of $\Sigma\setminus\Gamma$ is a disc.  In this case, we call $\Gamma$ {\bf properly embedded} in $\Sigma$.  Note that this implies a bijection between connected components of $\Gamma$ and of $\Sigma$, and    connected components of $\Sigma$ containing an isolated vertex are spheres. The genus $g$ of a connected component of $\Sigma$ is then determined by the Euler characteristic  $2-2g=|V|-|E|+|F|$, where $|V|,|E|,|F|$ are the number of vertices, edges and faces of the associated connected component of $\Gamma$.
 
  \begin{figure}
\begin{center}
\tikzset{every picture/.style={line width=0.75pt}}    

\begin{tikzpicture}[x=0.75pt,y=0.75pt,yscale=-1.35,xscale=1.35]
\draw [color={rgb, 255:red, 208; green, 2; blue, 27 }  ,draw opacity=1 ]   (46.5,50.25) .. controls (36.91,14.69) and (119.94,15.69) .. (102.07,53.53) ;
\draw [shift={(101.18,55.29)}, rotate = 298.3] [color={rgb, 255:red, 208; green, 2; blue, 27 }  ,draw opacity=1 ][line width=0.75]    (6.56,-1.97) .. controls (4.17,-0.84) and (1.99,-0.18) .. (0,0) .. controls (1.99,0.18) and (4.17,0.84) .. (6.56,1.97)   ;
\draw [color={rgb, 255:red, 65; green, 117; blue, 5 }  ,draw opacity=1 ]   (43.75,53) .. controls (6.58,16.66) and (87.13,10.98) .. (50.49,50.23) ;
\draw [shift={(49.34,51.44)}, rotate = 314.18] [color={rgb, 255:red, 65; green, 117; blue, 5 }  ,draw opacity=1 ][line width=0.75]    (6.56,-1.97) .. controls (4.17,-0.84) and (1.99,-0.18) .. (0,0) .. controls (1.99,0.18) and (4.17,0.84) .. (6.56,1.97)   ;
\draw [color={rgb, 255:red, 23; green, 19; blue, 254 }  ,draw opacity=1 ][line width=0.75]    (49.26,53) .. controls (61.68,57.26) and (81.93,59.54) .. (96.62,58.22) ;
\draw [shift={(98.43,58.04)}, rotate = 173.5] [color={rgb, 255:red, 23; green, 19; blue, 254 }  ,draw opacity=1 ][line width=0.75]    (6.56,-1.97) .. controls (4.17,-0.84) and (1.99,-0.18) .. (0,0) .. controls (1.99,0.18) and (4.17,0.84) .. (6.56,1.97)   ;
\draw   (171.71,58.15) .. controls (171.71,40.4) and (204.95,26.01) .. (245.95,26.01) .. controls (286.95,26.01) and (320.19,40.4) .. (320.19,58.15) .. controls (320.19,75.9) and (286.95,90.3) .. (245.95,90.3) .. controls (204.95,90.3) and (171.71,75.9) .. (171.71,58.15) -- cycle ;
\draw    (204,55.57) .. controls (228.76,66.01) and (266.76,67.72) .. (286.47,57.44) ;
\draw    (215.04,58.87) .. controls (234.94,48.45) and (260.37,47.02) .. (277.04,60.3) ;
\draw [color={rgb, 255:red, 2; green, 28; blue, 208 }  ,draw opacity=1 ]   (236.22,75.23) .. controls (253.2,76.47) and (260.39,77.36) .. (273.16,73.27) ;
\draw [shift={(275,72.66)}, rotate = 161.11] [color={rgb, 255:red, 2; green, 28; blue, 208 }  ,draw opacity=1 ][line width=0.75]    (6.56,-1.97) .. controls (4.17,-0.84) and (1.99,-0.18) .. (0,0) .. controls (1.99,0.18) and (4.17,0.84) .. (6.56,1.97)   ;
\draw [color={rgb, 255:red, 208; green, 2; blue, 27 }  ,draw opacity=1 ]   (230.71,75.23) .. controls (189.04,69.15) and (172.47,55.44) .. (225.9,38.01) ;
\draw [color={rgb, 255:red, 208; green, 2; blue, 27 }  ,draw opacity=1 ]   (225.9,38.01) .. controls (275.04,25.44) and (310.47,48.87) .. (298.19,65.44) ;
\draw [color={rgb, 255:red, 208; green, 2; blue, 27 }  ,draw opacity=1 ]   (298.19,65.44) .. controls (297.16,67.97) and (291.1,70.63) .. (282.36,72.32) ;
\draw [shift={(280.5,72.66)}, rotate = 350.33] [color={rgb, 255:red, 208; green, 2; blue, 27 }  ,draw opacity=1 ][line width=0.75]    (6.56,-1.97) .. controls (4.17,-0.84) and (1.99,-0.18) .. (0,0) .. controls (1.99,0.18) and (4.17,0.84) .. (6.56,1.97)   ;
\draw [color={rgb, 255:red, 65; green, 117; blue, 5 }  ,draw opacity=1 ]   (224.29,61.86) .. controls (234.04,61.12) and (234.13,66.3) .. (233.69,70.53) ;
\draw [shift={(233.47,72.48)}, rotate = 275.84] [color={rgb, 255:red, 65; green, 117; blue, 5 }  ,draw opacity=1 ][line width=0.75]    (6.56,-1.97) .. controls (4.17,-0.84) and (1.99,-0.18) .. (0,0) .. controls (1.99,0.18) and (4.17,0.84) .. (6.56,1.97)   ;
\draw [color={rgb, 255:red, 65; green, 117; blue, 5 }  ,draw opacity=1 ]   (216.77,87.58) .. controls (228.2,86.72) and (232.2,83.58) .. (233.47,77.99) ;
\draw [color={rgb, 255:red, 65; green, 117; blue, 5 }  ,draw opacity=1 ] [dash pattern={on 0.84pt off 2.51pt}]  (216.77,87.58) .. controls (210.48,85.86) and (207.62,69) .. (224.29,61.86) ;
\draw [color={rgb, 255:red, 245; green, 166; blue, 35 }  ,draw opacity=0.6 ]   (36,53) .. controls (7.91,20.15) and (74.77,-3.85) .. (64.48,42.15) ;
\draw [color={rgb, 255:red, 245; green, 166; blue, 35 }  ,draw opacity=0.6 ]   (64.48,42.15) .. controls (58.77,59.86) and (105.62,56.15) .. (95.62,39.58) ;
\draw [color={rgb, 255:red, 245; green, 166; blue, 35 }  ,draw opacity=0.6 ]   (49.05,41) .. controls (56.2,19.86) and (91.05,31.29) .. (95.62,39.58) ;
\draw [color={rgb, 255:red, 245; green, 166; blue, 35 }  ,draw opacity=0.6 ]   (49.05,41) .. controls (43.05,57.86) and (66.77,28.43) .. (49.34,27.29) ;
\draw [color={rgb, 255:red, 245; green, 166; blue, 35 }  ,draw opacity=0.6 ]   (49.34,27.29) .. controls (33.91,27.86) and (41.62,56.72) .. (44.2,38.15) ;
\draw [color={rgb, 255:red, 245; green, 166; blue, 35 }  ,draw opacity=0.6 ]   (44.2,38.15) .. controls (46.48,9.58) and (104.48,14.72) .. (109.62,35) ;
\draw [color={rgb, 255:red, 245; green, 166; blue, 35 }  ,draw opacity=0.6 ]   (109.62,35) .. controls (113.02,47.73) and (121.17,79.5) .. (64.78,62.12) ;
\draw [shift={(63.05,61.58)}, rotate = 17.67] [color={rgb, 255:red, 245; green, 166; blue, 35 }  ,draw opacity=0.6 ][line width=0.75]    (6.56,-1.97) .. controls (4.17,-0.84) and (1.99,-0.18) .. (0,0) .. controls (1.99,0.18) and (4.17,0.84) .. (6.56,1.97)   ;
\draw  [fill={rgb, 255:red, 0; green, 0; blue, 0 }  ,fill opacity=1 ] (43.75,53) .. controls (43.75,51.48) and (44.98,50.25) .. (46.5,50.25) .. controls (48.02,50.25) and (49.26,51.48) .. (49.26,53) .. controls (49.26,54.52) and (48.02,55.75) .. (46.5,55.75) .. controls (44.98,55.75) and (43.75,54.52) .. (43.75,53) -- cycle ;
\draw  [fill={rgb, 255:red, 0; green, 0; blue, 0 }  ,fill opacity=1 ] (98.43,58.04) .. controls (98.43,56.52) and (99.66,55.29) .. (101.18,55.29) .. controls (102.7,55.29) and (103.93,56.52) .. (103.93,58.04) .. controls (103.93,59.56) and (102.7,60.79) .. (101.18,60.79) .. controls (99.66,60.79) and (98.43,59.56) .. (98.43,58.04) -- cycle ;
\draw  [fill={rgb, 255:red, 0; green, 0; blue, 0 }  ,fill opacity=1 ] (230.71,75.23) .. controls (230.71,73.71) and (231.95,72.48) .. (233.47,72.48) .. controls (234.99,72.48) and (236.22,73.71) .. (236.22,75.23) .. controls (236.22,76.75) and (234.99,77.99) .. (233.47,77.99) .. controls (231.95,77.99) and (230.71,76.75) .. (230.71,75.23) -- cycle ;
\draw  [fill={rgb, 255:red, 0; green, 0; blue, 0 }  ,fill opacity=1 ] (275,72.66) .. controls (275,71.14) and (276.23,69.91) .. (277.75,69.91) .. controls (279.27,69.91) and (280.5,71.14) .. (280.5,72.66) .. controls (280.5,74.18) and (279.27,75.41) .. (277.75,75.41) .. controls (276.23,75.41) and (275,74.18) .. (275,72.66) -- cycle ;
\draw (110.57,65.7) node [anchor=north west][inner sep=0.75pt]  [font=\scriptsize]  {$\textcolor[rgb]{0.96,0.65,0.14}{f}$};
\end{tikzpicture}
\end{center}
\caption{
Attaching a disc to the face $f$ yields a torus.}
\label{Ex:GraphTorus}
\end{figure}
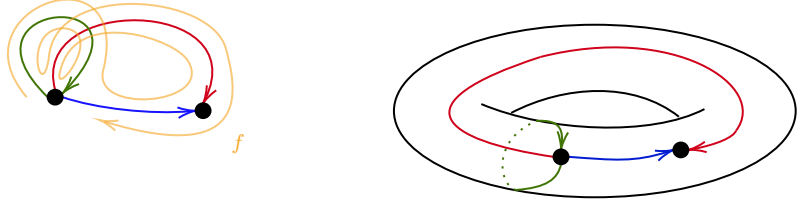

Note that each ribbon graph or embedded graph has a Poincar\'e dual obtained by replacing each vertex  (face) with a face (vertex) and each edge with a dual edge. 
This transforms the paths that characterise faces into paths that go counterclockwise around a vertex and vice versa. Edge ends  correspond to edge sides of the dual graph  and their
 cyclic ordering  at a vertex  to the cyclic ordering of the edge sides  in the dual face.

In the following we sometimes require a {\em linear} ordering of the edge ends at a vertex or of the edge sides in a face. This is achieved by inserting a marking, the {\em cilium},   that separates the edge ends or edge sides of minimal and maximal order, see for instance Figure \ref{fig:graphtrafos}, Definition \ref{def:standardgraph} or Example \ref{example:VertexAndFaceOperators}. For faces this corresponds to the choice of a starting vertex for the associated cyclic equivalence class of paths.

\begin{definition}\label{def:ciliated} $\quad$
\begin{compactenum}
\item A {\bf ciliated vertex} in a ribbon graph is a vertex with a choice of linear ordering of the incident edge ends that is compatible with their cyclic ordering. 
\item A  {\bf ciliated face} in a ribbon graph is a closed path that turns maximally left at each vertex,  including the starting vertex, and traverses  each edge at most once in each direction.
\end{compactenum}
A {\bf ciliated ribbon graph} is a ribbon graph in which each face and vertex is assigned a cilium. Isolated vertices and faces are trivially ciliated.
\end{definition}

For a closed surface $\Sigma$ of genus $g\geq 0$ we often work with  a ciliated  ribbon graph with a single vertex and  a single face that is given by a set of generators of the fundamental group
\begin{align}\label{eq:pi1gensurface}
\pi_1(\Sigma)=\langle \alpha_1,\beta_1,\ldots,\alpha_g,\beta_g\mid [\beta_g^\inv,\alpha_g]\cdots[\beta_1^\inv,\alpha_1]=1\rangle.
\end{align}

\begin{definition}\label{def:standardgraph} The {\bf standard graph} of an oriented surface $\Sigma$ of genus $g\geq 1$ is the  graph 
\begin{align}\label{eq:standardgraph}
\\[-7ex]
\tikzset{every picture/.style={line width=0.75pt}}        
\begin{tikzpicture}[x=0.75pt,y=0.75pt,yscale=-1.2,xscale=1.2]
\draw  [fill={rgb, 255:red, 0; green, 0; blue, 0 }  ,fill opacity=1 ] (74,64.75) .. controls (74,61.3) and (76.8,58.5) .. (80.25,58.5) .. controls (83.7,58.5) and (86.5,61.3) .. (86.5,64.75) .. controls (86.5,68.2) and (83.7,71) .. (80.25,71) .. controls (76.8,71) and (74,68.2) .. (74,64.75) -- cycle ;
\draw [color={rgb, 255:red, 208; green, 2; blue, 27 }  ,draw opacity=1 ]   (83,70.5) .. controls (114.84,136.42) and (181.33,52.1) .. (87.92,64.55) ;
\draw [shift={(86.5,64.75)}, rotate = 351.95] [color={rgb, 255:red, 208; green, 2; blue, 27 }  ,draw opacity=1 ][line width=0.75]    (10.93,-3.29) .. controls (6.95,-1.4) and (3.31,-0.3) .. (0,0) .. controls (3.31,0.3) and (6.95,1.4) .. (10.93,3.29)   ;
\draw [color={rgb, 255:red, 18; green, 16; blue, 224 }  ,draw opacity=1 ]   (87,69) .. controls (149.69,120.99) and (171.78,21.26) .. (87.28,60.15) ;
\draw [shift={(86,60.75)}, rotate = 334.78] [color={rgb, 255:red, 18; green, 16; blue, 224 }  ,draw opacity=1 ][line width=0.75]    (10.93,-3.29) .. controls (6.95,-1.4) and (3.31,-0.3) .. (0,0) .. controls (3.31,0.3) and (6.95,1.4) .. (10.93,3.29)   ;
\draw [color={rgb, 255:red, 208; green, 2; blue, 27 }  ,draw opacity=1 ]   (83.5,58.25) .. controls (144.39,15.93) and (68.55,-8.26) .. (76.73,55.78) ;
\draw [shift={(77,57.75)}, rotate = 261.45] [color={rgb, 255:red, 208; green, 2; blue, 27 }  ,draw opacity=1 ][line width=0.75]    (10.93,-3.29) .. controls (6.95,-1.4) and (3.31,-0.3) .. (0,0) .. controls (3.31,0.3) and (6.95,1.4) .. (10.93,3.29)   ;
\draw [color={rgb, 255:red, 18; green, 16; blue, 224 }  ,draw opacity=1 ]   (80.25,58.5) .. controls (108.36,2.03) and (12.96,8.68) .. (73.33,58.25) ;
\draw [shift={(74.25,59)}, rotate = 218.91] [color={rgb, 255:red, 18; green, 16; blue, 224 }  ,draw opacity=1 ][line width=0.75]    (10.93,-3.29) .. controls (6.95,-1.4) and (3.31,-0.3) .. (0,0) .. controls (3.31,0.3) and (6.95,1.4) .. (10.93,3.29)   ;
\draw [color={rgb, 255:red, 245; green, 166; blue, 35 }  ,draw opacity=1 ]   (80.25,71) .. controls (82.16,72.38) and (82.43,74.03) .. (81.05,75.94) .. controls (79.67,77.85) and (79.94,79.5) .. (81.85,80.87) .. controls (83.76,82.25) and (84.03,83.9) .. (82.65,85.81) .. controls (81.27,87.72) and (81.54,89.37) .. (83.45,90.74) .. controls (85.36,92.12) and (85.63,93.77) .. (84.25,95.68) -- (84.5,97.25) -- (84.5,97.25) ;
\draw [color={rgb, 255:red, 2; green, 36; blue, 208 }  ,draw opacity=1 ]   (73.5,67.75) .. controls (14.05,80.93) and (56.32,145.85) .. (77.19,73.35) ;
\draw [shift={(77.5,72.25)}, rotate = 105.61] [color={rgb, 255:red, 2; green, 36; blue, 208 }  ,draw opacity=1 ][line width=0.75]    (10.93,-3.29) .. controls (6.95,-1.4) and (3.31,-0.3) .. (0,0) .. controls (3.31,0.3) and (6.95,1.4) .. (10.93,3.29)   ;
\draw [color={rgb, 255:red, 208; green, 2; blue, 27 }  ,draw opacity=1 ]   (74,64.75) .. controls (3.21,51.88) and (41.22,123.79) .. (73.52,72.35) ;
\draw [shift={(74.5,70.75)}, rotate = 120.81] [color={rgb, 255:red, 208; green, 2; blue, 27 }  ,draw opacity=1 ][line width=0.75]    (10.93,-3.29) .. controls (6.95,-1.4) and (3.31,-0.3) .. (0,0) .. controls (3.31,0.3) and (6.95,1.4) .. (10.93,3.29)   ;
\draw  [color={rgb, 255:red, 74; green, 74; blue, 74 }  ,draw opacity=1 ][fill={rgb, 255:red, 74; green, 74; blue, 74 }  ,fill opacity=1 ] (45.67,38.92) .. controls (45.67,38.27) and (45.14,37.75) .. (44.5,37.75) .. controls (43.86,37.75) and (43.33,38.27) .. (43.33,38.92) .. controls (43.33,39.56) and (43.86,40.08) .. (44.5,40.08) .. controls (45.14,40.08) and (45.67,39.56) .. (45.67,38.92) -- cycle ;
\draw  [color={rgb, 255:red, 74; green, 74; blue, 74 }  ,draw opacity=1 ][fill={rgb, 255:red, 74; green, 74; blue, 74 }  ,fill opacity=1 ] (39,45.58) .. controls (39,44.94) and (38.48,44.42) .. (37.83,44.42) .. controls (37.19,44.42) and (36.67,44.94) .. (36.67,45.58) .. controls (36.67,46.23) and (37.19,46.75) .. (37.83,46.75) .. controls (38.48,46.75) and (39,46.23) .. (39,45.58) -- cycle ;
\draw  [color={rgb, 255:red, 74; green, 74; blue, 74 }  ,draw opacity=1 ][fill={rgb, 255:red, 74; green, 74; blue, 74 }  ,fill opacity=1 ] (38.33,56.25) .. controls (38.33,55.61) and (37.81,55.08) .. (37.17,55.08) .. controls (36.52,55.08) and (36,55.61) .. (36,56.25) .. controls (36,56.89) and (36.52,57.42) .. (37.17,57.42) .. controls (37.81,57.42) and (38.33,56.89) .. (38.33,56.25) -- cycle ;
\draw (130.5,89.9) node [anchor=north west][inner sep=0.75pt]  [font=\scriptsize]  {$\textcolor[rgb]{0.82,0.01,0.11}{\alpha _{1}}$};
\draw (108,13.4) node [anchor=north west][inner sep=0.75pt]  [font=\scriptsize]  {$\textcolor[rgb]{0.82,0.01,0.11}{\alpha }\textcolor[rgb]{0.82,0.01,0.11}{_{2}}$};
\draw (17,75.9) node [anchor=north west][inner sep=0.75pt]  [font=\scriptsize]  {$\textcolor[rgb]{0.82,0.01,0.11}{\alpha }\textcolor[rgb]{0.82,0.01,0.11}{_{g}}$};
\draw (129.5,34.9) node [anchor=north west][inner sep=0.75pt]  [font=\scriptsize,color={rgb, 255:red, 0; green, 0; blue, 0 }  ,opacity=1 ]  {$\textcolor[rgb]{0.01,0.11,0.82}{\beta _{1}}$};
\draw (36,15.9) node [anchor=north west][inner sep=0.75pt]  [font=\scriptsize,color={rgb, 255:red, 0; green, 0; blue, 0 }  ,opacity=1 ]  {$\textcolor[rgb]{0.01,0.11,0.82}{\beta }\textcolor[rgb]{0.01,0.11,0.82}{_{2}}$};
\draw (64.5,102.4) node [anchor=north west][inner sep=0.75pt]  [font=\scriptsize,color={rgb, 255:red, 0; green, 0; blue, 0 }  ,opacity=1 ]  {$\textcolor[rgb]{0.01,0.11,0.82}{\beta }\textcolor[rgb]{0.01,0.11,0.82}{_{g}}$};
\draw (92,101.23) node [anchor=north west][inner sep=0.75pt]  [font=\scriptsize]  {$v$};
\end{tikzpicture}\nonumber\\[-10ex]
\nonumber
\end{align}
with the face $f=[\beta_g^\inv, \alpha_g]\cdots[\beta_1^\inv,\alpha_1]$ and the ordering of edge ends at $v$ given by\\ $s(\alpha_1)<s(\beta_1)<t(\alpha_1)<t(\beta_1)<\ldots< s(\alpha_g)<s(\beta_g)<t(\alpha_g)<t(\beta_g)$. In particular, the standard graph for $S^2$ consists of a single isolated vertex and the associated isolated face.
\end{definition}

In the following we use certain graph transformations  to relate properly embedded ribbon graphs in a connected surface $\Sigma$ to its standard graph.

\begin{definition}\label{def:graphtrafos} Let $\Gamma$ be a ribbon graph with edge set $E$ and vertex set $V$.
\begin{compactenum}
\item The {\bf edge reversal}  reverses the orientation of an edge $\beta\in E$.

\item The  {\bf  contraction} of an edge $\alpha\in E$ that is not a loop  removes $\alpha\in E$ and fuses the vertices $s(\alpha)$ and $t(\alpha)$. 
\item 
The  {\bf edge slide}  slides an end of $\beta\in E$ that is a neighbour of an end of $\alpha\in E$ along $\alpha$. 
\item The {\bf loop deletion} removes an isolated loop $\beta\in E$ from $\Gamma$.
\end{compactenum}
In all cases except 2.~the resulting ribbon graph inherits all cilia from $\Gamma$. In 2.~one erases either the cilium of $t(\alpha)$ or  of $s(\alpha)$ and speaks of contracting $\alpha$ towards $t(\alpha)$ and $s(\alpha)$, respectively.
\end{definition}

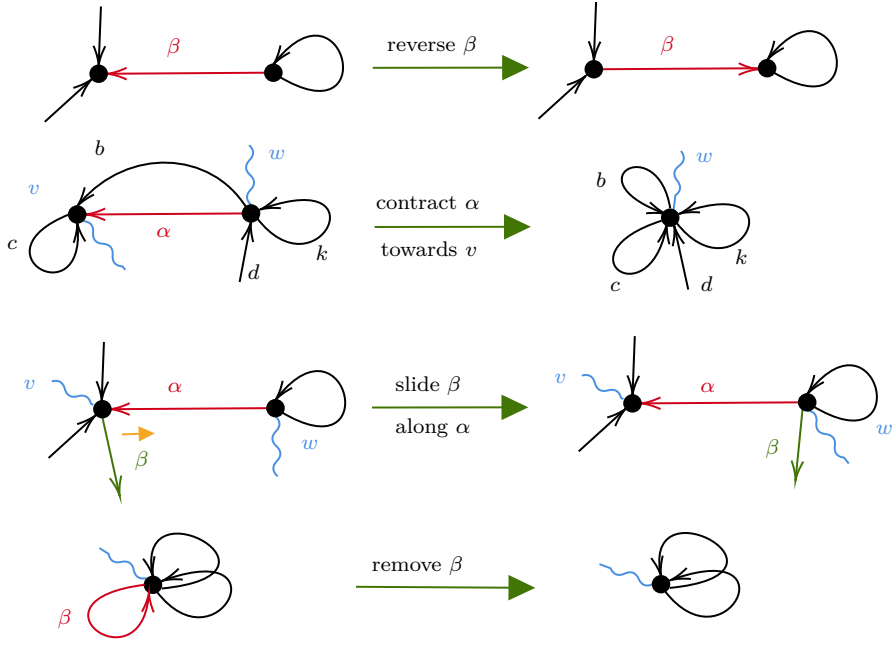
\begin{figure}
\begin{center}
\tikzset{every picture/.style={line width=0.75pt}}     
\begin{tikzpicture}[x=0.75pt,y=0.75pt,yscale=-1.5,xscale=1.5]
\draw [color={rgb, 255:red, 65; green, 117; blue, 5 }  ,draw opacity=1 ]   (128.35,35.76) -- (177.32,35.69) ;
\draw [shift={(180.32,35.69)}, rotate = 179.92] [fill={rgb, 255:red, 65; green, 117; blue, 5 }  ,fill opacity=1 ][line width=0.08]  [draw opacity=0] (8.93,-4.29) -- (0,0) -- (8.93,4.29) -- cycle    ;
\draw [color={rgb, 255:red, 208; green, 2; blue, 27 }  ,draw opacity=1 ]   (92.53,37.4) -- (41.93,37.74) ;
\draw [shift={(39.93,37.75)}, rotate = 359.62] [color={rgb, 255:red, 208; green, 2; blue, 27 }  ,draw opacity=1 ][line width=0.75]    (6.56,-1.97) .. controls (4.17,-0.84) and (1.99,-0.18) .. (0,0) .. controls (1.99,0.18) and (4.17,0.84) .. (6.56,1.97)   ;
\draw    (19.28,53.86) -- (33.47,40.87) ;
\draw [shift={(34.94,39.52)}, rotate = 137.54] [color={rgb, 255:red, 0; green, 0; blue, 0 }  ][line width=0.75]    (6.56,-1.97) .. controls (4.17,-0.84) and (1.99,-0.18) .. (0,0) .. controls (1.99,0.18) and (4.17,0.84) .. (6.56,1.97)   ;
\draw    (37.94,15.19) -- (37.26,33) ;
\draw [shift={(37.18,35)}, rotate = 272.2] [color={rgb, 255:red, 0; green, 0; blue, 0 }  ][line width=0.75]    (6.56,-1.97) .. controls (4.17,-0.84) and (1.99,-0.18) .. (0,0) .. controls (1.99,0.18) and (4.17,0.84) .. (6.56,1.97)   ;
\draw    (97.08,38.61) .. controls (131.42,59.54) and (119.33,5) .. (96.34,33.3) ;
\draw [shift={(95.29,34.65)}, rotate = 307.06] [color={rgb, 255:red, 0; green, 0; blue, 0 }  ][line width=0.75]    (6.56,-1.97) .. controls (4.17,-0.84) and (1.99,-0.18) .. (0,0) .. controls (1.99,0.18) and (4.17,0.84) .. (6.56,1.97)   ;
\draw  [fill={rgb, 255:red, 0; green, 0; blue, 0 }  ,fill opacity=1 ] (34.43,37.75) .. controls (34.43,36.23) and (35.66,35) .. (37.18,35) .. controls (38.7,35) and (39.93,36.23) .. (39.93,37.75) .. controls (39.93,39.27) and (38.7,40.5) .. (37.18,40.5) .. controls (35.66,40.5) and (34.43,39.27) .. (34.43,37.75) -- cycle ;
\draw  [fill={rgb, 255:red, 0; green, 0; blue, 0 }  ,fill opacity=1 ] (92.53,37.4) .. controls (92.53,35.88) and (93.77,34.65) .. (95.29,34.65) .. controls (96.81,34.65) and (98.04,35.88) .. (98.04,37.4) .. controls (98.04,38.92) and (96.81,40.16) .. (95.29,40.16) .. controls (93.77,40.16) and (92.53,38.92) .. (92.53,37.4) -- cycle ;
\draw [color={rgb, 255:red, 208; green, 2; blue, 27 }  ,draw opacity=1 ]   (257.23,35.9) -- (204.63,36.23) ;
\begin{scope}[shift={(52, 0)}]
\draw [shift={(204.63,36.24)}, rotate = 180.62] [color={rgb, 255:red, 208; green, 2; blue, 27 }  ,draw opacity=1 ][line width=0.75]    (6.56,-1.97) .. controls (4.17,-0.84) and (1.99,-0.18) .. (0,0) .. controls (1.99,0.18) and (4.17,0.84) .. (6.56,1.97)   ;
\end{scope}
\draw    (183.48,52.85) -- (197.67,39.86) ;
\draw [shift={(199.14,38.51)}, rotate = 137.54] [color={rgb, 255:red, 0; green, 0; blue, 0 }  ][line width=0.75]    (6.56,-1.97) .. controls (4.17,-0.84) and (1.99,-0.18) .. (0,0) .. controls (1.99,0.18) and (4.17,0.84) .. (6.56,1.97)   ;
\draw    (202.64,13.68) -- (201.96,31.49) ;
\draw [shift={(201.88,33.49)}, rotate = 272.2] [color={rgb, 255:red, 0; green, 0; blue, 0 }  ][line width=0.75]    (6.56,-1.97) .. controls (4.17,-0.84) and (1.99,-0.18) .. (0,0) .. controls (1.99,0.18) and (4.17,0.84) .. (6.56,1.97)   ;
\draw    (261.33,37.1) .. controls (295.67,58.03) and (283.59,3.49) .. (260.6,31.79) ;
\draw [shift={(259.54,33.14)}, rotate = 307.06] [color={rgb, 255:red, 0; green, 0; blue, 0 }  ][line width=0.75]    (6.56,-1.97) .. controls (4.17,-0.84) and (1.99,-0.18) .. (0,0) .. controls (1.99,0.18) and (4.17,0.84) .. (6.56,1.97)   ;
\draw  [fill={rgb, 255:red, 0; green, 0; blue, 0 }  ,fill opacity=1 ] (199.13,36.24) .. controls (199.13,34.72) and (200.36,33.49) .. (201.88,33.49) .. controls (203.4,33.49) and (204.63,34.72) .. (204.63,36.24) .. controls (204.63,37.76) and (203.4,39) .. (201.88,39) .. controls (200.36,39) and (199.13,37.76) .. (199.13,36.24) -- cycle ;
\draw  [fill={rgb, 255:red, 0; green, 0; blue, 0 }  ,fill opacity=1 ] (256.79,35.9) .. controls (256.79,34.38) and (258.02,33.14) .. (259.54,33.14) .. controls (261.06,33.14) and (262.29,34.38) .. (262.29,35.9) .. controls (262.29,37.42) and (261.06,38.65) .. (259.54,38.65) .. controls (258.02,38.65) and (256.79,37.42) .. (256.79,35.9) -- cycle ;
\draw [color={rgb, 255:red, 65; green, 117; blue, 5 }  ,draw opacity=1 ]   (128.13,149.69) -- (177.1,149.62) ;
\draw [shift={(180.1,149.62)}, rotate = 179.92] [fill={rgb, 255:red, 65; green, 117; blue, 5 }  ,fill opacity=1 ][line width=0.08]  [draw opacity=0] (8.93,-4.29) -- (0,0) -- (8.93,4.29) -- cycle    ;
\draw [color={rgb, 255:red, 74; green, 144; blue, 226 }  ,draw opacity=1 ]   (96,152.66) .. controls (97.71,154.27) and (97.76,155.94) .. (96.15,157.65) .. controls (94.54,159.37) and (94.59,161.04) .. (96.31,162.65) .. controls (98.02,164.27) and (98.07,165.94) .. (96.46,167.65) .. controls (94.85,169.36) and (94.9,171.03) .. (96.61,172.65) -- (96.62,172.82) -- (96.62,172.82) ;
\draw [color={rgb, 255:red, 208; green, 2; blue, 27 }  ,draw opacity=1 ]   (93.53,149.9) -- (42.93,150.24) ;
\draw [shift={(40.93,150.25)}, rotate = 359.62] [color={rgb, 255:red, 208; green, 2; blue, 27 }  ,draw opacity=1 ][line width=0.75]    (6.56,-1.97) .. controls (4.17,-0.84) and (1.99,-0.18) .. (0,0) .. controls (1.99,0.18) and (4.17,0.84) .. (6.56,1.97)   ;
\draw [color={rgb, 255:red, 74; green, 144; blue, 226 }  ,draw opacity=1 ]   (21.57,140.93) .. controls (23.85,140.31) and (25.3,141.14) .. (25.92,143.41) .. controls (26.54,145.68) and (27.99,146.51) .. (30.26,145.89) .. controls (32.53,145.27) and (33.98,146.1) .. (34.6,148.37) -- (35.39,148.82) -- (35.39,148.82) ;
\draw [color={rgb, 255:red, 65; green, 117; blue, 5 }  ,draw opacity=1 ]   (38.43,153) -- (43.77,177.4) ;
\draw [shift={(44.19,179.36)}, rotate = 257.66] [color={rgb, 255:red, 65; green, 117; blue, 5 }  ,draw opacity=1 ][line width=0.75]    (6.56,-1.97) .. controls (4.17,-0.84) and (1.99,-0.18) .. (0,0) .. controls (1.99,0.18) and (4.17,0.84) .. (6.56,1.97)   ;
\draw    (20.53,166.61) -- (34.72,153.62) ;
\draw [shift={(36.19,152.27)}, rotate = 137.54] [color={rgb, 255:red, 0; green, 0; blue, 0 }  ][line width=0.75]    (6.56,-1.97) .. controls (4.17,-0.84) and (1.99,-0.18) .. (0,0) .. controls (1.99,0.18) and (4.17,0.84) .. (6.56,1.97)   ;
\draw    (38.94,127.69) -- (38.26,145.5) ;
\draw [shift={(38.18,147.5)}, rotate = 272.2] [color={rgb, 255:red, 0; green, 0; blue, 0 }  ][line width=0.75]    (6.56,-1.97) .. controls (4.17,-0.84) and (1.99,-0.18) .. (0,0) .. controls (1.99,0.18) and (4.17,0.84) .. (6.56,1.97)   ;
\draw [color={rgb, 255:red, 245; green, 166; blue, 35 }  ,draw opacity=1 ]   (44.93,158.7) -- (52.86,158.45) ;
\draw [shift={(55.86,158.36)}, rotate = 178.2] [fill={rgb, 255:red, 245; green, 166; blue, 35 }  ,fill opacity=1 ][line width=0.08]  [draw opacity=0] (5.36,-2.57) -- (0,0) -- (5.36,2.57) -- cycle    ;
\draw    (97.79,151.11) .. controls (132.13,172.04) and (120.05,117.5) .. (97.06,145.8) ;
\draw [shift={(96,147.15)}, rotate = 307.06] [color={rgb, 255:red, 0; green, 0; blue, 0 }  ][line width=0.75]    (6.56,-1.97) .. controls (4.17,-0.84) and (1.99,-0.18) .. (0,0) .. controls (1.99,0.18) and (4.17,0.84) .. (6.56,1.97)   ;
\draw  [fill={rgb, 255:red, 0; green, 0; blue, 0 }  ,fill opacity=1 ] (35.68,150.25) .. controls (35.68,148.73) and (36.91,147.5) .. (38.43,147.5) .. controls (39.95,147.5) and (41.18,148.73) .. (41.18,150.25) .. controls (41.18,151.77) and (39.95,153) .. (38.43,153) .. controls (36.91,153) and (35.68,151.77) .. (35.68,150.25) -- cycle ;
\draw  [fill={rgb, 255:red, 0; green, 0; blue, 0 }  ,fill opacity=1 ] (93.25,149.9) .. controls (93.25,148.38) and (94.48,147.15) .. (96,147.15) .. controls (97.52,147.15) and (98.75,148.38) .. (98.75,149.9) .. controls (98.75,151.42) and (97.52,152.66) .. (96,152.66) .. controls (94.48,152.66) and (93.25,151.42) .. (93.25,149.9) -- cycle ;
\draw [color={rgb, 255:red, 74; green, 144; blue, 226 }  ,draw opacity=1 ]   (272.86,150.7) .. controls (275.2,150.99) and (276.22,152.31) .. (275.93,154.65) .. controls (275.64,156.99) and (276.66,158.31) .. (279,158.6) .. controls (281.34,158.89) and (282.36,160.21) .. (282.06,162.55) .. controls (281.77,164.89) and (282.79,166.2) .. (285.13,166.49) -- (286.68,168.48) -- (286.68,168.48) ;
\draw [color={rgb, 255:red, 208; green, 2; blue, 27 }  ,draw opacity=1 ]   (270.1,147.95) -- (219.5,148.29) ;
\draw [shift={(217.5,148.3)}, rotate = 359.62] [color={rgb, 255:red, 208; green, 2; blue, 27 }  ,draw opacity=1 ][line width=0.75]    (6.56,-1.97) .. controls (4.17,-0.84) and (1.99,-0.18) .. (0,0) .. controls (1.99,0.18) and (4.17,0.84) .. (6.56,1.97)   ;
\draw [color={rgb, 255:red, 74; green, 144; blue, 226 }  ,draw opacity=1 ]   (197.9,138.97) .. controls (200.17,138.36) and (201.62,139.19) .. (202.24,141.46) .. controls (202.86,143.73) and (204.31,144.56) .. (206.58,143.94) .. controls (208.85,143.32) and (210.3,144.15) .. (210.92,146.42) -- (211.71,146.87) -- (211.71,146.87) ;
\draw [color={rgb, 255:red, 65; green, 117; blue, 5 }  ,draw opacity=1 ]   (271.32,150.2) -- (269.16,172.2) ;
\draw [shift={(268.96,174.19)}, rotate = 275.62] [color={rgb, 255:red, 65; green, 117; blue, 5 }  ,draw opacity=1 ][line width=0.75]    (6.56,-1.97) .. controls (4.17,-0.84) and (1.99,-0.18) .. (0,0) .. controls (1.99,0.18) and (4.17,0.84) .. (6.56,1.97)   ;
\draw    (196.85,164.4) -- (211.04,151.42) ;
\draw [shift={(212.52,150.07)}, rotate = 137.54] [color={rgb, 255:red, 0; green, 0; blue, 0 }  ][line width=0.75]    (6.56,-1.97) .. controls (4.17,-0.84) and (1.99,-0.18) .. (0,0) .. controls (1.99,0.18) and (4.17,0.84) .. (6.56,1.97)   ;
\draw    (215.52,125.74) -- (214.83,143.55) ;
\draw [shift={(214.75,145.55)}, rotate = 272.2] [color={rgb, 255:red, 0; green, 0; blue, 0 }  ][line width=0.75]    (6.56,-1.97) .. controls (4.17,-0.84) and (1.99,-0.18) .. (0,0) .. controls (1.99,0.18) and (4.17,0.84) .. (6.56,1.97)   ;
\draw    (274.65,149.16) .. controls (308.99,170.08) and (296.9,115.55) .. (273.91,143.85) ;
\draw [shift={(272.86,145.2)}, rotate = 307.06] [color={rgb, 255:red, 0; green, 0; blue, 0 }  ][line width=0.75]    (6.56,-1.97) .. controls (4.17,-0.84) and (1.99,-0.18) .. (0,0) .. controls (1.99,0.18) and (4.17,0.84) .. (6.56,1.97)   ;
\draw  [fill={rgb, 255:red, 0; green, 0; blue, 0 }  ,fill opacity=1 ] (212,148.3) .. controls (212,146.78) and (213.23,145.55) .. (214.75,145.55) .. controls (216.27,145.55) and (217.5,146.78) .. (217.5,148.3) .. controls (217.5,149.82) and (216.27,151.05) .. (214.75,151.05) .. controls (213.23,151.05) and (212,149.82) .. (212,148.3) -- cycle ;
\draw  [fill={rgb, 255:red, 0; green, 0; blue, 0 }  ,fill opacity=1 ] (270.1,147.95) .. controls (270.1,146.43) and (271.34,145.2) .. (272.86,145.2) .. controls (274.38,145.2) and (275.61,146.43) .. (275.61,147.95) .. controls (275.61,149.47) and (274.38,150.7) .. (272.86,150.7) .. controls (271.34,150.7) and (270.1,149.47) .. (270.1,147.95) -- cycle ;
\draw [color={rgb, 255:red, 74; green, 144; blue, 226 }  ,draw opacity=1 ]   (87.24,61.69) .. controls (88.95,63.31) and (89,64.98) .. (87.39,66.69) .. controls (85.78,68.41) and (85.83,70.08) .. (87.55,71.69) .. controls (89.26,73.3) and (89.31,74.97) .. (87.7,76.68) .. controls (86.09,78.4) and (86.14,80.07) .. (87.86,81.68) -- (87.86,81.85) -- (87.86,81.85) ;
\draw [color={rgb, 255:red, 208; green, 2; blue, 27 }  ,draw opacity=1 ]   (85.39,84.6) -- (34.79,84.94) ;
\draw [shift={(32.79,84.95)}, rotate = 359.62] [color={rgb, 255:red, 208; green, 2; blue, 27 }  ,draw opacity=1 ][line width=0.75]    (6.56,-1.97) .. controls (4.17,-0.84) and (1.99,-0.18) .. (0,0) .. controls (1.99,0.18) and (4.17,0.84) .. (6.56,1.97)   ;
\draw [color={rgb, 255:red, 74; green, 144; blue, 226 }  ,draw opacity=1 ]   (32.07,87.18) .. controls (34.42,87.36) and (35.51,88.63) .. (35.33,90.98) .. controls (35.15,93.33) and (36.23,94.59) .. (38.58,94.77) .. controls (40.93,94.95) and (42.02,96.22) .. (41.84,98.57) .. controls (41.66,100.92) and (42.75,102.19) .. (45.1,102.36) -- (46.07,103.5) -- (46.07,103.5) ;
\draw    (89.84,86.16) .. controls (109.95,112.34) and (131.18,69.57) .. (92.75,82.64) ;
\draw [shift={(90.96,83.27)}, rotate = 339.82] [color={rgb, 255:red, 0; green, 0; blue, 0 }  ][line width=0.75]    (6.56,-1.97) .. controls (4.17,-0.84) and (1.99,-0.18) .. (0,0) .. controls (1.99,0.18) and (4.17,0.84) .. (6.56,1.97)   ;
\draw  [fill={rgb, 255:red, 0; green, 0; blue, 0 }  ,fill opacity=1 ] (27.29,84.95) .. controls (27.29,83.43) and (28.52,82.2) .. (30.04,82.2) .. controls (31.56,82.2) and (32.79,83.43) .. (32.79,84.95) .. controls (32.79,86.47) and (31.56,87.7) .. (30.04,87.7) .. controls (28.52,87.7) and (27.29,86.47) .. (27.29,84.95) -- cycle ;
\draw  [fill={rgb, 255:red, 0; green, 0; blue, 0 }  ,fill opacity=1 ] (85.11,84.6) .. controls (85.11,83.08) and (86.34,81.85) .. (87.86,81.85) .. controls (89.38,81.85) and (90.61,83.08) .. (90.61,84.6) .. controls (90.61,86.12) and (89.38,87.36) .. (87.86,87.36) .. controls (86.34,87.36) and (85.11,86.12) .. (85.11,84.6) -- cycle ;
\draw    (27.29,84.95) .. controls (-4.62,97.9) and (32.31,118.86) .. (30.22,89.56) ;
\draw [shift={(30.04,87.7)}, rotate = 83.23] [color={rgb, 255:red, 0; green, 0; blue, 0 }  ][line width=0.75]    (6.56,-1.97) .. controls (4.17,-0.84) and (1.99,-0.18) .. (0,0) .. controls (1.99,0.18) and (4.17,0.84) .. (6.56,1.97)   ;
\draw    (86.29,82.61) .. controls (74.37,63.32) and (47.03,62.42) .. (31.23,80.75) ;
\draw [shift={(30.04,82.2)}, rotate = 308.18] [color={rgb, 255:red, 0; green, 0; blue, 0 }  ][line width=0.75]    (6.56,-1.97) .. controls (4.17,-0.84) and (1.99,-0.18) .. (0,0) .. controls (1.99,0.18) and (4.17,0.84) .. (6.56,1.97)   ;
\draw    (84.07,107.59) -- (87.49,89.32) ;
\draw [shift={(87.86,87.36)}, rotate = 100.62] [color={rgb, 255:red, 0; green, 0; blue, 0 }  ][line width=0.75]    (6.56,-1.97) .. controls (4.17,-0.84) and (1.99,-0.18) .. (0,0) .. controls (1.99,0.18) and (4.17,0.84) .. (6.56,1.97)   ;
\draw [color={rgb, 255:red, 65; green, 117; blue, 5 }  ,draw opacity=1 ]   (128.97,89.14) -- (177.94,89.07) ;
\draw [shift={(180.94,89.07)}, rotate = 179.92] [fill={rgb, 255:red, 65; green, 117; blue, 5 }  ,fill opacity=1 ][line width=0.08]  [draw opacity=0] (8.93,-4.29) -- (0,0) -- (8.93,4.29) -- cycle    ;
\draw [color={rgb, 255:red, 74; green, 144; blue, 226 }  ,draw opacity=1 ]   (230.86,63.67) .. controls (232.31,65.53) and (232.1,67.19) .. (230.24,68.64) .. controls (228.38,70.09) and (228.18,71.74) .. (229.63,73.6) .. controls (231.08,75.46) and (230.88,77.11) .. (229.02,78.56) -- (228.41,83.45) -- (228.41,83.45) ;
\draw    (229.27,87.58) .. controls (249.38,113.76) and (270.61,70.99) .. (232.18,84.05) ;
\draw [shift={(230.38,84.69)}, rotate = 339.82] [color={rgb, 255:red, 0; green, 0; blue, 0 }  ][line width=0.75]    (6.56,-1.97) .. controls (4.17,-0.84) and (1.99,-0.18) .. (0,0) .. controls (1.99,0.18) and (4.17,0.84) .. (6.56,1.97)   ;
\draw  [fill={rgb, 255:red, 0; green, 0; blue, 0 }  ,fill opacity=1 ] (224.54,86.02) .. controls (224.54,84.5) and (225.77,83.27) .. (227.29,83.27) .. controls (228.81,83.27) and (230.04,84.5) .. (230.04,86.02) .. controls (230.04,87.54) and (228.81,88.77) .. (227.29,88.77) .. controls (225.77,88.77) and (224.54,87.54) .. (224.54,86.02) -- cycle ;
\draw    (224.41,87.05) .. controls (190.53,98.04) and (219.64,119.29) .. (225.84,90.02) ;
\draw [shift={(226.19,88.16)}, rotate = 99.46] [color={rgb, 255:red, 0; green, 0; blue, 0 }  ][line width=0.75]    (6.56,-1.97) .. controls (4.17,-0.84) and (1.99,-0.18) .. (0,0) .. controls (1.99,0.18) and (4.17,0.84) .. (6.56,1.97)   ;
\draw    (227.29,82.87) .. controls (223.2,57.41) and (195.46,72.79) .. (223.76,83.99) .. controls (223.92,84.05) and (224.07,84.12) .. (224.23,84.18) ;
\draw [shift={(226.11,84.87)}, rotate = 199.28] [color={rgb, 255:red, 0; green, 0; blue, 0 }  ][line width=0.75]    (6.56,-1.97) .. controls (4.17,-0.84) and (1.99,-0.18) .. (0,0) .. controls (1.99,0.18) and (4.17,0.84) .. (6.56,1.97)   ;
\draw    (233.3,110.12) -- (229.05,90.73) ;
\draw [shift={(228.62,88.77)}, rotate = 77.64] [color={rgb, 255:red, 0; green, 0; blue, 0 }  ][line width=0.75]    (6.56,-1.97) .. controls (4.17,-0.84) and (1.99,-0.18) .. (0,0) .. controls (1.99,0.18) and (4.17,0.84) .. (6.56,1.97)   ;
\draw [color={rgb, 255:red, 74; green, 144; blue, 226 }  ,draw opacity=1 ]   (53.06,206.89) .. controls (50.76,207.39) and (49.36,206.49) .. (48.86,204.18) .. controls (48.37,201.87) and (46.97,200.97) .. (44.66,201.47) .. controls (42.35,201.97) and (40.95,201.07) .. (40.46,198.76) -- (37.46,196.83) -- (37.46,196.83) ;
\draw    (56.57,211.5) .. controls (76.68,237.67) and (98.54,194.13) .. (60.13,207.17) ;
\draw [shift={(58.33,207.81)}, rotate = 339.82] [color={rgb, 255:red, 0; green, 0; blue, 0 }  ][line width=0.75]    (6.56,-1.97) .. controls (4.17,-0.84) and (1.99,-0.18) .. (0,0) .. controls (1.99,0.18) and (4.17,0.84) .. (6.56,1.97)   ;
\draw  [fill={rgb, 255:red, 0; green, 0; blue, 0 }  ,fill opacity=1 ] (52.49,209.14) .. controls (52.49,207.62) and (53.72,206.39) .. (55.24,206.39) .. controls (56.76,206.39) and (57.99,207.62) .. (57.99,209.14) .. controls (57.99,210.66) and (56.76,211.89) .. (55.24,211.89) .. controls (53.72,211.89) and (52.49,210.66) .. (52.49,209.14) -- cycle ;
\draw    (58.13,210.39) .. controls (104.73,205.81) and (50.6,175.2) .. (54.9,204.52) ;
\draw [shift={(55.24,206.39)}, rotate = 258.02] [color={rgb, 255:red, 0; green, 0; blue, 0 }  ][line width=0.75]    (6.56,-1.97) .. controls (4.17,-0.84) and (1.99,-0.18) .. (0,0) .. controls (1.99,0.18) and (4.17,0.84) .. (6.56,1.97)   ;
\draw [color={rgb, 255:red, 208; green, 2; blue, 27 }  ,draw opacity=1 ]   (52.49,209.14) .. controls (11.13,214.28) and (50.49,243.19) .. (53.75,213.6) ;
\draw [shift={(53.91,211.72)}, rotate = 93.12] [color={rgb, 255:red, 208; green, 2; blue, 27 }  ,draw opacity=1 ][line width=0.75]    (6.56,-1.97) .. controls (4.17,-0.84) and (1.99,-0.18) .. (0,0) .. controls (1.99,0.18) and (4.17,0.84) .. (6.56,1.97)   ;
\draw [color={rgb, 255:red, 65; green, 117; blue, 5 }  ,draw opacity=1 ]   (122.77,209.81) -- (179.43,210.15) ;
\draw [shift={(182.43,210.17)}, rotate = 180.34] [fill={rgb, 255:red, 65; green, 117; blue, 5 }  ,fill opacity=1 ][line width=0.08]  [draw opacity=0] (8.93,-4.29) -- (0,0) -- (8.93,4.29) -- cycle    ;
\draw [color={rgb, 255:red, 74; green, 144; blue, 226 }  ,draw opacity=1 ]   (221.49,208.94) .. controls (219.34,209.91) and (217.78,209.31) .. (216.81,207.16) .. controls (215.84,205.01) and (214.29,204.42) .. (212.14,205.39) .. controls (209.99,206.36) and (208.43,205.76) .. (207.46,203.61) -- (203.77,202.21) -- (203.77,202.21) ;
\draw    (225.57,211.3) .. controls (245.68,237.47) and (267.54,193.93) .. (229.13,206.97) ;
\draw [shift={(227.33,207.61)}, rotate = 339.82] [color={rgb, 255:red, 0; green, 0; blue, 0 }  ][line width=0.75]    (6.56,-1.97) .. controls (4.17,-0.84) and (1.99,-0.18) .. (0,0) .. controls (1.99,0.18) and (4.17,0.84) .. (6.56,1.97)   ;
\draw  [fill={rgb, 255:red, 0; green, 0; blue, 0 }  ,fill opacity=1 ] (221.49,208.94) .. controls (221.49,207.42) and (222.72,206.19) .. (224.24,206.19) .. controls (225.76,206.19) and (226.99,207.42) .. (226.99,208.94) .. controls (226.99,210.46) and (225.76,211.69) .. (224.24,211.69) .. controls (222.72,211.69) and (221.49,210.46) .. (221.49,208.94) -- cycle ;
\draw    (227.13,210.19) .. controls (273.73,205.61) and (219.6,175) .. (223.9,204.32) ;
\draw [shift={(224.24,206.19)}, rotate = 258.02] [color={rgb, 255:red, 0; green, 0; blue, 0 }  ][line width=0.75]    (6.56,-1.97) .. controls (4.17,-0.84) and (1.99,-0.18) .. (0,0) .. controls (1.99,0.18) and (4.17,0.84) .. (6.56,1.97)   ;
\draw (132.19,24.24) node [anchor=north west][inner sep=0.75pt]  [font=\scriptsize] [align=left] {reverse $\beta$};
\draw (58.81,24.78) node [anchor=north west][inner sep=0.75pt]  [font=\scriptsize]  {$\textcolor[rgb]{0.82,0.01,0.11}{\beta }$};
\draw (223.06,24.27) node [anchor=north west][inner sep=0.75pt]  [font=\scriptsize]  {$\textcolor[rgb]{0.82,0.01,0.11}{\beta }$};
\draw (134.9,138.31) node [anchor=north west][inner sep=0.75pt]  [font=\scriptsize] [align=left] { slide $\beta$};
\draw (59.24,140.28) node [anchor=north west][inner sep=0.75pt]  [font=\scriptsize]  {$\textcolor[rgb]{0.82,0.01,0.11}{\alpha }$};
\draw (48.25,163.48) node [anchor=north west][inner sep=0.75pt]  [font=\scriptsize]  {$\textcolor[rgb]{0.25,0.46,0.02}{\beta }$};
\draw (11.29,139.33) node [anchor=north west][inner sep=0.75pt]  [font=\scriptsize]  {$\textcolor[rgb]{0.29,0.56,0.89}{v}$};
\draw (103.67,159.52) node [anchor=north west][inner sep=0.75pt]  [font=\scriptsize]  {$\textcolor[rgb]{0.29,0.56,0.89}{w}$};
\draw (134.9,152.36) node [anchor=north west][inner sep=0.75pt]  [font=\scriptsize] [align=left] {along $\alpha$};
\draw (236.38,140.33) node [anchor=north west][inner sep=0.75pt]  [font=\scriptsize]  {$\textcolor[rgb]{0.82,0.01,0.11}{\alpha }$};
\draw (258,159.28) node [anchor=north west][inner sep=0.75pt]  [font=\scriptsize]  {$\textcolor[rgb]{0.25,0.46,0.02}{\beta }$};
\draw (187.86,137.38) node [anchor=north west][inner sep=0.75pt]  [font=\scriptsize]  {$\textcolor[rgb]{0.29,0.56,0.89}{v}$};
\draw (294.81,154.42) node [anchor=north west][inner sep=0.75pt]  [font=\scriptsize]  {$\textcolor[rgb]{0.29,0.56,0.89}{w}$};
\draw (55.42,88.38) node [anchor=north west][inner sep=0.75pt]  [font=\scriptsize]  {$\textcolor[rgb]{0.82,0.01,0.11}{\alpha }$};
\draw (12.7,74.03) node [anchor=north west][inner sep=0.75pt]  [font=\scriptsize]  {$\textcolor[rgb]{0.29,0.56,0.89}{v}$};
\draw (92.64,62.44) node [anchor=north west][inner sep=0.75pt]  [font=\scriptsize]  {$\textcolor[rgb]{0.29,0.56,0.89}{w}$};
\draw (34.86,58.96) node [anchor=north west][inner sep=0.75pt]  [font=\scriptsize]  {$b$};
\draw (5.75,92.51) node [anchor=north west][inner sep=0.75pt]  [font=\scriptsize]  {$c$};
\draw (85.75,100.51) node [anchor=north west][inner sep=0.75pt]  [font=\scriptsize]  {$d$};
\draw (108.42,94.29) node [anchor=north west][inner sep=0.75pt]  [font=\scriptsize]  {$k$};
\draw (128.54,77.84) node [anchor=north west][inner sep=0.75pt]  [font=\scriptsize] [align=left] {contract $\alpha$ };
\draw (130,93.03) node [anchor=north west][inner sep=0.75pt]  [font=\scriptsize] [align=left] {towards $v$};
\draw (234.96,63.41) node [anchor=north west][inner sep=0.75pt]  [font=\scriptsize]  {$\textcolor[rgb]{0.29,0.56,0.89}{w}$};
\draw (201.62,69.75) node [anchor=north west][inner sep=0.75pt]  [font=\scriptsize]  {$b$};
\draw (206.29,106.59) node [anchor=north west][inner sep=0.75pt]  [font=\scriptsize]  {$c$};
\draw (236.45,104.72) node [anchor=north west][inner sep=0.75pt]  [font=\scriptsize]  {$d$};
\draw (247.85,95.7) node [anchor=north west][inner sep=0.75pt]  [font=\scriptsize]  {$k$};
\draw (22.57,216.47) node [anchor=north west][inner sep=0.75pt]  [font=\scriptsize]  {$\textcolor[rgb]{0.82,0.01,0.11}{\beta }$};
\draw (127.02,198.53) node [anchor=north west][inner sep=0.75pt]  [font=\scriptsize] [align=left] {remove $\beta$};
\end{tikzpicture}
\end{center}
\caption{Examples of graph transformations}
\label{fig:graphtrafos}
\end{figure}

These graph transformations are illustrated in Figure \ref{fig:graphtrafos}. Note that they are not independent. Contracting an edge $\alpha$ towards $t(\alpha)$ is the same as first sliding some edge ends  along $\alpha$ and then contracting $\alpha$ towards $t(\alpha)$. 
Contracting an edge $\alpha$ towards $t(\alpha)$ is also the same as first reversing $\alpha$, then contracting $\alpha$ towards $s(\alpha)$ and then reversing $\alpha$. 
By reversing $\alpha$ and $\beta$ before and after a slide, one can reduce all edge slides to the ones that slide the target end of $\beta$ along the left of $\alpha$. 

There are of course other possible graph transformations such as deleting edges, which is dual to edge contractions. 
However, the graph transformations in Definition \ref{def:graphtrafos} are sufficient to transform any connected ribbon graph into a standard graph. 
This  is well-known and appears implicitly in many publications. We summarise the argument for the convenience of the reader.

\begin{proposition}\label{prop:standardgraph}   Every connected ribbon graph
can be transformed into the standard graph  \eqref{eq:standardgraph} by edge reversals,  edge slides, edge contractions and loop deletions.
\end{proposition}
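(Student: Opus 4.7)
The plan is to reduce any connected ribbon graph $\Gamma$ to the standard graph \eqref{eq:standardgraph} in two stages: first contract a spanning tree to obtain a single-vertex bouquet of loops, then reorganise the loops using the classical classification of surfaces.

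For the first stage, fix a spanning tree $T\subseteq\Gamma$. Since $T$ is acyclic, every edge of $T$ has two distinct endpoints and is not a loop, hence each such edge may be contracted in the sense of Definition \ref{def:graphtrafos}. Performing the $|V(\Gamma)|-1$ contractions one after another (with either choice of cilium to be erased at each step) yields a ribbon graph $\Gamma'$ with a single vertex $v$, all of whose edges are loops. Each contraction merely collapses a disc neighbourhood of the contracted edge in the CW complex $\Sigma_\Gamma$, so the homeomorphism type of the associated surface is preserved throughout.

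For the second stage, the cyclic ordering of the edge ends at $v$, decorated with the orientations of the loops, determines a cyclic word $W$ in the symbols $\alpha_i^{\pm1}$ which coincides with the boundary word of the polygonal decomposition of $\Sigma_{\Gamma'}$. By the classification of compact orientable surfaces, $W$ can be brought to the standard form $[\beta_g^{-1},\alpha_g]\cdots[\beta_1^{-1},\alpha_1]$ by a finite sequence of combinatorial moves: letter inversion, cyclic rotation, cancellation of an adjacent pair $\alpha\alpha^{-1}$, and the handle move that turns two interleaved occurrences $\ldots\alpha\ldots\alpha^{\pm1}\ldots$ into a commutator block. I would then realise each move as a graph operation on $\Gamma'$: letter inversion is an edge reversal; cyclic rotation requires no move, as the cyclic order at $v$ has no preferred basepoint; an adjacent pair $\alpha\alpha^{-1}$ in $W$ corresponds precisely to an isolated loop and is removed by loop deletion; and the handle move is implemented by a sequence of edge slides, each of which moves one edge end past a single neighbouring end.

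The main obstacle is the last point: a single edge slide is purely local, whereas the handle move of the classification algorithm rearranges whole blocks of the cyclic word. Decomposing these global rearrangements into sequences of elementary slides is straightforward in principle---a chain of slides of one end of $\beta$ successively along $\alpha_1,\alpha_2,\ldots$ amounts to conjugating $\beta$ by $\alpha_1\alpha_2\cdots$ in the polygon word, which is precisely the operation needed---but the bookkeeping must be done carefully to ensure that at each intermediate step the edge being slid still has the required neighbour and that the cyclic ordering is correctly tracked. The base cases are immediate: the edgeless single-vertex graph produced from a graph with $|V|=|E|+1$ is already the standard graph of $S^2$, and any $\alpha\alpha^{-1}$ pairs generated during the normalisation are eliminated by loop deletion applied iteratively until only the commutator pattern remains.
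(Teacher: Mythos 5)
Your proof follows the same two-stage strategy as the paper: contract a spanning tree to reach a one-vertex bouquet of loops, then normalise the bouquet by edge slides, delete the resulting isolated loops and reverse edges. Where the paper simply cites the analogous normalisation statement for chord diagrams (Chmutov--Duzhin--Mostovoy, Sec.~4.8.6) for the middle step, you propose to re-derive it from the polygon-word algorithm in the classification of surfaces; this is a legitimate alternative, but the one point you flag as ``bookkeeping'' is precisely the mathematical content of the cited result, so your argument is not more complete than the paper's -- it just replaces a citation by a sketch. Two cautions if you want to carry the sketch out. First, you conflate the cyclic word of edge ends at the vertex with the boundary word of the polygon: these are Poincar\'e dual to one another, and the moves act differently on them -- an isolated loop is an \emph{adjacent pair of ends of the same edge in the vertex word}, while the cancellation move of the classification theorem acts on the \emph{face word}; you must fix one of the two words and translate all moves consistently. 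Second, the handle move of the classification algorithm is a cut-and-reglue of the polygon, which deletes an edge and creates a new diagonal edge; it is not literally an edge slide, and showing that it is realised by a chain of slides (each of which replaces an end of $\beta$ so that the new edge represents $\alpha\beta$ or $\beta\alpha^{-1}$, not a conjugate) is exactly the induction that the chord-diagram reference performs. With those two points made precise your route would be a self-contained replacement for the citation.
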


\begin{proof}
Selecting a maximal tree in $\Gamma$ and contracting all edges in the tree  transforms $\Gamma$ into a graph $\Gamma'$ with a single vertex. By applying edge slides one can transform $\Gamma'$ into a graph $\Gamma''$ that coincides with \eqref{eq:standardgraph} up to edge orientation and up to the presence  a number of isolated loops between the cilium and the starting end of $\alpha_1$. This follows from an analogous statement for chord diagrams, which correspond to ribbon graphs with a single vertex, see for instance Chmutov, Duzhin and Mostovoy \cite[Sec.~4.8.6]{CDM}. Deleting the isolated loops and reversing edges in $\Gamma''$ then yields the standard graph \eqref{eq:standardgraph}.
\end{proof}

\section{(Co)modules from Hopf monoids and ribbon graphs}
\label{sec:protected space}

In this section we use involutive Hopf monoids in symmetric monoidal categories to assign (co)modules over  Hopf monoids to  ciliated  ribbon graphs. In Section \ref{sec:graphindependence}
we then show that their biinvariants are topological invariants: their isomorphism classes depend only on the genus of the  surface obtained by attaching discs to the faces of the graph. In Sections \ref{sec:sset} and \ref{sec:cat} we determine these biinvariants for simplicial groups as Hopf monoids in $\SSet$ and for crossed modules as group objects in $\Cat$. 

The construction generalises Kitaev's quantum double model and the toric code from \cite{Ki},  which was first formulated for the  group algebra of a finite group over $\C$ and  then generalised by Buerschaper et al.~in \cite{BMCA} to finite-dimensional semisimple $C^*$-Hopf algebras. A very similar construction to the one in this article is used in  \cite{MV} to obtain mapping class group actions from pivotal  Hopf  monoids in symmetric monoidal categories. The work \cite{MV} considers the biinvariants of a Yetter-Drinfeld module structure assigned to the standard graph
\eqref{eq:standardgraph}, but it does not establish  that the biinvariants are graph-independent. 

The construction of the (co)module structures from an involutive Hopf monoid  and  a ciliated ribbon graph in this section is directly analogous to the one in \cite{MV}, which in turn is a straightforward generalisation of  \cite{Ki,BMCA}. The only difference is that $H^*$-modules in \cite{BMCA} are replaced by $H$-comodules and $D(H)$-modules by Yetter-Drinfeld modules over $H$.

What differs substantially from \cite{Ki,BMCA} are the notions of (co)invariants, biinvariants and the construction of the topological invariant. The works in \cite{Ki,BMCA}  rely on  the normalised Haar integral  of a finite-dimensional semisimple complex Hopf algebra,  which is not available in our setting. Our construction is more general, as the only
assumptions are that the underlying symmetric monoidal category is complete and finitely cocomplete and the Hopf monoid involutive.
The article \cite{MV} also allows pivotal Hopf monoids. The involutive Hopf monoids in this article are examples of pivotal Hopf monoids,  with their unit as  pivotal structure.

Let $H$ be an involutive Hopf monoid in a complete and finitely cocomplete symmetric monoidal category $\mac$ and $\Gamma$ a ciliated ribbon graph  with vertex set $V$, edge set $E$ and face set $F$. 

We consider the $|E|$-fold tensor product of $H$ with itself, together with an assignment of the copies of $H$ in this tensor product to the  edges of $\Gamma$, which we emphasise by writing $H^{\oo E}$. If $E=\emptyset$, we set $H^{\oo E}=e$.  The object $H^{\oo E}$ can be viewed as the counterpart of the Hilbert space of Kitaev's quantum double model in \cite{Ki,BMCA}. 

We assign to each edge $\alpha\in E$ two $H$-module structures $\rhd_{\alpha\pm}: H\oo H^{\oo E}\to H^{\oo E}$ and  $H$-comodule structures $\delta_{\alpha\pm}: H^{\oo E}\to H\oo H^{\oo E}$. The $H$-module structures $\rhd_{\alpha+}$ and $\rhd_{\alpha-}$ are assigned to the target and starting end of $\alpha$ and the $H$-comodule structures to its left and right side, respectively.  They are induced by the standard  $H$-(co)module  structures on $H$ via left (co)multiplication.

This requires some notation. Given a morphism $f: H\to K$ in $\mac$ and an edge $\alpha\in E$ we write  $f_\alpha$  for the morphism that applies $f$ to the copy of $H$ in $H^{\oo E}$ that belongs to $\alpha$  and the identity morphism to the other copies. We write $\tau_{\alpha}: H^{\oo E}\to H^{\oo E}$ or $\tau_{\alpha}: H\oo H^{\oo E}\to H\oo H^{\oo E}$ for the composite of braidings that moves the copy of $H$ for $\alpha$ to the left. 
We denote by  $m_{\alpha}: H \oo H^{\oo E} \to H^{\oo E}$   the morphism that moves the first copy of $H$   to the left of the one for $\alpha$ and then applies $m$ to them.

\begin{definition}\label{def:TraingleCoActions} 
The $H$-module structures $\rhd_{\alpha\pm}: H \oo H^{\oo E}\to H^{\oo E}$ and $H$-comodule structures $\delta_{\alpha\pm}:H^{\oo E}\to H \oo H^{\oo E}$ for an edge $\alpha\in E$ are 
\begin{align*}
\rhd_{\alpha +} := m_{\alpha}, \quad 
\rhd_{\alpha -}:= S_{\alpha}\circ \rhd_{\alpha + }\circ (1_H \oo S_{\alpha}), \quad
 \delta_{\alpha +}:= \tau_{\alpha}\circ \Delta_{\alpha},  \quad
 \delta_{\alpha -}:= (1_H \oo S_{\alpha}) \circ \delta_{\alpha+}\circ S_{\alpha}.
\end{align*}
\end{definition}

By definition, the (co)module structures assigned to different edges of a graph commute, since they (co)act on different copies of $H$ in the tensor product $H^{\oo E}$. A direct computation using \eqref{eq:counit} and \eqref{eq:antialg}  shows that the two $H$-(co)module  structures assigned to a given edge commute  as well. The proof is directly analogous to the ones for Hopf algebras in \cite{BMCA}.

\begin{lemma}\label{Lemma:TraingelOperatorsAreHopfMonoid}\cite[Lemma 5.2, 2.]{MV}
For any edge $\alpha\in E$ the $H$-module structures $\rhd_{\alpha \pm}$ and the $H$-comodule structures $\delta_{\alpha \pm}$ commute:
\begin{align*}
\rhd_{\alpha -}\circ (1_H \oo \rhd_{\alpha +}) &=\rhd_{\alpha +}\circ (1_H \oo \rhd_{\alpha -}) \circ (\tau_{H, H}\oo 1_{H^{\oo E}})\text{,}\\
(1_H \oo \delta_{\alpha -})\circ \delta_{\alpha +} &= (\tau_{H, H}\oo 1_{H^{\oo E}}) \circ (1_H \oo \delta_{\alpha +})\circ \delta_{\alpha -}\text{.}
\end{align*}
\end{lemma}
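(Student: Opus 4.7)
The plan is to reduce both identities to (co)associativity in $H$ by exploiting that $\rhd_{\alpha\pm}$ and $\delta_{\alpha\pm}$ act non-trivially only on the copy of $H$ in $H^{\oo E}$ associated with the edge $\alpha$. All other tensor factors are spectators that move only through the braidings $\tau_\alpha$. In what follows I suppress these spectators and denote the $\alpha$-component by $x\in H$, writing $\Delta(x)=\low x 1\oo \low x 2$.

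For the first (module) identity, I first use that $S$ is an anti-algebra morphism from \eqref{eq:antialg} together with involutivity $S\circ S=1_H$ to simplify the definition of $\rhd_{\alpha-}$ into
\begin{align*}
\rhd_{\alpha-}(g\oo x)=S(g\cdot S(x))=x\cdot S(g),
\end{align*}
where the dot denotes $m$ on the $\alpha$-component. The left-hand side of the first identity is then $(h\cdot x)\cdot S(g)$, and after the braiding swaps $g$ and $h$, the right-hand side is $h\cdot(x\cdot S(g))$. Equality is immediate from associativity of $m$.

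For the second (comodule) identity, I similarly use $\Delta\circ S=(S\oo S)\circ\Delta^{op}$ from \eqref{eq:antialg} and involutivity to derive
\begin{align*}
\delta_{\alpha-}(x)=S(\low x 2)\oo \low x 1,
\end{align*}
with $\low x 1$ placed back into the $\alpha$-slot of the second tensor factor. Writing $\Delta^{(2)}(x)=\low x 1\oo\low x 2\oo\low x 3$ and combining this with the analogous computation for $\delta_{\alpha+}$, both sides of the second identity evaluate to $\low x 1\oo S(\low x 3)\oo\low x 2$; for the right-hand side this requires using coassociativity to rearrange $\Delta$ after $S$ and then applying $\tau_{H,H}$ on the two leftmost factors.

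The main difficulty is administrative: tracking the braidings $\tau_\alpha$ that reshuffle the $\alpha$-copy into the leftmost position, and tracking the Sweedler indices once $S$ has been pushed past $\Delta$ via \eqref{eq:antialg}. An alternative that halves the work is to invoke the duality from Example \ref{ex:Hopf-monoids}(4): $(H,m,\eta,\Delta,\epsilon,S)$ in $\mac$ corresponds to $H^*=(H,\Delta,\epsilon,m,\eta,S)$ in $\mac^{op}$, and under this correspondence $\delta_{\alpha\pm}$ for $H$ becomes $\rhd_{\alpha\pm}$ for $H^*$. The second identity then follows from the first applied to $H^*$ in $\mac^{op}$.
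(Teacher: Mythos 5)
Your proof is correct and matches the approach the paper indicates: the paper defers this to a direct computation using \eqref{eq:counit} and \eqref{eq:antialg} (citing [MV, Lemma 5.2]), and your reduction of $\rhd_{\alpha-}$ to $g\oo x\mapsto x\cdot S(g)$ and of $\delta_{\alpha-}$ to $x\mapsto S(\low x 2)\oo\low x 1$ via anti-(co)multiplicativity of $S$ and involutivity, followed by (co)associativity, is exactly that computation. The closing remark that the comodule identity is the module identity for $H^*$ in $\mac^{op}$ is consistent with the duality the paper itself notes at the end of Section \ref{subsec:modinv}.
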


The (co)module structures from Definition \ref{def:TraingleCoActions} define an $H$-module structure on $H^{\oo E}$ for each ciliated vertex $v$  and an $H$-comodule structure  on $H^{\oo E}$ for each  ciliated face $f$  of $\Gamma$. The former applies the comultiplication to $H$,  distributes the resulting copies of $H$ to the  edge ends at $v$ according to their ordering  and  acts on them with $\rhd_{\alpha\pm}$ according to their orientation. Dually, the coaction applies the $H$-coaction $\delta_{\alpha\pm}$ to each edge $\alpha$ in $f$, depending on its orientation relative to $f$, and multiplies the resulting copies of $H$ according to  the order of the edge sides in $f$.

\begin{definition} \label{def:vertexfaceops} \cite[Def.~5.3]{MV}$ $
\begin{enumerate}
\item The {$H$-module structure} $\rhd_v: H\oo H^{\oo E}\to H^{\oo E}$ for a ciliated vertex $v$ with incident edge ends $\alpha_1<\alpha_2<\ldots<\alpha_n$ is
\begin{align}
\rhd_v=\rhd_{\alpha_1}\circ (1_H\oo \rhd_{\alpha_2})\circ \ldots \circ  (1_{H^{\oo (n-1)}}\oo  \rhd_{\alpha_n}) \circ (\Delta^{(n-1)} \oo 1_{H^{\oo E}}),
\end{align}
where $\rhd_\alpha=\rhd_{e(\alpha)+}$ if $\alpha$ is incoming, $\rhd_{\alpha}=\rhd_{e(\alpha)-}$ if $\alpha$ is outgoing  and $e(\alpha)$ is the edge of  $\alpha$.

\item The {$H$-comodule structure} $\delta_f: H^{\oo E}\to H\oo H^{\oo E}$ for a ciliated face  $f$ that traverses the edges $\alpha_n,\alpha_{n-1},\ldots,\alpha_1$  in this order
is
\begin{align}
\delta_f= (m^{(n-1)}\oo 1_{H^{\oo E}}) \circ (1_{H^{\oo(n-1)}}\oo \delta_{\alpha_r})\circ \ldots\circ (1_H\oo\delta_{\alpha_2})\circ \delta_{\alpha_1},
\end{align}
where $\delta_\alpha=\delta_{e(\alpha)+}$ if $\alpha$ is traversed with, $\delta_{\alpha}=\delta_{e(\alpha)-}$ if $\alpha$ is traversed against its orientation  and $e(\alpha)$ is the edge of  $\alpha$.
\end{enumerate}
To an isolated vertex  and face we assign the (co)module  structures $\rhd_v=\epsilon\oo 1_{H^{\oo E}}$ and $\delta_f=\eta\oo 1_{H^{\oo E}}$.
\end{definition}

To avoid heavy notation we use Sweedler notation and   describe these (co)module structures by labelling edges of  a graph  with letters representing the associated copies of $H$.

\begin{example}\label{example:VertexAndFaceOperators} $ $
\begin{center}
\tikzset{every picture/.style={line width=0.75pt}} 
\begin{tikzpicture}[x=0.75pt,y=0.75pt,yscale=-1.45,xscale=1.45]
\draw [color={rgb, 255:red, 74; green, 144; blue, 226 }  ,draw opacity=1 ]   (44.71,38.61) .. controls (44.23,40.92) and (42.84,41.83) .. (40.53,41.35) .. controls (38.22,40.86) and (36.83,41.77) .. (36.35,44.08) .. controls (35.86,46.39) and (34.47,47.3) .. (32.16,46.82) -- (29.82,48.35) -- (29.82,48.35) ;
\draw    (40.1,16.35) -- (46.76,33.99) ;
\draw [shift={(47.47,35.86)}, rotate = 249.32] [color={rgb, 255:red, 0; green, 0; blue, 0 }  ][line width=0.75]    (6.56,-1.97) .. controls (4.17,-0.84) and (1.99,-0.18) .. (0,0) .. controls (1.99,0.18) and (4.17,0.84) .. (6.56,1.97)   ;
\draw [color={rgb, 255:red, 0; green, 0; blue, 0 }  ,draw opacity=1 ]   (49.82,40.35) .. controls (79.1,65.57) and (91.85,13.13) .. (51.63,35.09) ;
\draw [shift={(50.39,35.78)}, rotate = 330.38] [color={rgb, 255:red, 0; green, 0; blue, 0 }  ,draw opacity=1 ][line width=0.75]    (6.56,-1.97) .. controls (4.17,-0.84) and (1.99,-0.18) .. (0,0) .. controls (1.99,0.18) and (4.17,0.84) .. (6.56,1.97)   ;
\draw    (45.29,37.18) -- (26.09,30.65) ;
\draw [shift={(24.2,30.01)}, rotate = 18.78] [color={rgb, 255:red, 0; green, 0; blue, 0 }  ][line width=0.75]    (6.56,-1.97) .. controls (4.17,-0.84) and (1.99,-0.18) .. (0,0) .. controls (1.99,0.18) and (4.17,0.84) .. (6.56,1.97)   ;
\draw  [fill={rgb, 255:red, 0; green, 0; blue, 0 }  ,fill opacity=1 ] (44.71,38.61) .. controls (44.71,37.09) and (45.95,35.86) .. (47.47,35.86) .. controls (48.99,35.86) and (50.22,37.09) .. (50.22,38.61) .. controls (50.22,40.13) and (48.99,41.36) .. (47.47,41.36) .. controls (45.95,41.36) and (44.71,40.13) .. (44.71,38.61) -- cycle ;
\draw    (51.25,64.35) -- (47.79,43.34) ;
\draw [shift={(47.47,41.36)}, rotate = 80.66] [color={rgb, 255:red, 0; green, 0; blue, 0 }  ][line width=0.75]    (6.56,-1.97) .. controls (4.17,-0.84) and (1.99,-0.18) .. (0,0) .. controls (1.99,0.18) and (4.17,0.84) .. (6.56,1.97)   ;
\draw  [fill={rgb, 255:red, 0; green, 0; blue, 0 }  ,fill opacity=1 ] (140.43,38.32) .. controls (140.43,36.8) and (141.66,35.57) .. (143.18,35.57) .. controls (144.7,35.57) and (145.93,36.8) .. (145.93,38.32) .. controls (145.93,39.84) and (144.7,41.08) .. (143.18,41.08) .. controls (141.66,41.08) and (140.43,39.84) .. (140.43,38.32) -- cycle ;
\draw  [fill={rgb, 255:red, 0; green, 0; blue, 0 }  ,fill opacity=1 ] (167.29,34.04) .. controls (167.29,32.52) and (168.52,31.29) .. (170.04,31.29) .. controls (171.56,31.29) and (172.79,32.52) .. (172.79,34.04) .. controls (172.79,35.56) and (171.56,36.79) .. (170.04,36.79) .. controls (168.52,36.79) and (167.29,35.56) .. (167.29,34.04) -- cycle ;
\draw  [fill={rgb, 255:red, 0; green, 0; blue, 0 }  ,fill opacity=1 ] (229,38.32) .. controls (229,36.8) and (230.23,35.57) .. (231.75,35.57) .. controls (233.27,35.57) and (234.5,36.8) .. (234.5,38.32) .. controls (234.5,39.84) and (233.27,41.08) .. (231.75,41.08) .. controls (230.23,41.08) and (229,39.84) .. (229,38.32) -- cycle ;
\draw [color={rgb, 255:red, 0; green, 0; blue, 0 }  ,draw opacity=1 ]   (233.53,41.21) .. controls (244.95,64.61) and (209.12,55.69) .. (227.97,41.46) ;
\draw [shift={(229.53,40.35)}, rotate = 146.31] [color={rgb, 255:red, 0; green, 0; blue, 0 }  ,draw opacity=1 ][line width=0.75]    (6.56,-1.97) .. controls (4.17,-0.84) and (1.99,-0.18) .. (0,0) .. controls (1.99,0.18) and (4.17,0.84) .. (6.56,1.97)   ;
\draw [color={rgb, 255:red, 0; green, 0; blue, 0 }  ,draw opacity=1 ]   (231.75,35.57) .. controls (224.85,9.03) and (202.99,37.68) .. (227.06,38.33) ;
\draw [shift={(229,38.32)}, rotate = 178.19] [color={rgb, 255:red, 0; green, 0; blue, 0 }  ,draw opacity=1 ][line width=0.75]    (6.56,-1.97) .. controls (4.17,-0.84) and (1.99,-0.18) .. (0,0) .. controls (1.99,0.18) and (4.17,0.84) .. (6.56,1.97)   ;
\draw    (233.93,36.3) .. controls (265.95,1.13) and (163.16,4.16) .. (146.63,34.87) ;
\draw [shift={(145.93,36.3)}, rotate = 293.7] [color={rgb, 255:red, 0; green, 0; blue, 0 }  ][line width=0.75]    (6.56,-1.97) .. controls (4.17,-0.84) and (1.99,-0.18) .. (0,0) .. controls (1.99,0.18) and (4.17,0.84) .. (6.56,1.97)   ;
\draw    (141.65,40.3) .. controls (176.16,96.87) and (287.39,71.67) .. (236.11,39.31) ;
\draw [shift={(234.5,38.32)}, rotate = 30.77] [color={rgb, 255:red, 0; green, 0; blue, 0 }  ][line width=0.75]    (6.56,-1.97) .. controls (4.17,-0.84) and (1.99,-0.18) .. (0,0) .. controls (1.99,0.18) and (4.17,0.84) .. (6.56,1.97)   ;
\draw    (170.04,34.04) -- (147.9,37.97) ;
\draw [shift={(145.93,38.32)}, rotate = 349.92] [color={rgb, 255:red, 0; green, 0; blue, 0 }  ][line width=0.75]    (6.56,-1.97) .. controls (4.17,-0.84) and (1.99,-0.18) .. (0,0) .. controls (1.99,0.18) and (4.17,0.84) .. (6.56,1.97)   ;
\draw [color={rgb, 255:red, 245; green, 166; blue, 35 }  ,draw opacity=1 ]   (145.57,40.9) .. controls (147.76,40.01) and (149.29,40.66) .. (150.17,42.85) .. controls (151.05,45.04) and (152.58,45.69) .. (154.77,44.81) .. controls (156.96,43.93) and (158.49,44.58) .. (159.37,46.77) -- (163.63,48.58) -- (163.63,48.58) ;
\draw [color={rgb, 255:red, 245; green, 166; blue, 35 }  ,draw opacity=1 ]   (160.57,53.29) .. controls (209.91,64.3) and (237.63,73.15) .. (215.34,53.15) ;
\draw [color={rgb, 255:red, 245; green, 166; blue, 35 }  ,draw opacity=1 ]   (215.34,53.15) .. controls (210.2,46.3) and (219.63,43.72) .. (212.49,35.15) ;
\draw [color={rgb, 255:red, 245; green, 166; blue, 35 }  ,draw opacity=1 ]   (189.63,21.15) .. controls (229.91,8.87) and (207.06,28.87) .. (212.49,35.15) ;
\draw [color={rgb, 255:red, 245; green, 166; blue, 35 }  ,draw opacity=1 ]   (189.63,21.15) .. controls (142.77,29.15) and (193.06,26.01) .. (184.2,37.15) ;
\draw [color={rgb, 255:red, 245; green, 166; blue, 35 }  ,draw opacity=1 ]   (184.2,37.15) -- (172.32,42.35) ;
\draw [shift={(170.49,43.15)}, rotate = 336.37] [color={rgb, 255:red, 245; green, 166; blue, 35 }  ,draw opacity=1 ][line width=0.75]    (6.56,-1.97) .. controls (4.17,-0.84) and (1.99,-0.18) .. (0,0) .. controls (1.99,0.18) and (4.17,0.84) .. (6.56,1.97)   ;
\draw (14.56,37.92) node [anchor=north west][inner sep=0.75pt]  [font=\scriptsize]  {$\textcolor[rgb]{0.29,0.56,0.89}{v}$};
\draw (80.19,31.89) node [anchor=north west][inner sep=0.75pt]  [font=\scriptsize]  {$b$};
\draw (44.57,12.22) node [anchor=north west][inner sep=0.75pt]  [font=\scriptsize]  {$c$};
\draw (11.71,22.22) node [anchor=north west][inner sep=0.75pt]  [font=\scriptsize]  {$d$};
\draw (38.86,53.37) node [anchor=north west][inner sep=0.75pt]  [font=\scriptsize]  {$a$};
\draw (162.29,66.51) node [anchor=north west][inner sep=0.75pt]  [font=\scriptsize]  {$a$};
\draw (236.33,49.6) node [anchor=north west][inner sep=0.75pt]  [font=\scriptsize]  {$b$};
\draw (215.71,14.99) node [anchor=north west][inner sep=0.75pt]  [font=\scriptsize]  {$c$};
\draw (153.14,7.97) node [anchor=north west][inner sep=0.75pt]  [font=\scriptsize]  {$d$};
\draw (158,37.26) node [anchor=north west][inner sep=0.75pt]  [font=\scriptsize]  {$e$};
\draw (191.14,34.83) node [anchor=north west][inner sep=0.75pt]  [font=\scriptsize]  {$\textcolor[rgb]{0.96,0.65,0.14}{f}$};
\end{tikzpicture}
\end{center}

\vspace{-1cm}
The $H$-module structure $\rhd_v$ for the ciliated vertex $v$ with incident edge ends $t(a)< s(b)< t(b)< t(c)< s(d)$ and the $H$-comodule structure $\delta_f$ for the ciliated face $f= e \circ e^\inv \circ d \circ c^\inv \circ  b \circ a$  are 
\begin{align*}
h \rhd_v \, ( a\oo b\oo c\oo d) &= \low h 1 a \oo \low h 3 b S( \low h 2 ) \oo \low h 4 c \oo d S( \low h 5 ),\\
\delta_f \, (a\oo b\oo c\oo d\oo e) &= \low e 1 S( \low e 3 ) \low d 1 S( \low c 2 ) \low b 1 \low a 1 \oo  \low a 2 \oo \low b 2 \oo \low c 1 \oo \low d 2 \oo \low e 2.
\end{align*}
\end{example}

The interaction of the $H$-module and $H$-comodule structures assigned to ciliated vertices and faces of the graph is investigated in \cite{Ki, BMCA,MV}. 
They are local in the sense that the $H$-(co)module structure for a vertex (face) affects only those copies of $H$ that belong to their incident edges. As the action $\rhd_{\alpha+}$ for an edge $\alpha\in E$ acts by left- and $\rhd_{\alpha-}$  by right-multiplication, the $H$-module structures for different vertices commute. The same holds for the $H$-comodule structures at different faces. Moreover, $H$-module structures commute with $H$-comodule structures unless  their cilia share a vertex or a face. The $H$-module and $H$-comodule structure for each cilium define a Yetter-Drinfeld module structure.

\begin{lemma}\label{lemma:VertexAndFaceOperatorsCommmute}\cite[Lemma 5.5]{MV}
\begin{compactenum}
\item The $H$-left module structures for distinct vertices $v\neq v'\in V$ and the $H$-left comodule structures for distinct  faces $f\neq f'\in F$ commute for all choices of cilia:
\begin{align}
\rhd_{v'}\circ (1_H \oo \rhd_v) &=\rhd_v \circ (1_H \oo \rhd_{v'})\circ (\tau_{H, H}\oo 1_{H^{\oo E}}), \label{eq:VertexActionsCommute} \\
(1_H \oo \delta_{f'}) \circ \delta_f &= (\tau_{H, H}\oo 1_{H^{\oo E}})\circ (1_H \oo \delta_f)\circ \delta_{f'} \text{.}\label{eq:FaceCoactionsCommute}
\end{align}
\item If two cilia are at distinct vertices and distinct faces, the $H$-module structure for one of them commutes with the $H$-comodule structure for the other:
\begin{align}
\delta_f \circ \rhd_v= (1_H \oo  \rhd_v)\circ (\tau_{H, H}\oo 1_{H^{\oo E}}) \circ (1_H \oo \delta_f)\text{.}\label{eq:VertexAndFaceActionsCommute}
\end{align}
\item If $v\in V$ and $f\in F$ share a cilium, then $(H^{\oo E}, \rhd_v, \delta_f)$ is a Yetter-Drinfeld module over $H$.
\end{compactenum}
\end{lemma}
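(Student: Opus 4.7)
The strategy is to reduce all three claims to edge-local identities by exploiting the building-block structure of Definition \ref{def:vertexfaceops}: each $\rhd_v$ applies $\Delta^{(n-1)}$ to the acting $H$-factor and distributes the resulting pieces to the edge ends at $v$ via the $\rhd_{\alpha\pm}$, and each $\delta_f$ is assembled dually from the $\delta_{\alpha\pm}$ through $m^{(n-1)}$. The decisive combinatorial observation is that each edge end belongs to a unique vertex (and each oriented edge side to a unique face), so two distinct vertices $v,v'$ interact only through whole edges $\alpha$ that they share, and at any such edge the two vertex operators necessarily act at opposite ends, i.e.\ with opposite signs in $\rhd_{\alpha\pm}$. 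The analogous observation holds for distinct faces and their shared edge sides.

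For \eqref{eq:VertexActionsCommute}, the contributions from edges incident to only one of $v,v'$ commute trivially, since they act on disjoint tensor factors of $H^{\oo E}$. For a shared edge $\alpha$, the relevant pieces of $\Delta(h)$ and $\Delta(h')$ act by $\rhd_{\alpha+}$ and $\rhd_{\alpha-}$ respectively, which commute by Lemma \ref{Lemma:TraingelOperatorsAreHopfMonoid}; swapping the two $H$-inputs then produces the braiding $\tau_{H,H}$ on the right-hand side, and coassociativity of $\Delta$ regroups the remaining pieces correctly. Equation \eqref{eq:FaceCoactionsCommute} follows dually from associativity of $m$ and the comodule analogue of Lemma \ref{Lemma:TraingelOperatorsAreHopfMonoid}. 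Claim 2 reduces similarly: the hypothesis on cilia forces every edge $\alpha$ that is simultaneously incident to $v$ and traversed by $f$ to appear in $\rhd_v$ and $\delta_f$ with opposite signs; a short edge-local computation based on the bialgebra identity $\Delta\circ m = (m\oo m)\circ(1_H\oo \tau_{H,H}\oo 1_H)\circ(\Delta\oo\Delta)$ together with \eqref{eq:antialg} shows that such mixed-sign pairs $\rhd_{\alpha\pm}$ and $\delta_{\alpha\mp}$ commute, which yields \eqref{eq:VertexAndFaceActionsCommute}.

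For claim 3, the shared cilium forces an edge $\alpha$ on which $\rhd_v$ and $\delta_f$ use matching signs. On that edge, the composite $\delta_{\alpha+}\circ\rhd_{\alpha+}$ satisfies exactly the Yetter-Drinfeld identity of Definition \ref{def:yetterdrinfeld}, as a direct consequence of the bialgebra identity for $\Delta\circ m$ and the antipode relations \eqref{eq:antialg}. On all other edges, $\rhd_v$ and $\delta_f$ use mismatched signs and their local contributions commute by the computation already used in claim 2. Distributing the single Yetter-Drinfeld edge identity across the remaining edges via $\Delta^{(n-1)}$ and $m^{(n-1)}$ then assembles into the global Yetter-Drinfeld condition. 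The main obstacle throughout is the combinatorial bookkeeping: keeping the linear orderings of edge ends at $v$ and of edge sides in $f$ coherent, tracking the orientation-dependent signs at each shared edge, and managing the cascade of braidings produced when tensor factors are permuted. Once these identifications are set, the algebraic content reduces to a controlled sequence of applications of (co)associativity, the bialgebra identity, and Lemma \ref{Lemma:TraingelOperatorsAreHopfMonoid}.
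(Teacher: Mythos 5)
The paper does not prove this lemma itself: it cites \cite[Lemma 5.5]{MV} and only sketches the mechanism in the preceding paragraph (locality, and the fact that $\rhd_{\alpha+}$ acts by left- and $\rhd_{\alpha-}$ by right-multiplication). Your argument for part 1 matches that mechanism and is correct: a shared edge between distinct vertices cannot be a loop, so the two vertex actions hit it through $\rhd_{\alpha+}$ and $\rhd_{\alpha-}$ respectively, which commute by Lemma \ref{Lemma:TraingelOperatorsAreHopfMonoid}, and coassociativity handles the regrouping; dually for faces.

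For parts 2 and 3, however, there is a genuine gap. Your reduction rests on the claim that the mixed-sign pairs $\rhd_{\alpha\pm}$ and $\delta_{\alpha\mp}$ commute edge-locally. This is false. Lemma \ref{Lemma:TraingelOperatorsAreHopfMonoid} only asserts that the two \emph{module} structures on an edge commute with each other and that the two \emph{comodule} structures commute with each other; no module structure on an edge commutes with any comodule structure on the same edge. Concretely, with $\delta_{\alpha-}(a)=S(\low a 2)\oo \low a 1$ one computes $\delta_{\alpha-}\circ\rhd_{\alpha+}(h\oo a)=S(\low a 2)S(\low h 2)\oo \low h 1\low a 1$, whereas $(1_H\oo\rhd_{\alpha+})\circ(\tau_{H,H}\oo 1)\circ(1_H\oo\delta_{\alpha-})(h\oo a)=S(\low a 2)\oo h\low a 1$; these differ, and the same failure occurs for the matching-sign pair. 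The actual mechanism behind \eqref{eq:VertexAndFaceActionsCommute} is not edge-local but runs along the face: each passage of $f$ through $v$ uses two edge sides whose edge ends are adjacent in the cyclic order at $v$, so the face coaction picks up a product of the form $\cdots S(\low h {i+1})\low h i\cdots$ (or its mirror) of \emph{consecutive} coproduct legs, which collapses by $m\circ(S\oo 1_H)\circ\Delta=\eta\circ\epsilon$ and \eqref{eq:EightWithAntipode}. This telescoping is exactly where the hypothesis enters: it guarantees that no passage of $f$ through $v$ straddles the cilium of $v$, so only consecutive legs are paired. In part 3 the shared cilium means one passage does straddle it, the outermost legs $\low h 1$ and $S(\low h 3)$ survive, and the resulting conjugation $\low h 1(\cdots)S(\low h 3)\oo\low h 2\rhd(\cdots)$ is precisely the Yetter--Drinfeld condition (compare \eqref{eq:ydgen}); it is not obtained by isolating a single edge on which an edge-local Yetter--Drinfeld identity holds. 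To repair the proof you should replace the edge-by-edge commutation claim with this global cancellation argument along the ciliated face.
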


\begin{example}\label{ex:modcomodpi_1gen}
Let $H$ be an involutive Hopf monoid in $\mac$ and $\Gamma$ the standard graph \eqref{eq:standardgraph}  on a surface $\Sigma$ of genus $g\geq 1$.
Then the associated Yetter-Drinfeld module structure on  $H^{\oo E}$ is 
\begin{align}\label{eq:ydgen}
&h\rhd(a^1\oo b^1\oo \ldots\oo a^g\oo b^g)\\
&\qquad=\low h 3 a^1 S(\low h 1)\oo \low h 4 b^1 S(\low h 2)\oo\ldots\oo \low h {4g-1} a^g S(\low h {4g-3}) \oo \low h {4g}b^gS(\low h {4g-2})\nonumber\\
&\delta(a^1\oo b^1\oo \ldots\oo a^g\oo b^g)\nonumber\\
&\qquad=S(b^g_{(3)}) a^g_{(1)} b^g_{(1)}  S(a^g_{(3)})\cdots S(b^1_{(3)}) a^1_{(1)} b^1_{(1)}  S(a^1_{(3)})\oo a^1_{(2)}\oo b^1_{(2)}\oo\ldots\oo a^g_{(2)}\oo b^g_{(2)}.\nonumber
\end{align}

If $H$ is a group object in a cartesian monoidal category, this reduces to
\begin{align}\label{eq:YDmodulegroup}
&h\rhd (a_1,b_1,\ldots, a_g,b_g)=(h a_1 h^\inv, hb_1 h^\inv,\ldots, ha_g h^\inv, hb_g h^\inv)\\
&\delta(a_1,b_1,\ldots, a_g, b_g)=([b_g^\inv, a_g]\cdots[b_1^\inv, a_1], a_1,b_1,\ldots, a_g, b_g).\nonumber
\end{align}
\end{example}

If each vertex and face of $\Gamma$ is  equipped with a  cilium,  then Definition \ref{def:vertexfaceops} assigns 
 an $H$-(co)module structure on $H^{\oo E}$ to each vertex (face) of $\Gamma$. By Lemma \ref{lemma:VertexAndFaceOperatorsCommmute} these (co)module structures  commute and hence  combine into  $H^{\oo E}$-module and $H^{\oo F}$-comodule structures on $H^{\oo E}$.

\begin{definition}\label{def:setact} The $H^{\oo n}$-module  structure for a subset $\emptyset \neq \mathcal V:=\{v_1, \dots, v_n\}\subset V$ and  $H^{\oo m}$-comodule structure for a subset $\emptyset \neq \mathcal F:=\{f_1, \dots, f_m\} \subset F$  are 
\begin{align}
\rhd_{\mathcal V} &:= \rhd_{v_1}\circ (1_H \oo \rhd_{v_2}) \circ \dots \circ (1_{H^{\oo (n-2)}} \oo \rhd_{v_{n-1}})\circ (1_{H^{\oo (n-1)}}\oo \rhd_{v_n}): H^{\oo n}\oo H^{\oo E} \to H^{\oo E}, \label{eq:ComposedVertexAction}\\
\delta_{\mathcal F} &:= (1_{H^{\oo (m-1)}} \oo \delta_{f_m})\circ (1_{H^{\oo (m-2)}} \oo \delta_{f_{m-1}})\circ \dots \circ (1_H \oo \delta_{f_2}) \circ \delta_{f_1}: H^{\oo E} \to H^{\oo m}\oo H^{\oo E}.\nonumber
\end{align}
\end{definition}

Equations \eqref{eq:VertexActionsCommute} and \eqref{eq:FaceCoactionsCommute} ensure that the (co)actions do not depend on the numbering of vertices or faces in Definition \ref{def:setact}. That \eqref{eq:ComposedVertexAction} defines an $H^{\oo n}$-module structure  follows from the
 identity 
\begin{align*}
\rhd_{\mathcal V'} \circ (1_{H^{\oo \vert \mathcal V'\vert }} \oo \rhd_v) \circ (\tau_{H,H^{\oo \vert \mathcal V'\vert}} \oo 1_{H^{\oo E}}) = \rhd_v \circ (1_H \oo \rhd_{\mathcal V'}),
\end{align*}
  valid for any subset $\emptyset \neq \mathcal V'\subset V$, $v\in V\setminus \mathcal V'$. The dual statement for $\delta_{\mathcal F}$  follows analogously.

 The module and comodule structure  from Definition \ref{def:setact}  define the categorical counterpart of the  {\em protected space} or {\em ground state} in Kitaev's quantum double model. In the models based on a  finite-dimensional semisimple complex Hopf algebras in \cite{Ki,BMCA} the ground state  is  an eigenspace  of a Hamiltonian that combines these $H$-(co)module structures. The normalised Haar integral defines a projector on the ground state.
  In our setting these structures are not available. Instead, we consider the biinvariants from Definition \ref{def:biinvariants} for  the action and coaction from \eqref{eq:ComposedVertexAction}.

\begin{definition}\label{def:KitaevModel}$ $ 
The {\bf protected object} for an involutive Hopf monoid $H$ and  a ciliated ribbon graph $\Gamma$ are the biinvariants 
 $M_{inv}=\mathrm{Im}(\pi \circ \iota)$ of  $H^{\oo E}$ with the module structure $\rhd_V$ and comodule structure $\delta_F$  from \eqref{eq:ComposedVertexAction}. 
\end{definition}

In the quantum double models for  a finite-dimensional semisimple complex Hopf algebra 
 it is directly apparent that imposing (co)invariance under all  individual (co)actions at the vertices (faces) of a graph is the same as imposing (co)invariance under the combined action in Definition \ref{def:setact}. In this setting the (co)invariants for the individual (co)actions are linear subspaces of $H^{\oo E}$ and the (co)invariants of the combined (co)actions their intersections.
In our setting an analogous statement  follows from the universal properties of the 
coequaliser $\pi_{\mathcal V}: H^{\oo E}\to M^H_{\mathcal V}$  for the action $\rhd_{\mathcal V}$  and the  equaliser  $\iota_{\mathcal F}: M^{coH}_{\mathcal F}\to H^{\oo E}$ for the coaction $\delta_{\mathcal F}$, as given in Definition \ref{def:invariants}.

\begin{lemma}\label{Lemma:RelatingTheInvariants}$ $
Let $\emptyset\neq \mathcal V \subset V$, $\emptyset\neq \mathcal F\subset F$ be subsets. 
\begin{compactenum}
\item For any subset $\emptyset\neq \mathcal V'\subset\mathcal V$ the morphism $\pi_{\mathcal V}: H^{\oo E}\to M^H_{\mathcal V}$ satisfies
\begin{align}
\pi_{\mathcal V}\circ \rhd_{\mathcal V'}= \pi_{\mathcal V}\circ (\epsilon^{\vert \mathcal V'\vert}\oo 1_{H^{\oo E}})\label{eq:BigInvariantsCoequalise}.
\end{align}
There is a unique morphism $\chi_{\mathcal V', \mathcal V}: M_{\mathcal V'}^H\to M_{\mathcal V}^H$ with $\chi_{\mathcal V', \mathcal V} \circ \pi_{\mathcal V'}=\pi_{\mathcal V}$. 
It
is an epimorphism.\\
\item For any subset $\emptyset \neq \mathcal F'\subset \mathcal F$ the morphism $\iota_{\mathcal F}: M^{coH}_{\mathcal F}\to H^{\oo E}$ satisfies 
\begin{align}
\delta_{\mathcal F'}\circ \iota_{\mathcal F}= (\eta^{\vert \mathcal F'\vert} \oo 1_{H^{\oo E}})\circ \iota_{\mathcal F}\label{eq:BigCoinvariantsEqualise}.
\end{align}
There is a unique morphism $\xi_{\mathcal F', \mathcal F}: M_{\mathcal F}^{coH}\to M_{\mathcal F'}^{coH}$ with $\iota_{\mathcal F'}\circ \xi_{\mathcal F', \mathcal F}= \iota_{\mathcal F}$. 
It is a monomorphism.
\end{compactenum}
\end{lemma}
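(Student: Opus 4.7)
The plan is to derive both identities \eqref{eq:BigInvariantsCoequalise} and \eqref{eq:BigCoinvariantsEqualise} from the unit--counit compatibility $\epsilon\circ\eta=1_e$, together with the defining universal properties of $\pi_{\mathcal V}$ and $\iota_{\mathcal F}$. Once these are in hand, the morphisms $\chi_{\mathcal V',\mathcal V}$ and $\xi_{\mathcal F',\mathcal F}$ are produced by the universal properties of the \emph{smaller} (co)equalisers $\pi_{\mathcal V'}$ and $\iota_{\mathcal F'}$, and their (co)monic character follows from a standard cancellation argument.

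For part~1, the key observation is that $\rhd_{\mathcal V'}$ is obtained from $\rhd_{\mathcal V}$ by inserting the unit $\eta$ on the tensor factors of $H^{\oo \vert \mathcal V \vert}$ that correspond to $\mathcal V\setminus\mathcal V'$. Indeed, the actions at distinct vertices commute by \eqref{eq:VertexActionsCommute}, so they may be reordered freely, while $\rhd_v\circ(\eta\oo 1_{H^{\oo E}})=1_{H^{\oo E}}$: this follows from $\Delta^{(n-1)}\circ\eta=\eta^{\oo n}$ and the unitality of each $\rhd_{\alpha\pm}$ (using $S\circ\eta=\eta$ and $S\circ S=1_H$ for $\rhd_{\alpha-}$). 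Writing $j_{\mathcal V',\mathcal V}:H^{\oo \vert \mathcal V' \vert }\to H^{\oo \vert \mathcal V \vert}$ for this insertion, we have
\begin{align*}
\rhd_{\mathcal V'}=\rhd_{\mathcal V}\circ(j_{\mathcal V',\mathcal V}\oo 1_{H^{\oo E}}).
\end{align*}
Composing on the left with $\pi_{\mathcal V}$, invoking its defining coequaliser property $\pi_{\mathcal V}\circ\rhd_{\mathcal V}=\pi_{\mathcal V}\circ(\epsilon^{\vert \mathcal V \vert }\oo 1_{H^{\oo E}})$, and using $\epsilon^{\vert \mathcal V \vert}\circ j_{\mathcal V',\mathcal V}=\epsilon^{\vert \mathcal V'\vert}$ (which is $\epsilon\circ\eta=1_e$ applied factorwise) yields \eqref{eq:BigInvariantsCoequalise}. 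The universal property of the coequaliser $\pi_{\mathcal V'}$ then provides a unique $\chi_{\mathcal V',\mathcal V}$ with $\chi_{\mathcal V',\mathcal V}\circ\pi_{\mathcal V'}=\pi_{\mathcal V}$, and since $\pi_{\mathcal V}$ is a coequaliser and hence an epimorphism, the relation $\pi_{\mathcal V}=\chi_{\mathcal V',\mathcal V}\circ\pi_{\mathcal V'}$ forces $\chi_{\mathcal V',\mathcal V}$ to be epi as well.

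Part~2 is entirely dual. The coaction $\delta_{\mathcal F'}$ arises from $\delta_{\mathcal F}$ by applying $\epsilon$ on the $\mathcal F\setminus\mathcal F'$ output factors, using the counit axiom $(\epsilon\oo 1_M)\circ\delta=1_M$ from \eqref{eq:comod} at each face together with \eqref{eq:FaceCoactionsCommute} to reorder. Precomposing with $\iota_{\mathcal F}$, invoking the equaliser property $\delta_{\mathcal F}\circ\iota_{\mathcal F}=(\eta^{\vert \mathcal F \vert}\oo 1_{H^{\oo E}})\circ\iota_{\mathcal F}$, and once more using $\epsilon\circ\eta=1_e$ yields \eqref{eq:BigCoinvariantsEqualise}. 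The universal property of the equaliser $\iota_{\mathcal F'}$ then gives a unique $\xi_{\mathcal F',\mathcal F}$, which is monic because $\iota_{\mathcal F}=\iota_{\mathcal F'}\circ\xi_{\mathcal F',\mathcal F}$ is monic (equalisers are monomorphisms). No substantive obstacle is expected; the only bookkeeping is the symmetric monoidal permutation placing the omitted tensor factors in the correct positions, which is harmless by Lemma~\ref{lemma:VertexAndFaceOperatorsCommmute}.
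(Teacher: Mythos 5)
Your proof is correct and follows essentially the same route as the paper: the paper verifies the identity $\rhd_{\mathcal V}\circ(\eta^{\oo(j-1)}\oo 1_H\oo\eta^{\oo(n-j)}\oo 1_{H^{\oo E}})=\rhd_{v_j}$ for a single vertex and then inducts over $\vert\mathcal V'\vert$, whereas you insert units on all factors of $\mathcal V\setminus\mathcal V'$ at once, which is the same key observation packaged in one step. The construction of $\chi_{\mathcal V',\mathcal V}$ and $\xi_{\mathcal F',\mathcal F}$ via the universal properties of the smaller (co)equalisers and the epi/mono cancellation argument are identical to the paper's.
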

\begin{proof}
We prove 1., as 2.~is the dual statement.  It suffices to verify \eqref{eq:BigInvariantsCoequalise} for $\mathcal V=\{v_1, \ldots, v_n\}$, $\mathcal V'=\{v_j\}$,  and the claim follows by induction over $\vert \mathcal V'\vert$.  For this note first that Definition \ref{def:setact} implies
\begin{align}
\rhd_{\mathcal V}\circ (\eta^{\oo (j-1)}\oo 1_H \oo \eta^{\oo (n-j)}\oo 1_{H^{\oo E}})&= \rhd_{v_j} & &\forall j \in \{1, \dots, n\} \label{eq:GrosseUndEinzelneDreiecke}.
\end{align}
As $\pi_{\mathcal V}$ is the coequaliser of $\rhd_{\mathcal V}$ and $\epsilon^{\oo n}\oo 1_{H^{\oo E}}$ one obtains
\begin{align*}
\pi_{\mathcal V}\circ \rhd_{v_j}&\overset{\eqref{eq:GrosseUndEinzelneDreiecke}}{=} \pi_{\mathcal V}\circ \rhd_{\mathcal V}\circ (\eta^{\oo (j-1)}\oo 1_H\oo \eta^{\oo (n-j)}\oo 1_{H^{\oo E}})\\
&= \pi_{\mathcal V}\circ (\epsilon^{\oo n}\oo 1_{H^{\oo E}})\circ (\eta^{\oo (j-1)}\oo 1_H\oo \eta^{\oo (n-j)}\oo 1_{H^{\oo E}})\, = \,  \pi_{\mathcal V} \circ (\epsilon\oo 1_{H^{\oo E}}).
\end{align*}
Equation \eqref{eq:BigInvariantsCoequalise} and the universal property of the coequaliser $ \pi_{\mathcal V'}$ imply the existence of a unique morphism $\chi_{\mathcal V', \mathcal V}: M_{\mathcal V'}^H\to M_{\mathcal V}^H$ with $\chi_{\mathcal V', \mathcal V}\circ \pi_{\mathcal V'}=\pi_{\mathcal V}$. For any two morphisms $q_1, q_2: M_{\mathcal V}^H\to X$  with $q_1 \circ \chi_{\mathcal V', \mathcal V}=q_2 \circ \chi_{\mathcal V', \mathcal V}$ one  has
$q_1 \circ \chi_{\mathcal V', \mathcal V}\circ \pi_{\mathcal V'}= q_1\circ \pi_{\mathcal V}= q_2 \circ \pi_{\mathcal V}= q_2 \circ \chi_{\mathcal V', \mathcal V} \circ \pi_{\mathcal V'}$. 
As $\pi_{\mathcal V}$ is a coequaliser and hence an epimorphism, this implies $q_1=q_2$, and $\chi_{\mathcal V', \mathcal V}$ is an epimorphism. 
\end{proof}

It is also directly apparent from Definition \ref{def:setact} that (co)module morphisms with respect to all  individual (co)module structures at vertices  and faces in $\mathcal V$ and $\mathcal F$ are also (co)module morphisms with respect to the (co)actions  $\rhd_{\mathcal V}$ and $\delta_{\mathcal F}$. More precisely, 
for ciliated ribbon graphs 
$\Gamma, \Gamma'$, subsets $\emptyset\neq \mathcal V \subset V$, $\emptyset\neq \mathcal V' \subset V'$ and a bijection $\varphi:\mathcal V \to \mathcal V'$, $v\mapsto v'$,    any  morphism $g: H^{\oo E}\to H^{\oo E'}$ that is a module morphism with respect 
to $\rhd_v$ and $\rhd_{v'}$ for all $v \in V$ is also a module morphism with respect to $\rhd_{\mathcal V}$ and $\rhd_{\mathcal V'}$. An analogous statement holds for $\delta_{\mathcal F}$ and  comodule morphisms.

\section{Graph independence}
\label{sec:graphindependence}

In this section we show that the protected object from Definition \ref{def:KitaevModel} is a topological invariant:  Although its definition requires a ciliated ribbon graph $\Gamma$, its isomorphism class depends only on the homeomorphism class of the surface obtained by attaching  discs to the faces of $\Gamma$. 

To prove this, we show first in Section \ref{subsec:edge slides}  that  the (co)invariants associated to the (co)module structures at the vertices (faces) of $\Gamma$ depend neither on the edge orientation nor on the choices of the cilia. Reversing the orientation of edges and different choices of cilia yield  isomorphisms between these (co)invariants and hence also between the biinvariants.
We then show in Section \ref{subsec:contract} and  \ref{subsec:loops} that the other graph transformations from Definition \ref{def:graphtrafos} induce isomorphisms between the protected objects, although not necessarily between the (co)invariants. In Section \ref{subsec:topinv} we combine these results to obtain topological invariance and treat some simple examples.

As in Section \ref{sec:protected space} we consider a complete and finitely cocomplete symmetric monoidal category $\mathcal C$,  an involutive Hopf monoid $H$  in $\mathcal C$ and a ciliated ribbon graph $\Gamma$.

\subsection{Edge orientation reversal and  moving the cilium}
\label{subsec:edge slides}
As  edge orientation reversal switches the start and target and the left and right side of an edge $\alpha\in E$, it  exchanges the associated actions $\rhd_{\alpha\pm}$ and coactions $\delta_{\alpha\pm}$  from Definition \ref{def:TraingleCoActions}. It is directly apparent from their definitions that this is achieved by applying the antipode.

\begin{definition}\label{def:edgereversal}  The automorphism of $H^{\oo E}$ associated to the {\bf  reversal} of an edge $\alpha\in E$ is $S_\alpha: H^{\oo E}\to H^{\oo E}$.
\end{definition}

\begin{lemma}\label{Lemma:EdgeReversalModuleComoduleIso} For any ciliated vertex $v\in V$,   ciliated face $f\in F$ and edge $\beta\in E$  the edge reversal $S_{\beta}$  is an isomorphism of $H$-modules and $H$-comodules with respect to $\rhd_v$ and  $\delta_f$.
\end{lemma}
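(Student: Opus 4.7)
My plan is to verify the two intertwiner conditions directly; since $H$ is involutive we have $S_\beta\circ S_\beta=1_{H^{\oo E}}$, which makes $S_\beta$ an automorphism of $H^{\oo E}$ and reduces the claim to showing
\begin{align*}
S_\beta \circ \rhd_v &= \rhd'_v \circ (1_H \oo S_\beta), & (1_H \oo S_\beta) \circ \delta_f &= \delta'_f \circ S_\beta,
\end{align*}
where $\rhd'_v$ and $\delta'_f$ denote the (co)module structures that Definition \ref{def:vertexfaceops} assigns to $v$ and $f$ in the graph obtained from $\Gamma$ by reversing $\beta$. Isomorphism will then follow because $S_\beta$ is its own two-sided inverse.

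The first step is to extract from Definition \ref{def:TraingleCoActions} together with $S^2=1_H$ the edge-level intertwining relations
\begin{align*}
S_\beta \circ \rhd_{\beta\pm} &= \rhd_{\beta\mp} \circ (1_H \oo S_\beta), & (1_H \oo S_\beta) \circ \delta_{\beta\pm} &= \delta_{\beta\mp} \circ S_\beta.
\end{align*}
With these in hand I would unfold
\[
\rhd_v = \rhd_{\alpha_1} \circ (1_H \oo \rhd_{\alpha_2}) \circ \cdots \circ (1_{H^{\oo(n-1)}} \oo \rhd_{\alpha_n}) \circ (\Delta^{(n-1)} \oo 1_{H^{\oo E}})
\]
and push $S_\beta$ through each factor from the left. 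The morphism $S_\beta$ acts only on the $\beta$-copy of $H$, so it commutes with $\Delta^{(n-1)} \oo 1_{H^{\oo E}}$ and with every factor $1_{H^{\oo j}} \oo \rhd_{\alpha_i}$ for which $\alpha_i$ is not an end of $\beta$. At every end of $\beta$ incident at $v$, the edge-level identity turns $\rhd_{\beta\pm}$ into $\rhd_{\beta\mp}$; since reversing $\beta$ is precisely the operation that swaps target and starting ends of $\beta$ at each incident vertex, the result matches $\rhd'_v \circ (1_H \oo S_\beta)$ exactly.

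The face identity is proved by the dual argument: unfold the definition of $\delta_f$ and push $(1_H \oo S_\beta)$ through from the left, noting that it commutes with the combining multiplications on the outer $H$-factors and with all $\delta_{\alpha_i}$ where $\alpha_i$ is not a side of $\beta$, while converting $\delta_{\beta\pm}$ into $\delta_{\beta\mp}$ at each contribution from $\beta$. Since edge reversal swaps the left and right sides of $\beta$, this reproduces $\delta'_f \circ S_\beta$. The main obstacle is the bookkeeping in the boundary cases where $\beta$ appears twice at $v$ or on $f$, namely when $\beta$ is a loop at $v$ or borders $f$ along both of its sides; then the intertwining relation must be applied at two distinct positions, but since the identities above are perfectly symmetric in the two signs the argument goes through unchanged.
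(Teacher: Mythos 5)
Your proof is correct and follows essentially the same route as the paper: verify the intertwining relations $S_\beta\circ\rhd_v=\rhd_v'\circ(1_H\oo S_\beta)$ and $(1_H\oo S_\beta)\circ\delta_f=\delta_f'\circ S_\beta$ directly from Definitions \ref{def:TraingleCoActions} and \ref{def:vertexfaceops}, using $S\circ S=1_H$. Your explicit edge-level identities $S_\beta\circ\rhd_{\beta\pm}=\rhd_{\beta\mp}\circ(1_H\oo S_\beta)$ and $(1_H\oo S_\beta)\circ\delta_{\beta\pm}=\delta_{\beta\mp}\circ S_\beta$ just make precise the computation the paper leaves implicit.
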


\begin{proof}
We denote by $\rhd_v'$ and $\delta_f'$ the module and comodule structure in the graph where the  orientation of $\beta$ is reversed and verify that  $\rhd_v' \circ (1_H\oo S_{\beta})=S_{\beta} \circ \rhd_v$ and $\delta_f'\circ S_\beta=(1_H\oo S_\beta)\circ\delta_f$. If $\beta$ is not incident at $v$ and $f$, the copy of $H$ in $H^{\oo E}$ assigned to $\beta$ is not affected by $\rhd_v,\rhd_v'$ and $\delta_f,\delta'_f$, and
the identity follows directly. If $\beta$ is incident at $v$ or $f$, it follows from the expressions for the (co)actions in Definitions \ref{def:TraingleCoActions} and \ref{def:vertexfaceops}.
\end{proof}

As a direct consequence of  Lemma \ref{Lemma:EdgeReversalModuleComoduleIso},  Lemma \ref{Lemma:IsosAufInvarianten} and Lemma \ref{Lemma:IsomorphismOfBiinvariants} one has

\begin{corollary}\label{cor:edgereverssal} Reversing the orientation of an edge  in $\Gamma$ to obtain  $\Gamma'$  induces isomorphisms between the invariants, coinvariants and protected objects of $\Gamma$ and $\Gamma'$.
\end{corollary}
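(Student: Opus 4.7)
The proof will be short, since Lemma \ref{Lemma:EdgeReversalModuleComoduleIso} does essentially all the work: the plan is simply to promote that per-vertex/per-face statement to a statement about the combined (co)action and then feed it into Lemmas \ref{Lemma:IsosAufInvarianten} and \ref{Lemma:IsomorphismOfBiinvariants}.

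First I observe that $S_\beta$ is an automorphism of $H^{\oo E}$. Since $H$ is involutive, $S\circ S=1_H$, and hence $S_\beta\circ S_\beta=1_{H^{\oo E}}$, so $S_\beta$ is its own inverse. By Lemma \ref{Lemma:EdgeReversalModuleComoduleIso}, $S_\beta$ intertwines $\rhd_v$ (on $\Gamma$) with $\rhd'_v$ (on $\Gamma'$) for every ciliated vertex $v\in V$, and similarly intertwines $\delta_f$ with $\delta'_f$ for every ciliated face $f\in F$.

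Next I upgrade this to the combined structures from Definition \ref{def:setact}. By the remark following Lemma \ref{Lemma:RelatingTheInvariants}, any morphism that is a module morphism for each $\rhd_v$, $v\in V$, is automatically a module morphism for the combined action $\rhd_V$, and likewise for $\delta_F$. Therefore $S_\beta$ is an isomorphism of $H^{\oo V}$-modules $(H^{\oo E},\rhd_V)\to (H^{\oo E},\rhd'_V)$ and of $H^{\oo F}$-comodules $(H^{\oo E},\delta_F)\to (H^{\oo E},\delta'_F)$.

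Finally I apply the invariance machinery. Lemma \ref{Lemma:IsosAufInvarianten} applied to the module isomorphism $S_\beta$ yields a unique morphism $S_\beta^{H}: (H^{\oo E})^H\to (H^{\oo E})'^{H}$ with $S_\beta^H\circ \pi=\pi'\circ S_\beta$, and applied to the inverse $S_\beta^{-1}=S_\beta$ gives a two-sided inverse; hence the induced map on invariants is an isomorphism. Dually for coinvariants. Lemma \ref{Lemma:IsomorphismOfBiinvariants} then produces the unique isomorphism $(S_\beta)_{inv}: M_{inv}\to M'_{inv}$ between the protected objects with $\pi'\circ S_\beta\circ \iota=I'\circ (S_\beta)_{inv}\circ P$. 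There is no real obstacle here, as everything is assembled from statements already in hand; the only thing to be a little careful about is the bookkeeping that shows $S_\beta$ is simultaneously compatible with every vertex action and every face coaction, which is exactly the content of Lemma \ref{Lemma:EdgeReversalModuleComoduleIso}.
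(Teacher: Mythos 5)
Your proposal is correct and follows exactly the route the paper takes: the paper states this corollary as a direct consequence of Lemma \ref{Lemma:EdgeReversalModuleComoduleIso}, Lemma \ref{Lemma:IsosAufInvarianten} and Lemma \ref{Lemma:IsomorphismOfBiinvariants}, which is precisely the chain of reasoning you spell out (including the passage from the per-vertex/per-face statement to the combined (co)actions and the use of $S_\beta\circ S_\beta=1_{H^{\oo E}}$).
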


\begin{lemma}\label{lemma:InvariantsIndependentFromCilium}$ $
The (co)invariants for the $H$-(co)module structure at a given vertex (face) do not depend on the choice of cilia: moving the position of the cilium yields isomorphic (co)invariants. This induces isomorphisms of the protected objects.
\end{lemma}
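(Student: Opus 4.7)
The statement is symmetric in vertices and faces; I treat the vertex case and note that the face case follows by passing to the opposite category $\mac^{op}$ and the dual Hopf monoid $H^*$, as observed after Definition \ref{def:invariants}. By iterating one-step cilium moves, it suffices to consider the case where the linear ordering of edge ends at $v$ changes from $\alpha_1 < \alpha_2 < \ldots < \alpha_n$ to $\alpha_2 < \ldots < \alpha_n < \alpha_1$. Let $\rhd_v$ and $\rhd_v'$ be the corresponding $H$-module structures on $H^{\oo E}$ from Definition \ref{def:vertexfaceops}.

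The plan is to exhibit an explicit automorphism $\Phi: H^{\oo E}\to H^{\oo E}$ satisfying $\Phi \circ \rhd_v = \rhd_v' \circ (1_H \oo \Phi)$, built from the braiding on tensor factors and the antipode on selected edge copies. The construction depends on whether $\alpha_1$ is an end of a loop at $v$. If the edge carrying $\alpha_1$ is not a loop and is distinct from the edges of $\alpha_2,\ldots,\alpha_n$, then $\Phi$ is a composition of braidings $\tau_{H,H}$ moving the $\alpha_1$-tensor factor past the others; the intertwining property reduces to the commutativity of $\rhd_{\alpha_i}$-actions on different edges (Lemma \ref{Lemma:TraingelOperatorsAreHopfMonoid}) and naturality of the braiding. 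If the edge $e$ carrying $\alpha_1$ is a loop at $v$ with other end some $\alpha_j$, then $\Phi = S_e$ acts by the antipode on the copy of $H$ for $e$. The intertwining $S_e \circ \rhd_v = \rhd_v' \circ (1_H \oo S_e)$ is verified using the anti-algebra and anti-coalgebra properties \eqref{eq:antialg} of $S$ together with involutivity \eqref{eq:EightWithAntipode}; in Sweedler form this amounts to the calculation $S(h_{(1)}\, a\, S(h_{(2)})) = h_{(2)}\, S(a)\, S(h_{(1)})$. In the general case, $\Phi$ is a composition of such braidings and (at most one) antipode.

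By Lemma \ref{Lemma:IsosAufInvarianten} applied to $\Phi$, the isomorphism of $H$-modules descends to an isomorphism between the invariants computed with either cilium position, and analogously for the $H^{\oo n}$-module structure \eqref{eq:ComposedVertexAction} when combined with the unchanged actions at other vertices (these commute with $\Phi$ by Lemma \ref{lemma:VertexAndFaceOperatorsCommmute} since they act on disjoint edge ends up to braiding). To upgrade to the protected object, I verify that $\Phi$ is simultaneously an isomorphism of $H^{\oo F}$-comodules. The braiding components of $\Phi$ interchange tensor factors and are naturally compatible with the $\delta_f$ by naturality of $\tau_{H,H}$ and the anti-coalgebra property of $S$. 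The antipode component $S_e$ on a loop $e$ at $v$ coincides with the edge reversal of $e$ from Definition \ref{def:edgereversal}, whose compatibility with every face coaction is exactly Lemma \ref{Lemma:EdgeReversalModuleComoduleIso}. Thus $\Phi$ is an isomorphism for both the $H^{\oo n}$-module and $H^{\oo m}$-comodule structures, and Lemma \ref{Lemma:IsomorphismOfBiinvariants} yields the isomorphism of protected objects.

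The main obstacle is the loop case: producing the correct intertwiner $S_e$ and checking that it transforms $\rhd_v$ into $\rhd_v'$ rather than some other modification. This requires carefully tracking how $S$ on the loop edge exchanges $\rhd_{e,+}$ with $\rhd_{e,-}$ (and $\delta_{e,+}$ with $\delta_{e,-}$) and then using involutivity \eqref{eq:EightWithAntipode} to bring the Sweedler components into the cyclically shifted order. All other pieces of the argument are routine consequences of the symmetry of $\mac$ and of the results already established in Sections \ref{sec:Hopf} and \ref{sec:protected space}.
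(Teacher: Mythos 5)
There is a genuine gap, and it lies already in the non-loop case, not only in the loop case you flag as the main obstacle. The two actions $\rhd_v$ and $\rhd_v'$ live on the \emph{same} object $H^{\oo E}$ with the \emph{same} assignment of tensor factors to edges; they differ only in which Sweedler leg $\low h i$ of $\Delta^{(n-1)}(h)$ is delivered to which edge end. A composite of braidings moving the $\alpha_1$-copy past the others either effects a nontrivial permutation of the edge-labelled factors --- in which case it is not an endomorphism of the labelled object and is incompatible with the face coactions $\delta_f$ and the actions $\rhd_w$ at other vertices, all of which reference specific edge copies --- or its net effect is trivial, in which case it is the identity and does not intertwine $\rhd_v$ with $\rhd_v'$ unless $H$ is cocommutative. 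Concretely, for $n=2$ and two distinct incoming edges one would need a label-preserving automorphism $\Phi$ of $H\oo H$ with $\Phi\circ \rhd_v=\rhd_v'\circ(1_H\oo\Phi)$, where $h\rhd_v(a\oo b)=\low h 1 a\oo \low h 2 b$ and $h\rhd_v'(a\oo b)=\low h 2 a\oo \low h 1 b$; naturality of $\tau_{H,H}$ gives no such morphism, and Lemma \ref{Lemma:TraingelOperatorsAreHopfMonoid} (commutation of $\rhd_{\alpha+}$ with $\rhd_{\alpha-}$ on a single edge) does not address the reordering of the coproduct legs. Likewise $S_e$ on a loop intertwines $\rhd_v$ with the action of the graph in which $e$ is \emph{reversed} (Lemma \ref{Lemma:EdgeReversalModuleComoduleIso}), which is a different modification from rotating the cilium past one end of $e$.

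The paper's proof constructs no automorphism of $H^{\oo E}$ at all. It shows instead that the coequaliser $\pi_v$ of $(\rhd_v,\epsilon\oo 1_{H^{\oo E}})$ also coequalises $(\rhd_v',\epsilon\oo 1_{H^{\oo E}})$: one rewrites $\pi_v\circ\rhd_v'(h\oo a^1\oo\cdots)$ as $\pi_v\circ\rhd_v(\low h 2\oo S(\low h 1)\low h 3 a^1\oo a^2\oo\cdots)$ using coassociativity and the antipode axiom, and then collapses it to $\pi_v\circ(\epsilon\oo 1_{H^{\oo E}})$. By symmetry $\pi_v'$ coequalises the original pair, and the universal properties yield mutually inverse morphisms $M_v^H\cong M_v'^H$ over the \emph{identity} of $H^{\oo E}$. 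Because the comparison is over the identity, compatibility with all the remaining (co)module structures and the passage to the protected object via Lemmas \ref{Lemma:RelatingTheInvariants} and \ref{Lemma:IsomorphismOfBiinvariants} is automatic. To repair your argument you should abandon the search for an intertwining automorphism and prove this (co)equaliser identity directly.
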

\begin{proof}
We  focus on the $H$-module structure and its invariants. We consider a fixed position of the cilium at a vertex $v$ with associated vertex action $\rhd_v$ and coequaliser $\pi_v: H^{\oo E}\to M_v^H$ and compare it to the action $\rhd_v'$ and coequaliser $\pi_v': H^{\oo E}\to M'^H_v$ obtained by  rotating the cilium counterclockwise by one position. We first show that the coequaliser 
$\pi_v: H^{\oo E}\to M_v^H$  satisfies 
\begin{align}\label{eq:rotate}
\pi_v \circ \rhd_v'=\pi_v \circ (\epsilon\oo 1_{H^{\oo E}}).
\end{align}
By definition of the $H$-module structure $\rhd_v$  and by  Lemma \ref{Lemma:EdgeReversalModuleComoduleIso} it is sufficient to prove this for a vertex with $n$ incoming edges. The computations for vertices with incident loops are analogous. 
\begin{center}
\tikzset{every picture/.style={line width=0.75pt}}     

\begin{tikzpicture}[x=0.75pt,y=0.75pt,yscale=-1.5,xscale=1.5]
\draw [color={rgb, 255:red, 74; green, 144; blue, 226 }  ,draw opacity=1 ]   (47.47,41.36) .. controls (49.06,43.1) and (48.99,44.77) .. (47.25,46.36) .. controls (45.51,47.95) and (45.44,49.61) .. (47.03,51.35) .. controls (48.62,53.09) and (48.55,54.76) .. (46.81,56.35) -- (46.77,57.15) -- (46.77,57.15) ;
\draw    (32.49,18.58) -- (44.53,34.27) ;
\draw [shift={(45.75,35.86)}, rotate = 232.48] [color={rgb, 255:red, 0; green, 0; blue, 0 }  ][line width=0.75]    (6.56,-1.97) .. controls (4.17,-0.84) and (1.99,-0.18) .. (0,0) .. controls (1.99,0.18) and (4.17,0.84) .. (6.56,1.97)   ;
\draw    (27.91,54.01) -- (43.24,39.96) ;
\draw [shift={(44.71,38.61)}, rotate = 137.49] [color={rgb, 255:red, 0; green, 0; blue, 0 }  ][line width=0.75]    (6.56,-1.97) .. controls (4.17,-0.84) and (1.99,-0.18) .. (0,0) .. controls (1.99,0.18) and (4.17,0.84) .. (6.56,1.97)   ;
\draw  [fill={rgb, 255:red, 0; green, 0; blue, 0 }  ,fill opacity=1 ] (44.71,38.61) .. controls (44.71,37.09) and (45.95,35.86) .. (47.47,35.86) .. controls (48.99,35.86) and (50.22,37.09) .. (50.22,38.61) .. controls (50.22,40.13) and (48.99,41.36) .. (47.47,41.36) .. controls (45.95,41.36) and (44.71,40.13) .. (44.71,38.61) -- cycle ;
\draw    (67.63,57.44) -- (50.83,41.96) ;
\draw [shift={(49.36,40.61)}, rotate = 42.65] [color={rgb, 255:red, 0; green, 0; blue, 0 }  ][line width=0.75]    (6.56,-1.97) .. controls (4.17,-0.84) and (1.99,-0.18) .. (0,0) .. controls (1.99,0.18) and (4.17,0.84) .. (6.56,1.97)   ;
\draw    (67.34,17.72) -- (51.3,34.86) ;
\draw [shift={(49.93,36.32)}, rotate = 313.11] [color={rgb, 255:red, 0; green, 0; blue, 0 }  ][line width=0.75]    (6.56,-1.97) .. controls (4.17,-0.84) and (1.99,-0.18) .. (0,0) .. controls (1.99,0.18) and (4.17,0.84) .. (6.56,1.97)   ;
\draw [color={rgb, 255:red, 65; green, 117; blue, 5 }  ,draw opacity=1 ]   (101.63,40.01) -- (137.2,39.75) ;
\draw [shift={(140.2,39.72)}, rotate = 179.58] [fill={rgb, 255:red, 65; green, 117; blue, 5 }  ,fill opacity=1 ][line width=0.08]  [draw opacity=0] (8.93,-4.29) -- (0,0) -- (8.93,4.29) -- cycle    ;
\draw    (171.06,20.92) -- (183.11,36.61) ;
\draw [shift={(184.32,38.2)}, rotate = 232.48] [color={rgb, 255:red, 0; green, 0; blue, 0 }  ][line width=0.75]    (6.56,-1.97) .. controls (4.17,-0.84) and (1.99,-0.18) .. (0,0) .. controls (1.99,0.18) and (4.17,0.84) .. (6.56,1.97)   ;
\draw    (166.49,56.35) -- (181.81,42.3) ;
\draw [shift={(183.29,40.95)}, rotate = 137.49] [color={rgb, 255:red, 0; green, 0; blue, 0 }  ][line width=0.75]    (6.56,-1.97) .. controls (4.17,-0.84) and (1.99,-0.18) .. (0,0) .. controls (1.99,0.18) and (4.17,0.84) .. (6.56,1.97)   ;
\draw    (206.2,59.78) -- (189.4,44.3) ;
\draw [shift={(187.93,42.95)}, rotate = 42.65] [color={rgb, 255:red, 0; green, 0; blue, 0 }  ][line width=0.75]    (6.56,-1.97) .. controls (4.17,-0.84) and (1.99,-0.18) .. (0,0) .. controls (1.99,0.18) and (4.17,0.84) .. (6.56,1.97)   ;
\draw    (205.91,20.06) -- (189.87,37.2) ;
\draw [shift={(188.5,38.66)}, rotate = 313.11] [color={rgb, 255:red, 0; green, 0; blue, 0 }  ][line width=0.75]    (6.56,-1.97) .. controls (4.17,-0.84) and (1.99,-0.18) .. (0,0) .. controls (1.99,0.18) and (4.17,0.84) .. (6.56,1.97)   ;
\draw [color={rgb, 255:red, 74; green, 144; blue, 226 }  ,draw opacity=1 ]   (188.79,40.95) .. controls (190.45,39.28) and (192.12,39.27) .. (193.79,40.93) .. controls (195.46,42.59) and (197.13,42.58) .. (198.79,40.91) .. controls (200.45,39.24) and (202.12,39.23) .. (203.79,40.89) .. controls (205.46,42.55) and (207.13,42.54) .. (208.79,40.87) -- (209.34,40.87) -- (209.34,40.87) ;
\draw  [fill={rgb, 255:red, 0; green, 0; blue, 0 }  ,fill opacity=1 ] (183.29,40.95) .. controls (183.29,39.43) and (184.52,38.2) .. (186.04,38.2) .. controls (187.56,38.2) and (188.79,39.43) .. (188.79,40.95) .. controls (188.79,42.47) and (187.56,43.7) .. (186.04,43.7) .. controls (184.52,43.7) and (183.29,42.47) .. (183.29,40.95) -- cycle ;
\draw (44.84,12.49) node [anchor=north west][inner sep=0.75pt]  [font=\scriptsize]  {$\textcolor[rgb]{0.29,0.56,0.89}{v}$};
\draw (69.48,17.03) node [anchor=north west][inner sep=0.75pt]  [font=\scriptsize]  {$a^2$};
\draw (22.71,13.65) node [anchor=north west][inner sep=0.75pt]  [font=\scriptsize]  {$\ldots$};
\draw (19.43,39.94) node [anchor=north west][inner sep=0.75pt]  [font=\scriptsize]  {$a^n$};
\draw (54,52.94) node [anchor=north west][inner sep=0.75pt]  [font=\scriptsize]  {$a^1$};
\draw (104,25.86) node [anchor=north west][inner sep=0.75pt]   [align=left] {{\scriptsize shift}};
\draw (102,45) node [anchor=north west][inner sep=0.75pt]   [align=left] {{\scriptsize cilium}};
\draw (208.05,19.37) node [anchor=north west][inner sep=0.75pt]  [font=\scriptsize]  {$a^2$};
\draw (162.29,15.99) node [anchor=north west][inner sep=0.75pt]  [font=\scriptsize]  {$\ldots$};
\draw (158,42.28) node [anchor=north west][inner sep=0.75pt]  [font=\scriptsize]  {$a^n$};
\draw (192.57,55.28) node [anchor=north west][inner sep=0.75pt]  [font=\scriptsize]  {$a^1$};
\draw (36.86,70.8) node [anchor=north west][inner sep=0.75pt]  [font=\scriptsize]  {$\rhd_v$};
\draw (166.29,71.37) node [anchor=north west][inner sep=0.75pt]  [font=\scriptsize]  {$\rhd'_v$};
\end{tikzpicture}
\end{center}
For a vertex with $n$ incoming edges we have
\begin{align*}
&\pi_v \circ \rhd_v'\, (h \oo a^1\oo a^2 \oo \ldots \oo a^n)\, =\, \pi_v\, (\low h n a^1 \oo \low h 1 a^2 \oo \ldots \oo \low h {n-1} a^n)\\
&=\pi_v \, (\low {\low h 2} 1 S(\low h 1 ) \low h 3 a^1 \oo \low {\low h 2} 2 a^2 \oo \ldots \oo \low {\low h 2 } n a^n)\\
&= 
\pi_v \circ \rhd_v \, (\low h 2 \oo S(\low h 1 ) \low h 3 a^1 \oo a^2\oo \ldots \oo a^n)
=\pi_v \, (\epsilon(\low h 2 )\oo S(\low h 1) \low h 3 a^1 \oo  a^2 \oo \ldots \oo a^n)\\ 
&=\pi_v \circ (\epsilon\oo 1_{H^{\oo n}}) \, (h \oo a^1\oo a^2 \oo \ldots \oo a^n),
\end{align*}
where we used first the definition of $\rhd_v'$, then the defining property of the antipode and that $S\circ S=1_H$, then the definition of $\rhd_v$, the fact that $\pi_v$ coequalises $\rhd_v$ and $\epsilon\oo 1_{H^{\oo E}}$ and then again the defining properties of the antipode and the counitality of $H$. 

Inductively, we obtain \eqref{eq:rotate} for all positions of the cilium at $v$ and the same identity with $\pi_v,\rhd_v$ and $\pi'_v,\rhd'_v$ swapped. With the universal property of the coequalisers $\pi_v$, $\pi'_v$ this yields unique morphisms $\phi: M^H_v \to M'^H_v$, $\phi': M'^H_v\to M^H_v$ with $\phi\circ \pi_v=\pi_v'$ and $\phi'\circ \pi_v'=\pi_v$. As $\pi_v$, $\pi_v'$ are epimorphisms,  this implies $\phi'=\phi^\inv$. 
 
 The dual claim for the comodule structure and its coinvariants follows analogously. For all positions of the cilium at $f$ with associated coaction $\delta'_f$, there is a unique morphism $\psi: M^{coH}\to M'^{coH}$ with  $\iota'_f\circ \psi=\iota_f$, and $\psi$ is an isomorphism. Combining these statements for the (co)invariants of all vertices (faces) and using Lemmas
  \ref{Lemma:IsomorphismOfBiinvariants} and 
  \ref{Lemma:RelatingTheInvariants}  yields isomorphisms of the protected objects.
\end{proof}

\subsection{Edge slides and edge contractions}
\label{subsec:contract}
We now consider the edge slides and edge contractions from Definition \ref{def:graphtrafos}. Edge slides were already investigated in \cite{MV}, where it was shown that they define   mapping class group actions. They yield automorphisms of the object $H^{\oo E}$ that are  morphisms of $H$-modules and $H$-comodules as long as no edge ends slide over cilia.

\begin{definition}\label{def:EdgeSlide}\cite[Def.~6.1]{MV}\\
Let $\alpha\neq \beta$ be edges of $\Gamma$ with the starting end of $\alpha$  directly before the target end of $\beta$ in the ordering at $s(\alpha)=t(\beta)$. 
The {\bf edge slide} of the target end of $\beta$ along $\alpha$ corresponds to the isomorphism
$$S_{\alpha,\beta}:= \rhd_{\beta +}\circ \delta_{\alpha +}: H^{\oo E}\to H^{\oo E}\text{ with } S_{\alpha,\beta}^\inv=\rhd_{\beta +}\circ (S \oo 1_{H^{\oo E}})\circ \delta_{\alpha +}: H^{\oo E}\to H^{\oo E}.$$
Edge slides for other edge orientations are defined by reversing edge orientations with the antipode. 
\end{definition}

\begin{example} \label{ex:edgeslide}The isomorphisms induced by the edge slides
\begin{center}
\tikzset{every picture/.style={line width=0.75pt}}     
\begin{tikzpicture}[x=0.75pt,y=0.75pt,yscale=-1.2,xscale=1.2]
\draw  [fill={rgb, 255:red, 0; green, 0; blue, 0 }  ,fill opacity=1 ] (34.43,45.75) .. controls (34.43,44.23) and (35.66,43) .. (37.18,43) .. controls (38.7,43) and (39.93,44.23) .. (39.93,45.75) .. controls (39.93,47.27) and (38.7,48.5) .. (37.18,48.5) .. controls (35.66,48.5) and (34.43,47.27) .. (34.43,45.75) -- cycle ;
\draw [color={rgb, 255:red, 74; green, 144; blue, 226 }  ,draw opacity=1 ]   (113.79,52.58) -- (111.13,51.55) -- (111.13,51.55) .. controls (108.97,52.5) and (107.42,51.89) .. (106.47,49.74) .. controls (105.52,47.58) and (103.97,46.98) .. (101.81,47.93) -- (99.22,46.93) -- (99.22,46.93) ;
\draw  [fill={rgb, 255:red, 0; green, 0; blue, 0 }  ,fill opacity=1 ] (93.43,46.93) .. controls (93.43,45.41) and (94.66,44.18) .. (96.18,44.18) .. controls (97.7,44.18) and (98.93,45.41) .. (98.93,46.93) .. controls (98.93,48.45) and (97.7,49.68) .. (96.18,49.68) .. controls (94.66,49.68) and (93.43,48.45) .. (93.43,46.93) -- cycle ;
\draw [color={rgb, 255:red, 74; green, 144; blue, 226 }  ,draw opacity=1 ]   (20.93,59.44) .. controls (21.13,57.09) and (22.4,56.02) .. (24.75,56.21) -- (27.58,53.81) -- (27.58,53.81) .. controls (27.78,51.46) and (29.05,50.39) .. (31.4,50.58) -- (34.36,48.07) -- (34.36,48.07) ;
\draw [color={rgb, 255:red, 208; green, 2; blue, 27 }  ,draw opacity=1 ]   (39.93,45.75) -- (83.23,46.7) -- (91.43,46.88) ;
\draw [shift={(93.43,46.93)}, rotate = 181.26] [color={rgb, 255:red, 208; green, 2; blue, 27 }  ,draw opacity=1 ][line width=0.75]    (6.56,-1.97) .. controls (4.17,-0.84) and (1.99,-0.18) .. (0,0) .. controls (1.99,0.18) and (4.17,0.84) .. (6.56,1.97)   ;
\draw    (96.08,70.01) -- (96.17,51.68) ;
\draw [shift={(96.18,49.68)}, rotate = 90.3] [color={rgb, 255:red, 0; green, 0; blue, 0 }  ][line width=0.75]    (6.56,-1.97) .. controls (4.17,-0.84) and (1.99,-0.18) .. (0,0) .. controls (1.99,0.18) and (4.17,0.84) .. (6.56,1.97)   ;
\draw    (106.36,26.58) -- (99.28,42.43) ;
\draw [shift={(98.47,44.25)}, rotate = 294.07] [color={rgb, 255:red, 0; green, 0; blue, 0 }  ][line width=0.75]    (6.56,-1.97) .. controls (4.17,-0.84) and (1.99,-0.18) .. (0,0) .. controls (1.99,0.18) and (4.17,0.84) .. (6.56,1.97)   ;
\draw    (37.08,68.83) -- (37.17,50.5) ;
\draw [shift={(37.18,48.5)}, rotate = 90.3] [color={rgb, 255:red, 0; green, 0; blue, 0 }  ][line width=0.75]    (6.56,-1.97) .. controls (4.17,-0.84) and (1.99,-0.18) .. (0,0) .. controls (1.99,0.18) and (4.17,0.84) .. (6.56,1.97)   ;
\draw    (22.93,30.87) -- (32.86,42.18) ;
\draw [shift={(34.18,43.68)}, rotate = 228.73] [color={rgb, 255:red, 0; green, 0; blue, 0 }  ][line width=0.75]    (6.56,-1.97) .. controls (4.17,-0.84) and (1.99,-0.18) .. (0,0) .. controls (1.99,0.18) and (4.17,0.84) .. (6.56,1.97)   ;
\draw [color={rgb, 255:red, 65; green, 117; blue, 5 }  ,draw opacity=1 ]   (37.93,24.76) -- (37.26,41) ;
\draw [shift={(37.18,43)}, rotate = 272.36] [color={rgb, 255:red, 65; green, 117; blue, 5 }  ,draw opacity=1 ][line width=0.75]    (6.56,-1.97) .. controls (4.17,-0.84) and (1.99,-0.18) .. (0,0) .. controls (1.99,0.18) and (4.17,0.84) .. (6.56,1.97)   ;
\draw [color={rgb, 255:red, 245; green, 166; blue, 35 }  ,draw opacity=1 ]   (43.14,36.07) -- (58.28,36.26) ;
\draw [shift={(61.28,36.3)}, rotate = 180.72] [fill={rgb, 255:red, 245; green, 166; blue, 35 }  ,fill opacity=1 ][line width=0.08]  [draw opacity=0] (7.14,-3.43) -- (0,0) -- (7.14,3.43) -- cycle    ;
\begin{scope}[shift={(90, 0)}]
\draw  [fill={rgb, 255:red, 0; green, 0; blue, 0 }  ,fill opacity=1 ] (165,45.47) .. controls (165,43.95) and (166.23,42.71) .. (167.75,42.71) .. controls (169.27,42.71) and (170.5,43.95) .. (170.5,45.47) .. controls (170.5,46.99) and (169.27,48.22) .. (167.75,48.22) .. controls (166.23,48.22) and (165,46.99) .. (165,45.47) -- cycle ;
\draw [color={rgb, 255:red, 74; green, 144; blue, 226 }  ,draw opacity=1 ]   (244.36,52.3) -- (241.7,51.26) -- (241.7,51.26) .. controls (239.54,52.21) and (237.99,51.61) .. (237.04,49.45) .. controls (236.09,47.3) and (234.53,46.7) .. (232.38,47.65) -- (229.79,46.64) -- (229.79,46.64) ;
\draw  [fill={rgb, 255:red, 0; green, 0; blue, 0 }  ,fill opacity=1 ] (224,46.64) .. controls (224,45.12) and (225.23,43.89) .. (226.75,43.89) .. controls (228.27,43.89) and (229.5,45.12) .. (229.5,46.64) .. controls (229.5,48.16) and (228.27,49.4) .. (226.75,49.4) .. controls (225.23,49.4) and (224,48.16) .. (224,46.64) -- cycle ;
\draw [color={rgb, 255:red, 74; green, 144; blue, 226 }  ,draw opacity=1 ]   (147.28,47.15) .. controls (148.78,45.34) and (150.44,45.18) .. (152.25,46.68) .. controls (154.06,48.18) and (155.72,48.02) .. (157.23,46.21) .. controls (158.73,44.39) and (160.39,44.23) .. (162.21,45.73) -- (165,45.47) -- (165,45.47) ;
\draw [color={rgb, 255:red, 208; green, 2; blue, 27 }  ,draw opacity=1 ]   (170.5,45.47) -- (213.8,46.42) -- (222,46.6) ;
\draw [shift={(224,46.64)}, rotate = 181.26] [color={rgb, 255:red, 208; green, 2; blue, 27 }  ,draw opacity=1 ][line width=0.75]    (6.56,-1.97) .. controls (4.17,-0.84) and (1.99,-0.18) .. (0,0) .. controls (1.99,0.18) and (4.17,0.84) .. (6.56,1.97)   ;
\draw    (226.65,69.72) -- (226.74,51.4) ;
\draw [shift={(226.75,49.4)}, rotate = 90.3] [color={rgb, 255:red, 0; green, 0; blue, 0 }  ][line width=0.75]    (6.56,-1.97) .. controls (4.17,-0.84) and (1.99,-0.18) .. (0,0) .. controls (1.99,0.18) and (4.17,0.84) .. (6.56,1.97)   ;
\draw    (236.93,26.3) -- (229.85,42.14) ;
\draw [shift={(229.04,43.97)}, rotate = 294.07] [color={rgb, 255:red, 0; green, 0; blue, 0 }  ][line width=0.75]    (6.56,-1.97) .. controls (4.17,-0.84) and (1.99,-0.18) .. (0,0) .. controls (1.99,0.18) and (4.17,0.84) .. (6.56,1.97)   ;
\draw    (153.85,62.87) -- (165.06,49.19) ;
\draw [shift={(166.32,47.65)}, rotate = 129.34] [color={rgb, 255:red, 0; green, 0; blue, 0 }  ][line width=0.75]    (6.56,-1.97) .. controls (4.17,-0.84) and (1.99,-0.18) .. (0,0) .. controls (1.99,0.18) and (4.17,0.84) .. (6.56,1.97)   ;
\draw    (166.42,24.3) -- (167.61,40.72) ;
\draw [shift={(167.75,42.71)}, rotate = 265.86] [color={rgb, 255:red, 0; green, 0; blue, 0 }  ][line width=0.75]    (6.56,-1.97) .. controls (4.17,-0.84) and (1.99,-0.18) .. (0,0) .. controls (1.99,0.18) and (4.17,0.84) .. (6.56,1.97)   ;
\draw [color={rgb, 255:red, 65; green, 117; blue, 5 }  ,draw opacity=1 ]   (172.42,73.44) -- (169.27,51.13) ;
\draw [shift={(168.99,49.15)}, rotate = 81.96] [color={rgb, 255:red, 65; green, 117; blue, 5 }  ,draw opacity=1 ][line width=0.75]    (6.56,-1.97) .. controls (4.17,-0.84) and (1.99,-0.18) .. (0,0) .. controls (1.99,0.18) and (4.17,0.84) .. (6.56,1.97)   ;
\draw [color={rgb, 255:red, 245; green, 166; blue, 35 }  ,draw opacity=1 ]   (174,61.21) -- (189.13,61.4) ;
\draw [shift={(192.13,61.44)}, rotate = 180.72] [fill={rgb, 255:red, 245; green, 166; blue, 35 }  ,fill opacity=1 ][line width=0.08]  [draw opacity=0] (7.14,-3.43) -- (0,0) -- (7.14,3.43) -- cycle    ;
\end{scope}
\draw (60.71,49.72) node [anchor=north west][inner sep=0.75pt]  [font=\scriptsize]  {$\textcolor[rgb]{0.82,0.01,0.11}{\alpha }$};
\draw (15.71,40.26) node [anchor=north west][inner sep=0.75pt]  [font=\scriptsize]  {$\textcolor[rgb]{0.29,0.56,0.89}{v}$};
\draw (40.57,21.11) node [anchor=north west][inner sep=0.75pt]  [font=\scriptsize]  {$\textcolor[rgb]{0.25,0.46,0.02}{\beta}$};
\draw (15.71,19.97) node [anchor=north west][inner sep=0.75pt]  [font=\scriptsize]  {$\gamma$};
\draw (114.43,52.83) node [anchor=north west][inner sep=0.75pt]  [font=\scriptsize]  {$\textcolor[rgb]{0.29,0.56,0.89}{w}$};
\draw (34.57,67.97) node [anchor=north west][inner sep=0.75pt]  [font=\scriptsize]  {$\delta$};
\draw (108,19.11) node [anchor=north west][inner sep=0.75pt]  [font=\scriptsize]  {$\mu$};
\draw (98.29,67.69) node [anchor=north west][inner sep=0.75pt]  [font=\scriptsize]  {$\nu$};
\draw (-25, 40) node [anchor=north west][inner sep=0.75pt]  [font=\small]  {(a)};
\begin{scope}[shift={(90, 0)}]
\draw (191.29,49.44) node [anchor=north west][inner sep=0.75pt]  [font=\scriptsize]  {$\textcolor[rgb]{0.82,0.01,0.11}{\alpha }$};
\draw (147.71,29.69) node [anchor=north west][inner sep=0.75pt]  [font=\scriptsize]  {$\textcolor[rgb]{0.29,0.56,0.89}{v}$};
\draw (174.29,64.61) node [anchor=north west][inner sep=0.75pt]  [font=\scriptsize]  {$\textcolor[rgb]{0.25,0.46,0.02}{\delta}$};
\draw (156.86,15.97) node [anchor=north west][inner sep=0.75pt]  [font=\scriptsize]  {$\beta$};
\draw (245,52.54) node [anchor=north west][inner sep=0.75pt]  [font=\scriptsize]  {$\textcolor[rgb]{0.29,0.56,0.89}{w}$};
\draw (144.57,57.4) node [anchor=north west][inner sep=0.75pt]  [font=\scriptsize]  {$\gamma$};
\draw (239.14,16.54) node [anchor=north west][inner sep=0.75pt]  [font=\scriptsize]  {$\mu$};
\draw (229.14,66.54) node [anchor=north west][inner sep=0.75pt]  [font=\scriptsize]  {$\nu$};
\draw (104, 40) node [anchor=north west][inner sep=0.75pt]  [font=\small]  {(b)};
\end{scope}
\end{tikzpicture}
\end{center}
are obtained by (a) applying Definition \ref{def:EdgeSlide}  and (b) first reversing the orientation of $\alpha$,  applying the inverse edge slide from Definition \ref{def:EdgeSlide} and then reversing the orientation of $\alpha$. This yields
\begin{align*}
&(a) &&S_{\alpha,\beta}(\alpha\oo \beta\oo \gamma \oo \delta\oo \mu\oo \nu)=\low \alpha 2\oo \low \alpha 1 \beta\oo \gamma \oo \delta\oo \mu\oo \nu,\\
&(b) & &S_{\alpha,\beta}(\alpha\oo \beta\oo \gamma \oo \delta\oo \mu\oo \nu)=\low \alpha 1\oo  \beta\oo \gamma \oo \low \alpha 2\delta\oo \mu\oo \nu.\\
\end{align*}
\end{example}

By construction, edge slides affect only the two copies of $H$  in $H^{\oo E}$ of the edges involved in the slide and  commute with  edge orientation reversals. 
Moreover, they respect the module and comodule structures at vertices and faces and hence induce isomorphisms between the  protected objects.

\begin{proposition}\label{Prop:EdgeSlideIsIso}\cite[Prop.~6.2]{MV}\\
Let $v$ and $f$ be a ciliated vertex and face in a ribbon graph $\Gamma$ with associated $H$-module structure $\rhd_v$ and $H$-comodule structure $\delta_f$. Any edge slide that does not slide edge ends over their cilia is an isomorphism of $H$-left modules and $H$-left comodules with respect to $\rhd_v$ and  $\delta_f$.
\end{proposition}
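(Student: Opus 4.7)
The plan is to verify that $S_{\alpha,\beta}$ commutes with $\rhd_v$ and $\delta_f$ for every ciliated vertex $v$ and face $f$ of $\Gamma$. Since $S_{\alpha,\beta}$ modifies only the tensor factors at edges $\alpha$ and $\beta$, while each $\rhd_v$ (resp.\ $\delta_f$) touches only factors for edges incident at $v$ (resp.\ in $f$), commutativity is automatic whenever neither $\alpha$ nor $\beta$ is incident at $v$ or in $f$. This reduces the check to finitely many vertices (namely $s(\alpha)=t(\beta)$, $t(\alpha)$ and $s(\beta)$) and to those faces that traverse $\alpha$ or $\beta$. Using Lemma \ref{Lemma:EdgeReversalModuleComoduleIso} together with the fact that edge slides for arbitrary edge orientations are defined from the slide in Definition \ref{def:EdgeSlide} by pre- and post-composing with the antipode on $\alpha$ and $\beta$, it suffices to handle the slide in its normal form.

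At the vertex $v=s(\alpha)=t(\beta)$, the cilium restriction ensures that the starting end of $\alpha$ and the target end of $\beta$ remain adjacent in the linear ordering at $v$, so that the slide cleanly removes $t(\beta)$ from this ordering. In Sweedler notation the slide replaces the pair of factors $(a, b)$ at positions $\alpha,\beta$ by $(\low a 2, \low a 1 b)$, while $\rhd_v$ distributes the components of $\Delta^{(n-1)}(h)$ across the edge ends at $v$ according to Definition \ref{def:vertexfaceops}. The commutativity follows from coassociativity, which relates the iterated coproducts $\Delta^{(n-1)}(h)$ and $\Delta^{(n-2)}(h)$ before and after the slide, together with the antipode identity $\low y 1 \oo S(\low {\low y 2} 1)\, \low {\low y 2} 2 = y \oo \eta$ that absorbs the freshly coproduct-split factor into $\eta\epsilon$ on $\beta$'s position. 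An analogous calculation at $w=t(\alpha)$, where the slide now inserts $t(\beta)$ directly adjacent to $t(\alpha)$ (on the side determined by the cilium restriction), again reduces via coassociativity to an identity of Sweedler expressions. At $s(\beta)$ only one edge end is affected by the slide, and the check amounts to the $H$-module axiom applied to the factor $\low a 1 b$.

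For faces the argument is dual: each face traversing $\alpha$ or $\beta$ has its coaction pattern modified by the slide, and the required intertwining follows from multiplicativity of $\Delta$, associativity of $m$ and the antipode axiom, together with the cilium restriction which guarantees that no face-defining path changes its starting vertex and that the newly created adjacency of $\alpha$ and $\beta$ in the traversal of a face matches what is produced by $\delta_{\alpha +}\circ \rhd_{\beta +}$.

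The main obstacle is the combinatorial case distinction and Sweedler bookkeeping in degenerate configurations, notably when $\alpha$ or $\beta$ is a loop so that a single edge contributes more than one edge end at some vertex, or when $\alpha$ and $\beta$ both bound the same face on both sides. In such cases one must track carefully which Sweedler components of $h$ (or of the coacting factor on the face side) end up at which positions after the slide, and apply coassociativity several times; but the adjacency enforced by the cilium restriction always aligns the relevant Sweedler indices so that coassociativity and the antipode axiom suffice to close each identity.
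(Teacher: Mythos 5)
The paper does not reprove this statement but imports it from \cite[Prop.~6.2]{MV}, whose argument is exactly the direct verification you outline: reduce by locality and edge reversal to the normal-form slide, then check commutation with $\rhd_v$ at $s(\alpha)=t(\beta)$, $t(\alpha)$, $s(\beta)$ and dually with $\delta_f$, using (co)associativity, multiplicativity of $\Delta$ and the antipode axiom. Your sketch identifies the correct key identities (e.g.\ $\low h 1\oo S(\low h 2)\low h 3=h\oo 1$ at the vertex $s(\alpha)$, and the fact that $t(\beta)$ must land directly adjacent to $t(\alpha)$ on the correct side so that $\Delta(ha)=\Delta(h)\Delta(a)$ closes the computation at $t(\alpha)$), so the proposal is correct and takes essentially the same route.
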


\begin{corollary}\label{Coro:EdgeSlideIsoOnProtectedObject} $ $\\
Edge slides 
 from a ribbon graph $\Gamma$ to a ribbon graph $\Gamma'$ induce isomorphisms between the invariants, coinvariants and  protected objects of $\Gamma$ and $\Gamma'$. 
\end{corollary}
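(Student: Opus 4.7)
The plan is to derive Corollary \ref{Coro:EdgeSlideIsoOnProtectedObject} from Proposition \ref{Prop:EdgeSlideIsIso} together with Lemmas \ref{Lemma:IsosAufInvarianten}, \ref{Lemma:IsomorphismOfBiinvariants} and \ref{lemma:InvariantsIndependentFromCilium}. The restriction in Proposition \ref{Prop:EdgeSlideIsIso} -- that no edge end slides over its cilium -- is the only subtle point, and will be handled by first sliding the cilia out of the way.

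First I would treat the case in which the edge slide in question slides neither of the involved edge ends past any cilium. By Proposition \ref{Prop:EdgeSlideIsIso} the associated automorphism $S_{\alpha,\beta}: H^{\oo E}\to H^{\oo E}$ is then an isomorphism of $H$-modules with respect to $\rhd_v$ for every ciliated vertex $v\in V$ and an isomorphism of $H$-comodules with respect to $\delta_f$ for every ciliated face $f\in F$. By the remark at the end of Section \ref{sec:protected space}, this upgrades to an isomorphism of $H^{\oo|V|}$-modules and $H^{\oo|F|}$-comodules with respect to $\rhd_V$ and $\delta_F$. Applying Lemma \ref{Lemma:IsosAufInvarianten} to these combined (co)actions yields isomorphisms between the invariants $M^H$ and coinvariants $M^{coH}$ of $\Gamma$ and $\Gamma'$, and Lemma \ref{Lemma:IsomorphismOfBiinvariants} then produces the corresponding isomorphism of biinvariants, i.e.~of the protected objects.

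The remaining case is an edge slide that forces an edge end across a cilium. Here I would factor the transformation $\Gamma\to\Gamma'$ through an intermediate ciliated ribbon graph $\tilde\Gamma$ obtained from $\Gamma$ by first rotating the offending cilium to a position from which the slide no longer crosses it, then performing the slide to obtain a graph $\tilde\Gamma'$, and finally rotating the cilium back to its position in $\Gamma'$. Each cilium move is an isomorphism on (co)invariants and biinvariants by Lemma \ref{lemma:InvariantsIndependentFromCilium}, and the middle step is covered by the preceding paragraph. Composing these isomorphisms gives the desired isomorphisms for $\Gamma$ and $\Gamma'$.

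Finally, since Definition \ref{def:graphtrafos} allows edge slides in various orientations but edge orientations are handled by the antipode (Definition \ref{def:EdgeSlide}), one should observe that Corollary \ref{cor:edgereverssal} lets one reduce to the case covered by Proposition \ref{Prop:EdgeSlideIsIso}: reverse the relevant edges via $S_\alpha, S_\beta$, perform the slide, and reverse again, each of these being an isomorphism on (co)invariants and biinvariants. The only potential obstacle is bookkeeping for the cilia -- ensuring that after the factorisation through $\tilde\Gamma,\tilde\Gamma'$ one really ends up with the correct ciliated graph $\Gamma'$ -- but this follows automatically since the isomorphisms of Lemma \ref{lemma:InvariantsIndependentFromCilium} are canonical and compatible with composition. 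No new calculations are required beyond those already carried out in \cite{MV}.
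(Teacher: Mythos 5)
Your proposal is correct and follows essentially the same route as the paper: Proposition \ref{Prop:EdgeSlideIsIso} combined with Lemmas \ref{Lemma:IsosAufInvarianten} and \ref{Lemma:IsomorphismOfBiinvariants} handles slides that avoid cilia, and Lemma \ref{lemma:InvariantsIndependentFromCilium} is used to move a cilium out of the way otherwise. The extra remarks on combined (co)actions and on edge orientations are correct elaborations of points the paper leaves implicit.
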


\begin{proof}For edge slides that do not slide edge ends over cilia, this follows directly from Lemmas \ref{Lemma:IsosAufInvarianten},  
 \ref{Lemma:IsomorphismOfBiinvariants} and Proposition \ref{Prop:EdgeSlideIsIso}. If an edge end slides over a cilium, we can apply Lemma \ref{lemma:InvariantsIndependentFromCilium} to move the cilium and obtain the same  result.
\end{proof}

We now consider edge contractions. Recall from  Definition \ref{def:graphtrafos} that an edge $\alpha\in E$ may only be contracted if its starting and target vertex differ and that contracting $\alpha$ towards  $v\in\{s(\alpha),t(\alpha)\}$ erases the cilium at $v$, while the cilium at the other vertex is preserved. 

\begin{definition}\label{def:EdgeContraction} The morphism $c_{\alpha,v}: H^{\oo E}\to H^{\oo (E-1)}$ induced by an {\bf edge contraction} of an edge $\alpha$ towards $v\in \{s(\alpha), t(\alpha)\}$ is
\begin{align*}
c_{\alpha,v}=\begin{cases}
\rhd_{v, \alpha}\circ \tau_{\alpha}\circ S_{\alpha} &\text{ if } v=t(\alpha)\\
\rhd_{v, \alpha} \circ \tau_{\alpha} &\text{ if } v=s(\alpha)
\end{cases}
\end{align*}
 where $\rhd_{v, \alpha}: H^{\oo E}\to H^{\oo (E-1)}$ denotes the $H$-module structure  from Definition \ref{def:vertexfaceops} at $v$, where $\alpha$ is replaced by a cilium and $\tau_\alpha$ is given before Definition \ref{def:TraingleCoActions}. If $v$ is univalent, then $c_{\alpha,v}=\epsilon_{\alpha}$.
\end{definition}

\begin{example}\label{ex:EdgeContraction}
Contracting the edge $\alpha$ towards $v$ in 
\begin{center}
\tikzset{every picture/.style={line width=0.75pt}}       
\begin{tikzpicture}[x=0.75pt,y=0.75pt,yscale=-1.2,xscale=1.2]
\draw  [fill={rgb, 255:red, 0; green, 0; blue, 0 }  ,fill opacity=1 ] (44.71,38.61) .. controls (44.71,37.09) and (45.95,35.86) .. (47.47,35.86) .. controls (48.99,35.86) and (50.22,37.09) .. (50.22,38.61) .. controls (50.22,40.13) and (48.99,41.36) .. (47.47,41.36) .. controls (45.95,41.36) and (44.71,40.13) .. (44.71,38.61) -- cycle ;
\draw [color={rgb, 255:red, 74; green, 144; blue, 226 }  ,draw opacity=1 ]   (122.13,39.72) .. controls (120.34,41.26) and (118.68,41.14) .. (117.15,39.35) .. controls (115.61,37.56) and (113.95,37.44) .. (112.16,38.98) .. controls (110.37,40.52) and (108.71,40.4) .. (107.17,38.61) -- (106.78,38.58) -- (106.78,38.58) ;
\draw [color={rgb, 255:red, 208; green, 2; blue, 27 }  ,draw opacity=1 ]   (50.22,38.32) -- (99.28,38.57) ;
\draw [shift={(101.28,38.58)}, rotate = 180.29] [color={rgb, 255:red, 208; green, 2; blue, 27 }  ,draw opacity=1 ][line width=0.75]    (6.56,-1.97) .. controls (4.17,-0.84) and (1.99,-0.18) .. (0,0) .. controls (1.99,0.18) and (4.17,0.84) .. (6.56,1.97)   ;
\draw    (118.99,59.72) -- (107.18,42.97) ;
\draw [shift={(106.03,41.33)}, rotate = 54.82] [color={rgb, 255:red, 0; green, 0; blue, 0 }  ][line width=0.75]    (6.56,-1.97) .. controls (4.17,-0.84) and (1.99,-0.18) .. (0,0) .. controls (1.99,0.18) and (4.17,0.84) .. (6.56,1.97)   ;
\draw    (34.7,57.44) -- (43.8,41.98) ;
\draw [shift={(44.81,40.25)}, rotate = 120.45] [color={rgb, 255:red, 0; green, 0; blue, 0 }  ][line width=0.75]    (6.56,-1.97) .. controls (4.17,-0.84) and (1.99,-0.18) .. (0,0) .. controls (1.99,0.18) and (4.17,0.84) .. (6.56,1.97)   ;
\draw  [fill={rgb, 255:red, 0; green, 0; blue, 0 }  ,fill opacity=1 ] (101.28,38.58) .. controls (101.28,37.06) and (102.51,35.83) .. (104.03,35.83) .. controls (105.55,35.83) and (106.78,37.06) .. (106.78,38.58) .. controls (106.78,40.1) and (105.55,41.33) .. (104.03,41.33) .. controls (102.51,41.33) and (101.28,40.1) .. (101.28,38.58) -- cycle ;
\draw    (106.7,17.15) -- (104.31,33.85) ;
\draw [shift={(104.03,35.83)}, rotate = 278.15] [color={rgb, 255:red, 0; green, 0; blue, 0 }  ][line width=0.75]    (6.56,-1.97) .. controls (4.17,-0.84) and (1.99,-0.18) .. (0,0) .. controls (1.99,0.18) and (4.17,0.84) .. (6.56,1.97)   ;
\draw [color={rgb, 255:red, 74; green, 144; blue, 226 }  ,draw opacity=1 ]   (52.7,52.87) .. controls (50.5,52.02) and (49.82,50.5) .. (50.67,48.3) .. controls (51.52,46.1) and (50.84,44.58) .. (48.64,43.73) -- (47.47,41.08) -- (47.47,41.08) ;
\draw    (44.71,38.61) .. controls (11.95,38.89) and (32.97,7.31) .. (46.64,34.14) ;
\draw [shift={(47.47,35.86)}, rotate = 245.47] [color={rgb, 255:red, 0; green, 0; blue, 0 }  ][line width=0.75]    (6.56,-1.97) .. controls (4.17,-0.84) and (1.99,-0.18) .. (0,0) .. controls (1.99,0.18) and (4.17,0.84) .. (6.56,1.97)   ;
\draw    (50.29,40.43) .. controls (64.98,49.82) and (76.79,56.87) .. (100.93,42.08) ;
\draw [shift={(102.42,41.15)}, rotate = 147.53] [color={rgb, 255:red, 0; green, 0; blue, 0 }  ][line width=0.75]    (6.56,-1.97) .. controls (4.17,-0.84) and (1.99,-0.18) .. (0,0) .. controls (1.99,0.18) and (4.17,0.84) .. (6.56,1.97)   ;
\draw [color={rgb, 255:red, 74; green, 144; blue, 226 }  ,draw opacity=1 ]   (226.42,38.61) .. controls (224.63,40.15) and (222.97,40.03) .. (221.43,38.24) .. controls (219.9,36.45) and (218.24,36.33) .. (216.45,37.87) .. controls (214.66,39.41) and (213,39.29) .. (211.46,37.5) -- (211.07,37.47) -- (211.07,37.47) ;
\draw    (228.42,53.15) -- (212.76,41.15) ;
\draw [shift={(211.17,39.94)}, rotate = 37.46] [color={rgb, 255:red, 0; green, 0; blue, 0 }  ][line width=0.75]    (6.56,-1.97) .. controls (4.17,-0.84) and (1.99,-0.18) .. (0,0) .. controls (1.99,0.18) and (4.17,0.84) .. (6.56,1.97)   ;
\draw    (185.56,43.15) -- (202.86,39.29) ;
\draw [shift={(204.81,38.86)}, rotate = 167.42] [color={rgb, 255:red, 0; green, 0; blue, 0 }  ][line width=0.75]    (6.56,-1.97) .. controls (4.17,-0.84) and (1.99,-0.18) .. (0,0) .. controls (1.99,0.18) and (4.17,0.84) .. (6.56,1.97)   ;
\draw  [fill={rgb, 255:red, 0; green, 0; blue, 0 }  ,fill opacity=1 ] (205.56,37.47) .. controls (205.56,35.95) and (206.79,34.72) .. (208.31,34.72) .. controls (209.83,34.72) and (211.07,35.95) .. (211.07,37.47) .. controls (211.07,38.99) and (209.83,40.22) .. (208.31,40.22) .. controls (206.79,40.22) and (205.56,38.99) .. (205.56,37.47) -- cycle ;
\draw    (225.28,24.3) -- (212.25,33.3) ;
\draw [shift={(210.6,34.43)}, rotate = 325.36] [color={rgb, 255:red, 0; green, 0; blue, 0 }  ][line width=0.75]    (6.56,-1.97) .. controls (4.17,-0.84) and (1.99,-0.18) .. (0,0) .. controls (1.99,0.18) and (4.17,0.84) .. (6.56,1.97)   ;
\draw    (205.56,35.44) .. controls (172.8,35.72) and (193.82,6.09) .. (207.49,33) ;
\draw [shift={(208.31,34.72)}, rotate = 245.47] [color={rgb, 255:red, 0; green, 0; blue, 0 }  ][line width=0.75]    (6.56,-1.97) .. controls (4.17,-0.84) and (1.99,-0.18) .. (0,0) .. controls (1.99,0.18) and (4.17,0.84) .. (6.56,1.97)   ;
\draw    (205.56,40.58) .. controls (185.05,64.05) and (215.67,62.67) .. (208.91,41.86) ;
\draw [shift={(208.31,40.22)}, rotate = 68.16] [color={rgb, 255:red, 0; green, 0; blue, 0 }  ][line width=0.75]    (6.56,-1.97) .. controls (4.17,-0.84) and (1.99,-0.18) .. (0,0) .. controls (1.99,0.18) and (4.17,0.84) .. (6.56,1.97)   ;
\draw [color={rgb, 255:red, 128; green, 128; blue, 128 }  ,draw opacity=1 ]   (142.7,38.13) -- (160.56,38.03) ;
\draw [shift={(163.56,38.01)}, rotate = 179.67] [fill={rgb, 255:red, 128; green, 128; blue, 128 }  ,fill opacity=1 ][line width=0.08]  [draw opacity=0] (8.93,-4.29) -- (0,0) -- (8.93,4.29) -- cycle    ;
\draw (65.99,22.49) node [anchor=north west][inner sep=0.75pt]  [font=\scriptsize]  {$\textcolor[rgb]{0.82,0.01,0.11}{\alpha }$};
\draw (47.14,56.83) node [anchor=north west][inner sep=0.75pt]  [font=\scriptsize]  {$\textcolor[rgb]{0.29,0.56,0.89}{v}$};
\draw (23.43,45.65) node [anchor=north west][inner sep=0.75pt]  [font=\scriptsize]  {$b$};
\draw (66.57,51.94) node [anchor=north west][inner sep=0.75pt]  [font=\scriptsize]  {$c$};
\draw (14.86,18.8) node [anchor=north west][inner sep=0.75pt]  [font=\scriptsize]  {$d$};
\draw (108.86,14.8) node [anchor=north west][inner sep=0.75pt]  [font=\scriptsize]  {$k$};
\draw (123.43,27.4) node [anchor=north west][inner sep=0.75pt]  [font=\scriptsize]  {$\textcolor[rgb]{0.29,0.56,0.89}{w}$};
\draw (121.14,53.69) node [anchor=north west][inner sep=0.75pt]  [font=\scriptsize]  {$l$};
\draw (175.71,36.54) node [anchor=north west][inner sep=0.75pt]  [font=\scriptsize]  {$b$};
\draw (190.57,56.54) node [anchor=north west][inner sep=0.75pt]  [font=\scriptsize]  {$c$};
\draw (177.43,12.26) node [anchor=north west][inner sep=0.75pt]  [font=\scriptsize]  {$d$};
\draw (219.43,8.83) node [anchor=north west][inner sep=0.75pt]  [font=\scriptsize]  {$k$};
\draw (228.57,54.26) node [anchor=north west][inner sep=0.75pt]  [font=\scriptsize]  {$l$};
\draw (230.29,28.83) node [anchor=north west][inner sep=0.75pt]  [font=\scriptsize]  {$\textcolor[rgb]{0.29,0.56,0.89}{w}$};
\end{tikzpicture}
\end{center}
gives the morphism $c_{\alpha,v}$ with
\begin{align*}
c_{\alpha,v}\, (\alpha \oo b\oo c\oo d\oo k \oo l)= \low \alpha 3 b \oo c S(\low \alpha 4 ) \oo \low \alpha 1 d S(\low \alpha 2 ) \oo k \oo l.
\end{align*}
\end{example}

It follows directly from Definition \ref{def:EdgeContraction} that first reversing the orientation of an edge $\beta$ and then contracting it is the same as just contracting $\beta$. It
also follows from  Definitions \ref{def:TraingleCoActions} and \ref{def:vertexfaceops}
that reversing the orientation of an edge $\beta$ commutes with contractions of all edges $\alpha\neq \beta$.
The contraction of an edge $\alpha$ also commutes with edge slides along $\alpha$, which allows one to express any edge contraction as a composite of  edge slides and an edge contraction towards a univalent vertex.

\begin{lemma}\label{lemma:EdgeContractionReversalCommutes}
Let $\Gamma'$ be obtained by reversing an edge  $\beta$  in $\Gamma$. Then  
\begin{align}
&c'_{\beta, v}\circ S_\beta=c_{\beta, v} & &c_{\alpha,v}'\circ S_{\beta}= S_{\beta}\circ c_{\alpha,v}\quad \text{for }\alpha\neq\beta. \label{eq:EdgeContractionEdgeReversalCommute}
\end{align}
\end{lemma}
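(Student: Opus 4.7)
The plan is to reduce both identities to case analyses based on which slots of $H^{\oo E}$ are affected and then to exploit the definition of $\rhd_{\alpha\pm}$ together with involutivity $S\circ S=1_H$.

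For the first identity I would split into the two cases $v=t(\beta)$ and $v=s(\beta)$. In both cases the auxiliary action $\rhd_{v,\beta}$ and the braiding $\tau_\beta$ are the same in $\Gamma$ and $\Gamma'$: only the slot for $\beta$ is involved in the ``acting'' position, and the remaining edges at $v$ keep their orientation, so $\rhd'_{v,\beta}=\rhd_{v,\beta}$. If $v=t(\beta)$, then $v=s'(\beta)$ in $\Gamma'$, and
\[
 c'_{\beta,v}\circ S_\beta=\rhd_{v,\beta}\circ\tau_\beta\circ S_\beta=c_{\beta,v}
\]
directly from the definition. If $v=s(\beta)$, then $v=t'(\beta)$ in $\Gamma'$, and
\[
 c'_{\beta,v}\circ S_\beta=\rhd_{v,\beta}\circ\tau_\beta\circ S_\beta\circ S_\beta=\rhd_{v,\beta}\circ\tau_\beta=c_{\beta,v}
\]
by involutivity together with $S_\beta\circ\tau_\beta=\tau_\beta\circ S_\beta$ (the braiding and $S_\beta$ act on disjoint data).

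For the second identity I would distinguish whether $\beta$ is incident at $v$ or not. If $\beta$ is not incident at $v$, then $c_{\alpha,v}$ and $c'_{\alpha,v}$ do not touch the $\beta$-slot of $H^{\oo E}$, and the local data defining them (the cyclic ordering at $v$ and the orientations of edges at $v$) is the same in $\Gamma$ and $\Gamma'$; hence $c'_{\alpha,v}=c_{\alpha,v}$ and this morphism commutes with $S_\beta$ because their actions are on disjoint factors of the tensor product.

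If $\beta$ is incident at $v$, the only place where $c_{\alpha,v}$ and $c'_{\alpha,v}$ differ is at the $\beta$-slot, because reversing $\beta$ replaces $\rhd_{\beta+}$ by $\rhd_{\beta-}$ (or vice versa) inside $\rhd_{v,\alpha}$. Using Definition~\ref{def:TraingleCoActions}, namely $\rhd_{\beta-}=S_\beta\circ\rhd_{\beta+}\circ(1_H\oo S_\beta)$, and the involutivity of $S$, a direct computation in one of the two subcases (the other being symmetric) gives
\[
 S_\beta\circ\rhd_{\beta+}=\rhd_{\beta-}\circ(1_H\oo S_\beta)\qquad\text{and}\qquad S_\beta\circ\rhd_{\beta-}=\rhd_{\beta+}\circ(1_H\oo S_\beta),
\]
so on the $\beta$-slot postcomposing with $S_\beta$ in $\Gamma$ agrees with precomposing with $S_\beta$ in $\Gamma'$. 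On all other slots $c_{\alpha,v}$ and $c'_{\alpha,v}$ coincide and commute with $S_\beta$ as in the previous paragraph. Combining the two identities \eqref{eq:EdgeContractionEdgeReversalCommute} then follows.

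The routine verifications are all essentially one-line Sweedler computations; the only substantive point is the intertwining relation between $S_\beta$ and $\rhd_{\beta\pm}$, which is where involutivity of $H$ enters in an essential way. I expect no real obstacle beyond keeping track of which edge ends at $v$ change orientation under the reversal of $\beta$.
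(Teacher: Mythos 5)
Your proposal is correct and matches the paper's (implicit) argument: the paper states this lemma without a separate proof, asserting just before it that both identities follow directly from Definitions \ref{def:TraingleCoActions}, \ref{def:vertexfaceops} and \ref{def:EdgeContraction}, and your case analysis together with the intertwining relation $S_\beta\circ\rhd_{\beta\pm}=\rhd_{\beta\mp}\circ(1_H\oo S_\beta)$ (which uses $S\circ S=1_H$) is precisely that direct verification. The only cosmetic omission is the subcase where $\beta$ is a loop at $v$ with both ends incident, but the same slot-by-slot computation handles it.
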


\begin{lemma}\label{Lemma:EdgeContractionAndSlidesCompatible}
Contracting an edge $\alpha$  gives the same morphism as  first sliding  edge ends  along $\alpha$ and then contracting $\alpha$.
\end{lemma}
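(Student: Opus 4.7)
The plan is to reduce the identity to a single edge slide followed by a single contraction, and then verify it by a direct computation in Sweedler notation using the coassociativity of $\Delta$.

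\textbf{Reductions.} By induction on the number of slides it suffices to prove the identity for a single slide $S_{\alpha,\beta}$ along $\alpha$ followed by the contraction $c_{\alpha,v}$. By Lemma~\ref{lemma:EdgeContractionReversalCommutes}, reversing $\alpha$ commutes with contracting $\alpha$, so we may assume $v=s(\alpha)$ and $c_{\alpha,v}=\rhd_{v,\alpha}\circ\tau_\alpha$. Reversing $\alpha$ or $\beta$ as necessary and invoking Lemma~\ref{Lemma:EdgeReversalModuleComoduleIso}, Lemma~\ref{lemma:EdgeContractionReversalCommutes} and Lemma~\ref{lemma:InvariantsIndependentFromCilium} to move cilia out of the way if needed, we reduce to the configuration of Definition~\ref{def:EdgeSlide}: the target end of $\beta$ sits at $v=s(\alpha)$ directly before the starting end of $\alpha$, and the slide moves it to $w=t(\alpha)$.

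\textbf{Key computation.} Writing $\Delta(\alpha)=\alpha_{(1)}\oo\alpha_{(2)}$, the slide acts on the two relevant slots of $H^{\oo E}$ by $\alpha\oo\beta\mapsto\alpha_{(2)}\oo\alpha_{(1)}\beta$, peeling off one coproduct factor $\alpha_{(1)}$ and multiplying it onto $\beta$ from the left while leaving $\alpha_{(2)}$ in the slot of $\alpha$. The subsequent contraction $c_{\alpha,v}=\rhd_{v,\alpha}\circ\tau_\alpha$ then distributes $\Delta^{(n-2)}(\alpha_{(2)})$ over the $n-1$ remaining edge ends at $v$, where $n$ denotes the total number of edge ends at $v$ in the original graph (counting loops twice and including $\alpha$'s end). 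By contrast, the direct contraction distributes $\Delta^{(n-1)}(\alpha)$ over all $n$ edge ends at $v$, which still include $\beta$'s target end; by the assumed cyclic ordering the leftmost coproduct factor is routed to $\beta$. Coassociativity now yields
\begin{align*}
\Delta^{(n-1)}(\alpha)=(1_H\oo\Delta^{(n-2)})\circ\Delta(\alpha)=\alpha_{(1)}\oo\Delta^{(n-2)}(\alpha_{(2)}),
\end{align*}
so the factor landing on $\beta$ in the direct contraction is precisely $\alpha_{(1)}$, matching the slide, and the remaining factors agree with those produced by $c_{\alpha,v}$ after the slide. The two composites therefore coincide.

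\textbf{Main obstacle.} The principal difficulty is the combinatorial bookkeeping at the merged vertex: one must verify that the cyclic order produced by slide-then-contract agrees with that of direct contraction, including the subtle placement of the ``internal cilium'' introduced by $\rhd_{v,\alpha}$ at the former position of $\alpha$'s end, and that each coproduct factor of $\alpha$ is routed to the correct edge end under both procedures. A secondary technical point arises for slides in the opposite direction, which move an edge end from $w$ to $v$ and hence introduce an extra antipode through the inverse slide formula of Definition~\ref{def:EdgeSlide}; in that case one must combine the two consecutive coproduct factors acting on $\beta$ via the involutive antipode identity~\eqref{eq:EightWithAntipode}, $m^{op}\circ(S\oo 1_H)\circ\Delta=\eta\circ\epsilon$, to recover the direct-contract behaviour in which $\beta$'s end is untouched.
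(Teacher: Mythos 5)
Your proposal is correct and follows essentially the same route as the paper: reduce to a single slide by induction, normalise the edge orientations via Lemma \ref{lemma:EdgeContractionReversalCommutes}, and then check the identity by a direct Sweedler computation in which coassociativity of $\Delta$ matches the leg of $\Delta^{(n-1)}(\alpha)$ routed to $\beta$ under direct contraction with the factor $\alpha_{(1)}$ peeled off by the slide. The only divergence is your treatment of slides in the opposite direction (from $t(\alpha)$ to $s(\alpha)$): you compute these directly and cancel the extra antipode via \eqref{eq:EightWithAntipode}, whereas the paper disposes of them formally by pre-composing the already-proven identity with the inverse slide; both work, the paper's being slightly shorter and not relying on involutivity at this point, yours making the cancellation mechanism explicit. (One small remark: invoking Lemma \ref{lemma:InvariantsIndependentFromCilium} to move cilia is not needed and not quite appropriate here, since that lemma concerns isomorphisms of invariants rather than equalities of morphisms on $H^{\oo E}$ --- but the cilia of $\Gamma$ do not enter the morphisms $c_{\alpha,v}$ and $S_{\alpha,\beta}$ anyway, so nothing is lost.)
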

\begin{proof}
It suffices to slide a single edge end along $\alpha$, as the statement follows inductively. We denote by $c_{\alpha, v}$ the contraction of $\alpha$ in $\Gamma$ and by $c_{\alpha,v}'$ the contraction of $\alpha$ in the graph $\Gamma'$ obtained by sliding  an edge $b$ along $\alpha$. Suppose that there are no loops incident at $s(\alpha)$ and $t(\alpha)$ in $\Gamma$ and $\Gamma'$.  As edge slides and edge contractions commute with edge reversals by  Definition \ref{def:EdgeSlide}  and   Lemma \ref{lemma:EdgeContractionReversalCommutes}, respectively,  we can  assume   $v=s(\alpha)$ and all other edge ends at $v$ and $w=t(\alpha)$ are incoming. It is then sufficient to consider an edge slide of $b$ along the left and right of $\alpha$: 
\begin{align}\label{eq:slidecontract}
\tikzset{every picture/.style={line width=0.75pt}} 
\begin{tikzpicture}[x=0.75pt,y=0.75pt,yscale=-1.2,xscale=1.2]
\draw  [fill={rgb, 255:red, 0; green, 0; blue, 0 }  ,fill opacity=1 ] (34.43,45.75) .. controls (34.43,44.23) and (35.66,43) .. (37.18,43) .. controls (38.7,43) and (39.93,44.23) .. (39.93,45.75) .. controls (39.93,47.27) and (38.7,48.5) .. (37.18,48.5) .. controls (35.66,48.5) and (34.43,47.27) .. (34.43,45.75) -- cycle ;
\draw [color={rgb, 255:red, 74; green, 144; blue, 226 }  ,draw opacity=1 ]   (113.79,52.58) -- (111.13,51.55) -- (111.13,51.55) .. controls (108.97,52.5) and (107.42,51.89) .. (106.47,49.74) .. controls (105.52,47.58) and (103.97,46.98) .. (101.81,47.93) -- (99.22,46.93) -- (99.22,46.93) ;
\draw  [fill={rgb, 255:red, 0; green, 0; blue, 0 }  ,fill opacity=1 ] (93.43,46.93) .. controls (93.43,45.41) and (94.66,44.18) .. (96.18,44.18) .. controls (97.7,44.18) and (98.93,45.41) .. (98.93,46.93) .. controls (98.93,48.45) and (97.7,49.68) .. (96.18,49.68) .. controls (94.66,49.68) and (93.43,48.45) .. (93.43,46.93) -- cycle ;
\draw [color={rgb, 255:red, 74; green, 144; blue, 226 }  ,draw opacity=1 ]   (20.93,59.44) .. controls (21.13,57.09) and (22.4,56.02) .. (24.75,56.21) -- (27.58,53.81) -- (27.58,53.81) .. controls (27.78,51.46) and (29.05,50.39) .. (31.4,50.58) -- (34.36,48.07) -- (34.36,48.07) ;
\draw [color={rgb, 255:red, 208; green, 2; blue, 27 }  ,draw opacity=1 ]   (39.93,45.75) -- (83.23,46.7) -- (91.43,46.88) ;
\draw [shift={(93.43,46.93)}, rotate = 181.26] [color={rgb, 255:red, 208; green, 2; blue, 27 }  ,draw opacity=1 ][line width=0.75]    (6.56,-1.97) .. controls (4.17,-0.84) and (1.99,-0.18) .. (0,0) .. controls (1.99,0.18) and (4.17,0.84) .. (6.56,1.97)   ;
\draw    (96.08,70.01) -- (96.17,51.68) ;
\draw [shift={(96.18,49.68)}, rotate = 90.3] [color={rgb, 255:red, 0; green, 0; blue, 0 }  ][line width=0.75]    (6.56,-1.97) .. controls (4.17,-0.84) and (1.99,-0.18) .. (0,0) .. controls (1.99,0.18) and (4.17,0.84) .. (6.56,1.97)   ;
\draw    (106.36,26.58) -- (99.28,42.43) ;
\draw [shift={(98.47,44.25)}, rotate = 294.07] [color={rgb, 255:red, 0; green, 0; blue, 0 }  ][line width=0.75]    (6.56,-1.97) .. controls (4.17,-0.84) and (1.99,-0.18) .. (0,0) .. controls (1.99,0.18) and (4.17,0.84) .. (6.56,1.97)   ;
\draw    (37.08,68.83) -- (37.17,50.5) ;
\draw [shift={(37.18,48.5)}, rotate = 90.3] [color={rgb, 255:red, 0; green, 0; blue, 0 }  ][line width=0.75]    (6.56,-1.97) .. controls (4.17,-0.84) and (1.99,-0.18) .. (0,0) .. controls (1.99,0.18) and (4.17,0.84) .. (6.56,1.97)   ;
\draw    (22.93,30.87) -- (32.86,42.18) ;
\draw [shift={(34.18,43.68)}, rotate = 228.73] [color={rgb, 255:red, 0; green, 0; blue, 0 }  ][line width=0.75]    (6.56,-1.97) .. controls (4.17,-0.84) and (1.99,-0.18) .. (0,0) .. controls (1.99,0.18) and (4.17,0.84) .. (6.56,1.97)   ;
\draw [color={rgb, 255:red, 65; green, 117; blue, 5 }  ,draw opacity=1 ]   (37.93,24.76) -- (37.26,41) ;
\draw [shift={(37.18,43)}, rotate = 272.36] [color={rgb, 255:red, 65; green, 117; blue, 5 }  ,draw opacity=1 ][line width=0.75]    (6.56,-1.97) .. controls (4.17,-0.84) and (1.99,-0.18) .. (0,0) .. controls (1.99,0.18) and (4.17,0.84) .. (6.56,1.97)   ;
\draw [color={rgb, 255:red, 245; green, 166; blue, 35 }  ,draw opacity=1 ]   (43.14,36.07) -- (58.28,36.26) ;
\draw [shift={(61.28,36.3)}, rotate = 180.72] [fill={rgb, 255:red, 245; green, 166; blue, 35 }  ,fill opacity=1 ][line width=0.08]  [draw opacity=0] (7.14,-3.43) -- (0,0) -- (7.14,3.43) -- cycle    ;
\draw  [fill={rgb, 255:red, 0; green, 0; blue, 0 }  ,fill opacity=1 ] (165,45.47) .. controls (165,43.95) and (166.23,42.71) .. (167.75,42.71) .. controls (169.27,42.71) and (170.5,43.95) .. (170.5,45.47) .. controls (170.5,46.99) and (169.27,48.22) .. (167.75,48.22) .. controls (166.23,48.22) and (165,46.99) .. (165,45.47) -- cycle ;
\draw [color={rgb, 255:red, 74; green, 144; blue, 226 }  ,draw opacity=1 ]   (244.36,52.3) -- (241.7,51.26) -- (241.7,51.26) .. controls (239.54,52.21) and (237.99,51.61) .. (237.04,49.45) .. controls (236.09,47.3) and (234.53,46.7) .. (232.38,47.65) -- (229.79,46.64) -- (229.79,46.64) ;
\draw  [fill={rgb, 255:red, 0; green, 0; blue, 0 }  ,fill opacity=1 ] (224,46.64) .. controls (224,45.12) and (225.23,43.89) .. (226.75,43.89) .. controls (228.27,43.89) and (229.5,45.12) .. (229.5,46.64) .. controls (229.5,48.16) and (228.27,49.4) .. (226.75,49.4) .. controls (225.23,49.4) and (224,48.16) .. (224,46.64) -- cycle ;
\draw [color={rgb, 255:red, 74; green, 144; blue, 226 }  ,draw opacity=1 ]   (147.28,47.15) .. controls (148.78,45.34) and (150.44,45.18) .. (152.25,46.68) .. controls (154.06,48.18) and (155.72,48.02) .. (157.23,46.21) .. controls (158.73,44.39) and (160.39,44.23) .. (162.21,45.73) -- (165,45.47) -- (165,45.47) ;
\draw [color={rgb, 255:red, 208; green, 2; blue, 27 }  ,draw opacity=1 ]   (170.5,45.47) -- (213.8,46.42) -- (222,46.6) ;
\draw [shift={(224,46.64)}, rotate = 181.26] [color={rgb, 255:red, 208; green, 2; blue, 27 }  ,draw opacity=1 ][line width=0.75]    (6.56,-1.97) .. controls (4.17,-0.84) and (1.99,-0.18) .. (0,0) .. controls (1.99,0.18) and (4.17,0.84) .. (6.56,1.97)   ;
\draw    (226.65,69.72) -- (226.74,51.4) ;
\draw [shift={(226.75,49.4)}, rotate = 90.3] [color={rgb, 255:red, 0; green, 0; blue, 0 }  ][line width=0.75]    (6.56,-1.97) .. controls (4.17,-0.84) and (1.99,-0.18) .. (0,0) .. controls (1.99,0.18) and (4.17,0.84) .. (6.56,1.97)   ;
\draw    (236.93,26.3) -- (229.85,42.14) ;
\draw [shift={(229.04,43.97)}, rotate = 294.07] [color={rgb, 255:red, 0; green, 0; blue, 0 }  ][line width=0.75]    (6.56,-1.97) .. controls (4.17,-0.84) and (1.99,-0.18) .. (0,0) .. controls (1.99,0.18) and (4.17,0.84) .. (6.56,1.97)   ;
\draw    (153.85,62.87) -- (165.06,49.19) ;
\draw [shift={(166.32,47.65)}, rotate = 129.34] [color={rgb, 255:red, 0; green, 0; blue, 0 }  ][line width=0.75]    (6.56,-1.97) .. controls (4.17,-0.84) and (1.99,-0.18) .. (0,0) .. controls (1.99,0.18) and (4.17,0.84) .. (6.56,1.97)   ;
\draw    (166.42,24.3) -- (167.61,40.72) ;
\draw [shift={(167.75,42.71)}, rotate = 265.86] [color={rgb, 255:red, 0; green, 0; blue, 0 }  ][line width=0.75]    (6.56,-1.97) .. controls (4.17,-0.84) and (1.99,-0.18) .. (0,0) .. controls (1.99,0.18) and (4.17,0.84) .. (6.56,1.97)   ;
\draw [color={rgb, 255:red, 65; green, 117; blue, 5 }  ,draw opacity=1 ]   (172.42,73.44) -- (169.27,51.13) ;
\draw [shift={(168.99,49.15)}, rotate = 81.96] [color={rgb, 255:red, 65; green, 117; blue, 5 }  ,draw opacity=1 ][line width=0.75]    (6.56,-1.97) .. controls (4.17,-0.84) and (1.99,-0.18) .. (0,0) .. controls (1.99,0.18) and (4.17,0.84) .. (6.56,1.97)   ;
\draw [color={rgb, 255:red, 245; green, 166; blue, 35 }  ,draw opacity=1 ]   (174,61.21) -- (189.13,61.4) ;
\draw [shift={(192.13,61.44)}, rotate = 180.72] [fill={rgb, 255:red, 245; green, 166; blue, 35 }  ,fill opacity=1 ][line width=0.08]  [draw opacity=0] (7.14,-3.43) -- (0,0) -- (7.14,3.43) -- cycle    ;
\draw (60.71,49.72) node [anchor=north west][inner sep=0.75pt]  [font=\scriptsize]  {$\textcolor[rgb]{0.82,0.01,0.11}{\alpha }$};
\draw (15.71,40.26) node [anchor=north west][inner sep=0.75pt]  [font=\scriptsize]  {$\textcolor[rgb]{0.29,0.56,0.89}{v}$};
\draw (40.57,21.11) node [anchor=north west][inner sep=0.75pt]  [font=\scriptsize]  {$\textcolor[rgb]{0.25,0.46,0.02}{b}$};
\draw (15.71,19.97) node [anchor=north west][inner sep=0.75pt]  [font=\scriptsize]  {$c$};
\draw (111.43,52.83) node [anchor=north west][inner sep=0.75pt]  [font=\scriptsize]  {$\textcolor[rgb]{0.29,0.56,0.89}{w}$};
\draw (34.57,67.97) node [anchor=north west][inner sep=0.75pt]  [font=\scriptsize]  {$d$};
\draw (108,19.11) node [anchor=north west][inner sep=0.75pt]  [font=\scriptsize]  {$k$};
\draw (98.29,67.69) node [anchor=north west][inner sep=0.75pt]  [font=\scriptsize]  {$l$};
\draw (191.29,49.44) node [anchor=north west][inner sep=0.75pt]  [font=\scriptsize]  {$\textcolor[rgb]{0.82,0.01,0.11}{\alpha }$};
\draw (147.71,29.69) node [anchor=north west][inner sep=0.75pt]  [font=\scriptsize]  {$\textcolor[rgb]{0.29,0.56,0.89}{v}$};
\draw (174.29,64.61) node [anchor=north west][inner sep=0.75pt]  [font=\scriptsize]  {$\textcolor[rgb]{0.25,0.46,0.02}{b}$};
\draw (156.86,15.97) node [anchor=north west][inner sep=0.75pt]  [font=\scriptsize]  {$c$};
\draw (242,52.54) node [anchor=north west][inner sep=0.75pt]  [font=\scriptsize]  {$\textcolor[rgb]{0.29,0.56,0.89}{w}$};
\draw (144.57,57.4) node [anchor=north west][inner sep=0.75pt]  [font=\scriptsize]  {$d$};
\draw (239.14,16.54) node [anchor=north west][inner sep=0.75pt]  [font=\scriptsize]  {$k$};
\draw (229.14,66.54) node [anchor=north west][inner sep=0.75pt]  [font=\scriptsize]  {$l$};
\end{tikzpicture}
\end{align}
Omitting the copies of $H$ for edges not incident at $v,w$ we compute for the edge slides in \eqref{eq:slidecontract}
\begin{align*}
c_{\alpha,v}'\circ S_{\alpha,b} (\alpha \oo b \oo c \oo d \oo k \oo l) &= c_{\alpha,v}'(\low \alpha 2 \oo \low \alpha 1 b \oo c \oo d \oo k \oo l) \\
&= \low \alpha 1 b \oo \low \alpha 2 c \oo \low \alpha 3 d \oo k \oo l \,= \, c_{\alpha,v}(\alpha \oo b \oo c\oo d\oo k \oo l),\\
c_{\alpha,v}'\circ S_{\alpha,b}(\alpha \oo b\oo c \oo d \oo k \oo l) &= c_{\alpha,v}'(\low \alpha 1 \oo \low \alpha 2 b \oo c\oo d  \oo k \oo l) \\
&= \low \alpha 3 b \oo \low \alpha 1 c \oo \low \alpha 2 d \oo k \oo l \, = \, c_{\alpha,v}(\alpha \oo b\oo c \oo d\oo k \oo l).
\end{align*}
As   edge slides from $w$ to $v$ are the inverses of edge slides from $v$ to $w$, the corresponding identities for those follow by 
pre-composing  with the inverses. The proof for vertices  with  different numbers of incident edge ends or  incident loops is analogous.
\end{proof}
 
Next, we consider the interaction of edge contractions with the (co)module  structures for the vertices (faces) of the graph. For this, note that the contraction of an edge $\alpha$ towards $v\in \{s(\alpha), t(\alpha)\}$ defines a bijection between the sets $F,F'$ of faces  before and after the contraction and likewise a bijection between the sets $V\setminus\{v\}$ and $V'$. 
If faces and vertices are identified via these bijections, the edge contraction becomes a (co)module  morphism. In contrast, the module structure $\rhd_v$ is coequalised.

\begin{lemma}\label{Lemma:EdgeContractionIsModuleMorphism}$ $ The  contraction  of  an edge $\alpha$ towards a ciliated vertex $v$ coequalises $\rhd_v$ and $\epsilon \oo 1_{H^{\oo E}}$  and is a (co)module morphism with respect to the (co)actions $\rhd_z$ and $\delta_f$ for all 
ciliated vertices $z\neq v$ and  ciliated faces $f\in F$ that do not start at $v$: 
\begin{align}
c_{\alpha,v}\circ \rhd_v &= c_{\alpha,v}\circ (\epsilon \, \oo \, 1_{H^{\oo E}}), \label{eq:EdgeContractionVertexActionAtV}\\
c_{\alpha,v}\circ \rhd_z &= \rhd_z'\circ (1_H \, \oo \, c_{\alpha,v}),\label{eq:EdgeContractionModuleMorphism}\\
\delta_f'\circ c_{\alpha,v} &= (1_H \, \oo \, c_{\alpha,v})\circ \delta_f. \label{eq:EdgeContractionComoduleMorphism}
\end{align}
\end{lemma}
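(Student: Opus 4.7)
The plan is to verify the three identities by direct computation in Sweedler notation, after reducing to a convenient normalised configuration. By Lemma \ref{lemma:EdgeContractionReversalCommutes} edge contraction commutes with reversing any edge other than $\alpha$ and satisfies $c_{\alpha,v}=c'_{\alpha,v}\circ S_\alpha$ under reversal of $\alpha$, and by Lemma \ref{Lemma:EdgeContractionAndSlidesCompatible} it also commutes with edge slides along $\alpha$. Using these, I would reduce to the case $v=s(\alpha)$ with $\alpha$ outgoing from $v$, all remaining edges at $v$ and at $w:=t(\alpha)$ incoming, and no loops incident at $v$ or $w$. The cases with loops or opposite orientation are treated by inserting antipodes on the appropriate Sweedler factors and are notationally heavier but structurally identical.

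For \eqref{eq:EdgeContractionVertexActionAtV} I would unwind $\rhd_v$ by Definition \ref{def:vertexfaceops}: the coproduct $\Delta^{(n-1)}(h)$ is distributed over the edge ends at $v$, producing a factor $\alpha S(\low h 1)$ on the $\alpha$-copy and factors $\low h {i+1}\beta_i$ on the other incident edges. Applying $c_{\alpha,v}=\rhd_{v,\alpha}\circ \tau_\alpha$ then takes a further coproduct of $\alpha S(\low h 1)$, whose Sweedler factors are $\low\alpha 1 S(\low{h}{1,2}) \oo \low\alpha 2 S(\low{h}{1,1})$ by anti-comultiplicativity \eqref{eq:antialg}, and multiplies these into the $\beta_i$. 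Coassociativity regroups the Sweedler indices of $h$, and then the defining antipode relation $m\circ (S\oo 1_H)\circ\Delta=\eta\circ\epsilon$ from \eqref{eq:counit} together with involutivity $S^2=1_H$ and \eqref{eq:EightWithAntipode} collapses every pair of $h$-factors into $\epsilon(h)$, leaving precisely $c_{\alpha,v}\circ (\epsilon\oo 1_{H^{\oo E}})$.

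For \eqref{eq:EdgeContractionModuleMorphism}, I would split by the relative position of $z$ and $\alpha$. If $z$ is not incident at any end of $\alpha$, then $\rhd_z$ acts only on copies of $H$ that are untouched by $c_{\alpha,v}$, and the identity is immediate from Lemma \ref{lemma:VertexAndFaceOperatorsCommmute}\,(1). If $z=w=t(\alpha)$, the merged vertex in $\Gamma/\alpha$ has edge ends obtained by splicing the edge ends of $v$ into the cyclic order at $w$ at the position previously occupied by the $t(\alpha)$-end. Unwinding $\rhd_z'$ on $H^{\oo(E-1)}$ via Definition \ref{def:vertexfaceops} and comparing with $c_{\alpha,v}\circ \rhd_z$ reduces, after coassociative regrouping of the Sweedler factors of $h$, to the statement that the $\alpha$-factor inserted into the $v$-incident edges by $c_{\alpha,v}$ and the subsequent $h$-action at $w$ multiply on the same copies of $H$ in a compatible way, which follows from the (co)associativity of $H$.

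For \eqref{eq:EdgeContractionComoduleMorphism} I would proceed by the same case analysis on whether $f$ traverses $\alpha$. If $f$ avoids $\alpha$, both sides act only on edges untouched by the contraction and the claim is clear. If $f$ traverses $\alpha$ once or twice, the Sweedler factors of the $\alpha$-copy produced by $\delta_{\alpha\pm}$ correspond, under $c_{\alpha,v}$, to factors on the $v$-incident edges in the order prescribed by the boundary of the image face in $\Gamma/\alpha$; the identification then rests on coassociativity and the anti-coalgebra identities \eqref{eq:antialg}. The main obstacle is the last subcase, in which $f$ crosses both sides of $\alpha$ and simultaneously winds through the merged vertex: the cyclic interleaving of edge ends at the new vertex must match exactly the Sweedler factors demanded by $\delta_f'$, and this is where the hypothesis that $f$ does not start at $v$ is crucial, since otherwise the mismatch of cilium position at the merged vertex would produce correction terms of the Yetter-Drinfeld type from Lemma \ref{lemma:VertexAndFaceOperatorsCommmute}\,(3) rather than an equality.
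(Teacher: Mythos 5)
Your overall strategy --- normalise the local picture by edge reversals and slides, then verify the three identities by a Sweedler computation with a case split on $z$ and on how $f$ meets $\alpha$ --- is the same as the paper's. The move you are missing is the decisive one: the paper combines Lemma \ref{Lemma:EdgeContractionAndSlidesCompatible} with Proposition \ref{Prop:EdgeSlideIsIso} to slide \emph{all} edge ends other than $\alpha$ off the vertex $v$, so that $v$ becomes univalent and, by Definition \ref{def:EdgeContraction}, $c_{\alpha,v}=\epsilon_\alpha$. After that each identity is a two-line computation: for \eqref{eq:EdgeContractionVertexActionAtV} one only needs $\epsilon(h\alpha)=\epsilon(h)\epsilon(\alpha)$; for \eqref{eq:EdgeContractionModuleMorphism} the counit kills the single factor of $h$ that lands on $\alpha$ and counitality recombines the rest; for \eqref{eq:EdgeContractionComoduleMorphism} the two adjacent $\alpha$-contributions $S(\low \alpha 3)\low \alpha 1$ in the face product collapse by \eqref{eq:EightWithAntipode} once $\epsilon$ hits $\low \alpha 2$. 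You have both ingredients of this reduction in hand but stop short of using them, and instead commit to the computation at a general multivalent $v$.

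That general computation is not wrong --- your claims for \eqref{eq:EdgeContractionVertexActionAtV} and for $z=w$ in \eqref{eq:EdgeContractionModuleMorphism} do close up via \eqref{eq:counit}, \eqref{eq:antialg} and \eqref{eq:EightWithAntipode} --- but as written, your treatment of \eqref{eq:EdgeContractionModuleMorphism} and \eqref{eq:EdgeContractionComoduleMorphism} is an assertion rather than a verification, and it is exactly the part the paper's reduction is designed to avoid. The delicate points are the ones you only gesture at: the splice position of $v$'s edge ends into the cyclic order at the merged vertex must match the coassociative regrouping of $\Delta^{(n)}(h)$, and in \eqref{eq:EdgeContractionComoduleMorphism} the order in which $\delta_f$ writes the $\alpha$- and $\beta$-factors into the face product must coincide with the order produced by $\Delta$ applied to the product that $c_{\alpha,v}$ deposits on $\beta$, with antipodes reversing that order for edge sides traversed against orientation. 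Finally, your diagnosis of why ``$f$ does not start at $v$'' is needed is roughly right but misattributed: the obstruction is not the cilium of the merged vertex but the face's own cilium (the base point of the face path) sitting at $v$; in that case the contraction conjugates the face coaction by the adjoint action $\lhd_{ad}$, as computed in the proof of Lemma \ref{Lemma:EdgeContractionAndCoInvarianceMorphisms}, rather than producing an equality.
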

\begin{proof}
As edge slides along $\alpha$ are  module and comodule isomorphisms by Proposition \ref{Prop:EdgeSlideIsIso}  and commute with the contraction of $\alpha$ by Lemma \ref{Lemma:EdgeContractionAndSlidesCompatible}, we can assume that $v$ is univalent. With Lemma \ref{lemma:EdgeContractionReversalCommutes} we can assume that $v=t(\alpha)$ and that all edge ends  at  $w=s(\alpha)$ are incoming: 
\begin{align}\label{pic:ribbonproof}
\tikzset{every picture/.style={line width=0.75pt}}  
\begin{tikzpicture}[x=0.75pt,y=0.75pt,yscale=-1.2,xscale=1.2]
\draw  [fill={rgb, 255:red, 0; green, 0; blue, 0 }  ,fill opacity=1 ] (34.43,45.75) .. controls (34.43,44.23) and (35.66,43) .. (37.18,43) .. controls (38.7,43) and (39.93,44.23) .. (39.93,45.75) .. controls (39.93,47.27) and (38.7,48.5) .. (37.18,48.5) .. controls (35.66,48.5) and (34.43,47.27) .. (34.43,45.75) -- cycle ;
\draw [color={rgb, 255:red, 74; green, 144; blue, 226 }  ,draw opacity=1 ]   (111.56,56.3) .. controls (109.23,56.65) and (107.9,55.65) .. (107.55,53.32) .. controls (107.2,50.99) and (105.86,49.99) .. (103.53,50.34) .. controls (101.2,50.69) and (99.86,49.69) .. (99.51,47.36) -- (98.93,46.93) -- (98.93,46.93) ;
\draw  [fill={rgb, 255:red, 0; green, 0; blue, 0 }  ,fill opacity=1 ] (93.43,46.93) .. controls (93.43,45.41) and (94.66,44.18) .. (96.18,44.18) .. controls (97.7,44.18) and (98.93,45.41) .. (98.93,46.93) .. controls (98.93,48.45) and (97.7,49.68) .. (96.18,49.68) .. controls (94.66,49.68) and (93.43,48.45) .. (93.43,46.93) -- cycle ;
\draw [color={rgb, 255:red, 74; green, 144; blue, 226 }  ,draw opacity=1 ]   (22.13,58.87) .. controls (22.22,56.51) and (23.45,55.38) .. (25.8,55.47) -- (27.58,53.81) -- (27.58,53.81) .. controls (27.78,51.46) and (29.05,50.39) .. (31.4,50.58) -- (34.36,48.07) -- (34.36,48.07) ;
\draw    (96.08,70.01) -- (96.17,51.68) ;
\draw [shift={(96.18,49.68)}, rotate = 90.3] [color={rgb, 255:red, 0; green, 0; blue, 0 }  ][line width=0.75]    (6.56,-1.97) .. controls (4.17,-0.84) and (1.99,-0.18) .. (0,0) .. controls (1.99,0.18) and (4.17,0.84) .. (6.56,1.97)   ;
\draw    (117.28,32.87) -- (100.98,42.96) ;
\draw [shift={(99.28,44.01)}, rotate = 328.24] [color={rgb, 255:red, 0; green, 0; blue, 0 }  ][line width=0.75]    (6.56,-1.97) .. controls (4.17,-0.84) and (1.99,-0.18) .. (0,0) .. controls (1.99,0.18) and (4.17,0.84) .. (6.56,1.97)   ;
\draw    (94.99,24.58) -- (96.01,40.58) ;
\draw [shift={(96.13,42.58)}, rotate = 266.37] [color={rgb, 255:red, 0; green, 0; blue, 0 }  ][line width=0.75]    (6.56,-1.97) .. controls (4.17,-0.84) and (1.99,-0.18) .. (0,0) .. controls (1.99,0.18) and (4.17,0.84) .. (6.56,1.97)   ;
\draw [color={rgb, 255:red, 208; green, 2; blue, 27 }  ,draw opacity=1 ]   (93.43,46.93) -- (41.93,45.8) ;
\draw [shift={(39.93,45.75)}, rotate = 1.26] [color={rgb, 255:red, 208; green, 2; blue, 27 }  ,draw opacity=1 ][line width=0.75]    (6.56,-1.97) .. controls (4.17,-0.84) and (1.99,-0.18) .. (0,0) .. controls (1.99,0.18) and (4.17,0.84) .. (6.56,1.97)   ;
\draw (62.71,48.01) node [anchor=north west][inner sep=0.75pt]  [font=\scriptsize]  {$\textcolor[rgb]{0.82,0.01,0.11}{\alpha }$};
\draw (21.14,36.26) node [anchor=north west][inner sep=0.75pt]  [font=\scriptsize]  {$\textcolor[rgb]{0.29,0.56,0.89}{v}$};
\draw (119.71,25.4) node [anchor=north west][inner sep=0.75pt]  [font=\scriptsize]  {$c$};
\draw (109.71,58.26) node [anchor=north west][inner sep=0.75pt]  [font=\scriptsize]  {$\textcolor[rgb]{0.29,0.56,0.89}{w}$};
\draw (87.43,13.97) node [anchor=north west][inner sep=0.75pt]  [font=\scriptsize]  {$d$};
\draw (98.29,67.69) node [anchor=north west][inner sep=0.75pt]  [font=\scriptsize]  {$b$};
\end{tikzpicture}
\end{align}
For the vertices $v$ and $w$ in \eqref{pic:ribbonproof} we compute \begin{align*}
c_{\alpha,v}\circ \rhd_v (h \oo \alpha\oo b\oo c\oo d) &= c_{\alpha,v}(h\alpha\oo b\oo c\oo d)= \epsilon(h\alpha) b\oo c\oo d = c_{\alpha,v}(\epsilon(h) \alpha \oo b\oo c\oo d)\\ 
&= c_{\alpha,v}\circ (\epsilon\oo 1_{H^{\oo E}}) ( h \oo \alpha\oo b\oo c\oo d)\\
 c_{\alpha,v}\circ \rhd_w(h \oo \alpha \oo b \oo c \oo d ) \, &=\,  c_{\alpha,v}(\alpha S(\low h 3) \oo \low h 4 b \oo \low h 1 c \oo \low h 2 d)\\
&= \epsilon(\alpha) \low h 3 b \oo \low h 1 c \oo \low h 2 d 
= \rhd_w'\circ (1_H \oo c_{\alpha,v})(h \oo \alpha\oo b \oo c\oo d).
\end{align*}
The computations for graphs with a different number of edge ends or loops incident at $w$ are analogous.
For vertices $z\in V\setminus\{v,w\}$ the action $\rhd_z$ does not affect the copy of $H$ for $\alpha$  and commutes with $\rhd_{v,\alpha}$ and hence with $c_{v,\alpha}$. 
This  proves \eqref{eq:EdgeContractionVertexActionAtV} and \eqref{eq:EdgeContractionModuleMorphism}.

If $f$ is a face that contains $\alpha$, but does not start at $v$, then the associated coaction is of the form
$$
\delta_f(\alpha\oo b\oo c\oo d\oo \ldots)=(\cdots S(\low d 2)S(\low\alpha 3) \low\alpha 1\low b 1\cdots)\oo \low\alpha 2\oo \low b 2\oo c\oo \low d 1\oo \ldots,
$$
where the dots stand for contributions of parts of $\Gamma$ that are not drawn in \eqref{pic:ribbonproof}. This yields
\begin{align*}
&(1_H\oo c_{\alpha, v})\circ \delta_f(\alpha\oo b\oo c\oo d\oo \ldots)
=\epsilon(\low\alpha 2) (\cdots S(\low d 2)S(\low\alpha 3) \low\alpha 1\low b 1\cdots) \oo \low b 2\oo c\oo \low d 1\oo \ldots\\
&\overset{\eqref{eq:EightWithAntipode}}{=}\epsilon(\alpha ) (\cdots S(\low d 2)\low b 1\cdots) \oo \low b 2\oo c\oo \low d 1\oo \ldots=\delta'_f\circ c_{\alpha,v}(\alpha\oo b\oo c\oo d\oo \ldots).
\end{align*}
If $f$ does not contain $\alpha$,  the edge $\alpha$ does not contribute to the coaction $\delta_f$, which proves \eqref{eq:EdgeContractionComoduleMorphism}. 
\end{proof}

With these results, we  investigate how edge contractions interact with the (co)invariants  of the $H$-(co)module structures at ciliated vertices and  faces of $\Gamma$. For subsets $\emptyset\neq \mathcal V\subset V$ and $\emptyset\neq \mathcal F\subset F$ we denote by $\rhd_{\mathcal V}$ and $\delta_{\mathcal F}$ the associated $H^{\oo \mathcal V}$-module structure and $H^{\oo \mathcal F}$-comodule structure 
from  \eqref{eq:ComposedVertexAction}
and by $\pi_{\mathcal V}$ and $\iota_{\mathcal F}$ their invariants and coinvariants from Definition \ref{def:invariants}.

We then find that edge contractions send coinvariants for $\delta_{\mathcal F}$ to coinvariants for the corresponding face set in the contracted graph. 
The same holds for the invariants of the action $\rhd_{\mathcal V}$, as long as $\mathcal V$ contains the starting and target vertex of the contracted edge.  
The morphism $\eta_\alpha$ that creates a copy of $H$ assigned to $\alpha$ by applying the unit of $H$ is right inverse to 
the edge contraction $c_{v,\alpha}$ and a left inverse
on the coinvariants. 
This corresponds to  the following technical lemma.

\begin{lemma} \label{Lemma:EdgeContractionAndCoInvarianceMorphisms} Let $\Gamma'$ be obtained from $\Gamma$ by contracting an edge $\alpha$ incident at $v,w\in V$. 
Then $\eta_\alpha: H^{\oo(E-1)}\to H^{\oo E}$ is right inverse to
 the edge contraction $c_{\alpha, v}: H^{\oo E}\to H^{\oo(E-1)}$, and for all subsets $\{v,w\}\subset\mathcal V\subset V$, $\emptyset \neq \mathcal F\subset F$ one has
\begin{align}
\delta_{\mathcal F}' \circ c_{\alpha,v}\circ \iota_{\mathcal F} &= (\eta^{\oo \vert \mathcal F\vert} \oo c_{\alpha,v})\circ \iota_{\mathcal F} \label{eq:WholeIotaFAndContraction}\\
\delta_{\mathcal F}\circ \eta_{\alpha}\circ \iota_{\mathcal F}' &= (\eta^{\oo \vert \mathcal F\vert} \oo \eta_{\alpha}) \circ \iota_{\mathcal F}' \label{eq:WholeIotaFAndEtaAlpha}\\
\pi_{\mathcal V}\circ \eta_{\alpha}\circ c_{\alpha,v}&= \pi_{\mathcal V} \label{eq:WholePiVAndContraction}\\
\pi_{\mathcal V}'\circ c_{\alpha,v}\circ \rhd_{\mathcal V} &= \pi_{\mathcal V}'\circ (\epsilon^{\oo \vert \mathcal V \vert}\oo c_{\alpha,v}) \label{eq:WholePiVWholeAction}\\
 \pi_{\mathcal V} \circ \eta_{\alpha} \circ \rhd_{\mathcal V}' &=\pi_{\mathcal V}\circ (\epsilon^{\oo \vert \mathcal V \vert -1}\oo \eta_{\alpha}).
\label{eq:WholePiVWholeActionDash}
\end{align}
\end{lemma}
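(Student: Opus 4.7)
The plan is to prove the five identities in turn, using the results of Lemma~\ref{Lemma:EdgeContractionIsModuleMorphism} and Lemma~\ref{Lemma:EdgeContractionAndSlidesCompatible}. The first claim, $c_{\alpha,v}\circ\eta_\alpha=1_{H^{\oo(E-1)}}$, is immediate from Definition~\ref{def:EdgeContraction}: $\eta_\alpha$ inserts $1\in H$ into the $\alpha$-slot, the antipode (if $v=t(\alpha)$) and $\tau_\alpha$ leave this unchanged, and $\rhd_{v,\alpha}$ applied with input $1$ reduces to the identity by $\Delta^{(n)}(1)=1^{\oo(n+1)}$ and unitality of each triangle action $\rhd_{\alpha_i\pm}$.

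For the coinvariance identities \eqref{eq:WholeIotaFAndContraction} and \eqref{eq:WholeIotaFAndEtaAlpha} the strategy is to show that $c_{\alpha,v}$ and $\eta_\alpha$ are comodule morphisms between the face coactions on $\Gamma$ and $\Gamma'$, namely $\delta_{\mathcal F}'\circ c_{\alpha,v}=(1_{H^{\oo|\mathcal F|}}\oo c_{\alpha,v})\circ\delta_{\mathcal F}$ and $\delta_{\mathcal F}\circ\eta_\alpha=(1_{H^{\oo|\mathcal F|}}\oo\eta_\alpha)\circ\delta_{\mathcal F}'$. The first is \eqref{eq:EdgeContractionComoduleMorphism} after relocating any face cilia from $v$ via Lemma~\ref{lemma:InvariantsIndependentFromCilium}, which only introduces canonical isomorphisms of coinvariants through which the identity transports. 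The second is a direct computation: whenever a face $f$ traverses $\alpha$, the $\alpha$-factor contributes $\Delta(1)=1\oo 1$ (and possibly $S(1)=1$) to $\delta_f$, and these factors are absorbed by unitality of $m$. Composing with the defining equations of the equalisers $\iota_{\mathcal F}$ and $\iota_{\mathcal F}'$ then yields both claims.

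The invariance identities \eqref{eq:WholePiVAndContraction}, \eqref{eq:WholePiVWholeAction} and \eqref{eq:WholePiVWholeActionDash} share a common reduction: by Lemma~\ref{Lemma:EdgeContractionAndSlidesCompatible} edge slides along $\alpha$ commute with $c_{\alpha,v}$ and by Proposition~\ref{Prop:EdgeSlideIsIso} they are module isomorphisms for every $\rhd_z$, $z\in\mathcal V$ (after relocating cilia via Lemma~\ref{lemma:InvariantsIndependentFromCilium}), so we may assume that $v$ is univalent with $\alpha$ its only incident edge. In that reduced case $c_{\alpha,v}=\epsilon_\alpha$ (preceded by $S_\alpha$ if $v=t(\alpha)$) and $\rhd_v$ is left or right multiplication on the $\alpha$-slot. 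Identity \eqref{eq:WholePiVAndContraction} then follows by specialising \eqref{eq:BigInvariantsCoequalise} of Lemma~\ref{Lemma:RelatingTheInvariants} for $\mathcal V'=\{v\}$ to $h$ being the $\alpha$-slot content, and \eqref{eq:WholePiVWholeAction} follows from \eqref{eq:EdgeContractionVertexActionAtV}--\eqref{eq:EdgeContractionModuleMorphism} combined with \eqref{eq:BigInvariantsCoequalise}.

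The main obstacle is \eqref{eq:WholePiVWholeActionDash}. After the reduction, for $z\in\mathcal V\setminus\{v,w\}$ the action $\rhd_z'=\rhd_z$ does not touch the $\alpha$-slot and commutes with $\eta_\alpha$, so \eqref{eq:BigInvariantsCoequalise} handles these contributions. The nontrivial comparison is at the merged vertex $w$, where $\rhd_w$ in $\Gamma$ uses $\Delta^{(n-1)}$ while $\rhd_w'$ in $\Gamma'$ uses $\Delta^{(n-2)}$. The key point is that $\rhd_w\circ(1_H\oo\eta_\alpha)$ and $\eta_\alpha\circ\rhd_w'$ agree on all non-$\alpha$ slots up to the counit identity $(1^{\oo(k-1)}\oo\epsilon\oo 1^{\oo(n-k)})\circ\Delta^{(n-1)}=\Delta^{(n-2)}$, while differing in the $\alpha$-slot by a factor $S(\low h k)$; at the univalent $v$ this factor is absorbed by $\pi_{\mathcal V}$ via $\rhd_v$, contributing precisely the $\epsilon(\low h k)$ needed to reconcile the two coproducts. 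Iterating this argument over $z\in\mathcal V\setminus\{v\}$ produces the full identity.
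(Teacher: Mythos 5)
Your proof is correct and follows essentially the same route as the paper's: reduce to a univalent $v$ via edge slides and Lemma \ref{Lemma:EdgeContractionAndSlidesCompatible}, establish the single-vertex/single-face auxiliary identities, and assemble them with the morphisms $\chi_{\mathcal V',\mathcal V}$, $\xi_{\mathcal F',\mathcal F}$ of Lemma \ref{Lemma:RelatingTheInvariants}; in particular your mechanism for \eqref{eq:WholePiVWholeActionDash} --- inserting $\low h 3 S(\low h 4)$ into the $\alpha$-slot and absorbing it through the coequaliser at $v$ --- is exactly the paper's computation \eqref{eq:PiVWAndEtaAlpha}. The only cosmetic deviation is in \eqref{eq:WholeIotaFAndContraction}, where you relocate the cilia of faces starting at $v$ before contracting instead of performing the paper's explicit adjoint-twist computation; both variants ultimately rest on Lemma \ref{lemma:InvariantsIndependentFromCilium} and are equivalent.
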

\begin{proof}
1.~It follows directly from Definition \ref{def:EdgeContraction} that the morphism   $\eta_\alpha$   is a right inverse to $c_{\alpha, v}$. From the formula  for the (co)action in Definition \ref{def:vertexfaceops} it is apparent that $\eta_\alpha$ is a comodule morphism for the coactions $\delta_f$ at all ciliated faces 
 and a module morphism with respect to the actions $\rhd_z$ at all vertices $z\in V\setminus\{v,w\}$.
Moreover, it is clear from Definition \ref{def:EdgeSlide}  that sliding edge ends over $\alpha$ after applying $\eta_\alpha$ yields a morphism $\eta_\alpha''$ which splits the vertex $w$ in a different way.
Thus, we have 
\begin{align}\label{eq:helpids}
&c_{\alpha,v}\circ \eta_\alpha=1_{H^{\oo E}}, \quad
\delta_f\circ \eta_\alpha=\eta_\alpha\circ \delta'_f,\quad
\eta_\alpha\circ \rhd'_z=\rhd_z\circ \eta_\alpha,\quad 
S_{\alpha,\beta}\circ \eta_{\alpha}=\eta''_{\alpha}
\end{align}
for all vertices $z\in V\setminus\{v,w\}$ and faces $f\in F$ and edge slides $S_{\alpha,\beta}$ along $\alpha$. We can therefore assume that the vertex $v=t(\alpha)$ is univalent, all edge ends at $w=s(\alpha)$ are incoming, the graph $\Gamma$ is locally given by \eqref{pic:ribbonproof} and the edge contraction  by $c_{\alpha,v}=\epsilon_\alpha$, as in the proof of Lemma \ref{Lemma:EdgeContractionIsModuleMorphism}. 

2.~We prove the auxiliary identities
\begin{align}
\pi_v \circ \eta_{\alpha}\circ c_{\alpha,v} &= \pi_v \text{,}  \label{eq:PiVEtaAlphaContraction} \\
\pi_{\{v, w\}} \circ \eta_{\alpha} \circ (\epsilon\, \oo \, 1_{H^{\oo (E-1)}}) &= \pi_{\{v, w\}} \circ \eta_{\alpha}\circ \rhd_w'\text{,} \label{eq:PiVWAndEtaAlpha} \\
\delta_f'\circ c_{\alpha,v} \circ \iota_f &= (\eta \, \oo \, 1_{H^{\oo (E-1)}}) \circ c_{\alpha,v}\circ \iota_f \qquad \forall f\in F.\label{eq:IotaFAndContraction}
\end{align}
Omitting all copies of $H$ in $H^{\oo E}$ except the one for $\alpha$, we verify \eqref{eq:PiVEtaAlphaContraction}
\begin{align*}
\pi_v(\alpha\oo \ldots)=\pi_v\circ (\alpha \, \rhd_v)(1\oo \ldots)=\pi_v (\epsilon(\alpha) \, 1\oo\ldots)=\pi_v\circ \eta_\alpha\circ\epsilon_\alpha(1\oo \ldots)=\pi_v\circ \eta_\alpha\circ c_{\alpha,v}(1\oo \ldots).
\end{align*}
To show \eqref{eq:PiVWAndEtaAlpha}, we consider the graph  \eqref{pic:ribbonproof} and compute with  Lemma \ref{Lemma:RelatingTheInvariants}
\begin{align*}
&\pi_{\{v,w\}}\circ \eta_\alpha\circ \rhd'_w(h\oo b\oo c\oo d)=\pi_{\{v,w\}}\circ \eta_\alpha(\low h 3 b\oo \low h 1 c\oo \low h 2 d)=\pi_{\{v,w\}}(1\oo \low h 3 b\oo \low h 1 c\oo \low h 2 d)\\
&=\pi_{\{v,w\}}( \low h 3 S(\low h 4)\oo \low h 5 b\oo \low h 1 c\oo \low h 2 d)=\pi_{\{v,w\}}\circ \low h 3\rhd_v(S(\low h 4)\oo \low h 5 b\oo \low h 1 c\oo \low h 2 d)\\
&=\pi_{\{v,w\}}(\epsilon (\low h 3)\, S(\low h 4)\oo \low h 5 b\oo \low h 1 c\oo \low h 2 d)=\pi_{\{v,w\}}(S(\low h 3)\oo \low h 4 b\oo \low h 1 c\oo \low h 2 d)\\
&=\pi_{\{v,w\}}\circ h\rhd_w \, (1\oo  b\oo  c\oo  d)= \pi_{\{v,w\}}(\epsilon(h) \, 1\oo  b\oo  c\oo  d)= \pi_{\{v,w\}}\circ \eta_\alpha\circ (\epsilon\oo 1_{H^{\oo (E-1)}})(b\oo  c\oo  d).
\end{align*}
Identity \eqref{eq:IotaFAndContraction} follows from identity \eqref{eq:EdgeContractionComoduleMorphism} in Lemma \ref{Lemma:EdgeContractionIsModuleMorphism} for all faces $f\in F$ that do not start at $v$. If $f$ starts at $v$ one has for the graph in \eqref{pic:ribbonproof}
\begin{align*}
&\delta'_f\circ c_{\alpha, v}(\alpha\oo b\oo c\oo d)=\epsilon(\alpha)\delta'_f(b\oo c\oo d)=\epsilon(\alpha) \low b 1 \cdots S(\low d 2)\oo \low b 2\oo c\oo \low d 1\nonumber\\
&= S(\low\alpha 2) \low\alpha 1\low b 1 \cdots S(\low d 2)S(\low \alpha 4)\low \alpha 3\oo \low b 2\oo c\oo \low d 1\nonumber\\
&= (\lhd_{ad} \oo 1_{H^{\oo (E-1)}}) \circ (1_H\oo \tau_\alpha)\circ \delta_f (\alpha\oo b\oo c\oo d),
\end{align*}
where $\lhd_{ad}: H\oo H\to H$, $h\oo\alpha\mapsto S(\low \alpha 1) h\low\alpha 2$. In this case, contracting $\alpha$ deletes the cilium of $f$, but  Lemma \ref{lemma:InvariantsIndependentFromCilium} allows one to place a new cilium for  $f$ in any position. As $\lhd_{ad}\circ (\eta \oo 1_H)=\eta\circ \epsilon: H\to H$ this yields
\begin{align*}
&\delta'_f\circ c_{\alpha, v}\circ \iota_f=(\lhd_{ad} \oo 1_{H^{\oo (E-1)}}) \circ (1_H\oo \tau_\alpha)\circ \delta_f\circ \iota_f=(\lhd_{ad} \oo 1_{H^{\oo (E-1)}}) \circ (1_H\oo \tau_\alpha)\circ (\eta\oo 1_{H^{\oo E}})\circ \iota_f\\
&=(\eta\circ \epsilon\oo 1_{H^{\oo (E-1)}})\circ \tau_\alpha\circ \iota_f=(\eta\oo 1_{H^{\oo (E-1)}})\circ \epsilon_\alpha\circ \iota_f=(\eta\oo 1_{H^{\oo (E-1)}})\circ c_{\alpha, v}\circ \iota_f.
\end{align*}

3.~We prove the identities in the Lemma. Identity \eqref{eq:WholeIotaFAndContraction} follows  by pre-composing \eqref{eq:IotaFAndContraction} with the morphism $\xi_{f,\mathcal F}:=\xi_{\{f\},\mathcal F}$ from Lemma \ref{Lemma:RelatingTheInvariants} and inductively applying this equation for all $f\in \mathcal F$. Likewise, identity \eqref{eq:WholeIotaFAndEtaAlpha} follows by 
applying the identity $\delta_f \circ \eta_{\alpha}\circ \iota_f' = (\eta \, \oo \, 1_{H^{\oo E}}) \circ \eta_{\alpha}\circ \iota_f'$ obtained from  the second identity in \eqref{eq:helpids}
and pre-composing it with  $\xi_{f,\mathcal F}'$. Post-composing \eqref{eq:PiVEtaAlphaContraction} with the morphism $\chi_{v,\mathcal V}:=\chi_{\{v\},\mathcal V}$ from  Lemma \ref{Lemma:RelatingTheInvariants} yields \eqref{eq:WholePiVAndContraction}.  
From \eqref{eq:EdgeContractionModuleMorphism}, we obtain for all $z\in V\setminus\{v,w\}$ 
\begin{align}\label{eq:WholePiVAndActionZ}
\pi_{\mathcal V}'\circ c_{\alpha,v}\circ \rhd_z = \chi_{z,\mathcal V}'\circ \pi_z'\circ c_{\alpha,v}\circ \rhd_z = \chi_{z,\mathcal V}'\circ \pi_z'\circ \rhd_z'\circ (1_H\oo c_{\alpha,v}) =\pi_{\mathcal V}'\circ (\epsilon \oo c_{\alpha,v}).
\end{align}
Together with  the identity $\pi_{\mathcal V}'\circ c_{\alpha,v}\circ \rhd_w\circ (1_H \oo \rhd_v) =\pi_{\mathcal V}' \circ (\epsilon^{\oo 2}\oo c_{\alpha, v}),$ which follows from \eqref{eq:EdgeContractionVertexActionAtV} and \eqref{eq:EdgeContractionModuleMorphism} with $z=w$ and the identity  $\pi_{\mathcal V}'=\chi_{w,\mathcal V}'\circ \pi_w'$, this yields
 \eqref{eq:WholePiVWholeAction}. Identity \eqref{eq:WholePiVWholeActionDash} follows by  post-composing \eqref{eq:PiVWAndEtaAlpha} with $\chi_{\{v,w\},\mathcal V}$ and    the third identity  in \eqref{eq:helpids}  
with $\pi_{\mathcal V}=\chi_{z,\mathcal V}\circ \pi_z$.
\end{proof}

We now apply Lemma \ref{Lemma:EdgeContractionAndCoInvarianceMorphisms} to show that edge contractions induce morphisms between the coinvariants for $\emptyset\neq \mathcal F\subset F$. If $\mathcal V$ contains the starting and target vertex of the contracted edge, they also induce
 isomorphisms between the invariants  and isomorphisms between the protected objects.

For this, we consider a ciliated ribbon graph $\Gamma$ and the graph $\Gamma'$ obtained by contracting an edge $\alpha$ in $\Gamma$. We denote by $\mathcal M^{coH}$, $\mathcal M^H$, $\mathcal M_{inv}$ the coinvariants, invariants and biinvariants of $\delta_{\mathcal F}$, $\rhd_{\mathcal V}$ for $\Gamma$ and by $\mathcal M'^{coH}$, $\mathcal M'^H$, $\mathcal M'_{inv}$ the corresponding quantities for $\Gamma'$. As in Lemma \ref{Lemma:RelatingTheInvariants}  we write $\iota_{\mathcal F}$ and $\pi_{\mathcal V}$ for the associated equaliser and coequaliser and 
  $I: \mathcal M_{inv}\to \mathcal M^H$
and $P: \mathcal M^{coH}\to\mathcal M_{inv}$ for  the monomorphism and epimorphism that characterise $\mathcal M_{inv}$ as the image of $\pi_{\mathcal V}\circ \iota_{\mathcal F}$. 
The corresponding morphisms for $\Gamma'$ are denoted $\iota'_{\mathcal F}$, $\pi'_{\mathcal V}$, $I'$ and $P'$.

\begin{proposition}\label{prop:EdgeContractionInducesIso}
Let $\Gamma'$ be obtained from a ciliated ribbon graph  $\Gamma$ by contracting an edge $\alpha$ incident at $v,w$ towards $v$. Then for all  $\{v,w\}\subset \mathcal V \subset V$, $\emptyset \neq \mathcal F \subset F$  the  contraction of $\alpha$ induces 
\begin{compactitem}
\item a morphism $u:M_{\mathcal F}^{coH}\to M'^{coH}_{\mathcal F}$ with a right inverse that satisfies $\iota_{\mathcal F}'\circ u= c_{\alpha,v}\circ \iota_{\mathcal F}$, 
\item an isomorphism $r: M_{\mathcal V}^H \to M'^H_{\mathcal V}$ that satisfies $r\circ \pi_{\mathcal V}= \pi_{\mathcal V}'\circ c_{\alpha,v}$,
\item an isomorphism $\phi_{inv}:M_{inv}\to M'_{inv}$ with $I= r^\inv \circ I'\circ \phi_{inv}$.
\end{compactitem}
\end{proposition}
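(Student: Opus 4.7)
The plan is to obtain each of the three statements in turn by combining the identities in Lemma \ref{Lemma:EdgeContractionAndCoInvarianceMorphisms} with the universal properties of equalisers, coequalisers, and images.

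\textbf{Step 1: Constructing $u$.} By equation \eqref{eq:WholeIotaFAndContraction} the morphism $c_{\alpha,v}\circ \iota_{\mathcal F}: M^{coH}_{\mathcal F}\to H^{\oo (E-1)}$ equalises $\delta'_{\mathcal F}$ and $\eta^{\oo |\mathcal F|}\oo 1_{H^{\oo(E-1)}}$. The universal property of the equaliser $\iota'_{\mathcal F}$ then yields a unique $u: M^{coH}_{\mathcal F}\to M'^{coH}_{\mathcal F}$ with $\iota'_{\mathcal F}\circ u= c_{\alpha,v}\circ \iota_{\mathcal F}$. Dually, equation \eqref{eq:WholeIotaFAndEtaAlpha} shows that $\eta_\alpha\circ \iota'_{\mathcal F}$ equalises $\delta_{\mathcal F}$ and $\eta^{\oo|\mathcal F|}\oo 1_{H^{\oo E}}$, producing a unique $u': M'^{coH}_{\mathcal F}\to M^{coH}_{\mathcal F}$ with $\iota_{\mathcal F}\circ u'=\eta_\alpha\circ \iota'_{\mathcal F}$. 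Then $\iota'_{\mathcal F}\circ u\circ u'= c_{\alpha,v}\circ \eta_\alpha\circ \iota'_{\mathcal F}= \iota'_{\mathcal F}$ by the first identity in \eqref{eq:helpids}, and since $\iota'_{\mathcal F}$ is monic, $u\circ u'=1$, exhibiting $u'$ as the required right inverse.

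\textbf{Step 2: Constructing $r$.} By equation \eqref{eq:WholePiVWholeAction} the morphism $\pi'_{\mathcal V}\circ c_{\alpha,v}$ coequalises $\rhd_{\mathcal V}$ and $\epsilon^{\oo |\mathcal V|}\oo 1_{H^{\oo E}}$, so the universal property of the coequaliser $\pi_{\mathcal V}$ produces a unique $r: M^H_{\mathcal V}\to M'^H_{\mathcal V}$ with $r\circ \pi_{\mathcal V}=\pi'_{\mathcal V}\circ c_{\alpha,v}$. Dually, \eqref{eq:WholePiVWholeActionDash} gives a unique $r': M'^H_{\mathcal V}\to M^H_{\mathcal V}$ with $r'\circ \pi'_{\mathcal V}=\pi_{\mathcal V}\circ\eta_\alpha$. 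The identities $c_{\alpha,v}\circ \eta_\alpha=1$ and \eqref{eq:WholePiVAndContraction} then yield $r\circ r'\circ \pi'_{\mathcal V}=\pi'_{\mathcal V}$ and $r'\circ r\circ \pi_{\mathcal V}=\pi_{\mathcal V}$, and since $\pi_{\mathcal V}$, $\pi'_{\mathcal V}$ are epic, $r$ is an isomorphism with $r^\inv=r'$.

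\textbf{Step 3: Constructing $\phi_{inv}$.} Combining Steps 1 and 2 one computes
\begin{align*}
r\circ I\circ P\;=\; r\circ \pi_{\mathcal V}\circ \iota_{\mathcal F}\;=\;\pi'_{\mathcal V}\circ c_{\alpha,v}\circ \iota_{\mathcal F}\;=\;\pi'_{\mathcal V}\circ \iota'_{\mathcal F}\circ u\;=\;I'\circ (P'\circ u).
\end{align*}
Here $r\circ I$ is monic (composite of iso and mono) and $P$ is epic, and on the other hand $I'$ is monic while $P'\circ u$ is a composite of epis (since $u$ admits a right inverse and is therefore split epic), hence epic. Thus both sides constitute epi–mono factorisations of $r\circ \pi_{\mathcal V}\circ \iota_{\mathcal F}$. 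Applying the universal property of the image $I: M_{inv}\to M^H_{\mathcal V}$ (with $r\circ I$ as the mono into $M'^H_{\mathcal V}$) to the factorisation $I'\circ(P'\circ u)$ produces a unique $\phi_{inv}: M_{inv}\to M'_{inv}$ with $r\circ I=I'\circ \phi_{inv}$; the dual universal property of $I': M'_{inv}\to M'^H_{\mathcal V}$ applied to the factorisation $r\circ I\circ (P\circ u')$ (using $\pi'_{\mathcal V}\circ\iota'_{\mathcal F}=r\circ\pi_{\mathcal V}\circ\eta_\alpha\circ\iota'_{\mathcal F}=r\circ I\circ P\circ u'$) produces a $\psi: M'_{inv}\to M_{inv}$ with $I'=r\circ I\circ \psi$. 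The two compositions $\phi_{inv}\circ\psi$ and $\psi\circ\phi_{inv}$ are then seen to be identities by cancelling the monomorphisms $I'$ and $r\circ I$, so $\phi_{inv}$ is an isomorphism satisfying $I=r^\inv\circ I'\circ\phi_{inv}$.

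The only mildly subtle point is the need to check that $P'\circ u$ is epic when forming the second image factorisation in Step 3; this is what forces the use of the \emph{right} inverse of $u$ produced in Step 1, since split epis compose with epis to give epis. Everything else is a direct exploitation of Lemma \ref{Lemma:EdgeContractionAndCoInvarianceMorphisms} through universal properties.
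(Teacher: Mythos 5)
Your proof is correct and follows essentially the same route as the paper: the same identities from Lemma \ref{Lemma:EdgeContractionAndCoInvarianceMorphisms} feed the universal properties of $\iota'_{\mathcal F}$, $\iota_{\mathcal F}$, $\pi_{\mathcal V}$, $\pi'_{\mathcal V}$ to produce $u$, $u'$, $r$, $r'$, and the image comparison in Step 3 is the paper's argument transported along the isomorphism $r$ (the paper uses the pair $(P'\circ u,\, r^\inv\circ I')$ against the image $(P,I)$ of $\pi_{\mathcal V}\circ\iota_{\mathcal F}$ rather than $(P'\circ u,\, I')$ against $(P,\, r\circ I)$). One small remark: with the paper's Mitchell-style notion of image, the universal property only requires the second leg to be a monomorphism, so your check that $P'\circ u$ is epic, while true, is not actually needed.
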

\begin{proof}
Using equation \eqref{eq:WholeIotaFAndContraction} together with the universal property of the equaliser $ \iota_{\mathcal F}' $ yields a unique morphism $u: M^{coH}_{\mathcal F} \to M'^{coH}_{\mathcal F}$ with $\iota_{\mathcal F}' \circ u = c_{\alpha,v} \circ \iota_{\mathcal F}$.
Equation \eqref{eq:WholeIotaFAndEtaAlpha} and the  equaliser $\iota_{\mathcal F}$ yield a unique morphism $u^\inv: M'^{coH}_{\mathcal F} \to M^{coH}_{\mathcal F}$ with 
$
\iota_{\mathcal F} \circ u^\inv= \eta_{\alpha} \circ \iota'_{\mathcal F} 
$.
To show that $u^\inv$ is a right inverse of $u$ note that 
$\iota_{\mathcal F}' \circ u \circ u^\inv= c_{\alpha,v} \circ \iota_{\mathcal F} \circ u^\inv = c_{\alpha,v} \circ \eta_{\alpha}\circ \iota_{\mathcal F}' =  \iota_{\mathcal F}' $, since $\eta_\alpha$ is right inverse to $c_{\alpha,v}$.
As $\iota_{\mathcal F}'$ is a monomorphism, this implies $u\circ u^\inv = 1_{M'^{coH}_{\mathcal F}}$.

Analogously, \eqref{eq:WholePiVWholeAction} and the universal property of the coequaliser $\pi_{\mathcal V}$ define a unique morphism $ r: M^H_{\mathcal V} \to M'^H_{\mathcal V}$ with $ r \circ \pi_{\mathcal V}= \pi_{\mathcal V}' \circ c_{\alpha,v}$. 
The coequaliser $\pi_{\mathcal V}'$ together with \eqref{eq:WholePiVWholeActionDash} yields a unique morphism $r^\inv: M'^H_{\mathcal V} \to M^H_{\mathcal V}$ with 
$
r^\inv \circ \pi'_{\mathcal V} = \pi_{\mathcal V} \circ \eta_{\alpha} 
$.
The morphisms $ r$ and $r^\inv$ are mutually inverse isomorphisms, since $\pi_{\mathcal V}'$, $\pi_{\mathcal V}$ are epimorphisms with
\begin{align*}
 r\circ r^\inv \circ \pi_{\mathcal V}' &=  r\circ \pi_{\mathcal V} \circ \eta_{\alpha} = \pi_{\mathcal V}' \circ c_{\alpha,v} \circ \eta_{\alpha} =\pi_{\mathcal V}',  \\
r^\inv \circ  r \circ \pi_{\mathcal V} &= r^\inv \circ \pi_{\mathcal V}' \circ c_{\alpha,v} = \pi_{\mathcal V} \circ \eta_{\alpha} \circ c_{\alpha,v} \overset{\eqref{eq:WholePiVAndContraction}}{=} \pi_{\mathcal V}.
\end{align*}
Hence, we constructed commuting  diagrams 
$$
\xymatrix{
M^{coH}_{\mathcal F} \ar[r]^{\iota_{\mathcal F}} \ar[d]^{u} & M  \ar[r]^{\pi_{\mathcal V}} \ar[d]^{c_{\alpha,v}}  & M^H_{\mathcal V} \ar[d]^{r}\\
M'^{coH}_{\mathcal F} \ar[r]_{\iota_{\mathcal F}'}  & M' \ar[r]_{\pi_{\mathcal V}'} & M'^H_{\mathcal V}.
}\qquad\qquad
\xymatrix{
M'^{coH}_{\mathcal F} \ar[r]^{\iota'_{\mathcal F}} \ar[d]^{u^\inv} & M'  \ar[r]^{\pi'_{\mathcal V}} \ar[d]^{\eta_{\alpha}}  & M'^H_{\mathcal V} \ar[d]^{r^\inv}\\
M^{coH}_{\mathcal F} \ar[r]_{\iota_{\mathcal F}}  & M \ar[r]_{\pi_{\mathcal V}} & M^H_{\mathcal V}
}
$$
To construct the isomorphism $\phi_{inv}$, we set $j:= r^\inv \circ I' : M'_{inv} \to M^H_{\mathcal V}$ and $q:= P'\circ u: M^{coH}_{\mathcal F} \to M'_{inv}$.
As $r^\inv$ is an isomorphism and $I'$ a monomorphism, the  morphism $j$ is a monomorphism. The composite $j \circ q $ satisfies
\begin{align*}
j \circ q = r^\inv \circ I' \circ P' \circ u = r^\inv \circ \pi'_{\mathcal V} \circ \iota'_{\mathcal F} \circ u = \pi_{\mathcal V}  \circ \eta_{\alpha} \circ  \iota'_{\mathcal F} \circ u = \pi_{\mathcal V} \circ \eta_{\alpha} \circ c_{\alpha,v} \circ \iota_{\mathcal F} \overset{\eqref{eq:WholePiVAndContraction}}{=} \pi_{\mathcal V} \circ \iota_{\mathcal F} \text{.}
\end{align*}
The universal property of the image $M_{inv}$ then yields a unique morphism $\phi_{inv}: M_{inv} \to M'_{inv}$ with $I = j \circ \phi_{inv}=r^\inv\circ I'\circ \phi_{inv}$.  
To construct its inverse we set $j':= r \circ I: M_{inv} \to M'^H_{\mathcal V}$ and $q':= P \circ u^\inv : M'^{coH}_{\mathcal F} \to M_{inv}$. As
$r$ is an isomorphism and $I$ a monomorphism, $j'$ is  a monomorphism, and we have
\begin{align*}
j' \circ q' =  r \circ I \circ P \circ u^\inv =  r \circ \pi_{\mathcal V} \circ \iota_{\mathcal F} \circ u^\inv = \pi'_{\mathcal V} \circ c_{\alpha,v} \circ \iota_{\mathcal F} \circ u^\inv = \pi'_{\mathcal V} \circ c_{\alpha,v} \circ \eta_{\alpha} \circ \iota'_{\mathcal F} = \pi'_{\mathcal V} \circ \iota'_{\mathcal F},
\end{align*}
where we used that $\eta_\alpha$ is right inverse to $c_{\alpha,v}$ in the last step. 
By the universal property of the image $M'_{inv}$ there is a unique morphism $\phi_{inv}^\inv: M'_{inv} \to M_{inv}$ with $I' = j' \circ \phi_{inv}^\inv=r\circ I\circ \phi_{inv}^\inv$ and 
\begin{align*}
I \circ \phi_{inv}^\inv \circ \phi_{inv} &= r^\inv \circ  r\circ I \circ \phi_{inv}^\inv \circ \phi_{inv} = r^\inv \circ I' \circ \phi_{inv} = I, \\
I' \circ \phi_{inv}\circ \phi_{inv}^\inv &=  r\circ r^\inv \circ I' \circ \phi_{inv}\circ \phi_{inv}^\inv =  r \circ I \circ \phi_{inv}^\inv = I'.
\end{align*}
As $I,I'$  are monomorphisms, it follows that $\phi_{inv}$ and $\phi_{inv}^\inv$ are mutually  inverse isomorphisms. 
\end{proof}

\begin{corollary}\label{Coro:EdgeContracionIso}
Edge contractions  induce  isomorphisms between  protected objects. 
\end{corollary}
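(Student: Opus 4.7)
The plan is to obtain Corollary \ref{Coro:EdgeContracionIso} as an immediate specialisation of Proposition \ref{prop:EdgeContractionInducesIso} by taking $\mathcal V$ and $\mathcal F$ to be the full vertex and face sets. First I would set up the bookkeeping: for an edge $\alpha$ incident at $v,w$ contracted towards $v$, the vertex set of $\Gamma'$ is canonically identified with $V\setminus\{w\}$ (the merged vertex inherits the cilium of $v$ and is identified with $v$), and the face set $F'$ of $\Gamma'$ is canonically in bijection with $F$, since  the definition of $c_{\alpha,v}$ and the local computations in Lemma \ref{Lemma:EdgeContractionIsModuleMorphism} preserve the face structure.

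Next I would verify that the hypotheses of Proposition \ref{prop:EdgeContractionInducesIso} are satisfied when one takes $\mathcal V=V$ and $\mathcal F=F$. The condition $\{v,w\}\subset \mathcal V$ is automatic since $v,w\in V$ are the endpoints of $\alpha$. The condition $\mathcal F\neq\emptyset$ is also automatic: the graph $\Gamma$ contains the edge $\alpha$, hence at least one vertex, and every ribbon graph has at least one face (any connected component with an edge contributes a nontrivial face, and  isolated vertices are viewed as isolated faces by Definition \ref{def:ribbongraph}). Under the identifications above, the (co)invariants appearing in Proposition \ref{prop:EdgeContractionInducesIso} for $\mathcal V=V$ and $\mathcal F=F$ are precisely the (co)invariants used to construct the protected objects $M_{inv}$ of $\Gamma$ and $M'_{inv}$ of $\Gamma'$ in Definition \ref{def:KitaevModel}.

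Proposition \ref{prop:EdgeContractionInducesIso} then directly produces the required isomorphism $\phi_{inv}: M_{inv}\to M'_{inv}$ between the two protected objects, satisfying $I=r^{-1}\circ I'\circ \phi_{inv}$, and this is the content of the corollary. There is no serious obstacle here: all the nontrivial work (constructing $u$, $r$, $\phi_{inv}$ and their inverses from the universal properties of (co)equalisers and images, and checking the compatibility identities \eqref{eq:WholeIotaFAndContraction}--\eqref{eq:WholePiVWholeActionDash}) has already been carried out in Lemma \ref{Lemma:EdgeContractionAndCoInvarianceMorphisms} and Proposition \ref{prop:EdgeContractionInducesIso}. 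The only point that requires any care is the identification of vertex and face sets of $\Gamma$ and $\Gamma'$, which is a piece of routine bookkeeping rather than a real obstacle.
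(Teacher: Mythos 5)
Your proposal is correct and is exactly the argument the paper intends: the corollary is the specialisation of Proposition \ref{prop:EdgeContractionInducesIso} to $\mathcal V=V$ and $\mathcal F=F$, for which the hypotheses $\{v,w\}\subset\mathcal V$ and $\mathcal F\neq\emptyset$ hold automatically and the resulting biinvariants are, by Definition \ref{def:KitaevModel}, precisely the protected objects of $\Gamma$ and $\Gamma'$. One small bookkeeping slip: by Definition \ref{def:graphtrafos}, contracting $\alpha$ \emph{towards} $v$ erases the cilium of $v$, so the merged vertex keeps the cilium of $w$ and $V'$ is identified with $V\setminus\{v\}$ rather than $V\setminus\{w\}$ --- but this does not affect your argument, since both endpoints lie in $\mathcal V=V$ in any case.
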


\subsection{Deleting isolated loops}
\label{subsec:loops}

We now consider the last graph transformation from Definition \ref{def:graphtrafos},  the deletion of isolated loops. The morphism associated to the deletion of an isolated loop $\alpha$    applies the counit to the corresponding copy of the Hopf monoid $H$. Just as  edge contractions,  this is in general not an isomorphism in $\mac$. The morphism $\eta_\alpha$ that creates a copy of $H$ for $\alpha$ by applying the unit is a right inverse and corresponds to inserting a loop.

\begin{definition}\label{def:LoopRemoval}
The morphism induced by {\bf deleting an isolated loop} $\alpha$  is  $\epsilon_{\alpha}: H^{\oo E}\to H^{\oo E\setminus\{\alpha\}}$.
\end{definition}

As for edge contractions we investigate how these morphisms interact with the coinvariants for the $H^{\oo \mathcal F}$-comodule structure $\delta_{\mathcal F}$ and the $H^{\oo\mathcal V}$-module structure $\rhd_{\mathcal V}$ from Definition \ref{def:setact}  for subsets $\emptyset\neq \mathcal F\subset F$ and $\emptyset\neq  \mathcal V\subset V$.
We find that loop deletions send the invariants for $\rhd_{\mathcal V}$ to invariants for the corresponding vertex set  of the graph with the loop removed. The same holds for coinvariants of $\delta_{\mathcal F}$, as long as the two faces incident to the loop are contained in $\mathcal F$. Analogous statements hold for the right inverse $\eta_\alpha$, and on the coinvariants $\eta_\alpha$ is also a left inverse. This is a consequence of  the following technical lemma.

\begin{lemma}\label{Lemma:LoopRemovalAndCoInvarianceMorphisms}
Let $\Gamma^+$ be obtained from a ciliated ribbon graph $\Gamma$ by removing an isolated loop $\alpha$   with adjacent faces $f_1,f_2$ at a vertex $v$. 
 Then for all subsets $\emptyset\neq \mathcal V \subset V$ and $\{f_1, f_2\}\subset \mathcal F\subset F$ 
\begin{align}
\pi_{\mathcal V}^+ \circ  \epsilon_{\alpha} \circ \rhd_{\mathcal V} &= \pi_{\mathcal V}^+  \circ (\epsilon^{\oo \vert \mathcal V \vert} \oo \epsilon_{\alpha}) \label{eq:LoopRemovalWholePiPlus}\\
\delta_{\mathcal F}^+ \circ \epsilon_{\alpha} \circ \iota_{\mathcal F} &= (\eta^{\oo \vert \mathcal F\vert-1} \oo \epsilon_{\alpha})\circ \iota_{\mathcal F} \label{eq:LoopRemovalWholeIota}\\
\eta_{\alpha}\circ \epsilon_{\alpha} \circ \iota_{\mathcal F} &= \iota_{\mathcal F} \label{eq:LoopEtaLoopIotaIsIota}\\
\pi_{\mathcal V}\circ \eta_{\alpha}\circ \rhd_{\mathcal V }^+ &= \pi_{\mathcal V}\circ (\epsilon^{\oo \vert \mathcal V\vert }\oo \eta_{\alpha}) \label{eq:LoopRemovalWholePi}\\
\delta_{\mathcal F}\circ \eta_{\alpha}\circ \iota_{\mathcal F}^+ &= (\eta^{\oo \vert \mathcal F \vert} \oo \eta_{\alpha}) \circ \iota_{\mathcal F}^+. \label{eq:LoopRemovalWholeIotaPlus}
\end{align}
\end{lemma}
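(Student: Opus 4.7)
The plan is to prove each of the five identities by first establishing the analogous statement for the individual vertex actions $\rhd_v, \rhd_z$ and face coactions $\delta_{f_1}, \delta_{f_2}, \delta_f$, and then lifting to the composites $\rhd_{\mathcal V}$, $\delta_{\mathcal F}$ via the universal properties of the (co)equalisers, in parallel to the proof of Lemma \ref{Lemma:EdgeContractionAndCoInvarianceMorphisms}. Since $\alpha$ is an isolated loop, its ends $s(\alpha), t(\alpha)$ are neighbours in the cyclic ordering at $v$, and by Lemma \ref{lemma:InvariantsIndependentFromCilium} we may assume they are consecutive in the linear order. Then the inside face $f_1$ is the single-edge closed path $\alpha$, while the other face $f_2$ adjacent to $\alpha$ merges with $f_1$ into a single face $f^+_{12}$ of $\Gamma^+$ upon loop deletion. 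All vertices of $\Gamma$ survive in $\Gamma^+$, and the (co)actions at vertices $z\neq v$ and at faces $f\notin\{f_1,f_2\}$ are unchanged.

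The key local identities to verify are
\begin{align*}
&\epsilon_\alpha \circ \rhd_v = \rhd^+_v \circ (1_H\oo \epsilon_\alpha), & &\eta_\alpha\circ \rhd^+_v = \rhd_v\circ (1_H\oo \eta_\alpha), & &\delta_{f_1}\circ \eta_\alpha = (\eta\oo 1_{H^{\oo E}})\circ \eta_\alpha,
\end{align*}
together with the trivial commutation of $\epsilon_\alpha$, $\eta_\alpha$ with $\rhd_z$, $\delta_f$ for $z\neq v$ and $f\notin\{f_1,f_2\}$. The third identity follows from $\Delta(1)=1\oo 1$ and the fact that $f_1=\alpha$. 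The two identities at $v$ are Sweedler computations analogous to the one in the proof of Lemma \ref{lemma:InvariantsIndependentFromCilium}: since $s(\alpha)$ and $t(\alpha)$ are consecutive, the loop $\alpha$ appears in $\rhd_v$ as $\low h 2 \cdot \alpha \cdot S(\low h 1)$ at two consecutive Sweedler indices. The first identity then follows from $\epsilon\circ S=\epsilon$, $\epsilon\circ m=\epsilon\oo \epsilon$, and counitality; the second uses $\low h 2 \cdot S(\low h 1)=\epsilon(h)\,1$ from \eqref{eq:EightWithAntipode} together with counitality.

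Given the local identities, \eqref{eq:LoopRemovalWholePiPlus} and \eqref{eq:LoopRemovalWholePi} follow by composing with the coequalisers $\pi^+_{\mathcal V}$, $\pi_{\mathcal V}$ and iterating over $\mathcal V$ as in the proof of \eqref{eq:BigInvariantsCoequalise}. Identity \eqref{eq:LoopEtaLoopIotaIsIota} follows from the $f_1$-coinvariance $\delta_{f_1}\circ \iota_{\mathcal F}=(\eta\oo 1_{H^{\oo E}})\circ \iota_{\mathcal F}$ (valid since $f_1\in \mathcal F$): as $\delta_{f_1}=\tau_\alpha\circ \Delta_\alpha$ is just the coproduct on the $\alpha$-slot, post-composing with $(\epsilon\oo 1_{H^{\oo E}})$ and using $(\epsilon\oo 1_H)\circ \Delta=1_H$ yields $\eta_\alpha\circ \epsilon_\alpha\circ \iota_{\mathcal F}=\iota_{\mathcal F}$. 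For \eqref{eq:LoopRemovalWholeIota} and \eqref{eq:LoopRemovalWholeIotaPlus} one reduces to individual faces via Lemma \ref{Lemma:RelatingTheInvariants}: faces other than the merged one are handled by the local commutations with $\epsilon_\alpha, \eta_\alpha$ combined with the relevant coinvariance, while for $f^+_{12}$ one shows by direct Sweedler computation that $\delta^+_{f^+_{12}}\circ\epsilon_\alpha=(1_H\oo \epsilon_\alpha)\circ \delta_{f_2}$ on the image of $\iota_{\mathcal F}$; the point is that the $\alpha$-contribution $S(\low\alpha 2)$ to the value of $\delta_{f_2}$ becomes trivial once \eqref{eq:LoopEtaLoopIotaIsIota} forces the $\alpha$-slot of $\iota_{\mathcal F}$ to be a multiple of $1$. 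The dual computation $\delta_{f_2}\circ \eta_\alpha=(1_H\oo \eta_\alpha)\circ \delta^+_{f^+_{12}}$ on the image of $\iota^+_{\mathcal F}$ handles \eqref{eq:LoopRemovalWholeIotaPlus}.

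The main obstacle is the explicit matching of $\delta^+_{f^+_{12}}$ in $\Gamma^+$ with $(1_H\oo \epsilon_\alpha)\circ \delta_{f_2}$ in $\Gamma$ on coinvariants, which requires careful tracking of the face traversal order around $f_2$ and of the two $\alpha$-contributions (of the form $S(\low\alpha 2)$ or $\low\alpha 1$ depending on orientation) to the value of $\delta_{f_2}$, and the verification that this contribution becomes trivial modulo the $f_1$-coinvariance.
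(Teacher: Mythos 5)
Your overall strategy is the paper's: establish local identities at the vertex $v$ and at the faces $f_1,f_2$ (with trivial commutation elsewhere), then lift to $\rhd_{\mathcal V}$, $\delta_{\mathcal F}$ via the (co)equaliser universal properties and Lemma \ref{Lemma:RelatingTheInvariants}. Your vertex identities $\epsilon_\alpha\circ\rhd_v=\rhd_v^+\circ(1_H\oo\epsilon_\alpha)$ and $\eta_\alpha\circ\rhd_v^+=\rhd_v\circ(1_H\oo\eta_\alpha)$ are in fact slightly stronger than what the paper records (which only asserts them after composing with $\pi_v^+$ resp.\ $\pi_v$), but they do hold on the nose — the first by counitality, the second by \eqref{eq:EightWithAntipode} — once the two ends of $\alpha$ are made adjacent in the linear order at $v$, which you correctly arrange via Lemma \ref{lemma:InvariantsIndependentFromCilium}. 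The treatment of \eqref{eq:LoopRemovalWholePiPlus}, \eqref{eq:LoopRemovalWholePi}, \eqref{eq:LoopRemovalWholeIota} and \eqref{eq:LoopRemovalWholeIotaPlus} is sound and matches the paper's in substance.

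There is, however, one step that fails as written: your derivation of \eqref{eq:LoopEtaLoopIotaIsIota}. Post-composing the coinvariance equation $\delta_{f_1}\circ\iota_{\mathcal F}=(\eta\oo 1_{H^{\oo E}})\circ\iota_{\mathcal F}$ with $(\epsilon\oo 1_{H^{\oo E}})$ collapses \emph{both} sides to $\iota_{\mathcal F}$ — the left side by the comodule counit axiom \eqref{eq:comod}, the right side by $\epsilon\circ\eta=1_e$ — so you obtain only the tautology $\iota_{\mathcal F}=\iota_{\mathcal F}$, not the claimed identity. The counit must be applied to the \emph{other} tensor leg: post-compose instead with the morphism that applies $\epsilon$ to the copy of $H$ left in the $\alpha$-slot and moves the extracted front copy back into that slot (equivalently, multiply $S$ of the front copy into the $\alpha$-slot). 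Then the left side $\tau_\alpha\circ\Delta_\alpha\circ\iota_{\mathcal F}$ yields $\epsilon(\low\alpha 2)\low\alpha 1=\alpha$ in the $\alpha$-slot, i.e.\ $\iota_{\mathcal F}$, while the right side $(\eta\oo 1_{H^{\oo E}})\circ\iota_{\mathcal F}$ yields $\eta_\alpha\circ\epsilon_\alpha\circ\iota_{\mathcal F}$, giving \eqref{eq:LoopEtaLoopIotaIsIota}. (The paper reaches the same conclusion by a slightly different chain through $\delta_{f_2}$.) Since you — like the paper — use \eqref{eq:LoopEtaLoopIotaIsIota} to make the $S(\low\alpha 2)$-contribution to $\delta_{f_2}$ trivial in the proof of \eqref{eq:LoopRemovalWholeIota}, this step needs the repair, but the fix is local and the rest of your argument goes through unchanged.
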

\begin{proof}  1.~We first prove some auxiliary identities for the interaction of the morphisms $\epsilon_\alpha$ and $\eta_\alpha$ with the module and comodules structures at the vertices and faces. 

1.(a) As $\eta_\alpha$ and $\epsilon_\alpha$ affect only the copy of $H$ for $\alpha$, we have for any vertex $z\neq v$ and any ciliated face $f$ that does not contain $\alpha$
\begin{align}
&\epsilon_\alpha \circ \rhd_z=\rhd^+_z\circ (1_H\oo \epsilon_\alpha) & &\eta_{\alpha}\circ \rhd_z^+ = \rhd_z  \circ (1_H\oo \eta_{\alpha})  \label{eq:LoopActionEtaAlpha}\\
&(1_H\oo \epsilon_\alpha)\circ \delta_f=\delta_f^+\circ \epsilon_\alpha & &(1_H\oo \eta_{\alpha})\circ \delta_f^+ = \delta_f \circ \eta_{\alpha}\label{eq:LoopCoactionEtaAlpha}. 
\end{align}
1.(b) For the $H$-module structure at the vertex $v$, we show that
\begin{align}
\pi_v^+ \circ \epsilon_{\alpha}\circ \rhd_v &= \pi_v^+\circ (\epsilon \oo \epsilon_{\alpha})\label{eq:LoopRemovalPiPlusAtV}\\
\pi_v \circ \eta_{\alpha}\circ \rhd_v^+ &= \pi_v \circ (\epsilon \oo \eta_{\alpha})\label{eq:LoopRemovalPiAtV}.
\end{align}
As reversing edge orientations  commutes with $\epsilon_{\alpha}, \eta_{\alpha}$ and $\rhd_v$, we
can assume that all edges $\beta\neq \alpha$ at $v$ are incoming and that $s(\alpha)$ is directly before $t(\alpha)$ with respect to the cyclic ordering at $v$:  
\begin{align*}
\tikzset{every picture/.style={line width=0.75pt}} 
\begin{tikzpicture}[x=0.75pt,y=0.75pt,yscale=-1.3,xscale=1.3]
\draw  [fill={rgb, 255:red, 0; green, 0; blue, 0 }  ,fill opacity=1 ] (65.14,42.18) .. controls (65.14,41.06) and (66.06,40.14) .. (67.18,40.14) .. controls (68.31,40.14) and (69.22,41.06) .. (69.22,42.18) .. controls (69.22,43.31) and (68.31,44.22) .. (67.18,44.22) .. controls (66.06,44.22) and (65.14,43.31) .. (65.14,42.18) -- cycle ;
\draw [color={rgb, 255:red, 208; green, 2; blue, 27 }  ,draw opacity=1 ]   (65.22,44.3) .. controls (54.81,91.01) and (107.04,62.88) .. (72.02,44.55) ;
\draw [shift={(70.36,43.72)}, rotate = 25.53] [color={rgb, 255:red, 208; green, 2; blue, 27 }  ,draw opacity=1 ][line width=0.75]    (6.56,-1.97) .. controls (4.17,-0.84) and (1.99,-0.18) .. (0,0) .. controls (1.99,0.18) and (4.17,0.84) .. (6.56,1.97)   ;
\draw    (42.86,47.32) -- (63.19,42.63) ;
\draw [shift={(65.14,42.18)}, rotate = 167.01] [color={rgb, 255:red, 0; green, 0; blue, 0 }  ][line width=0.75]    (6.56,-1.97) .. controls (4.17,-0.84) and (1.99,-0.18) .. (0,0) .. controls (1.99,0.18) and (4.17,0.84) .. (6.56,1.97)   ;
\draw    (50.65,28.87) -- (65.53,39.02) ;
\draw [shift={(67.18,40.14)}, rotate = 214.3] [color={rgb, 255:red, 0; green, 0; blue, 0 }  ][line width=0.75]    (6.56,-1.97) .. controls (4.17,-0.84) and (1.99,-0.18) .. (0,0) .. controls (1.99,0.18) and (4.17,0.84) .. (6.56,1.97)   ;
\draw    (92.93,37.15) -- (71.18,41.77) ;
\draw [shift={(69.22,42.18)}, rotate = 348.03] [color={rgb, 255:red, 0; green, 0; blue, 0 }  ][line width=0.75]    (6.56,-1.97) .. controls (4.17,-0.84) and (1.99,-0.18) .. (0,0) .. controls (1.99,0.18) and (4.17,0.84) .. (6.56,1.97)   ;
\draw [color={rgb, 255:red, 74; green, 144; blue, 226 }  ,draw opacity=1 ]   (68.65,40.64) .. controls (68.17,38.33) and (69.08,36.94) .. (71.39,36.46) .. controls (73.7,35.98) and (74.61,34.59) .. (74.13,32.28) .. controls (73.66,29.97) and (74.57,28.58) .. (76.88,28.1) .. controls (79.19,27.62) and (80.1,26.23) .. (79.62,23.92) -- (80.65,22.36) -- (80.65,22.36) ;
\draw (88.57,52.54) node [anchor=north west][inner sep=0.75pt]  [font=\scriptsize]  {$\textcolor[rgb]{0.82,0.01,0.11}{\alpha }$};
\draw (48.29,53.97) node [anchor=north west][inner sep=0.75pt]  [font=\scriptsize]  {$\textcolor[rgb]{0.29,0.56,0.89}{v}$};
\draw (94.57,28.83) node [anchor=north west][inner sep=0.75pt]  [font=\scriptsize]  {$b$};
\draw (42.29,19.4) node [anchor=north west][inner sep=0.75pt]  [font=\scriptsize]  {$c$};
\draw (33.43,40.83) node [anchor=north west][inner sep=0.75pt]  [font=\scriptsize]  {$d$};
\end{tikzpicture}\\[-10ex]
\end{align*}
For this graph we compute
\begin{align*}
&\pi_v^+ \circ \epsilon_{\alpha}\circ \rhd_v \,  (h \oo \alpha \oo b \oo c \oo d ) = \pi_v^+ \circ \epsilon_{\alpha} \, (\low h 4 \alpha S( \low h 3 ) \oo \low h 5 b \oo \low h 1 c \oo \low h 2 d)\\
&= \pi_v^+ \,  (\epsilon(\alpha) \oo \low h 3 b \oo \low h 1 c \oo \low h 2 d)=\pi_v^+ \circ (\epsilon \oo \rhd_v^+) \, (\alpha \oo h \oo b \oo c\oo d)\\
&= \pi_v^+ \circ (\epsilon \oo \epsilon \oo 1_{H^{\oo 3}}) \, (h \oo \alpha \oo b \oo c \oo d) = \pi_v^+ \circ (\epsilon \oo \epsilon_{\alpha}) \, (h \oo \alpha \oo b \oo c \oo d),\\[+1ex]
&\pi_v \circ \eta_{\alpha}\circ \rhd_v^+ \, (h \oo b \oo c\oo d) = \pi_v\circ \eta_\alpha\,( \low h 3 b\oo \low h 1 c\oo \low h 2 d)\\
&=\pi_v \, (\low h 4 S(\low h 3 ) \oo \low h 5 b \oo \low h 1 c \oo \low h 2 d )= \pi_v \circ \rhd_v \, (h \oo 1 \oo b \oo c \oo d)\\
& = \pi_v \circ (\epsilon\oo 1_{H^{\oo 4}}) \, (h \oo 1\oo b \oo c \oo d)= \pi_v \circ (\epsilon \oo \eta_{\alpha}) \, (h \oo b\oo c \oo d),
\end{align*}
which proves \eqref{eq:LoopRemovalPiPlusAtV} and \eqref{eq:LoopRemovalPiAtV}. The computation for graphs with a different number of edge ends at $v$ are analogous. The claim for the case where the cilium is between the edge ends of $\alpha$ follows, because the invariants of the $H$-module structure at $v$ do not depend on the choice of the cilium by Lemma \ref{lemma:InvariantsIndependentFromCilium}. It can also be verified directly by analogous computations.

1.(c) We consider the $H$-comodule structures at the faces $f_1,f_2$. Under the assumption  that the starting end of $\alpha$ comes directly before its target end with respect to the cyclic ordering at $v$, one of these faces coincides with $\alpha$, and we assume it is $f_1=\alpha$. We then have
\begin{align}
&\delta_{f_1}\circ \eta_{\alpha} = \eta \oo \eta_{\alpha}\label{eq:LoopDeltaF1EtaAlpha}\\
&\delta_{f_2}\circ \eta_\alpha=(1_H\oo \eta_\alpha)\circ \delta^+_{f_2}   \qquad \delta_{f_2}\circ \eta_{\alpha}\circ \iota_{f_2}^+ = (\eta \oo \eta_{\alpha}) \circ \iota_{f_2}^+.\label{eq:LoopDeltaF2EtaAlpha}
\end{align}
Equation \eqref{eq:LoopDeltaF1EtaAlpha} is obvious, and to prove \eqref{eq:LoopDeltaF2EtaAlpha}, we can assume    that $\Gamma$ is locally given by
\begin{align}\label{eq:facegraphhelp}
\tikzset{every picture/.style={line width=0.75pt}} 
\begin{tikzpicture}[x=0.75pt,y=0.75pt,yscale=-1.2,xscale=1.2]
\draw  [fill={rgb, 255:red, 0; green, 0; blue, 0 }  ,fill opacity=1 ] (53.57,22.32) .. controls (53.57,20.8) and (54.8,19.57) .. (56.32,19.57) .. controls (57.84,19.57) and (59.08,20.8) .. (59.08,22.32) .. controls (59.08,23.84) and (57.84,25.08) .. (56.32,25.08) .. controls (54.8,25.08) and (53.57,23.84) .. (53.57,22.32) -- cycle ;
\draw [color={rgb, 255:red, 208; green, 2; blue, 27 }  ,draw opacity=1 ]   (54.65,26.07) .. controls (34.56,83.94) and (115.57,39.43) .. (63.13,24.51) ;
\draw [shift={(61.5,24.07)}, rotate = 14.54] [color={rgb, 255:red, 208; green, 2; blue, 27 }  ,draw opacity=1 ][line width=0.75]    (6.56,-1.97) .. controls (4.17,-0.84) and (1.99,-0.18) .. (0,0) .. controls (1.99,0.18) and (4.17,0.84) .. (6.56,1.97)   ;
\draw [color={rgb, 255:red, 245; green, 166; blue, 35 }  ,draw opacity=1 ]   (63.04,97.75) .. controls (61.92,95.68) and (62.4,94.08) .. (64.47,92.96) .. controls (66.54,91.84) and (67.02,90.24) .. (65.91,88.17) -- (67.22,83.79) -- (67.22,83.79) ;
\draw [color={rgb, 255:red, 245; green, 166; blue, 35 }  ,draw opacity=1 ]   (68.65,63.67) .. controls (56.06,74.6) and (35.28,35.72) .. (55.69,86) ;
\draw [shift={(56.65,88.36)}, rotate = 247.77] [fill={rgb, 255:red, 245; green, 166; blue, 35 }  ,fill opacity=1 ][line width=0.08]  [draw opacity=0] (5.36,-2.57) -- (0,0) -- (5.36,2.57) -- cycle    ;
\draw    (52.36,23.79) .. controls (29.18,48.61) and (39.8,80.71) .. (57.29,98.98) ;
\draw [shift={(58.65,100.36)}, rotate = 224.55] [color={rgb, 255:red, 0; green, 0; blue, 0 }  ][line width=0.75]    (6.56,-1.97) .. controls (4.17,-0.84) and (1.99,-0.18) .. (0,0) .. controls (1.99,0.18) and (4.17,0.84) .. (6.56,1.97)   ;
\draw    (64.65,99.5) .. controls (98.31,87.9) and (123.01,4.5) .. (62.51,19.86) ;
\draw [shift={(60.65,20.36)}, rotate = 344.4] [color={rgb, 255:red, 0; green, 0; blue, 0 }  ][line width=0.75]    (6.56,-1.97) .. controls (4.17,-0.84) and (1.99,-0.18) .. (0,0) .. controls (1.99,0.18) and (4.17,0.84) .. (6.56,1.97)   ;
\draw  [fill={rgb, 255:red, 0; green, 0; blue, 0 }  ,fill opacity=1 ] (58.29,100.93) .. controls (58.29,99.41) and (59.52,98.18) .. (61.04,98.18) .. controls (62.56,98.18) and (63.79,99.41) .. (63.79,100.93) .. controls (63.79,102.45) and (62.56,103.68) .. (61.04,103.68) .. controls (59.52,103.68) and (58.29,102.45) .. (58.29,100.93) -- cycle ;
\draw [color={rgb, 255:red, 245; green, 166; blue, 35 }  ,draw opacity=1 ]   (75.22,85.95) .. controls (98.72,59.91) and (87.01,51.06) .. (68.65,63.67) ;
\draw [color={rgb, 255:red, 245; green, 166; blue, 35 }  ,draw opacity=1 ]   (62.93,35.95) .. controls (60.78,35) and (60.17,33.45) .. (61.12,31.29) .. controls (62.07,29.13) and (61.47,27.58) .. (59.31,26.63) -- (58.65,24.93) -- (58.65,24.93) ;
\draw [color={rgb, 255:red, 245; green, 166; blue, 35 }  ,draw opacity=1 ]   (58.65,38.52) .. controls (52.37,51.08) and (80.49,52.68) .. (69.6,37.37) ;
\draw [shift={(67.79,35.1)}, rotate = 48.95] [fill={rgb, 255:red, 245; green, 166; blue, 35 }  ,fill opacity=1 ][line width=0.08]  [draw opacity=0] (5.36,-2.57) -- (0,0) -- (5.36,2.57) -- cycle    ;
\draw (81.29,44.3) node [anchor=north west][inner sep=0.75pt]  [font=\scriptsize]  {$\textcolor[rgb]{0.82,0.01,0.11}{\alpha }$};
\draw (28.86,10.26) node [anchor=north west][inner sep=0.75pt]  [font=\scriptsize]  {$\textcolor[rgb]{0.29,0.56,0.89}{v}$};
\draw (32.57,63.69) node [anchor=north west][inner sep=0.75pt]  [font=\scriptsize]  {$b$};
\draw (104,60.26) node [anchor=north west][inner sep=0.75pt]  [font=\scriptsize]  {$c$};
\draw (65.43,68.54) node [anchor=north west][inner sep=0.75pt]  [font=\tiny]  {$\textcolor[rgb]{0.96,0.65,0.14}{f_{2}}$};
\draw (82.29,27.69) node [anchor=north west][inner sep=0.75pt]  [font=\tiny]  {$\textcolor[rgb]{0.96,0.65,0.14}{f_{1}}$};
\end{tikzpicture}
\end{align}
as edge reversals commute with the module and comodule structures at the vertices and faces and with the morphisms $\eta_\alpha$ and $\epsilon_\alpha$. 
We then compute
\begin{align*}
\delta_{f_2}\circ \eta_\alpha(b\oo c)=\delta_{f_2}(1\oo b\oo c)=\low b 1\low c 1\oo 1\oo \low b 2\oo \low c 2=(1_H\oo\eta_\alpha)\circ \delta^+_{f_2}(b\oo c).
\end{align*}
The computations for graphs with different numbers of edges in $f_2$ are analogous, and
with the identity $\delta_{f_2}^+\circ \iota_{f_2}^+=(\eta\oo 1_{H^{\oo(E-1)}})\circ \iota_{f_2}^+$  we obtain the second identity in \eqref{eq:LoopDeltaF2EtaAlpha}.

2.~We prove the identities \eqref{eq:LoopRemovalWholePiPlus} to \eqref{eq:LoopRemovalWholeIotaPlus}.
To show \eqref{eq:LoopEtaLoopIotaIsIota}
it is  sufficient to consider  the graph \eqref{eq:facegraphhelp} with
\begin{align*}
&\delta_{f_1}(\alpha\oo b\oo c)=\low\alpha 1\oo \low \alpha 2\oo b\oo c\\
&\delta_{f_2}(\alpha\oo b\oo c)=\low b 1 S(\low
\alpha 2)\low c 1\oo \low \alpha 1\oo \low b 2\oo \low c 2.
\end{align*}
As $f_1, f_2 \in \mathcal F$, this yields 
\begin{align*}
\iota_{\mathcal F}=((\epsilon\circ \eta) \oo 1_{H^{\oo E}})\circ \iota_{\mathcal F}=(\epsilon\oo 1_{H^{\oo E}})\circ \delta_{f_2}\circ \iota_{\mathcal F}\overset{(\ast)}=\eta_\alpha\circ \epsilon_\alpha\circ \iota_{\mathcal F},
\end{align*}
where we apply in  $(\ast)$  the coinvariance 
under $\delta_{f_1}$.

Identity \eqref{eq:LoopRemovalWholePiPlus} follows inductively from the identity $\pi_{\mathcal V}^+ \circ \epsilon_{\alpha}\circ \rhd_z = \pi_{\mathcal V}^+ \circ (\epsilon \oo \epsilon_{\alpha})$ for all vertices $z\in V$, which is obtained for $z\neq v$ by post-composing the first identity in \eqref{eq:LoopActionEtaAlpha} with $\pi_{\mathcal V}^+ = \chi_{z,\mathcal V}^+ \circ \pi_z^+$ and for $z=v$ by post-composing \eqref{eq:LoopRemovalPiPlusAtV} with $\chi_{v,\mathcal V}$. 

Identity \eqref{eq:LoopRemovalWholeIota} follows from \eqref{eq:LoopEtaLoopIotaIsIota} and the first identities in  \eqref{eq:LoopCoactionEtaAlpha}, \eqref{eq:LoopDeltaF2EtaAlpha}, which yield for all $f\in \mathcal F\setminus\{f_1\}$
\begin{align*}
\delta_f^+\circ \epsilon_\alpha\circ \iota_{\mathcal F}&=(1_H\oo (\epsilon_\alpha\circ \eta_\alpha))\circ \delta_f^+\circ \epsilon_\alpha\circ \iota_{\mathcal F}\overset{\eqref{eq:LoopCoactionEtaAlpha}, \eqref{eq:LoopDeltaF2EtaAlpha}}=(1_H\oo \epsilon_\alpha)\circ \delta_f\circ \eta_\alpha\circ \epsilon_\alpha\circ \iota_{\mathcal F}\\&\overset{\eqref{eq:LoopEtaLoopIotaIsIota}}= (1_H\oo \epsilon_\alpha)\circ \delta_f\circ \iota_{\mathcal F}= (1_H \oo \epsilon_{\alpha} )\circ \delta_f \circ \iota_f \circ \xi_{f,\mathcal F}= (\eta \oo \epsilon_{\alpha}) \circ \iota_{\mathcal F}.
\end{align*}
Identity \eqref{eq:LoopRemovalWholePi} follows inductively by applying the identity
 $\pi_{\mathcal V}\circ \eta_{\alpha}\circ \rhd_z^+ = \pi_{\mathcal V}\circ (\epsilon \oo \eta_{\alpha})$ for $z\in V$, obtained by post-composing \eqref{eq:LoopActionEtaAlpha} with $\pi_{\mathcal V}=\chi_{z,\mathcal V}\circ\pi_z$  for $z\neq v$ and  \eqref{eq:LoopRemovalPiAtV} with $\chi_{v,\mathcal V}$ for $z=v$. 

 Analogously, \eqref{eq:LoopRemovalWholeIotaPlus} follows from the identity  $\delta_f \circ \eta_{\alpha}\circ \iota_{\mathcal F}^+ = (\eta\oo\eta_{\alpha}) \circ \iota_{\mathcal F}^+$ for  $f\in F$, which is obtained for  $f=f_1$  by pre-composing \eqref{eq:LoopDeltaF1EtaAlpha} with $\iota_{\mathcal F}^+$, for $f=f_2$ by pre-composing \eqref{eq:LoopDeltaF2EtaAlpha} with $\xi_{f_2,\mathcal F}^+$ and for $f\notin \{f_1, f_2\}$ by pre-composing \eqref{eq:LoopCoactionEtaAlpha} with $\iota_{\mathcal F}^+ = \iota_f^+ \circ \xi_{f,\mathcal F}^+$.
\end{proof}

We now apply Lemma \ref{Lemma:LoopRemovalAndCoInvarianceMorphisms} to show that loop deletions induce morphisms between the  invariants of  $\rhd_{\mathcal V}$ for
subsets $\emptyset\neq \mathcal V\subset V$. If $\mathcal F$ contains the two faces adjacent to the loop, they
also induce isomorphisms between  the
coinvariants of $\delta_\mathcal F$  and isomorphisms between the protected objects.

For this we denote by $\Gamma^+$ the graph obtained by deleting a loop $\alpha$ in $\Gamma$. For $\Gamma$ we use the  notation from Proposition \ref{prop:EdgeContractionInducesIso}. For $\Gamma^+$ we  denote by $\mathcal M^{+coH}$, $\mathcal M^{+H}$, $\mathcal M^+_{inv}$ the coinvariants, invariants and biinvariants of $\delta^+_{\mathcal F}$, $\rhd^+_{\mathcal V}$, by 
 $\iota^+_{\mathcal F}$ and $\pi^+_{\mathcal V}$ the associated equaliser and coequaliser and 
by  $I^+: \mathcal M^+_{inv}\to \mathcal M^{+H}$
and $P^+: \mathcal M^{+coH}\to\mathcal M^+_{inv}$  the monomorphism and epimorphism that characterise $\mathcal M^+_{inv}$ as the image of $\pi^+_{\mathcal V}\circ \iota^+_{\mathcal F}$.

\begin{proposition}\label{prop:LoopRemovalInducesIso}
Let $ \Gamma^+$ be obtained from   $\Gamma$ by removing an isolated loop $\alpha$ with  incident faces $f_1,f_2$.
Then for all subsets $\emptyset\neq\mathcal V \subset V$, $\{f_1, f_2\}\subset \mathcal F \subset F$ the loop removal induces 
\begin{compactitem}
\item an isomorphism $y: M_{\mathcal F}^{coH} \to M^{+coH}_{\mathcal F}$ with $\iota_{\mathcal F}^+ \circ y=\epsilon_{\alpha}\circ \iota_{\mathcal F}$,
\item a morphism $t:M_{\mathcal V}^{H}\to M_{\mathcal V}^{+H}$ with a right inverse and  $t\circ \pi_{\mathcal V}= \pi_{\mathcal V}^+ \circ \epsilon_{\alpha}$
\item an isomorphism $\psi_{inv}:M_{inv}\to M^+_{inv}$ with $I= t^\inv \circ I^+\circ \psi_{inv}$. 
\end{compactitem}
\end{proposition}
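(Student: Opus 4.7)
The strategy is to mimic the proof of Proposition \ref{prop:EdgeContractionInducesIso}, but with the roles of the (co)invariants interchanged, since loop removal is dual to edge contraction in this respect: the loop is incident to a single vertex but to two distinct faces, so one should expect an isomorphism on the coinvariants (when both faces are in $\mathcal F$) and merely a morphism with right inverse on the invariants. All of the required naturality identities have been gathered in Lemma \ref{Lemma:LoopRemovalAndCoInvarianceMorphisms}; what remains is to feed them into the universal properties of the (co)equalisers and of the image.

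First I would construct the morphism $y:M^{coH}_{\mathcal F}\to M^{+coH}_{\mathcal F}$. Identity \eqref{eq:LoopRemovalWholeIota} together with the universal property of the equaliser $\iota^+_{\mathcal F}$ yields a unique $y$ with $\iota^+_{\mathcal F}\circ y=\epsilon_\alpha\circ \iota_{\mathcal F}$. In the same way \eqref{eq:LoopRemovalWholeIotaPlus} and the equaliser $\iota_{\mathcal F}$ yield a unique $y^{-1}:M^{+coH}_{\mathcal F}\to M^{coH}_{\mathcal F}$ with $\iota_{\mathcal F}\circ y^{-1}=\eta_\alpha\circ \iota^+_{\mathcal F}$. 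To verify that these are mutually inverse, I would compute $\iota_{\mathcal F}\circ y^{-1}\circ y=\eta_\alpha\circ\iota^+_{\mathcal F}\circ y=\eta_\alpha\circ\epsilon_\alpha\circ \iota_{\mathcal F}=\iota_{\mathcal F}$ by \eqref{eq:LoopEtaLoopIotaIsIota}, and similarly $\iota^+_{\mathcal F}\circ y\circ y^{-1}=\epsilon_\alpha\circ\eta_\alpha\circ \iota^+_{\mathcal F}=\iota^+_{\mathcal F}$ using $\epsilon_\alpha\circ \eta_\alpha=1_{H^{\oo(E-1)}}$. Since $\iota_{\mathcal F}$ and $\iota^+_{\mathcal F}$ are monomorphisms, this forces $y^{-1}\circ y=1$ and $y\circ y^{-1}=1$.

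Next I would construct $t$ and its right inverse. Identity \eqref{eq:LoopRemovalWholePiPlus} and the universal property of the coequaliser $\pi_{\mathcal V}$ yield a unique $t:M^H_{\mathcal V}\to M^{+H}_{\mathcal V}$ with $t\circ\pi_{\mathcal V}=\pi^+_{\mathcal V}\circ\epsilon_\alpha$. Dually, \eqref{eq:LoopRemovalWholePi} and the coequaliser $\pi^+_{\mathcal V}$ give a morphism $t':M^{+H}_{\mathcal V}\to M^H_{\mathcal V}$ with $t'\circ \pi^+_{\mathcal V}=\pi_{\mathcal V}\circ \eta_\alpha$. Then $t\circ t'\circ \pi^+_{\mathcal V}=t\circ \pi_{\mathcal V}\circ \eta_\alpha=\pi^+_{\mathcal V}\circ\epsilon_\alpha\circ \eta_\alpha=\pi^+_{\mathcal V}$, and since $\pi^+_{\mathcal V}$ is an epimorphism, $t\circ t'=1_{M^{+H}_{\mathcal V}}$, so $t'$ is a right inverse of $t$. (Unlike in Proposition \ref{prop:EdgeContractionInducesIso}, one cannot expect $t'\circ t=1$, since the invariants at the vertex $v$ are no longer pinned down by the coinvariance constraints.)

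Finally I would assemble $\psi_{inv}$ via the universal property of the image, in parallel with the construction of $\phi_{inv}$ in Proposition \ref{prop:EdgeContractionInducesIso}. Setting $j:=t^{-1}\circ I^+$ (well defined on $M^+_{inv}$ since $t^{-1}$ here means the restriction of $t'$ acts bijectively on the image, but cleaner: work directly with the diagram using $t'\circ I^+$ and show it factors as a monomorphism times $\phi^{-1}_{inv}$) and $q:=P^+\circ y$, I would compute
\begin{align*}
t'\circ I^+\circ P^+\circ y=t'\circ \pi^+_{\mathcal V}\circ\iota^+_{\mathcal F}\circ y=\pi_{\mathcal V}\circ\eta_\alpha\circ\epsilon_\alpha\circ\iota_{\mathcal F}=\pi_{\mathcal V}\circ\iota_{\mathcal F},
\end{align*}
where the last step uses \eqref{eq:LoopEtaLoopIotaIsIota}. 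The universal property of the image $M_{inv}$ as the image of $\pi_{\mathcal V}\circ\iota_{\mathcal F}$ then furnishes a unique $\psi^{-1}_{inv}:M^+_{inv}\to M_{inv}$ with $I=t'\circ I^+\circ \psi^{-1}_{inv}$. Symmetrically, using $t\circ I$ and $P\circ y^{-1}$ together with \eqref{eq:LoopEtaLoopIotaIsIota} once more, one obtains $\psi_{inv}$ with $I^+=t\circ I\circ\psi_{inv}$. The same monomorphism argument as at the end of Proposition \ref{prop:EdgeContractionInducesIso} then shows $\psi_{inv}$ and $\psi^{-1}_{inv}$ are mutually inverse. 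The main technical subtlety will be to set up the image diagram correctly given that $t$ is not itself an isomorphism: this forces one to use the right inverse $t'$ exactly where $r^{-1}$ appeared in the edge-contraction argument, and to check that the relation $I=t'\circ I^+\circ\psi_{inv}$ (which should be read as the counterpart of $I=r^{-1}\circ I'\circ \phi_{inv}$) is exactly what the universal property of $M_{inv}$ delivers.
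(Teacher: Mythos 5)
Your proposal is correct and follows essentially the same route as the paper: the morphisms $y$, $y^\inv$, $t$ and its right inverse are obtained from the identities of Lemma \ref{Lemma:LoopRemovalAndCoInvarianceMorphisms} via the universal properties of the (co)equalisers, and $\psi_{inv}$ is assembled from the image's universal property exactly as in Proposition \ref{prop:EdgeContractionInducesIso}, with the right inverse of $t$ playing the role of $r^\inv$. The only slip is in the final paragraph, where the labels are attached to the wrong arrows: the map furnished by the universal property of $M_{inv}$ with target $M^+_{inv}$ goes \emph{from} $M_{inv}$ \emph{to} $M^+_{inv}$ and is therefore $\psi_{inv}$ itself (matching the stated relation $I=t^\inv\circ I^+\circ \psi_{inv}$), while the symmetric construction yields $\psi_{inv}^\inv: M^+_{inv}\to M_{inv}$.
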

\begin{proof}
Equation \eqref{eq:LoopRemovalWholeIota} and the universal property of the equaliser $ \iota_{\mathcal F}^+$ yield a unique morphism $y: M_{\mathcal F}^{coH}\to M_{\mathcal F}^{+coH}$  with $\iota_{\mathcal F}^+ \circ y=\epsilon_{\alpha}\circ \iota_{\mathcal F}$. The universal property  of the equaliser $\iota_{\mathcal F}$ and \eqref{eq:LoopRemovalWholeIotaPlus} provide a unique morphism $y^\inv: M_{\mathcal F}^{+coH}\to M_{\mathcal F}^{coH}$ with
$
\iota_{\mathcal F}\circ y^\inv = \eta_{\alpha}\circ \iota_{\mathcal F}^+ 
$. 
The two morphisms are inverse to each other, as $\iota_{\mathcal F}$, $\iota_{\mathcal F}^+$ are monomorphisms and 
\begin{align*}
\iota_{\mathcal F }^+ \circ y \circ y^\inv &= \epsilon_{\alpha}\circ \iota_{\mathcal F}\circ y^\inv = \epsilon_{\alpha}\circ \eta_{\alpha}\circ \iota_{\mathcal F}^+ =\iota_{\mathcal F}^+\\
\iota_{\mathcal F}\circ y^\inv \circ y &= \eta_{\alpha}\circ \iota_{\mathcal F}^+ \circ y = \eta_{\alpha}\circ \epsilon_{\alpha}\circ \iota_{\mathcal F} \overset{\eqref{eq:LoopEtaLoopIotaIsIota}}{=} \iota_{\mathcal F}.
\end{align*}
Similarly, equation \eqref{eq:LoopRemovalWholePiPlus} and the universal  property of the coequaliser $ \pi_{\mathcal V}$ yield a unique morphism $t:M^H_{\mathcal V}\to M_{\mathcal V}^{+H}$ with $t\circ \pi_{\mathcal V}=\pi_{\mathcal V}^+\circ \epsilon_{\alpha}$
and equation \eqref{eq:LoopRemovalWholePi} with the universal property of the 
coequaliser $ \pi_{\mathcal V}^+$  a unique morphism $t^\inv: M_{\mathcal V}^{+H}\to M_{\mathcal V}^H$ with
$
t^\inv \circ \pi_{\mathcal V}^+ = \pi_{\mathcal V}\circ \eta_{\alpha}. 
$ 
The morphism $t^\inv$ is a right inverse of $t$, since $\pi_{\mathcal V}^+$ is an epimorphism and
\begin{align*}
t \circ t^\inv \circ \pi_{\mathcal V}^+ = t \circ \pi_{\mathcal V}\circ \eta_{\alpha} = \pi_{\mathcal V}^+ \circ \epsilon_{\alpha}\circ \eta_{\alpha} = \pi_{\mathcal V}^+.
\end{align*}
We have constructed commuting diagrams 
$$
\xymatrix{
M^{coH}_{\mathcal F} \ar[r]^{\iota_{\mathcal F}} \ar[d]^{y} & M  \ar[r]^{\pi_{\mathcal V}} \ar[d]^{\epsilon_{\alpha}}  & M^H_{\mathcal V} \ar[d]^{t}\\
M^{+coH}_{\mathcal F} \ar[r]_{\iota_{\mathcal F}^+}  & M^+ \ar[r]_{\pi_{\mathcal V}^+} & M^{+H}_{\mathcal V}.
}\qquad\qquad 
\xymatrix{
M^{+coH}_{\mathcal F} \ar[r]^{\iota^+_{\mathcal F}} \ar[d]^{y^\inv} & M^+  \ar[r]^{\pi^+_{\mathcal V}} \ar[d]^{\eta_{\alpha}}  & M^{+H}_{\mathcal V} \ar[d]^{t^\inv}\\
M^{coH}_{\mathcal F} \ar[r]_{\iota_{\mathcal F}}  & M \ar[r]_{\pi_{\mathcal V}} & M^H_{\mathcal V}
}
$$
The construction of $\psi_{inv}: M_{inv}\to M^+_{inv}$ and its inverse is as in the proof of Proposition \ref{prop:EdgeContractionInducesIso} with the monomorphisms $j=t^\inv\circ I^+$, $j^+=t\circ I$ and epimorphisms $q=P^+\circ y$, $q^+=P\circ y^\inv$.
\end{proof}

\begin{corollary}\label{Cor:loopdel}
Deletions of isolated loops induce isomorphisms between the protected objects.
\end{corollary}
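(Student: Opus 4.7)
The plan is to obtain the corollary as a direct specialisation of Proposition \ref{prop:LoopRemovalInducesIso} to the maximal choices $\mathcal{V} = V$ and $\mathcal{F} = F$, which are precisely the data defining the protected object in Definition \ref{def:KitaevModel}. Since both faces $f_1, f_2$ adjacent to the isolated loop $\alpha$ lie in $F$, the hypothesis $\{f_1, f_2\} \subset \mathcal{F}$ is automatic, and the proposition furnishes an isomorphism $\psi_{inv}: M_{inv} \to M^+_{inv}$ together with the compatibility $I = t^\inv \circ I^+ \circ \psi_{inv}$.

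The only remaining step is to identify the target $M^+_{inv}$ with the protected object of $\Gamma^+$. Loop deletion preserves vertices, so $V^+ = V$ and the $H^{\oo V}$-module structure on $H^{\oo (E-1)}$ matches the one used in Definition \ref{def:KitaevModel} for $\Gamma^+$. On the face side, deleting $\alpha$ removes the isolated face $f_1 = \alpha$ entirely and replaces $f_2$ by a face $f_2^+ \in F^+$ obtained by omitting the traversal of $\alpha^\inv$ from its boundary, giving a canonical bijection $F \setminus \{f_1\} \leftrightarrow F^+$ under which $f_2 \leftrightarrow f_2^+$ and all other faces are preserved. The coaction $\delta^+_{f_1}$ on $H^{\oo (E-1)}$ is then the trivial coaction $\eta \oo 1_{H^{\oo (E-1)}}$ assigned to an isolated face in Definition \ref{def:vertexfaceops}, and coinvariance under it is automatic, so the equaliser $\iota^+_{F}$ from the proposition coincides with the equaliser indexed by the actual face set $F^+$ of $\Gamma^+$. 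Consequently $M^+_{inv}$ is the protected object of $\Gamma^+$ and $\psi_{inv}$ is the desired isomorphism.

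No serious obstacle arises, since all substantive content is already carried out in Lemma \ref{Lemma:LoopRemovalAndCoInvarianceMorphisms} and Proposition \ref{prop:LoopRemovalInducesIso}; the corollary reduces to this identification of indexing data and a clean invocation of the proposition.
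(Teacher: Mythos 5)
Your proposal is correct and matches the paper's intent: the corollary is stated as an immediate consequence of Proposition \ref{prop:LoopRemovalInducesIso} with $\mathcal V=V$ and $\mathcal F=F$, exactly as you argue, and your bookkeeping of the face sets (dropping $f_1=\alpha$ and replacing $f_2$ by the modified face of $\Gamma^+$) is consistent with the paper's convention, visible in the exponent $\vert\mathcal F\vert-1$ in \eqref{eq:LoopRemovalWholeIota}. Nothing is missing.
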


\subsection{Protected objects}
\label{subsec:topinv}

Combining the results from Section \ref{subsec:edge slides} to \ref{subsec:loops} one has that ciliated ribbon graphs related by moving cilia, edge reversals, edge contractions and deletions of isolated loops have isomorphic protected objects. As these are sufficient to relate any connected ribbon graph to the standard graph from \eqref{eq:standardgraph},  the protected object  of a ciliated ribbon graph is determined up to isomorphisms by the genera of the connected components of the associated surface. 

\begin{theorem}\label{th:topinv}
The isomorphism class of the protected object for an involutive Hopf monoid $H$ and a ciliated ribbon graph $\Gamma$ depends only on $H$ and the homeomorphism class of the surface for $\Gamma$.
\end{theorem}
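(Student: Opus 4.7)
The plan is to combine the reduction to standard form from Proposition \ref{prop:standardgraph} with the invariance results established in Sections \ref{subsec:edge slides}--\ref{subsec:loops}. The key point is that each of the five elementary graph moves (cilium relocation, edge orientation reversal, edge slide, edge contraction, and isolated loop deletion) preserves the homeomorphism type of the surface $\Sigma_\Gamma$ obtained by gluing discs along the faces, while also inducing an isomorphism between the corresponding protected objects.

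First I would treat the case of a connected ciliated ribbon graph $\Gamma$. Let $g\geq 0$ be determined by $2-2g=|V|-|E|+|F|$; by construction the standard graph of Definition \ref{def:standardgraph} for genus $g$ also has associated surface $\Sigma_\Gamma$. Proposition \ref{prop:standardgraph} supplies a finite sequence of edge reversals, edge slides, edge contractions and loop deletions transforming $\Gamma$ into this standard graph. By Corollary \ref{cor:edgereverssal}, Corollary \ref{Coro:EdgeSlideIsoOnProtectedObject}, Corollary \ref{Coro:EdgeContracionIso} and Corollary \ref{Cor:loopdel} respectively, each such move induces an isomorphism between the protected objects of the graph before and after the move, while any intermediate relocation of cilia is handled by Lemma \ref{lemma:InvariantsIndependentFromCilium}. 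Composing the resulting finite chain of isomorphisms shows that the protected object of $\Gamma$ is isomorphic to the one associated to the standard graph of genus $g$. Since any two connected ciliated ribbon graphs whose associated surfaces are homeomorphic share the same standard graph, their protected objects are isomorphic.

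For a disconnected ciliated ribbon graph $\Gamma=\Gamma_1\sqcup\cdots\sqcup\Gamma_k$, I would reduce each connected component separately. The crucial local nature of Definition \ref{def:vertexfaceops} ensures that $\rhd_v$ and $\delta_f$ act as identities on the copies of $H$ belonging to edges of components other than the one containing $v$ or $f$; similarly, the morphisms associated to the graph moves of Definition \ref{def:graphtrafos} only modify the tensor factors for edges in the affected component and commute with the action and coaction structures at vertices and faces of the other components. Hence each move applied inside $\Gamma_i$ still defines a $(\rhd_{\mathcal V},\delta_{\mathcal F})$-compatible morphism on the full tensor product $H^{\oo E}$, and the isomorphism arguments of Propositions \ref{prop:EdgeContractionInducesIso} and \ref{prop:LoopRemovalInducesIso} go through component-wise. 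Applying the connected case to each $\Gamma_i$ in turn yields an isomorphism between the protected object of $\Gamma$ and the one associated to the disjoint union of the standard graphs for the genera of the components of $\Sigma_\Gamma$. Since this data depends only on the homeomorphism class of $\Sigma_\Gamma$, the theorem follows.

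The only real obstacle is the bookkeeping for the disconnected case: one must check carefully that every statement of Sections \ref{subsec:edge slides}--\ref{subsec:loops} -- which were formulated for an unspecified ribbon graph -- behaves compatibly when the move is localised inside a single component. This is immediate from Definition \ref{def:vertexfaceops}, but needs to be invoked explicitly in order to promote the local isomorphisms on the affected tensor factors to global isomorphisms of the protected object. Beyond this, no new technical ingredient is required, as Proposition \ref{prop:standardgraph} and the four corollaries already do all the substantive work.
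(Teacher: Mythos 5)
Your proposal is correct and follows essentially the same route as the paper's proof: cilium independence via Lemma \ref{lemma:InvariantsIndependentFromCilium}, reduction to (a disjoint union of) standard graphs via Proposition \ref{prop:standardgraph}, and the four corollaries on edge reversal, edge slides, edge contractions and loop deletions to produce the chain of isomorphisms. Your extra bookkeeping for the disconnected case is sound and simply makes explicit the locality argument that the paper leaves implicit when it speaks of transforming a graph into a disjoint union of standard graphs.
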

\begin{proof} 
By Lemma \ref{lemma:InvariantsIndependentFromCilium} the invariants, coinvariants  and hence the protected object of a ciliated ribbon graph are independent of the choice of the cilia. 
By Proposition \ref{prop:standardgraph} every  ribbon graph can be transformed into a disjoint union of standard graphs by edge reversals, edge contractions, edge slides and removing isolated loops.  
In each step the cilia can be arranged in such a way that no edge ends slide over cilia. 
By Corollaries \ref{cor:edgereverssal}, \ref{Coro:EdgeSlideIsoOnProtectedObject}, \ref{Coro:EdgeContracionIso} and \ref{Cor:loopdel} these graph transformations induce  isomorphisms between the protected objects.
\end{proof}

As the protected object is a topological invariant, one can use any embedded graph whose complement is a disjoint union of discs to compute the protected object.
For a sphere, the simplest such graph consists of a single isolated vertex. This is associated with the trivial $H$-(co)module  structure on $e$ given by the (co)unit of $H$ and yields the tensor unit as protected object.

\begin{example}\label{ex:sphere}  The protected object for a sphere $S^2$ is the tensor unit of $H$: $\mathcal M_{inv}=e$.
\end{example}

We now focus on  oriented surfaces $\Sigma$ of genus $g\geq 1$ and use  the standard graphs \eqref{eq:standardgraph} to determine their protected objects. The associated module and comodule structures are given in Example \ref{ex:modcomodpi_1gen} and form a Yetter-Drinfeld module.

\begin{example}\label{ex:set}
For a group $H$ as a Hopf monoid in $\mac=\Set$ the coinvariants are the set of group homomorphisms from $\pi_1(\Sigma)$ to $H$
\begin{align}
M^{coH}= \{(a_1, b_1, \ldots, a_g, b_g)\in H^{\times 2g}: [b_g^\inv, a_g]\cdot \ldots \cdot [b_1^\inv, a_1]=1\} \cong \mathrm{Hom}(\pi_1(\Sigma), H). \label{eq:CoinvariantsGroupHomos}
\end{align}
The invariants are the set of orbits for the  conjugation action $\rhd$ from \eqref{eq:YDmodulegroup} on $H^{\times 2g}$, and the protected object  is the {\em representation variety}  or {\em moduli space of flat $H$-bundles}
$M_{inv}\cong \mathrm{Hom}(\pi_1(\Sigma), H)/H$.
\end{example}

\begin{example}\label{ex:top}
For a topological group $H$ as a Hopf monoid in $\mathcal C=\mathrm{Top}$ the protected object is $M_{inv}\cong \mathrm{Hom}(\pi_1(\Sigma), H)/H$ as a set by Example \ref{Ex:ImageInSet,Top}. It is equipped with the quotient topology induced by the canonical surjection $\pi:\mathrm{Hom}(\pi_1(\Sigma), H)\to \mathrm{Hom}(\pi_1(\Sigma), H)/H$ and the compact-open topology on $\mathrm{Hom}_{\mathrm{Top}}(\pi_1(\Sigma), H)$ for the discrete topology on $\pi_1(\Sigma)$.
\end{example}

\begin{example}\label{ex:GSet}
For a Hopf monoid $H$ in $\mathcal C=G\mathrm{-Set}\, = \mathrm{Set}^{\mathrm{B}G}$ the coinvariants for the  comodule structure  $\delta$ from Example \ref{ex:modcomodpi_1gen} are  the set \eqref{eq:CoinvariantsGroupHomos} 
with the diagonal $G$-action. The invariants for the module structure  $\rhd$ 
are the associated orbit space. By  Example \ref{Ex:ImageInSet,Top}, 2. 
the protected object is the representation variety $M_{inv}\cong \mathrm{Hom}(\pi_1(\Sigma), H)/H$ with the induced $G$-set structure.
\end{example}

\begin{example}\label{ex:k-Mod}
Let $k$ be a commutative ring, $\mac=k\text{-Mod}$ and $G$ a finite group.

For the group algebra $H=k[G]$ as a Hopf monoid in $\mac$ and the standard graph in \eqref{eq:standardgraph} one has $M=k[G]^{2g}\cong k[G^{\times 2g}]$. The Yetter-Drinfeld module structure of $M$ is given by  \eqref{eq:YDmodulegroup}  on a basis. The coinvariants and invariants are 
\begin{align*}
M^{coH}&=\langle\{(a_1,b_1,\ldots, a_g, b_g)\mid [b_g^\inv,a_g]\cdots[b_1^\inv, a_1]=1\}\rangle_k\cong \langle \Hom(\pi_1(\Sigma),G)\rangle_k\\
M^H&=k[G^{\times 2g}]/\langle  \{(a_1,\ldots, b_g)-(ha_1h^\inv, \ldots ,hb_g h^\inv)\mid a_1,b_1,\ldots, a_g,b_g,h\in G\}\rangle,
\end{align*} 
and the protected object is the free $k$-module generated by the representation variety $\Hom(\pi_1(\Sigma),G)/G$
\begin{align}
M_{inv}=\langle \Hom(\pi_1(\Sigma), G)/G\rangle_k.
\end{align}
For the dual Hopf monoid $H=k[G]^*=\mathrm{Map}(G,k)$ of maps from $G$ to $k$ with Hopf monoid structure
\begin{align}\label{eq:dualhopf}
&\delta_g\cdot \delta_h=\delta_g(h)\delta_g, & &1=\sum_{g\in G} \delta_g, & &\Delta(\delta_g)=\sum_{x,y\in G, xy=g} \delta_x\oo \delta_y, & &\epsilon(\delta_g)=\delta_g(e), & &S(\delta_g)=\delta_{g^\inv}
\end{align}
one has $M=\mathrm{Map}(G,k)^{\oo 2g}\cong \mathrm{Map}(G^{\times 2g},k)$ with the Yetter-Drinfeld module structure
\begin{align}
\delta_h\rhd(\delta_{a_1}\oo\delta_{b_1}\oo\ldots\oo \delta_{a_g}\oo \delta_{b_g})=\delta_h([b_g^\inv, a_g]\cdots [b_1^\inv, a_1]) \,\delta_{a_1}\oo\delta_{b_1}\oo\ldots\oo \delta_{a_g}\oo \delta_{b_g}\\
\delta(\delta_{a_1}\oo\delta_{b_1}\oo\ldots\oo \delta_{a_g}\oo \delta_{b_g})=\Sigma_{h\in G} \delta_{h^\inv} \oo \delta_{ha_1h^\inv}\oo \delta_{hb_1h^\inv}\oo\ldots\oo \delta_{ha_gh^\inv}\oo \delta_{hb_gh^\inv}\nonumber
\end{align} 
computed from \eqref{eq:ydgen} and \eqref{eq:dualhopf}. It follows that the coinvariants and invariants are given by
\begin{align}
M^{coH}&=\mathrm{Map}(G^{\times 2g},k)^G\\
M^H
&=\{f: G^{\times 2g}\to k\mid \mathrm{supp}(f)\subseteq\{(a_1,\ldots, b_g)\mid [b_g^\inv, a_g]\cdots [b_1^\inv, a_1]=1\}\},\nonumber
\end{align}
and the protected object is the set of functions 
\begin{align}
M_{inv}=\mathrm{Map}(\Hom(\pi_1(\Sigma), G)/G, k).
\end{align}
\end{example}

Example \ref{ex:k-Mod} shows that the protected object in this article indeed generalises the protected space of Kitaev's quantum double models. If one sets $k=\C$ in Example \ref{ex:k-Mod} one obtains precisely the protected space for Kitaev's quantum double model for the group algebra $\C[G]$ and its dual, see \cite[Sec.~4]{Ki}. However, Example \ref{ex:k-Mod} also yields an analogous result for any commutative ring $k$, for  which the usual quantum double models are not defined.

\section{Protected objects in SSet}
\label{sec:sset}

In this section, we investigate protected objects for group objects in the category $\SSet$. We denote by $\Delta$ the simplex category with finite ordinals $[n]=\{0,1,\ldots, n\}$ for $n\in\N_0$ as objects and weakly monotonic maps $\alpha:[m]\to [n]$ as morphisms from $[m]$ to $[n]$. 

Objects in $\SSet=\Set^{\Delta^{op}}$ are simplicial sets,  functors $X:\Delta^{op}\to\Set$ that are specified by  sets $X_n$,  face maps $d_i: X_{n+1}\to X_{n}$  and degeneracies $s_i: X_n\to X_{n+1}$ for $n\in\N_0$ and $i\in\{0,\ldots, n\}$ that satisfy the simplicial relations 
\begin{align}\label{eq:simprels}
&d_j\circ d_i=d_i\circ d_{j+1}\text{ if } i\leq j, &  &s_i\circ s_j=s_{j+1}\circ s_i\text{ if }i\leq j, \\
&d_i\circ s_j=s_{j-1}\circ d_i \text{ if }  i<j, & 
&d_i\circ s_j=\id  \text{ if } i\in \{j,j+1\}, &
&d_i\circ s_j=s_j\circ d_{i-1} \text{ if } i>j+1.\nonumber
\end{align}

Morphisms in $\SSet$ are simplicial maps,  natural transformations $f: X\to Y$  specified by  component maps $f_n: X_n\to Y_n$ satisfying $f_{n-1}\circ d_i=d_{i} \circ f_n$ and $f_{n+1}\circ s_i=s_i\circ f_n$ for  $n\in\N_0$ and admissible $i$. 
The category $\SSet$ is cartesian monoidal with the objectwise product induced by the product in $\Set$. 

Unpacking the definition of a group object in a cartesian monoidal category from Example \ref{ex:groupobjex}  yields

\begin{definition} \label{lem:simplicialgroup} $\quad$
\begin{compactenum}
\item A group object in $\SSet$ is a {\bf simplicial group}: a simplicial set $H:\Delta^{op}\to \Set$ with  group structures on the sets $H_n$  such that all face maps  and degeneracies  are group homomorphisms. 
\item A morphism of group objects in $\SSet$ is a {\bf morphism of simplicial groups}:  a simplicial map $f: H\to H'$ such that all maps $f_n: H_n\to H'_n$ are group homomorphisms. 
\end{compactenum}
\end{definition}

For examples of simplicial groups, see Section \ref{subsec:coequcat},  in particular Corollary \ref{cor:nervegroup}  and Example \ref{ex:crossedmod}. 
Modules, comodules and Yetter-Drinfeld modules over simplicial groups are given by Example  \ref{Ex:YDAndHopfModulesInCat}. 

\begin{lemma} Let $H:\Delta^{op}\to\Set$ be a simplicial group.
\begin{compactenum}
\item A {\bf module} over  $H$ is a  simplicial set $M:\Delta^{op}\to \Set$ together with a collection of $H_n$-actions $\rhd_n: H_n\times M_n\to M_n$ that define a simplicial map $\rhd: H\times M\to M$. 
\item A {\bf comodule} over $H$  is a  simplicial set $M:\Delta^{op}\to \Set$  
with a simplicial map $F: M\to H$. 

\item If $(M,\rhd)$ is a module and $(M,F)$ a comodule over $H$, then  $(M,\rhd, F)$ is a
 {\bf Yetter-Drinfeld module} over $H$ iff $F_n(g\rhd_n m)=g\cdot F_n(m) \cdot g^\inv$ for all $m\in M_n$, $g\in H_n$ and $n\in\N_0$. 

\end{compactenum}
\end{lemma}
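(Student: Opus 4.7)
The plan is to unpack the general definitions from Section \ref{subsec:modinv} and Example \ref{Ex:YDAndHopfModulesInCat} in the cartesian monoidal functor category $\SSet=\Set^{\Delta^{op}}$, using that finite products and the symmetric monoidal structure in $\SSet$ are computed pointwise. The same principle used in Lemma \ref{lemma:HopfMonoidsInFunctorCategory} applies: structure morphisms in $\SSet$ correspond to levelwise families of set-maps that intertwine the face maps and degeneracies.

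For part 1, a morphism $\rhd: H\times M\to M$ in $\SSet$ is a natural transformation, hence is given by a family of maps $\rhd_n: H_n\times M_n\to M_n$ commuting with face maps and degeneracies. Evaluating the associativity and unitality axioms of Definition \ref{def:module} at each $[n]\in\Delta$ yields precisely the conditions that each $\rhd_n$ is a group action of $H_n$ on $M_n$, and naturality is the statement that face maps and degeneracies intertwine these actions, i.e.~that $\rhd$ is a simplicial map.

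For part 2, a comodule structure $\delta: M\to H\times M$ over the group object $H$ in the cartesian monoidal category $\SSet$ must by Definition \ref{def:module} satisfy counitality $(\epsilon_H\times 1_M)\circ\delta=1_M$, where $\epsilon_H:H\to\bullet$ is the terminal morphism. Writing $\delta=(F,g)$ with $F=\pi_1\circ\delta$ and $g=\pi_2\circ\delta$, counitality forces $g=1_M$, so $\delta$ is entirely determined by $F: M\to H$. Coassociativity $(\Delta_H\times 1_M)\circ\delta=(1_H\times\delta)\circ\delta$ is then automatic, since $\Delta_H$ is the diagonal: both sides equal $(F,F,1_M)$ (up to the associator that is suppressed in the cartesian setting). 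Conversely any simplicial map $F: M\to H$ yields a comodule structure $\delta=(F,1_M)$.

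For part 3, by Example \ref{Ex:YDAndHopfModulesInCat} a module and comodule $(M,\rhd,\delta)$ over a group object in a cartesian monoidal category is a Yetter-Drinfeld module iff the induced map $F=\pi_1\circ\delta$ satisfies equation \eqref{eq:yddefgrp}. In $\SSet$ this is an equation between simplicial maps, and as such holds iff it holds at each simplicial degree $n$. Translated to elements via the cartesian product in $\Set$, equation \eqref{eq:yddefgrp} at level $n$ reads exactly $F_n(g\rhd_n m)=g\cdot F_n(m)\cdot g^\inv$ for all $g\in H_n$ and $m\in M_n$. The proof is thus entirely bookkeeping; no genuine obstacle arises, and the main point is simply to observe that in the cartesian setting comodule structures reduce to simplicial maps $F: M\to H$, so the Yetter-Drinfeld condition becomes a levelwise $F$-equivariance condition between the $H_n$-action and the conjugation action.
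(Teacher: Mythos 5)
Your proof is correct and follows exactly the route the paper intends: the paper states this lemma without a written proof, simply citing Example \ref{Ex:YDAndHopfModulesInCat}, and your argument is the expected unpacking of Definition \ref{def:module} and that example in the cartesian monoidal functor category $\SSet$, where products and the (co)module axioms are evaluated levelwise. In particular your observations that counitality forces $\pi_2\circ\delta=1_M$ and that coassociativity is then automatic, and that the Yetter--Drinfeld condition becomes the levelwise conjugation-equivariance of $F_n$, are precisely the intended content.
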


As (co)limits in $\SSet$ are  objectwise, see for instance Riehl \cite[Prop.~3.3.9]{R} or Leinster \cite[Th.~6.2.5]{L}, (co)invariants of a (co)module over a group object in $\SSet$ are obtained from  (co)equalisers in $\Set$.  It is also straightforward to compute the biinvariants of a Yetter-Drinfeld module.

\begin{proposition}\label{prop:binvsimplicial} Let $H$ be a simplicial group.
\begin{compactenum}
\item The coinvariants $\mathcal M^{coH}$ of  a $H$-comodule $M$   defined by a simplicial map $F: M\to H$ are given by 
the sets $M^{coH}_n=\{m\in M_n\mid F_n(m)=e\}$ and the induced face maps and degeneracies.

\item The invariants $M^H$  of a $H$-module $(M,\rhd)$ are given by the sets  $M^H_n=\{H_n\rhd_n m\mid m\in M_n\}$ and the induced face maps and degeneracies.

\item The biinvariants $M_{inv}$ of a Yetter-Drinfeld module  $(M, \rhd, F)$ over $H$ are given by the sets  $(M_{inv})_n=\{H_n\rhd_n m\mid m\in M_n, F_n(m)=e\}$ and the induced  face maps and degeneracies.
\end{compactenum}
\end{proposition}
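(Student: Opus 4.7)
The plan is to leverage the fact that $\SSet=\Set^{\Delta^{op}}$ is a functor category, so that all small limits, all small colimits, and (since $\Set$ admits an epi--mono factorisation that is preserved and reflected by the evaluation functors $\SSet\to\Set$, $X\mapsto X_n$) also images are computed objectwise (see Pareigis [Th.~1, Sec.~2.7]). Consequently each of the three formulas can be obtained by first describing the (co)equaliser or image  of the relevant (co)module structure level by level using Example \ref{Ex:ImageInSet,Top}, 1., and then checking that the resulting subsets (or quotient sets) of $M_n$ are preserved by the face maps $d_i$ and degeneracies $s_i$ of $M$, so that they assemble into a simplicial set whose structure maps are those induced from $M$.

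For (1), the coaction $\delta: M\to H\times M$ of a comodule over a group object is of the form $\delta_n(m)=(F_n(m),m)$, and the equaliser in $\Set$ of $\delta_n$ and $\eta_n\oo 1_{M_n}$ is  $M^{coH}_n=\{m\in M_n:F_n(m)=e\}$ with inclusion $\iota_n: M^{coH}_n\hookrightarrow M_n$.  Since $F$ is a simplicial map, $F_n(d_i m)=d_i F_{n+1}(m)=e$ and $F_{n+1}(s_i m)=s_i F_n(m)=e$  whenever $F$ vanishes on $m$, so the $M^{coH}_n$ are closed under the face maps and degeneracies; the universal property in each level then automatically extends to a simplicial equaliser. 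For (2), the coequaliser in $\Set$ of $\rhd_n$ and $\epsilon_n\oo 1_{M_n}$ is the orbit set $M^H_n=\{H_n\rhd_n m : m\in M_n\}$ with the canonical surjection $\pi_n: M_n\to M^H_n$; the simpliciality of $\rhd$ means $d_i(h\rhd_{n+1} m)=(d_i h)\rhd_n (d_i m)$ and  $s_i(h\rhd_n m)=(s_i h)\rhd_{n+1}(s_i m)$, so the $d_i$ and $s_i$ send orbits to orbits and induce a unique simplicial structure on $M^H$ making $\pi$ a simplicial map.

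For (3), since $\Set$ is complete and images are computed as set-theoretic images of morphisms, the image in $\SSet$ of the simplicial map $\pi\circ\iota: M^{coH}\to M^H$ is computed objectwise. The $n$-th level is therefore
\begin{align*}
(M_{inv})_n=(\pi_n\circ \iota_n)(M_n^{coH})=\{H_n\rhd_n m : m\in M_n,\, F_n(m)=e\},
\end{align*}
with face maps and degeneracies inherited from $M^H$. The universal property of each levelwise image together with the compatibility just established  makes this into a simplicial set satisfying the universal property in $\SSet$.

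The only mildly non-trivial point is the levelwise computation of the image; the obstacle here would be if $\SSet$ did not admit objectwise images, but this is immediate from the fact that every simplicial map factors objectwise as a surjection followed by an injection, and this factorisation is natural in $[n]\in\Delta^{op}$, hence defines an epi--mono factorisation in $\SSet$ that witnesses the claimed image.
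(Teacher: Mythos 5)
Your proposal is correct and follows essentially the same route as the paper: parts (1) and (2) use that (co)limits in $\SSet=\Set^{\Delta^{op}}$ are computed objectwise, and part (3) identifies the biinvariants with the levelwise set-theoretic image. The only difference is in part (3), where the paper verifies the universal property of the image directly (using that $J_n$ injective forces $Q_n(g\rhd_n m)=Q_n(m)$ and then constructing the comparison map $V$ by hand), whereas you appeal to the objectwise epi--mono factorisation being the image factorisation in $\SSet$; this is fine, though to be fully rigorous one should note that the levelwise surjections are regular (strong) epimorphisms, so that the mono part of this factorisation is indeed universal among monomorphisms through which $\pi\circ\iota$ factors.
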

  
  \begin{proof} 1.~The coinvariant object of a $H$-comodule $(M,F)$ is the equaliser of the simplicial maps $\delta=F\times\id: M\to H\times M$ and $\eta\times \id: M\to H\times M$. As limits in $\SSet$ are objectwise, this is the simplicial set $M^{coH}:\Delta^{op}\to \Set$ that assigns to an ordinal $[n]$ the equaliser in $\Set$ of the maps $F_n\times \id: M_n\to H_n\times M_n$ and $\eta_n\times \id: M_n\to H_n\times M_n$, which is $M^{coH}_n=\{m\in M_n\mid F_n(m)=e\}$. 
  The face maps and degeneracies are induced by the ones of $M$, and
  the simplicial map $\iota: M^{coH}\to M$ is given by the maps  $\iota_n: M^{coH}_n\to M_n$, $m\mapsto m$.

2.~Analogously to 1., the invariant object of  $(M,\rhd)$ is the simplicial set $M^H:\Delta^{op}\to\Set$ that assigns to the ordinal $[n]$ the coequaliser in $\Set$ of the maps $\rhd_n: H_n\times M_n\to M_n$ and $\epsilon_n\times\id: H_n\times M_n\to M_n$. This is the set $M^H_n=M_n/\sim_n$  with $m\sim_n m'$ iff there is a $g\in H_n$ with $m'=g\rhd_n m$. 
 The simplicial map $\pi: M\to M^H$ is given by the maps  $\pi_n: M_n\to M^H_n$, $m\mapsto H_n\rhd_n m$. 
  
  3.~The simplicial maps $I: M_{inv}\to M^H$ and $P: M^{coH}\to M_{inv}$ with
$\pi\circ \iota=I\circ P$ that characterise $M_{inv}$ with $(M_{inv})_n=\{H_n \rhd_n m \mid m \in M^{coH}_n\}$ as the image of $\pi\circ\iota$  are given by 
  $$I_n: (M_{inv})_n\to M^H_n, H_n\rhd_n m\mapsto H_n\rhd_n m, \qquad P_n: M^{coH}_n\to (M_{inv})_n, m\mapsto H_n\rhd_n m.$$ 
  As monomorphisms and epimorphisms in $\SSet$ are those simplicial maps whose component morphisms are injective and surjective, see for instance \cite[Ex.~6.2.20]{L}, it follows directly that $I$ is a monomorphism and $P$ an epimorphism in $\SSet$. Every pair $(J,Q)$ of a monomorphism $J: X\to M^H$ and morphism $Q: M^{coH}\to X$ in $\SSet$ with $J\circ Q=\pi\circ \iota$ defines  injective maps $J_n: X_n\to M^H_n$ and thus identifies $Q(M^{coH}_n)$ with a subset of $M^H_n$. 
As $J_n$ is a monomorphism and due to the identity $J_n\circ Q_n(g\rhd_n m)=\pi_n\circ \iota_n(g\rhd_n m)=\pi_n\circ\iota_n(m)=J_n\circ Q_n(m)$,  we have $Q_n(g\rhd_n m)=Q_n(m)$  for all $m\in M_n^{coH}$ and $g\in H_n$. The maps
    $V_n: (M_{inv})_n\to X_n$,  $H_n\rhd_n m\mapsto Q_n(m)$ define a simplicial map $V: M_{inv}\to X$  with $I=J\circ V$.
 \end{proof}

We now determine the  coinvariants, invariants and the protected objects for Kitaev models on oriented surfaces $\Sigma$ of genus $g\geq 1$ and for a simplicial group $H$ as a Hopf monoid in $\SSet$. 

\begin{proposition}\label{th:simplicialprot} Let $H$ be a simplicial group and $\Sigma$ an oriented surface of genus $g\geq 1$. The  associated protected object  is the simplicial set $X:\Delta^{op}\to \Set$ with $X_n=\Hom(\pi_1(\Sigma), H_n)/H_n$, where the quotient is with respect to conjugation by $H_n$,  and face maps and degeneracies given by
$$
d_i: X_n\to X_{n-1},\; [\rho]\mapsto [d_i\circ \rho],\qquad s_i: X_n\to X_{n+1},\; [\rho]\mapsto [s_i\circ \rho].
$$ 
\end{proposition}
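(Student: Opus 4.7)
The plan is to reduce the claim to Proposition \ref{prop:binvsimplicial} together with Example \ref{ex:set}, using that the standard graph for $\Sigma$ has one vertex, $2g$ edges and one face, and that the cartesian monoidal structure on $\SSet$ is objectwise.

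First I would fix the standard graph \eqref{eq:standardgraph} for $\Sigma$, so that $H^{\oo E} = H^{\times 2g}$ in $\SSet$. By Example \ref{ex:modcomodpi_1gen}, the associated Yetter--Drinfeld module structure on $H^{\times 2g}$ is given by \eqref{eq:YDmodulegroup}. Because limits and colimits in $\SSet$ are computed objectwise (see \cite[Prop.~3.3.9]{R} or \cite[Th.~6.2.5]{L}), and because the product $H^{\times 2g}$ in $\SSet$ has $n$-simplices $H_n^{\times 2g}$, the simplicial map $\rhd: H\times H^{\times 2g}\to H^{\times 2g}$ and the simplicial coaction $\delta: H^{\times 2g}\to H\times H^{\times 2g}$ are in each degree $n$ precisely the Yetter--Drinfeld module structure on $H_n^{\times 2g}$ over the group $H_n$ from \eqref{eq:YDmodulegroup}.

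Then I would apply Proposition \ref{prop:binvsimplicial} degreewise. This yields
\begin{align*}
(M^{coH})_n &= \{(a_1,b_1,\ldots,a_g,b_g)\in H_n^{\times 2g}\mid [b_g^\inv,a_g]\cdots[b_1^\inv,a_1]=1\},\\
(M_{inv})_n &= \{H_n\rhd (a_1,\ldots,b_g)\mid (a_1,\ldots,b_g)\in (M^{coH})_n\},
\end{align*}
with face maps and degeneracies induced from $H^{\times 2g}$, i.e.\ by applying $d_i$ respectively $s_i$ of $H$ componentwise. Using the presentation \eqref{eq:pi1gensurface} of $\pi_1(\Sigma)$, the assignment $(a_1,b_1,\ldots,a_g,b_g)\mapsto \rho$ with $\rho(\alpha_i)=a_i$, $\rho(\beta_i)=b_i$ is a bijection from $(M^{coH})_n$ to $\Hom(\pi_1(\Sigma),H_n)$, exactly as in Example \ref{ex:set}. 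This bijection intertwines the conjugation action of $H_n$ on $H_n^{\times 2g}$ with the conjugation action on $\Hom(\pi_1(\Sigma),H_n)$, so descends to a bijection $(M_{inv})_n\cong \Hom(\pi_1(\Sigma),H_n)/H_n$.

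Finally I would check compatibility with the simplicial structure. Since each $d_i:H_n\to H_{n-1}$ and $s_i:H_n\to H_{n+1}$ is a group homomorphism (Definition \ref{lem:simplicialgroup}), the componentwise map $d_i^{\times 2g}$ on $H_n^{\times 2g}$ corresponds under the bijection above to the map $\Hom(\pi_1(\Sigma),H_n)\to \Hom(\pi_1(\Sigma),H_{n-1})$, $\rho\mapsto d_i\circ\rho$, and likewise for the degeneracies. These maps descend to the quotients by $H_n$ and $H_{n-1}$ because $d_i$ is a group homomorphism, giving the described face maps and degeneracies on $X_n=\Hom(\pi_1(\Sigma),H_n)/H_n$.

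The only really substantive step is checking that these degreewise identifications assemble into a simplicial isomorphism $M_{inv}\cong X$; this is essentially bookkeeping, since Proposition \ref{prop:binvsimplicial} provides the simplicial structure on $M_{inv}$ as the one induced from $M=H^{\times 2g}$, and naturality of the bijection $(M^{coH})_n\cong \Hom(\pi_1(\Sigma),H_n)$ in $[n]\in\Delta^{op}$ follows from the fact that this bijection is defined purely via the universal property of the presentation \eqref{eq:pi1gensurface}, which does not involve $n$. No further obstacle is expected.
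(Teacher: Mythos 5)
Your proposal is correct and follows essentially the same route as the paper: reduce to the standard graph, read off the Yetter--Drinfeld structure from Example \ref{ex:modcomodpi_1gen}, apply Proposition \ref{prop:binvsimplicial} degreewise, and identify the result with representation varieties as in Example \ref{ex:set}. The only cosmetic difference is that the paper explicitly invokes Theorem \ref{th:topinv} to justify computing with the standard graph, whereas you assume this implicitly; otherwise the arguments coincide.
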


\begin{proof} By Theorem \ref{th:topinv} the protected object of $\Sigma$  can be computed from the standard graph in \eqref{eq:standardgraph}. This yields a Yetter-Drinfeld module $(M,\rhd,F)$ over $H$ given by formula \eqref{eq:YDmodulegroup} in Example \ref{ex:modcomodpi_1gen}. Hence, we have 
$M_n=H_n^{\times 2g}$ for all $n\in\N_0$ with the face maps and degeneracies of $H$ applied to each component simultaneously. The Yetter-Drinfeld module structure is given by
\begin{align*}
&F_n: H_n^{\times 2g}\to H_n, \;(a_1,b_1,\ldots, a_g,b_g)\mapsto[b_g^\inv, a_g]\cdots[b_1^\inv,a_1]\\
&\rhd_n: H_n\times H_n^{\times 2g}\to H_n^{\times 2g},\; (h, a_1,b_1,\ldots, a_g,b_g)\mapsto (ha_1h^\inv, hb_1h^\inv,\ldots, ha_g h^\inv, hb_gh^\inv). 
\end{align*}
  By Proposition  \ref{prop:binvsimplicial} the associated  protected object is the simplicial set $\mathcal M_{inv}$ with 
  $$
  (M_{inv})_n=\{ H_n\rhd_n (a_1,b_1,\ldots, a_g,b_g)\in H_n^{\times 2g}\mid [b_g^\inv, a_g]\cdots [b_1^\inv,a_1]=e\}\cong \Hom(\pi_1(\Sigma), H_n)/H_n.
  $$
Face maps and degeneracies are given by post-composing  group homomorphisms $\rho:\pi_1(\Sigma)\to H_n$ with the face maps $d_i: H_n\to H_{n-1}$ and degeneracies $s_i: H_n\to H_{n+1}$. 
\end{proof}

\section{Protected objects in Cat}
\label{sec:cat}

\subsection{Crossed modules as group objects in Cat}
\label{subsec:crossed modules}

We consider the category Cat of small categories and functors between them as a cartesian monoidal category with terminal object  
$\{\cdot\}$. For a finite product $\mathcal C_1\times\ldots\times \mathcal C_n$ of small categories, we denote by $\pi_i: \mathcal C_1\times\ldots\times \mathcal C_n\to \mathcal C_i$ the associated projection functors. For a small category $\mathcal C$ we denote by $\mathrm{Ob}(\mathcal C)$ the set of objects and by $\mathcal C^{(1)}=\bigcup_{X, Y \in \mathrm{Ob}(\mathcal C)}\mathrm{Hom}_{\mathcal C}(X, Y)$ the set of all morphisms in $\mathcal C$.

\begin{definition}\label{Def:GroupObjectInCat} $\quad$
\begin{compactenum}
\item A \textbf{group object} in Cat is a  small category $H$ together with functors $m: H \times H \to H$,  $\eta: \{\cdot\}\to H$ and $I:H \to H$ such that the  diagrams 
\eqref{eq:groupobjdiags} commute.

\item A \textbf{morphism} $F: (H, m, \eta, I)\to (H', m', \eta', I')$ \textbf{of group objects}  is a functor $F: H \to H'$ 
that satisfies \eqref{eq:groupmorphism}.
\end{compactenum}
We denote by $\mathcal G(\mathrm{Cat})$ the category of group objects  and morphisms of group objects in  Cat  and write $e:=\eta(\cdot)$,   $f^\inv=I(f)$, $g\cdot f=m(g,f)$ and likewise for multiple products.
\end{definition}

Brown und Spencer \cite{BS} showed that  group objects in $\Cat$ correspond to crossed modules. We summarise this correspondence for the convenience of the reader.

\begin{definition}\label{Def:CrossedModule}$ $\\
A \textbf{crossed module} is a quadruple $(B, A, \brhd, \partial)$ of groups $A$ and $B$, a group homomorphism $\partial:A \to B$ and a group action $\brhd: B\times A \to A$ by automorphisms that satisfy the Peiffer identities
\begin{align}\label{eq:crossedmod}
\partial(b\brhd a) =b \partial(a) b^\inv,\qquad
\partial(a)\brhd a'= aa'a^\inv \qquad \forall a, a'\in A, b\in B.
\end{align}
A \textbf{morphism of crossed modules} $f=(f_1, f_2):(B, A, \brhd, \partial)\to (B', A',  \brhd', \partial')$ is a pair of group homomorphisms $f_1: B\to B'$, $f_2: A \to A'$ such that 
\begin{equation*}
\partial' \circ f_2= f_1 \circ \partial\text{,} \qquad \brhd' \circ (f_1 \times f_2)=f_2 \circ \brhd\text{.}
\end{equation*}
We denote by $\mathcal{CM}$ the category of crossed modules and morphisms between them.
\end{definition}

\vspace*{0.2cm}

\begin{example} \label{ex:crossedmodules} $\quad$
\begin{compactenum}
\item A normal subgroup $A\subset B$ defines a crossed module with the inclusion  $\partial: A \to B$, $a\mapsto a$ and the conjugation action $\brhd: B \times A \to A $, $b \brhd a=b a b^\inv$.

\item Any crossed module $(A,B,\brhd,\partial)$ yields a crossed module $(A/\ker \partial, B,\brhd',\partial')$ with injective $\partial'$. 
This identifies $A/\ker\partial$ with a normal subgroup of $B$ and hence yields 1.

\item Any group action $\brhd: B\times A\to A$ by automorphisms of an abelian group $A$  yields a crossed module with $\partial \equiv e_B$. 
\item Any group $A$ defines a crossed module with $B=\mathrm{Aut}(A)$, $\brhd: \mathrm{Aut}(A)\times A \to A$, $\phi \brhd a=\phi (a)$ and $\partial: A \to \mathrm{Aut}(A)$, $g\mapsto C_g$, where $C_g(x)=gxg^\inv$. 
\item Every extension of a group $G$ by a group $X$
\begin{equation*}
\xymatrix{
1 \ar[r] & X  \ar@^{(->}[r]^{\iota} & E  \ar@{->>}[r]^{\pi} &G  \ar[r] & 1
}
\end{equation*}
defines a crossed module with $A:=X$, $B:=E$,  $\partial:=\iota$ and  $b\brhd a:= \iota^\inv (b \iota(a) b^\inv)$ for all $b\in B$, $a\in A$.  The group extension is central iff $\iota(X)\subset Z(E)$, which is equivalent to $\brhd$ trivial. 

\item Conversely, any crossed module $(B, A, \brhd, \partial)$ gives an extension of  $B/\partial(A)$ by $A/\ker (\partial)$:
\begin{equation*}
\xymatrix{
1 \ar[r] & A/\ker (\partial)  \ar@^{(->}[r]^{\qquad \partial'} & B  \ar@{->>}[r]^{\pi\qquad } & B/\partial(A)  \ar[r] & 1
}
\end{equation*}
with $\partial'(a \ker (\partial)):= \partial(a)$. For crossed modules with surjective $\partial$, the group $A$ is also an extension of $B$ by $\ker(\partial)$:
\begin{equation*}
\xymatrix{
1 \ar[r] & \ker (\partial)  \ar@^{(->}[r]^{\iota} & A  \ar@{->>}[r]^{\partial} &B  \ar[r] & 1\text{.}
}
\end{equation*}
\end{compactenum}
\end{example}

\begin{theorem}\label{Proposition:CrossedModulesAndGroupObjectsCatEquivalent}\cite[Th.~1]{BS}\\
The following functors $\bigtriangleup: \mathcal G(\mathrm{Cat})\to\mathcal{CM}$ and $\bigtriangledown: \mathcal{CM}\to\mathcal G(\mathrm{Cat})$ form an equivalence of categories.

The functor $\bigtriangleup$ sends a group object $(H, m, \eta, I)$ to the crossed module $(B, A, \brhd,  \partial)$ with
\begin{compactitem}
\item $A= \mathrm{Cost}_e:=\bigcup_{X\in \mathrm{Ob}(H)} \mathrm{Hom}_H(X, e)$ with  multiplication $m_A: A\times A \to A$, $(a, a')\mapsto m(a,a')$,
\item $B=\mathrm{Ob}(H)$ with  multiplication $m_B: B \times B  \to B$, $(b, b')\mapsto  m(b, b')$,
\item $\partial: A \to B$, $(f: X \to e) \mapsto X$,
\item $\brhd: B  \times A \to A$, $b\brhd a=1_b \cdot a \cdot 1_b^\inv$,
\end{compactitem}
and a morphism of group objects $F:(H, m, \eta, I)\to (H', m', \eta', I')$  to the pair of group homomorphisms $f_1:=F: \mathrm{Ob}(H)\to \mathrm{Ob}(H')$ and $f_2:=F: \mathrm{Cost}_e \to \mathrm{Cost}'_{e'}$.

The functor $\bigtriangledown$ sends a crossed module $(B, A, \brhd, \partial)$ to the group object $(H, m, \eta, I)$ with
\begin{compactitem}
\item $\mathrm{Ob}(H)=B$,
\item $\mathrm{Hom}_{H}(b, b')=\{(a, b)\in A \times B: \, \partial(a)b=b'\}$ with composition $(a', \partial(a)b)\circ (a, b)=(a'a, b)$,
\item $m: H\times H\to H$ with $m(b, b')=bb'$ and $m((a', b'), (a, b))= (a' (b'\brhd a), b'b)$,
\item $\eta: \{\cdot \} \to H$, $\eta(\cdot)=e_B$, $\eta(1_{\cdot})=(e_A, e_B)$,
\item $I:H\to H$, $I(b)=b^\inv$, $I((a, b))=(b^\inv \brhd a^\inv, b^\inv)$,
\end{compactitem}
and a morphism $(f_1, f_2): (B, A, \brhd,\partial )\to (B', A',  \brhd', \partial')$ of crossed modules  to the functor $F: H\to H'$ with $F(b)=f_1(b)$ for all $b\in B=\mathrm{Ob}(H)$ and $F((a, b))=(f_2(a), f_1(b))$.
\end{theorem}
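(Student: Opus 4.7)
The plan is to build the proof around the \emph{interchange law}: since $m: H\times H\to H$ is a functor, for composable morphisms $f, f', g, g'$ in $H$ one has
\begin{equation*}
m(f'\circ f,\, g'\circ g)=m(f',g')\circ m(f,g),
\end{equation*}
and similarly $m(1_X,1_Y)=1_{m(X,Y)}$. This single identity, together with $\eta$ and $I$ being functors, is what forces all the crossed module data. First I would check that $\bigtriangleup$ is well-defined. The set $B=\mathrm{Ob}(H)$ is a group under $m$ directly from the commuting diagrams \eqref{eq:groupobjdiags}. For $A=\mathrm{Cost}_e$ I would verify that $m_A$ restricts to $A\times A\to A$ (since $m(e,e)=e$), that $1_e\in A$ is a two-sided unit, and that $I(a)\in A$ gives inverses. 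The source map $\partial$ is a group homomorphism because $m$ acts on objects and morphisms compatibly. The $B$-action $\brhd$ on $A$ is by automorphisms because $b\brhd(-)$ is a composite of two functors $m(1_b,-)$ and $m(-,1_{b^\inv})$ restricted to $A$.

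The two Peiffer identities in \eqref{eq:crossedmod} are the heart of the matter. The first, $\partial(b\brhd a)=b\partial(a)b^\inv$, follows just by reading off the source of $1_b\cdot a\cdot 1_{b^\inv}$. For the second, $\partial(a)\brhd a'=aa'a^{-1}$, I would apply the interchange law to the factorisations
\begin{equation*}
a=a\circ 1_X,\qquad a'=1_e\circ a'=a'\circ 1_{X'},\qquad a^\inv=1_e\circ a^\inv
\end{equation*}
and rewrite $m^{(2)}(a,a',a^\inv)$ as $m^{(2)}(1_X,a',1_{X^\inv})\circ m^{(2)}(a,1_{X'},a^\inv)$. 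The second factor is a morphism out of $m^{(2)}(X,X',X^\inv)$ whose target is $X'$, and by another application of interchange it equals $1_{X'}$, leaving exactly $X\brhd a'=\partial(a)\brhd a'$. Well-definedness on morphisms of group objects is then a direct computation.

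For $\bigtriangledown$, the main task is to check that $H=\bigtriangledown(B,A,\brhd,\partial)$ is actually a category and that $m,\eta,I$ are functors. Composition is associative and unital because these properties are inherited from the group structure of $A$. The functoriality of $m$ on composable morphisms $(a',\partial(a)b)\circ(a,b)$ and $(a^{\prime\prime},\partial(a^{\prime})b^{\prime})\circ(a^{\prime\prime\prime}, b^{\prime})$ is exactly where the second Peiffer identity enters: expanding both sides of the interchange requirement reduces to
$\partial(a)\brhd a^{\prime\prime\prime}=a a^{\prime\prime\prime} a^\inv$.
That $I$ is a functor uses the first Peiffer identity. The diagrams in \eqref{eq:groupobjdiags} then commute on objects by the group axioms of $B$ and on morphisms by direct calculation using that $\brhd$ is an action by automorphisms. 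Functoriality of $\bigtriangledown$ on morphisms of crossed modules is immediate.

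It remains to produce natural isomorphisms $\bigtriangledown\circ\bigtriangleup\cong\id_{\mathcal{G}(\mathrm{Cat})}$ and $\bigtriangleup\circ\bigtriangledown\cong\id_{\mathcal{CM}}$. For a crossed module $(B,A,\brhd,\partial)$, the composite $\bigtriangleup\circ\bigtriangledown$ returns the same quadruple up to obvious identifications, since $\mathrm{Ob}(H)=B$ by construction and $\mathrm{Cost}_e=\{(a,e):a\in A\}\cong A$. Conversely, for a group object $H$, define $\Phi_H: \bigtriangledown\bigtriangleup(H)\to H$ by $b\mapsto b$ on objects and by sending a morphism $(a,b)\in A\times B$ with $\partial(a)b=b'$ to $m(a,1_b): b\to b'$. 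I expect the main obstacle to be verifying that $\Phi_H$ is a functor and is bijective on morphisms, and this is where I would again lean on the interchange law: every morphism $f:b\to b'$ in $H$ factors uniquely as $f=m(f\circ 1_b,1_b)\circ \ldots$, more precisely as $f=m(f\cdot 1_{b^\inv},1_b)$ where $f\cdot 1_{b^\inv}:=m(f,1_{b^\inv})\in\mathrm{Cost}_e$, giving the inverse assignment. Naturality of $\Phi$ in $H$ is automatic from the construction, and naturality of the other isomorphism is immediate.
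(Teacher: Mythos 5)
First, a caveat on the comparison: the paper offers no proof of this statement at all \textemdash{} it is quoted verbatim from Brown--Spencer \cite[Th.~1]{BS} \textemdash{} so your proposal has to be judged against the standard argument rather than against anything in the text. Your overall strategy is the right one: the interchange law $m(f'\circ f,\,g'\circ g)=m(f',g')\circ m(f,g)$ really is what forces both Peiffer identities, and your outline for $\bigtriangledown$ (composition inherited from the group $A$, the second Peiffer identity as exactly the obstruction to functoriality of $m$, the first for $I$) is correct. But two of your central computations fail as written, both because you lose track of sources and targets in $H$. For the second Peiffer identity, the factorisation $m^{(2)}(a,a',a^\inv)=m^{(2)}(1_X,a',1_{X^\inv})\circ m^{(2)}(a,1_{X'},a^\inv)$ does not typecheck: the right-hand factor has target $m^{(2)}(e,X',e)=X'$ while the left-hand factor has source $m^{(2)}(X,X',X^\inv)=XX'X^\inv$, and these differ unless $X$ is central. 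Moreover $m^{(2)}(a,1_{X'},a^\inv)$ is a morphism $XX'X^\inv\to X'$, so it cannot equal $1_{X'}$; if your computation went through it would prove $aa'a^\inv=a'$, which is the wrong statement. The correct splitting is the other one: writing $a=a\circ 1_X$, $a'=1_e\circ a'$ and $I(a)=I(a)\circ 1_{X^\inv}$, interchange gives
\begin{align*}
m^{(2)}(a,a',I(a))=m^{(2)}(a,1_e,I(a))\circ m^{(2)}(1_X,a',1_{X^\inv}),
\end{align*}
and $m^{(2)}(a,1_e,I(a))=m(m(a,1_e),I(a))=m(a,I(a))=1_e$ by unitality and the inverse axiom, leaving exactly $\partial(a)\brhd a'$.

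The second failure is in the comparison functor $\Phi_H$. For $a\in\mathrm{Cost}_e$, i.e.\ $a\colon X\to e$, the morphism $m(a,1_b)$ runs from $\partial(a)b$ to $b$ \textemdash{} the wrong way around, since $(a,b)$ is a morphism $b\to\partial(a)b$ in $\bigtriangledown\bigtriangleup(H)$. Likewise, for $f\colon b\to b'$ the morphism $m(f,1_{b^\inv})$ runs from $e$ to $b'b^\inv$, so it lies in the star of $e$, not in $\mathrm{Cost}_e$. Your formulas implicitly use the dual convention (star of $e$, with $\partial$ the target), whereas the paper's $\bigtriangleup$ uses the costar with $\partial$ the source; with the paper's conventions you must insert inverses, e.g.\ $\Phi_H((a,b))=m(I(a),1_{\partial(a)b})\colon b\to\partial(a)b$ and $\Phi_H^\inv(f)=(m(1_{b'},I(f)),b)$ for $f\colon b\to b'$. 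Both defects are repairable without changing your strategy, but as written the two steps that carry the whole proof \textemdash{} the second Peiffer identity and the unit of the equivalence \textemdash{} do not go through.
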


By Theorem \ref{Proposition:CrossedModulesAndGroupObjectsCatEquivalent} the {\em group structure} on the set $H^{(1)}$ of morphisms of a group object $H$ is the semidirect product $A\rtimes B$ 
for the   group action $\brhd: B\times A\to A$. As a \emph{category}  $H$ is the action groupoid for the group action $\brhd': A\times B\to B$, $a\brhd' b=\partial(a)b$.

\subsection{Equalisers and coequalisers in Cat}
\label{subsec:coequcat}

To determine the coinvariants, invariants and the protected object for a group object in Cat, we require equalisers, coequalisers and images in Cat.
It is well-known that Cat is complete and cocomplete, see for instance \cite[Prop.~3.5.6, Cor.~4.5.16]{R}. The following result on  equalisers is standard, see for example Schubert \cite[Sec.~7.2]{Sch}.

\begin{lemma}\label{def:equCat} The equaliser of two functors $F,K:\mathcal C\to \mathcal D$ between small categories  is the subcategory $\mathcal E\subset\mathcal C$ with
\begin{compactitem}
\item $\mathrm{Ob}(\mathcal E)=\{C\in \mathrm{Ob}(\mac)\mid F(C)=K(C)\}$,
\item $\mathrm{Hom}_{\mathcal E}(C,C')=\{f\in \mathrm{Hom}_{\mathcal C}(C,C')\mid F(f)=K(f)\}$.
\end{compactitem}
\end{lemma}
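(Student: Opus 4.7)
The plan is to verify the three standard items needed for a (co)limit statement: that $\mathcal E$ is a well-defined subcategory of $\mathcal C$, that the inclusion functor equalises $F$ and $K$, and that it satisfies the universal property of an equaliser in $\Cat$.

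First I would check that $\mathcal E$ is a subcategory. For any object $C\in \mathrm{Ob}(\mathcal E)$ the identity $1_C$ satisfies $F(1_C) = 1_{F(C)} = 1_{K(C)} = K(1_C)$ by functoriality and the condition $F(C)=K(C)$, so $1_C$ lies in $\mathcal E$. For composable morphisms $f,g$ in $\mathcal E$, functoriality yields $F(g\circ f) = F(g)\circ F(f) = K(g)\circ K(f) = K(g\circ f)$, so $\mathcal E$ is closed under composition. Hence the inclusion $\iota: \mathcal E \to \mathcal C$ is a well-defined functor, and by construction $F \circ \iota = K \circ \iota$.

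Next I would verify the universal property. Given any small category $\mathcal X$ and functor $G: \mathcal X \to \mathcal C$ with $F\circ G = K\circ G$, the equation $F(G(X)) = K(G(X))$ for each $X\in \mathrm{Ob}(\mathcal X)$ shows $G(X) \in \mathrm{Ob}(\mathcal E)$, and similarly $F(G(f)) = K(G(f))$ shows $G(f) \in \mathcal E^{(1)}$ for every morphism $f$ in $\mathcal X$. Thus one can define $U: \mathcal X \to \mathcal E$ by $U(X) = G(X)$ on objects and $U(f) = G(f)$ on morphisms; this is functorial because $G$ is, and satisfies $\iota \circ U = G$ by construction. Uniqueness of $U$ is immediate: any functor $U': \mathcal X \to \mathcal E$ with $\iota \circ U' = G$ must agree with $G$ on objects and morphisms, and since $\iota$ is injective on both, $U'$ is forced to equal $U$.

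There is essentially no obstacle here: the proof is a direct unpacking of definitions, with the only subtlety being the verification that $\mathcal E$ is closed under identities and composition, which follows immediately from the functoriality of $F$ and $K$. The result is also a standard consequence of the fact that $\Cat$ is complete with limits computed via the underlying sets of objects and morphisms, and I would mention this (with a reference to \cite{Sch}) as an alternative justification.
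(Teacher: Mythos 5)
Your proof is correct and complete: the verification that $\mathcal E$ is closed under identities and composition, that the inclusion equalises $F$ and $K$, and that any cone factors uniquely through it is exactly the standard argument. The paper itself gives no proof of this lemma and simply cites Schubert~\cite{Sch}, so your direct unpacking of the universal property supplies precisely what the reference contains; note also that for a morphism $f$ the condition $F(f)=K(f)$ already forces $F$ and $K$ to agree on its source and target, so the object condition is consistent with the morphism condition.
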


To describe coequalisers in Cat  we use that $\mathrm{Cat}$ is a reflective subcategory of $\mathrm{SSet}$ with the inclusion given by the nerve functor  $N: \mathrm{Cat}\to \mathrm{SSet}$, which is full and faithful. Its left adjoint is the homotopy  functor $h: \mathrm{SSet}\to \mathrm{Cat}$, and  the composite $hN: \mathrm{Cat}\to\mathrm{Cat}$ is naturally isomorphic to the identity functor via the counit of the adjunction, see Riehl
\cite[Ex.~4.5.14 (vi)]{R} or Lurie \cite[Sec.~1.2]{K}. As a left adjoint,  $h$ preserves colimits. This allows one to compute  colimits in Cat  by applying the homotopy functor $h$ to the associated colimits in $\mathrm{SSet}$, see for instance \cite[Prop.~4.5.15]{R}.

\begin{lemma}\label{def:coequCat} The coequaliser of two functors $F,K:\mathcal C\to \mathcal D$ between small categories  is the functor $h(\pi): hN(\mathcal D)\to h(X)$, where $\pi: N(\mathcal D)\to X$ is the coequaliser of $N(F), N(K)$ in $\mathrm{SSet}$.
\end{lemma}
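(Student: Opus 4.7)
The plan is to exploit the adjunction $h \dashv N$ between $\SSet$ and $\Cat$ together with the fact that $N$ is fully faithful, so that the counit $\varepsilon : hN \Rightarrow 1_{\Cat}$ is a natural isomorphism (this is precisely the content of the reflective subcategory statement cited from \cite{R,K}). Since $h$ is a left adjoint, it preserves all colimits, in particular coequalisers. This will let us transport the coequaliser $\pi : N(\mathcal D)\to X$ of $N(F), N(K)$ in $\SSet$ back to a coequaliser in $\Cat$.

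More concretely, I would first observe that in $\SSet$ the diagram
\[
\xymatrix{ N(\mathcal C) \ar@<.5ex>[r]^{N(F)} \ar@<-.5ex>[r]_{N(K)} & N(\mathcal D) \ar[r]^{\pi} & X }
\]
is by assumption a coequaliser. Applying the left adjoint $h$ yields a coequaliser diagram
\[
\xymatrix{ hN(\mathcal C) \ar@<.5ex>[r]^{hN(F)} \ar@<-.5ex>[r]_{hN(K)} & hN(\mathcal D) \ar[r]^{h(\pi)} & h(X) }
\]
in $\Cat$. Next, I would use the natural isomorphism $\varepsilon : hN \Rightarrow 1_{\Cat}$ to rewrite $hN(F) = \varepsilon_{\mathcal D}^{-1}\circ F\circ \varepsilon_{\mathcal C}$ and similarly for $hN(K)$, so that the pair $(hN(F), hN(K))$ is isomorphic, as a parallel pair of arrows in $\Cat$, to $(F, K)$. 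Since coequalisers are invariant under such isomorphisms of diagrams, it follows that $h(\pi)\circ \varepsilon_{\mathcal D}^{-1} : \mathcal D \to h(X)$ is the coequaliser of $F, K$ in $\Cat$, which under the identification $\mathcal D \cong hN(\mathcal D)$ is precisely the functor $h(\pi)$ as claimed.

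The entire argument is essentially formal adjunction nonsense; there is no real obstacle, since $h$'s preservation of colimits and the reflectivity of $\Cat$ in $\SSet$ do all the work. The only point that needs to be stated carefully is that the statement implicitly identifies $\mathcal D$ with $hN(\mathcal D)$ along the natural isomorphism $\varepsilon_{\mathcal D}$, so that $h(\pi)$ really is a morphism out of the correct object. No other technicalities are involved and no computations need to be carried out.
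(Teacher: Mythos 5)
Your argument is correct and is essentially the same as the paper's: the paper also relies on $h$ being a left adjoint (hence preserving coequalisers) together with the counit isomorphism $hN\cong 1_{\Cat}$ coming from the full faithfulness of the nerve, citing the standard fact that colimits in a reflective subcategory are computed by reflecting the colimit from the ambient category. The only point worth keeping explicit, as you note, is the identification of $\mathcal D$ with $hN(\mathcal D)$ along the counit so that $h(\pi)$ has the stated domain.
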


To compute such coequalisers, we require an explicit description of the nerve and the homotopy functor. We summarise the details from \cite[Ex.~4.5.14 (vi)]{R} and \cite[Sec.~1.2]{K}. 
For $n\in\mathbb N_0$ we denote by $[n]$  the  ordinals  in $\Delta$ as well as the associated categories with objects $0,1,\ldots, n$ and a single morphism from $i$ to  $j$ if $i\leq j$. As every weakly monotonic map $\alpha: [m]\to [n]$ defines a  functor $\alpha:[m]\to [n]$,  this defines an embedding $\iota:\Delta\to \Cat$. 

\begin{definition}\label{def:nerve}The {\bf nerve} $N:\mathrm{Cat}\to \mathrm{SSet}$ is the functor that sends a small category $\mac$ to the simplicial set $N(\mac):\Delta^{op}\to \mathrm{Set}$ with
\begin{compactitem}
\item $N(\mac)_n=\Hom_{\mathrm{Cat}}([n], \mac)$,
\item $N(\mac)(\alpha): N(\mac)_n\to N(\mac)_m$, $F\mapsto F\circ \alpha$ for every weakly monotonic  $\alpha:[m]\to [n]$,
\end{compactitem}
and a functor $F:\mac\to\mad$ to the simplicial map $N(F): N(\mac)\to N(\mad)$ that post-composes with $F$.
\end{definition}

By definition,  $N(\mac)_0=\Ob\, \mac$ and $N(\mathcal C)_n$  is the set of sequences 
$(f_1,\ldots, f_n): C_0\xrightarrow{f_1} \ldots\xrightarrow{f_n} C_n$ of  composable morphisms in $\mac$ for $n\in\N$. 
The simplicial set structure is given by the face maps $d_i: N(\mac)_n\to N(\mac)_{n-1}$ 
and  degeneracies $s_i: N(\mac)_n\to N(\mac)_{n+1}$ for $i\in\{0,...,n\}$.
The face maps act on a sequence $(f_1,\ldots, f_n)$
by removing $f_1$ and $f_n$ for $i=0$ and $i=n$, respectively,  and by replacing $(\ldots, f_i, f_{i+1},\ldots)$ with  $(\ldots,f_{i+1}\circ f_i, \ldots)$ for $1\leq i\leq n-1$. For $n=1$ and  $f_1: C_0\to C_1$  one has  $d_0(f_1)=C_1$ and $d_1(f_1)=C_0$. The degeneracies act on  $(f_1,\ldots, f_n)$ by inserting the identity morphism $1_{C_i}$. In particular, for $n=0$ one has $s_0(C)=1_C$ for every $C\in \Ob \, \mac$. The simplicial map $N(F)$ for a functor $F:\mac\to \mad$ applies $F$ to all morphisms in $(f_1,\ldots, f_n)$.

The left adjoint of the nerve $N:\Cat\to\SSet$ is the homotopy functor $h: \SSet\to\Cat$.  It is the left Kan extension along the Yoneda embedding $y: \Delta\to\SSet$
of the embedding functor $\iota: \Delta\to\Cat$. 
Concretely, it is given as follows.

\begin{definition}\label{def:homotopy}The {\bf homotopy functor} $h:\mathrm{SSet}\to \mathrm{Cat}$ sends a simplicial set $X$ to the  category $hX$ with  
$\Ob\, hX=X_0$, generating morphisms $\sigma: d_1(\sigma)\to d_0(\sigma)$ for $\sigma\in X_1$  and relations
\begin{align}\label{eq:relshom}
&s_0(x)=1_x \text{ for } x\in X_0, & &d_1(\sigma)=d_0(\sigma)\circ d_2(\sigma)\text{ for }\sigma\in X_2.
\end{align}
It sends a simplicial map $f: X\to Y$ to the functor $hf: hX\to hY$ given by $f$ on the generators.  
\end{definition}

The simplicial relations imply that  for elements of $X_2$ that are in the image of a degeneracy map, the second relation in \eqref{eq:relshom} is satisfied trivially. In this case one of the two morphisms on the right is an identity and the other coincides with the morphism on the left. Only non-degenerate elements of $X_2$ give rise to non-trivial relations in $hX$. 

In general, morphisms in the homotopy category of a simplicial set $X$ are finite sequences of composable elements of $X_1$. However, 
if the simplicial set $X$ is an $\infty$-category, which is always the case if $X=N(\mac)$ for some category $\mac$, every morphism in $hX$ is represented by a single element in $X_1$, see for instance  \cite[Sec.~1.2.5]{K}.  Most of  the simplicial sets considered in the following are even Kan complexes, as they are nerves of groupoids.

As  a right  adjoint, the nerve preserves limits, and as a left adjoint, the homotopy functor preserves colimits. It follows directly from its definition that the nerve also preserves coproducts, and the homotopy functor preserves finite products, see for instance Joyal \cite[Prop. 1.3]{Jo}. 
This  implies  with Examples \ref{ex:transport0} and \ref{Ex:groupobjectfunctor}

\begin{corollary} \label{cor:nervegroup}The nerve $N:\Cat\to \SSet$ and the homotopy functor $h: \SSet\to \Cat$ are symmetric monoidal with respect to the cartesian monoidal category structures of $\Cat$ and $\SSet$. In particular:
\begin{compactenum}
\item The nerve of a crossed module is a simplicial group.
\item The homotopy category of a simplicial group is a crossed module.
\item The nerve of a (co)module over a crossed module  is a (co)module over its nerve.
\item The homotopy category of a (co)module over a simplicial group  is a (co)module over its homotopy category.
\end{compactenum}
\end{corollary}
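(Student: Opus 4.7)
The plan is to reduce the four listed consequences to a single fact: both $N$ and $h$ preserve finite products. Once this is established, $N$ and $h$ become symmetric monoidal functors between the cartesian monoidal categories $\Cat$ and $\SSet$, and items (1)--(4) follow immediately by combining this with Example \ref{ex:transport0}(2) (finite-product-preserving functors send group objects to group objects) and Example \ref{Ex:groupobjectfunctor}(2) (the analogous statement for their (co)modules), once we invoke Theorem \ref{Proposition:CrossedModulesAndGroupObjectsCatEquivalent} and Definition \ref{lem:simplicialgroup} to identify crossed modules with group objects in $\Cat$ and simplicial groups with group objects in $\SSet$.

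For the nerve, preservation of finite products is automatic: as stated before the corollary, $N$ is a right adjoint to the homotopy functor $h$, so it preserves all small limits, in particular the terminal object and binary products. Thus $N$ is symmetric monoidal with respect to the cartesian monoidal structures, and items (1) and (3) follow by applying Examples \ref{ex:transport0}(2) and \ref{Ex:groupobjectfunctor}(2) with $F=N$.

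For the homotopy functor, the situation is more subtle: although $h$ is a left adjoint and hence preserves colimits, left adjoints do not preserve limits in general, so product preservation is not automatic. This is precisely the step that needs a separate argument, and it is where I expect the main obstacle to lie. However, this is exactly the content of Joyal's result \cite[Prop.~1.3]{Jo}, already cited in the paragraph preceding the corollary: the homotopy functor preserves finite products. Granting this, $h$ is symmetric monoidal between the cartesian monoidal categories $\SSet$ and $\Cat$, and items (2) and (4) follow by Examples \ref{ex:transport0}(2) and \ref{Ex:groupobjectfunctor}(2) applied with $F=h$. Altogether, once one has product preservation for $N$ (formal) and for $h$ (from Joyal), the corollary is a direct application of the machinery already developed in Section \ref{sec:Hopf}.
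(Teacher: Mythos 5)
Your proof is correct and follows essentially the same route as the paper: the paper likewise derives the corollary from the facts that $N$ preserves finite products as a right adjoint and that $h$ preserves finite products by Joyal's result, and then applies Examples \ref{ex:transport0} and \ref{Ex:groupobjectfunctor}.
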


Concretely,   the nerve of  a crossed module $(B,A, \brhd,\partial)$ is the simplicial group $H$ with $H_n= A^{\times n}\times B$ for $n\in\N_0$ with group multiplication
\begin{align}\label{eq:groumultsimp}
(a_1,..., a_n, b)\cdot ( a'_1,..., a'_n, b')=(a_1(b\brhd a'_1), a_2(\partial(a_1)b\brhd a'_2), ... ,a_n(\partial(a_{n-1}\cdots a_1)b\brhd a'_n), bb')
\end{align}
and  face maps  and degeneracies \vspace{-.4cm}
\begin{align}\label{eq:facedegs}
&d_i: H_n\to H_{n-1}, \quad (a_1,..., a_n,b)\mapsto \begin{cases} ( a_2,..., a_n, \partial(a_1)b) & i=0\\
(a_1,..., a_{i+1}a_i, ..., a_n,b) & 1\leq i\leq n-1\\
(a_1,..., a_{n-1},b) & i=n
\end{cases}\\
&s_i: H_n\to H_{n+1}, \quad (a_1,..., a_n,b)\mapsto (a_1,\ldots, a_i, 1, a_{i+1}, ..., a_n,b) \quad 0\leq i\leq n .\nonumber
\end{align}

\smallskip
\begin{example} \label{ex:crossedmod} $\quad$
\begin{compactenum}
\item A group action $\brhd: B\times A\to A$  by automorphisms on an abelian group $A$  yields a simplicial group with $H_n=A^{\times n}\rtimes' B$, where $B$ acts diagonally via $\brhd$, and  with the face maps and degeneracies \eqref{eq:facedegs} for $\partial\equiv 1$.

\item Every injective group homomorphism $\partial: A\to B$  from an abelian group $A$ into the centre of a group $B$ yields a simplicial group, where $H_n=A^{\times n}\times B$  is the direct product, and the face maps and degeneracies are given by \eqref{eq:facedegs}.
\item Every abelian group $A$ is a simplicial group with $H_n=A^{\times n}$, the group multiplication of $A^{\times n}$, the face maps and degeneracies \eqref{eq:facedegs}
for $B=\{e\}$ and $\partial\equiv 1$.

\item Any normal subgroup $A\subset B$ determines a simplicial group with $H_n=A^{\times n}\times B$ and group multiplication \eqref{eq:groumultsimp}, face maps and degeneracies \eqref{eq:facedegs}, where $\partial: A\to B$ is the inclusion and $\brhd: B\times A\to A$ the conjugation action. 
\end{compactenum}

\end{example}

\subsection{(Co)invariants of (co)modules  over group objects in Cat}
\label{subsec:modcomodgrp}

The coinvariants of a comodule $(\mathcal M, \delta)$ over a group object  $(H, m, \eta, I)$ in Cat are given as the equaliser of $\delta=(F \times 1_{\mathcal M})\circ \Delta:\mathcal M\to H\times \mathcal M$ and $\eta\times 1_{\mathcal M}: \mathcal M\to H\times \mathcal M$. This is  the subcategory on which $\delta$ and $\eta\times 1_{\mathcal M}$ coincide, together with its inclusion functor, see Lemma \ref{def:equCat}. In terms of the associated  functor $F:\mathcal M\to H$ from 
Example \ref{Ex:YDAndHopfModulesInCat}
we have

\begin{lemma}\label{Lemma:CoinvariantsInCat}$ $\\
Let $(\mathcal M, \delta)$ be a comodule over a group object $(H, m, \eta, I)$ in $\mathrm{Cat}$. Then the coinvariants  are given by the  subcategory $\mathcal M^{coH}\subset\mathcal M$ with
\begin{compactitem}
\item $\mathrm{Ob}(\mathcal M^{coH})=\{A\in \mathrm{Ob}(\mathcal M)\mid F(A)=e\}$,
\item $\mathrm{Hom}_{\mathcal M^{coH}}(A,A')=\{f\in \mathrm{Hom}_{\mathcal M}(A, A')\mid F(f)=1_e\}$,
\end{compactitem}
and the  inclusion functor $\iota: \mathcal M^{coH}\to \mathcal M$.
\end{lemma}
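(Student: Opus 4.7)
The plan is to directly apply Lemma \ref{def:equCat} to the defining equaliser of the coinvariants. By Definition \ref{def:invariants}, the coinvariants $(\mathcal M^{coH},\iota)$ are the equaliser in $\Cat$ of the two functors
\[
\delta,\; \eta\times 1_{\mathcal M}\colon \mathcal M \longrightarrow H\times \mathcal M,
\]
where in the cartesian monoidal category $\Cat$ the tensor product is the categorical product and $\eta\times 1_{\mathcal M}$ denotes the functor obtained by pre-composing $\eta\times 1_{\mathcal M}\colon \{\cdot\}\times \mathcal M\to H\times \mathcal M$ with the left unitor $\mathcal M\cong \{\cdot\}\times \mathcal M$.

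First I would unpack how these functors act. The comodule counit condition $(\epsilon_H\times 1_{\mathcal M})\circ\delta=1_{\mathcal M}$, which in the cartesian setting reads $\pi_2\circ\delta=1_{\mathcal M}$, implies $\delta=\langle F,1_{\mathcal M}\rangle=(F\times 1_{\mathcal M})\circ \Delta_{\mathcal M}$ for $F:=\pi_1\circ\delta\colon \mathcal M\to H$ (as recalled in Example \ref{Ex:YDAndHopfModulesInCat}). Hence on an object $A\in\Ob(\mathcal M)$ one has $\delta(A)=(F(A),A)$, while $(\eta\times 1_{\mathcal M})(A)=(e,A)$; on a morphism $f\in\Hom_{\mathcal M}(A,A')$ one has $\delta(f)=(F(f),f)$, while $(\eta\times 1_{\mathcal M})(f)=(1_e,f)$, since the functor $\eta\colon \{\cdot\}\to H$ sends the unique identity in $\{\cdot\}$ to $1_e$.

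Applying Lemma \ref{def:equCat} then gives that the equaliser $\mathcal E\subset \mathcal M$ of $\delta$ and $\eta\times 1_{\mathcal M}$ is the subcategory with
\[
\Ob(\mathcal E)=\{A\in\Ob(\mathcal M)\mid F(A)=e\},\qquad
\Hom_{\mathcal E}(A,A')=\{f\in\Hom_{\mathcal M}(A,A')\mid F(f)=1_e\},
\]
together with its inclusion into $\mathcal M$. This is the description of $(\mathcal M^{coH},\iota)$ in the statement. The only small check worth spelling out is that $\mathcal E$ is indeed a subcategory: identities lie in $\mathcal E$ because $F(1_A)=1_{F(A)}=1_e$ for $A\in\Ob(\mathcal E)$, and composition is preserved because $F$ is a functor. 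There is no real obstacle here; the content of the lemma is simply rewriting the equaliser of Definition \ref{def:invariants} using Lemma \ref{def:equCat} and the formula $\delta=\langle F,1_{\mathcal M}\rangle$.
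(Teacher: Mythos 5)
Your proposal is correct and follows exactly the paper's argument: the paper likewise obtains the lemma by writing $\delta=(F\times 1_{\mathcal M})\circ\Delta_{\mathcal M}$ and applying Lemma \ref{def:equCat} to the equaliser of $\delta$ and $\eta\times 1_{\mathcal M}$ from Definition \ref{def:invariants}. Your extra check that the resulting data forms a subcategory is a harmless addition that the paper leaves implicit.
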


The invariants of a module $(\mathcal M, \rhd)$ over a group object  $H$ in $\mathrm{Cat}$ are the coequaliser of the functors $\rhd,\pi_2: H \times \mathcal M \to \mathcal M$. They are computed with Lemma \ref{def:coequCat}.

\begin{proposition} \label{Prop:InvariantsInCat}Let $(\mathcal M, \rhd)$ be a module over a group object $H=\bigtriangledown(B,A,\brhd, \delta)$  in $\Cat$. 
Then its invariants are the category $\mathcal M^H$,
whose
\begin{compactitem}
\item objects are  orbits of the $B$-action on $\mathrm{Ob}(\mathcal M)$,
\item morphisms  are generated by orbits of the $A\rtimes B$-action on $\mathcal M^{(1)}$ subject to the relations
$
[f_2]\circ [f_1]=[f_2\circ f_1]
$ 
for all $A\rtimes B$-orbits $[f_1]$, $[f_2]$ of composable morphisms $f_1,f_2$ in $\mathcal M$.
\end{compactitem}
We denote by $\pi: \mathcal M\to\mathcal M^H$ the projection functor that sends each object  of $\mathcal M$ to its $B$-orbit and each morphism in $\mathcal M$ to the equivalence class of its $A\rtimes B$-orbit.
\end{proposition}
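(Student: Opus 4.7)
The plan is to apply Lemma \ref{def:coequCat}: compute the coequaliser $\pi: N(\mathcal M)\to X$ of the nerves $N(\rhd), N(\pi_2): N(H\times \mathcal M)\to N(\mathcal M)$ in $\SSet$, and then apply the homotopy functor $h$ to obtain $h(\pi): h(N(\mathcal M))\to h(X)$. Since $hN\cong 1_{\Cat}$, the source is $\mathcal M$, and $h(X)$ is the coequaliser in $\Cat$. As $N$ preserves products by Corollary \ref{cor:nervegroup} and colimits in $\SSet$ are objectwise, $X$ is the simplicial set with $X_n=N(\mathcal M)_n/N(H)_n$ (the orbit set under the action induced by $N(\rhd)$) and face maps and degeneracies inherited from $N(\mathcal M)$.

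By Theorem \ref{Proposition:CrossedModulesAndGroupObjectsCatEquivalent}, $N(H)_0=\Ob(H)=B$ and $N(H)_1=H^{(1)}=A\rtimes B$, so $X_0=\Ob(\mathcal M)/B$ and $X_1=\mathcal M^{(1)}/(A\rtimes B)$, where the quotients are taken with respect to the actions induced by $\rhd$ on objects and morphisms. Elements of $X_2$ are $N(H)_2$-orbits of composable pairs $(f_1,f_2)$ in $\mathcal M^{(1)}$; since $\rhd$ is a functor, the action of $N(H)_2$ on $N(\mathcal M)_2$ is componentwise and preserves composability, so every composable pair $(f_1,f_2)$ in $\mathcal M$ projects to an orbit in $X_2$.

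From Definition \ref{def:homotopy} it then follows that $h(X)$ has object set $X_0$, one generating morphism $[f]:[d_1(f)]\to[d_0(f)]$ for every orbit $[f]\in X_1$, and relations coming from $s_0$ and from elements of $X_2$. The $s_0$-relations yield $[1_y]=1_{[y]}$ for all $y\in\Ob(\mathcal M)$, while the $X_2$-relations give exactly $[f_2]\circ[f_1]=[f_2\circ f_1]$ for every composable pair in $\mathcal M$. No further relations arise from $X_n$ for $n\geq 3$, as the presentation of $h(X)$ only uses $X_0, X_1, X_2$. The projection $\pi: \mathcal M\to \mathcal M^H$ is then read off directly from $h(\pi)$ via $hN\cong 1_\Cat$: it sends $C\in\Ob(\mathcal M)$ to the $B$-orbit $BC$ and $f\in\mathcal M^{(1)}$ to its $(A\rtimes B)$-orbit. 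The main subtlety is verifying that the $N(H)_2$-action on $N(\mathcal M)_2$ is indeed componentwise so that the relations from $X_2$ are precisely the stated composition relations and nothing more; this follows from functoriality of $\rhd$ together with the fact that $N$ preserves products.
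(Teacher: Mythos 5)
Your proposal is correct and follows essentially the same route as the paper: apply the nerve, use that colimits in $\SSet$ are objectwise to identify the coequaliser degreewise as the orbit sets $N(\mathcal M)_n/N(H)_n$, read off $X_0$, $X_1$, $X_2$ via $N(H)_0=B$ and $N(H)_1=A\rtimes B$, and then apply the homotopy functor so that the $X_2$-relations become $[f_2]\circ[f_1]=[f_2\circ f_1]$. The paper spells out the "componentwise" action on $N(\mathcal M)_2$ explicitly as acting by pairs $(a_1,b_1),(a_2,b_2)\in A\rtimes B$ subject to the composability constraint $\partial(a_1)b_1=b_2$, which is exactly the content of your observation that the acting elements range over $N(H)_2$.
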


\begin{proof} By Corollary \ref{cor:nervegroup}, applying the nerve to the group object $H$ in $\Cat$ and to a module $(\mathcal M,\rhd)$ over $H$ yields a simplicial group $N(H)$ and a module $N(\mathcal M)$ over $N(H)$ in $\SSet$. By Lemma \ref{def:coequCat} the coequaliser of the morphisms $\rhd,\pi_2: H\times \mathcal M\to\mathcal M$ in $\Cat$ is obtained by applying the homotopy functor  to the   coequaliser of  $\rhd'=N(\rhd),\pi'_2=N(\pi_2): N(H)\times N(\mathcal M)\to N(\mathcal M)$ in $\SSet$. 

As  colimits in $\SSet$ are computed objectwise, see for instance \cite[Prop.~3.3.9]{R}, the coequaliser  of $\rhd', \pi'_2$ is 
   the simplicial set $N(\mathcal M)^H$ with
$N(\mathcal M)^H_n=N(\mathcal M)_n/\sim_n$, where $\sim_n$ is the equivalence relation on $N(\mathcal M)_n$ defined by the $N(H)$-action:  
 $m\sim_n m'$ iff there is a  $g\in N(H)_n$ with $m'=g\rhd' m$. 
The face maps and degeneracies of $N(\mathcal M)^H$ are induced by the ones of $N(\mathcal M)$. 

As $N(H)_0=\Ob\, H=B$ and $N(\mathcal M)_0=\Ob\, \mathcal M$, the elements of $N(\mathcal M)^H_0$ are the orbits of the $B$-action on $\Ob\,\mathcal M$. 
As $N(H)_1=H^{(1)}=A\rtimes B$, the set $N(\mathcal M)^H_1$ contains the orbits of the $A\rtimes B$-action on $\mathcal M^{(1)}$. 
Elements of $N(\mathcal M)_2$ and $N(H)_2$ are pairs  of composable morphisms in $\mathcal M$ and $H$. 
Thus, the set $N(\mathcal M)^H_2$ consists of equivalence classes  of pairs  $(f_1,f_2)$ of composable morphisms  in $\mathcal M$ with $(f_1,f_2)\sim (f'_1,f'_2)$ if there are $(a_1,b_1), (a_2,b_2)\in A\rtimes B$ with $\partial(a_1)b_1=b_2$  such that $f'_1=(a_1,b_1)\rhd' f_1$ and $f'_2=(a_2,b_2)\rhd' f_2$. 

For any composable pair $(f_1,f_2) \in N(\mathcal M)_2$, one has $d_0(f_1,f_2)=f_2$, $d_1(f_1,f_2)=f_2\circ f_1$ and $d_2(f_1,f_2)=f_1$. This  implies
$d_0 [(f_1,f_2)]=[f_2]$, $d_1[(f_1,f_2)]=[f_2\circ f_1]$ and $d_2[(f_1,f_2)]=[f_1]$ for their equivalence classes in $N(\mathcal M)^H_2$ and $N(\mathcal M)^H_1$. 

Applying the homotopy functor from Definition \ref{def:homotopy}  thus yields a category $\mathcal M^H$ with objects $\Ob\,\mathcal M^H=N(\mathcal M)^H_0=\Ob\,\mathcal M/B$. Its generating morphisms are $A\rtimes B$-orbits of morphisms in $\mathcal M$, and the second relation in \eqref{eq:relshom} translates into the relation $[f_2]\circ [f_1]=[f_2\circ f_1]$ for the $A\rtimes B$-orbits of  composable pairs $(f_1,f_2)$ of morphisms in $\mathcal M$. 
\end{proof}

We now restrict attention to Yetter-Drinfeld modules $(\mathcal M, \rhd, \delta)$ over group objects $H$ in Cat and determine their biinvariants. We denote again by $F:\mathcal M\to H$ the functor defined by $\delta$ from Example \ref{Ex:YDAndHopfModulesInCat}, by $\iota: \mathcal M^{coH}\to\mathcal M$ the inclusion functor from Lemma \ref{Lemma:CoinvariantsInCat} and by  $\pi: \mathcal M\to \mathcal M^H$  the projection functor from  Proposition \ref{Prop:InvariantsInCat}.

\begin{proposition}\label{Proposition:ImageInCat}$ $\\
Let $(\mathcal M, \rhd, F)$ be a Yetter-Drinfeld module over a group object $H$ in $\mathrm{Cat}$. Then $\mathcal M_{inv}$ is given by
\begin{align*}
&\Ob \mathcal M_{inv}=\{\pi(M)\mid M\in \Ob\mathcal M \text{ with } F(M)=e\},\\
&\Hom_{\mathcal M_{inv}}(\pi(M_1), \pi(M_2))=\{\pi(f)\mid f\in \mathcal M^{(1)} \text{ with } \pi(s(f))=\pi(M_1), \pi(t(f))=\pi(M_2), F(f)=1_e\}. 
\end{align*}
\end{proposition}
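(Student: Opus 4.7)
The plan is to construct the category $\mathcal N$ given in the statement as a subcategory of $\mathcal M^H$ and to verify directly the universal property that characterises $\mathcal M_{inv}$ as the image of $\pi\circ\iota:\mathcal M^{coH}\to \mathcal M^H$ in Definition \ref{def:biinvariants}. Concretely, I would introduce an inclusion functor $\hat I:\mathcal N\to \mathcal M^H$ and a factorisation $\hat P:\mathcal M^{coH}\to\mathcal N$ of $\pi\circ\iota$ through $\mathcal N$, show that $\hat I$ is a monomorphism and $\hat P$ an epimorphism in $\Cat$, and then check that any other pair $(J,Q)$ consisting of a monomorphism $J:X\to\mathcal M^H$ and a functor $Q:\mathcal M^{coH}\to X$ with $J\circ Q=\pi\circ\iota$ factors uniquely through $(\hat I,\hat P)$.

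The first step is to verify that $\mathcal N$ is a well-defined subcategory of $\mathcal M^H$. By Example \ref{Ex:YDAndHopfModulesInCat}, the Yetter-Drinfeld condition reads $F(h\rhd m)=h\cdot F(m)\cdot h^{-1}$ in $H$, applied to both objects and morphisms of $\mathcal M$. Hence the conditions $F(M)=e$ and $F(f)=1_e$ are invariant under the $B$-action on $\Ob\mathcal M$ and the $A\rtimes B$-action on $\mathcal M^{(1)}$, so the object and morphism classes in the statement are genuine subsets of $\Ob\mathcal M^H$ and of the morphism set of $\mathcal M^H$. Identities lie in $\mathcal N$ because $F(1_M)=1_{F(M)}=1_e$ whenever $F(M)=e$, and closure under composition follows because, given $\pi(f_1),\pi(f_2)\in\mathcal N$ composable in $\mathcal M^H$, the $B$-action on objects lets one choose a representative $f_2'\in[f_2]$ with $s(f_2')=t(f_1)$, and then $F(f_2'\circ f_1)=F(f_2')\cdot F(f_1)=1_e$, so that $\pi(f_2)\circ\pi(f_1)=\pi(f_2'\circ f_1)$ lies in $\mathcal N$.

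The inclusion $\hat I:\mathcal N\to\mathcal M^H$ is injective on both objects and morphisms and hence a monomorphism in $\Cat$, while the functor $\hat P:\mathcal M^{coH}\to\mathcal N$ sending $M\mapsto\pi(M)$ and $f\mapsto \pi(f)$ is surjective on both objects and morphisms by construction, hence an epimorphism, with $\hat I\circ\hat P=\pi\circ\iota$ tautologically. For the universal property, given $J$ and $Q$ as above, set $V(\pi(M)):=Q(M)$ on objects and $V(\pi(f)):=Q(f)$ on morphisms. Well-definedness on equivalence classes follows from $J$ being injective on objects and on morphisms: if $\pi(M)=\pi(M')$, then $J(Q(M))=\pi\circ\iota(M)=\pi(M)=\pi(M')=J(Q(M'))$ forces $Q(M)=Q(M')$, and analogously for morphisms. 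Functoriality of $V$ and the identity $J\circ V=\hat I$ are then inherited from $Q$, and uniqueness of $V$ follows from $J$ being a monomorphism together with $\hat P$ being an epimorphism.

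The main technical delicacy lies in the well-definedness of $V$ on morphisms of $\mathcal N$. By Proposition \ref{Prop:InvariantsInCat}, morphisms of $\mathcal M^H$ are formal composites of $A\rtimes B$-orbits modulo the composition relations; using these relations, every morphism reduces to a single orbit $\pi(f)$, but one must still verify that two representatives $f,f'$ of the same morphism class satisfy $Q(f)=Q(f')$. This is where the monomorphism property of $J$ in $\Cat$ is used essentially, together with the Yetter-Drinfeld identity ensuring that the orbits relevant to $\mathcal N$ remain inside the fibre over $1_e\in H^{(1)}$ and hence inside $\mathcal M^{coH}$ where $Q$ is defined.
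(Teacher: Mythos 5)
Your proposal is correct and follows essentially the same route as the paper: first using the Yetter--Drinfeld condition to check that the prescribed object and morphism classes form a genuine subcategory of $\mathcal M^H$ (including the choice of a composable representative $g^{-1}\rhd f_2$ resp.\ $f_2'$ within an orbit), and then verifying the universal property of the image directly by exploiting that monomorphisms in $\Cat$ are injective on objects and morphisms, so that $V$ is forced and well defined. The only cosmetic difference is that the paper phrases the factorisation through $(J,Q)$ by identifying $X$ with a subcategory of $\mathcal M^H$ containing $\mathcal M_{inv}$, whereas you construct $V$ explicitly from $Q$; the epimorphism property of $\hat P$ that you invoke for uniqueness is not actually needed, since injectivity of $J$ alone gives uniqueness of $V$ with $I=J\circ V$.
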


\begin{proof} 1.~We verify that $\mathcal M_{inv}$ is  a category. If $F(M)=e$ for an object $M$ in $\mathcal M$, then $F(g\rhd M)=g\cdot F(M)\cdot g^\inv=e$ for all objects $g$ in $H$ by  the Yetter-Drinfeld module condition in Example \ref{Ex:YDAndHopfModulesInCat}. Likewise, if  $f$ is a morphism in $\mathcal M$ with $F(f)=1_e$,  then $F(g\rhd f)=g \cdot F(f)\cdot g^\inv=1_e$ for all $g\in H^{(1)}$. This shows that for every object $M$ and morphism $f$ of $\mathcal M^{coH}$ the entire $\Ob\, H$-orbit of $M$ and $H^{(1)}$-orbit of $f$  is contained in $\mathcal M^{coH}$.  Any identity morphism  on an object $M\in \Ob\,\mathcal M^{coH}$ satisfies $F(1_M)=1_e$ and hence is contained in $\mathcal M^{coH}$. 
 If $(f_1,f_2)$ is a pair of composable morphisms in $\mathcal M^{coH}$, then $F(f_2\circ f_1)=F(f_2)\circ F(f_1)=1_e$ and hence $f_2\circ f_1\in \mathcal M^{coH}$ as well. 

 Suppose now that  $f_1: M_0\to M_1$ and  $f_2: M'_1\to M_2$ are morphisms in $\mathcal M^{coH}$ such that $\pi(f_1)$ and $\pi(f_2)$ are composable in $\mathcal M^H$. Then there is a $g\in \Ob\, H$ with $M'_1=g\rhd M_1$,  and the morphisms  $f_1$ and $g^\inv\rhd f_2$ are composable in $\mathcal M^{coH}$. With the relations of $\mathcal M_{inv}$ one obtains  $\pi(f_2)\circ \pi(f_1)=\pi(g^\inv\rhd f_2)\circ \pi(f_1)=\pi((g^\inv\rhd f_2)\circ f_1)$ with  $(g^\inv \rhd f_2)\circ f_1\in \mathcal M^{coH}$. 

2.~We show that $\mathcal M_{inv}$ has the universal property of the image in Cat. 
The inclusion functor 
 $I: \mathcal M_{inv}\to \mathcal M^H$  is a monomorphism in $\Cat$ and satisfies $IP=\pi\iota$, where
 $P: \mathcal M^{coH}\to \mathcal M_{inv}$ is the functor that sends an object $M$ in $\mathcal M^{coH}$ to $\pi(M)$ and a morphism $f$ in $\mathcal M^{coH}$ to $\pi(f)$.
  If $(J,Q)$ is a pair of a monomorphism $J: \mac\to \mathcal M^{H}$ and a functor $Q: \mathcal M^{coH}\to \mac$ with $JQ=\pi\iota$, 
 then  $J$ is a monomorphism in $\Cat$,  which allows one to identify $\mac$ with a subcategory of $\mathcal M^H$ and $J$ with its inclusion functor. As $JQ=\pi\iota$, the subcategory $\mac\subset \mathcal M^H$ contains $\mathcal M_{inv}$ as a subcategory $\mathcal M_{inv}\subset \mac$ and hence there is a unique functor, the inclusion $V:\mathcal M_{inv}\to\mac$,  with
  $I=JV$. 
\end{proof}

\begin{remark}\label{remark:CoequalisersViaGenerealisedCongruences}
Coequalisers in $\mathrm{Cat}$ can also be determined via the construction of Bednarczyk, Borzyszkowski and Pawlowski \cite{BBP}, using \textit{generalised congruences} and associated \textit{quotient categories}. For a summary of this construction, see also Bruckner \cite{Br} and Haucourt \cite{Ha}.
 For a module $(\mathcal M, \rhd)$ over a group object in $\mathrm{Cat}$ the associated quotient category gives the invariants of $(\mathcal M, \rhd)$ as in Proposition \ref{Prop:InvariantsInCat}. For a triple $(\mathcal M, \rhd, \delta)$ the generalised congruence $(\sim_0, \sim_m)$ restricts to a generalised congruence on $\mathcal M^{coH}$ whose quotient category are the biinvariants of $(\mathcal M, \rhd, \delta)$.
\end{remark}

\subsection{Protected objects for group objects in Cat}
\label{subsec:protected}

We now give a concrete description of the coinvariants and the protected objects for oriented surfaces $\Sigma$ of genus $g\geq 1$ and group objects $H=\bigtriangledown (B,A,\brhd,\partial)$ in $\Cat$. We start by considering the Yetter-Drinfeld module and the  coinvariants  for the standard graph from \eqref{eq:standardgraph} 
and show that they are given by group homomorphisms $\rho: F_{2g}\to A\rtimes B$ and $\rho: \pi_1(\Sigma)\to A\rtimes B$, respectively. To describe their category structure, we consider group-valued  1-cocycles.

\begin{definition}\label{Def:Cocycle}$ $ \\
Let $K, A$ be groups and $\brhd: K\times A\to A$ a group action of $K$ on $A$ by automorphisms. 
\begin{compactenum}[1.]
\item A \textbf{$1$-cocycle}   is a map $\phi: K\to A$ with 
$
\phi(\lambda \mu)=\phi(\lambda) \cdot (\lambda \brhd \phi(\mu)) 
$ for all $\lambda, \mu \in K$.
\item A \textbf{$1$-coboundary}   is a map $\eta_a: K\to A$, $\lambda \mapsto a(\lambda\brhd a^\inv)$ for some $a\in A$. 
\item  $\phi,\psi:K\to A$ are \textbf{related by a coboundary} if  $\psi(\lambda)=a \cdot \phi(\lambda)\cdot (\lambda\brhd a^\inv)$ for some $a\in A$.
\end{compactenum}
\end{definition}
If $A$ is abelian,   1-cocycles form a group $Z^1(K, A,\brhd)$ with  pointwise multiplication  and  coboundaries  a subgroup $B^1(K, A,\brhd)$. The factor group is the first cohomology group $H^1(K, A,\brhd)$. More generally,  1-cocycles with values in a (not necessarily abelian) group $A$ arise from group homomorphisms into a semidirect product $A\rtimes B$.

\begin{lemma}\label{Lemma:GrouphomoIsCocycleAndGrouphomoToB}$ $\\ 
Let  $\brhd: B\times A\to A$ a group action  by automorphisms and $A\rtimes B$ the associated semidirect product. 
\begin{compactenum}
\item Group homomorphisms $\sigma: K\to A\rtimes B$ correspond to  pairs $(\phi,\rho)$ of a group homomorphism $\rho:K\to B$ and a $1$-cocycle $\phi:K\to A$ for the action $\rho^*\brhd: K \times A\to A$, $(\lambda, a)\mapsto \rho(\lambda)\brhd a$.
\item Two 1-cocycles $\phi,\phi':K\to A$ for $\rho^*\brhd$ are related by a coboundary iff the group homomorphisms $(\phi, \rho), (\phi',\rho):K\to A\rtimes B$ are related by conjugation with $A\subset A\rtimes B$.
\end{compactenum}
\end{lemma}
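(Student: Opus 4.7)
The plan is to prove both parts by direct computation using the multiplication rule $(a,b)\cdot(a',b')=(a(b\brhd a'), bb')$ in $A\rtimes B$, so no cohomology machinery will actually be needed.

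For part 1, I would start with an arbitrary map $\sigma: K\to A\rtimes B$ and write its components as $\sigma(\lambda)=(\phi(\lambda),\rho(\lambda))$ for maps $\phi: K\to A$ and $\rho: K\to B$. Expanding the equation $\sigma(\lambda\mu)=\sigma(\lambda)\cdot\sigma(\mu)$ with the semidirect product multiplication gives
\begin{align*}
(\phi(\lambda\mu),\rho(\lambda\mu))=(\phi(\lambda)\,(\rho(\lambda)\brhd\phi(\mu)),\,\rho(\lambda)\rho(\mu)),
\end{align*}
which is equivalent to the two identities $\rho(\lambda\mu)=\rho(\lambda)\rho(\mu)$ and $\phi(\lambda\mu)=\phi(\lambda)\cdot((\rho^*\brhd)(\lambda,\phi(\mu)))$. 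The first says $\rho$ is a group homomorphism, and the second is precisely the $1$-cocycle condition for $\phi$ with respect to $\rho^*\brhd$. The converse direction reads the same chain of equalities backwards, and unitality $\sigma(1_K)=(1_A,1_B)$ follows from the cocycle identity at $\lambda=\mu=1_K$ once $\rho(1_K)=1_B$ is known.

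For part 2, I would compute the conjugation of $(\phi(\lambda),\rho(\lambda))$ by an element $(a,1_B)\in A\subset A\rtimes B$. Using $(a,1_B)^{-1}=(a^{-1},1_B)$ and the multiplication rule twice,
\begin{align*}
(a,1_B)\cdot(\phi(\lambda),\rho(\lambda))\cdot(a^{-1},1_B)=(a\,\phi(\lambda)\,(\rho(\lambda)\brhd a^{-1}),\,\rho(\lambda)).
\end{align*}
Hence the second component of the conjugate is still $\rho(\lambda)$, and the first component is $a\,\phi(\lambda)\,(\rho(\lambda)\brhd a^{-1})$, which is exactly the coboundary relation of Definition \ref{Def:Cocycle} applied to the action $\rho^*\brhd$. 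Reading the chain of equalities in either direction gives the claimed equivalence.

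There is no real obstacle; the only mild subtlety is keeping track of where the action $\brhd$ sits versus the pulled-back action $\rho^*\brhd$, which I would address by consistently writing $\rho(\lambda)\brhd a$ instead of $(\rho^*\brhd)(\lambda,a)$ inside the $A\rtimes B$ computations and only identifying it with $\rho^*\brhd$ when invoking the cocycle/coboundary definitions. The proof is essentially a one-page unpacking; I would present part 1 and part 2 as two short paragraphs and omit the routine checks of unit and inverse.
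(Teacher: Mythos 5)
Your proof is correct, and the paper itself states this lemma without proof, treating it as a routine unpacking of the semidirect product multiplication $(a',b')\cdot(a,b)=(a'(b'\brhd a),b'b)$; your two computations are exactly the standard verification one would supply. Both parts check out against Definition \ref{Def:Cocycle}, including the inverse $(a,1_B)^{-1}=(a^{-1},1_B)$ and the resulting conjugate $(a\,\phi(\lambda)\,(\rho(\lambda)\brhd a^{-1}),\rho(\lambda))$.
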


If the semidirect product in Lemma \ref{Lemma:GrouphomoIsCocycleAndGrouphomoToB} arises from a crossed module
 $(B,A,\brhd,\partial)$, the group homomorphism $\partial: A\to B$,  allows one to organise the group homomorphisms $\rho: K\to B$ and  1-cocycles $\phi:K\to A$  from  Lemma  \ref{Lemma:GrouphomoIsCocycleAndGrouphomoToB}  into a groupoid.  
Denoting by $\phi\cdot\psi$ and $\phi^\inv$ the pointwise product and inverse of maps $\phi,\psi:K\to A$ we have

\begin{lemma}\label{Lemma:CategoryFromCocycles} $ $ \\
Any group $K$ and crossed module $(B,A,\brhd, \partial)$ defines a  groupoid 
$\Hom(K,B\brhd A)$  with
\begin{compactitem}
\item group homomorphisms $\rho:K\to B$ as objects,
\item $\mathrm{Hom}(\rho, \rho')=\{(\phi,\rho)\mid \phi:K\to A\text{ 1-cocycle for } \rho^*\brhd\text{ with } (\partial\circ \phi)\cdot \rho=\rho' \}$,
\item composition of morphisms: $(\psi, (\partial\circ\phi)\cdot \rho)\circ (\phi,\rho)=(\psi\cdot \phi,\rho) $,
\item inverse morphisms: $(\phi,\rho)^\inv=(\phi^\inv, (\partial\circ \phi)\cdot \rho)$.
\end{compactitem}
\end{lemma}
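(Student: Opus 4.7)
The plan is to verify the groupoid axioms directly, using the Peiffer identities \eqref{eq:crossedmod} throughout. Denote by $\lambda \brhd a$ the action $\rho^*\brhd$ whenever the source cocycle is unambiguous, and abbreviate $\rho' := (\partial \circ \phi)\cdot \rho$ for $(\phi,\rho) \in \Hom(\rho,\rho')$.

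First I would check that the data are well-defined. For well-definedness of the codomain map, the calculation
\begin{align*}
\rho'(\lambda\mu) &= \partial(\phi(\lambda)(\rho(\lambda)\brhd\phi(\mu)))\,\rho(\lambda)\rho(\mu)\\
&= \partial(\phi(\lambda))\,(\rho(\lambda)\partial(\phi(\mu))\rho(\lambda)^\inv)\,\rho(\lambda)\rho(\mu) = \rho'(\lambda)\rho'(\mu)
\end{align*}
uses the cocycle condition and the first Peiffer identity $\partial(b\brhd a)=b\partial(a)b^\inv$, and shows $\rho':K\to B$ is a group homomorphism. For well-definedness of composition, given $\phi$ a $1$-cocycle for $\rho^*\brhd$ and $\psi$ a $1$-cocycle for $\rho'^*\brhd$, I would verify that $\psi\cdot\phi$ is a $1$-cocycle for $\rho^*\brhd$:
\begin{align*}
(\psi\cdot\phi)(\lambda\mu)
&=\psi(\lambda)\bigl(\rho'(\lambda)\brhd\psi(\mu)\bigr)\phi(\lambda)\bigl(\rho(\lambda)\brhd\phi(\mu)\bigr)\\
&=\psi(\lambda)\phi(\lambda)\Bigl[\phi(\lambda)^\inv\bigl(\partial(\phi(\lambda))\brhd(\rho(\lambda)\brhd\psi(\mu))\bigr)\phi(\lambda)\Bigr]\bigl(\rho(\lambda)\brhd\phi(\mu)\bigr)\\
&=(\psi\cdot\phi)(\lambda)\,\bigl(\rho(\lambda)\brhd(\psi\cdot\phi)(\mu)\bigr),
\end{align*}
where the bracketed term simplifies to $\rho(\lambda)\brhd\psi(\mu)$ by the second Peiffer identity $\partial(a)\brhd a'=aa'a^\inv$.

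Next I would observe that associativity of composition is immediate from the associativity of the pointwise product of $A$-valued maps, and that the constant map $\phi_e:\lambda\mapsto e_A$ is trivially a 1-cocycle with $(\partial\circ\phi_e)\cdot\rho=\rho$, so $(\phi_e,\rho)$ is a two-sided identity at $\rho$ since $\phi\cdot\phi_e=\phi_e\cdot\phi=\phi$.

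Finally, I would verify that inverses exist. Given $(\phi,\rho):\rho\to\rho'$, set $\phi^\inv(\lambda):=\phi(\lambda)^\inv$. One checks that $\phi^\inv$ is a $1$-cocycle for $\rho'^*\brhd$: starting from $(\phi(\lambda\mu))^\inv = (\rho(\lambda)\brhd\phi(\mu))^\inv\phi(\lambda)^\inv$ and rewriting the right-hand side using the second Peiffer identity gives $\phi^\inv(\lambda)\,\bigl(\rho'(\lambda)\brhd\phi^\inv(\mu)\bigr)$, as required. Since $\partial$ is a group homomorphism, $(\partial\circ\phi^\inv)\cdot\rho' = (\partial\circ\phi)^\inv\cdot(\partial\circ\phi)\cdot\rho=\rho$, so $(\phi^\inv,\rho')$ is a morphism $\rho'\to\rho$, and $\phi^\inv\cdot\phi=\phi_e=\phi\cdot\phi^\inv$ shows it is two-sided inverse to $(\phi,\rho)$. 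Hence $\Hom(K,B\brhd A)$ is a groupoid.

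The proof is entirely routine; the main point to keep track of is which action each cocycle is taken with respect to, and the only non-trivial manoeuvre is using the second Peiffer identity to conjugate actions by $\rho'(\lambda)=\partial(\phi(\lambda))\rho(\lambda)$ into actions by $\rho(\lambda)$ in steps two and five.
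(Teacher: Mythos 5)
Your proof is correct and follows the same route as the paper, which simply states that the closure of the target map, composition, and inverses under the groupoid structure follow by direct computation from the Peiffer identities; you have spelled out those computations explicitly and accurately, including the key use of the second Peiffer identity to convert the $\rho'(\lambda)$-action into the $\rho(\lambda)$-action.
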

\begin{proof}
A direct computation using \eqref{eq:crossedmod} shows that for any pair $(\phi,\rho)$ of a group homomorphism $\rho:K\to B$ and a 1-cocycle $\phi:K\to A$ for $\rho^*\brhd$, the map $(\partial\circ \phi)\cdot \rho:K\to B$ is another group homomorphism. Similarly, if $\phi$ is a 1-cocycle for $\rho^*\brhd$ and $\psi$ a 1-cocycle for $((\partial\circ \phi)\cdot \rho)^*\brhd$, then $\psi\cdot\phi$ is another 1-cocycle  for $\rho^*\brhd$. The formula for the inverse morphism  follows directly.
\end{proof}

By applying this lemma to  Example \ref{ex:modcomodpi_1gen}, we obtain a groupoid that describes the Yetter-Drinfeld module for a group object $H=\bigtriangledown (B,A,\brhd,\partial)$  in $\Cat$  and the standard graph  \eqref{eq:standardgraph}, if we set $K=F_{2g}$ and identify the generators of $F_{2g}$ with the edges of the graph. An analogous result holds for the associated coinvariants for $K=\pi_1(\Sigma)$ and any properly embedded graph with a single vertex.

\begin{proposition}\label{Proposition:CoinvariantsAndCocycleCategoryEquivalent}$ $\\
Let $\Gamma$ be a properly embedded graph  with a single vertex on a surface $\Sigma$ of genus $g\geq 1$ and $H=\bigtriangledown (B, A, \brhd, \partial)$ a group object in $\mathrm{Cat}$. 
Then the associated coinvariants are the groupoid  from Lemma \ref{Lemma:CategoryFromCocycles} for $K=\pi_1(\Sigma)$. 
\end{proposition}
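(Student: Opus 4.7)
The plan is to identify $\mathcal M^{coH}$ with $\Hom(\pi_1(\Sigma),B\brhd A)$ directly. Since $\Gamma$ is properly embedded on $\Sigma$ with a single vertex $v$, all $E$ edges $e_1,\dots,e_E$ are loops at $v$, and attaching a disc to each face produces a $2$-complex whose fundamental group admits the standard presentation
\begin{align*}
\pi_1(\Sigma)\cong F_E/\langle\!\langle r_{f_1},\dots,r_{f_F}\rangle\!\rangle,
\end{align*}
where $r_{f_i}$ is the word in $e_1^{\pm 1},\dots,e_E^{\pm 1}$ read off the boundary of the face $f_i$ (Van Kampen applied to the $1$-skeleton, which is a wedge of $E$ circles).

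Next I would describe $\mathcal M = H^{\oo E}=H^{\times E}$ concretely. By Theorem \ref{Proposition:CrossedModulesAndGroupObjectsCatEquivalent} one has $\Ob H = B$ and $H^{(1)} = A\rtimes B$, so the universal property of the free group $F_E$ yields bijections $\Ob \mathcal M = B^E \cong \Hom(F_E,B)$ and $\mathcal M^{(1)} = (A\rtimes B)^E \cong \Hom(F_E,A\rtimes B)$, sending a tuple to the unique homomorphism whose value on the $i$-th generator is the $i$-th entry. Lemma \ref{Lemma:GrouphomoIsCocycleAndGrouphomoToB} then reparametrises the second set as pairs $(\phi,\rho)$ with $\rho\in\Hom(F_E,B)$ and $\phi:F_E\to A$ a $1$-cocycle for $\rho^*\brhd$, and unwinding the source, target and composition rules of $H$ from Theorem \ref{Proposition:CrossedModulesAndGroupObjectsCatEquivalent} shows that the source and target of $\sigma=(\phi,\rho)$ as a morphism of $\mathcal M$ are $\rho$ and $(\partial\circ\phi)\cdot\rho$, while composition in $\mathcal M$ coincides with pointwise multiplication in $A\rtimes B$ and hence with the rule $(\psi,(\partial\circ\phi)\cdot\rho)\circ(\phi,\rho)=(\psi\cdot\phi,\rho)$ of Lemma \ref{Lemma:CategoryFromCocycles}.

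The crucial step is then to identify the coaction functor $F=(F_{f_1},\dots,F_{f_F}): \mathcal M \to H^{\times F}$ induced by the combined coaction $\delta_F$. By Definition \ref{def:vertexfaceops}, $F_{f_i}$ multiplies the edge labels around $f_i$ in the cyclic order of the boundary, inserting the antipode $S$ for edges traversed against orientation. Since the multiplication on $\Ob H=B$ and on $H^{(1)}=A\rtimes B$ are the group law and the semidirect product multiplication respectively, and $S$ implements group inversion in both cases, a direct induction on word length yields $F_{f_i}(\rho)=\rho(r_{f_i})\in B$ on objects and $F_{f_i}(\sigma)=\sigma(r_{f_i})\in A\rtimes B$ on morphisms. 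Applying Lemma \ref{Lemma:CoinvariantsInCat} to the group object $H^{\times F}$ in $\Cat$ then identifies $\mathcal M^{coH}$ with the subcategory of $\mathcal M$ whose objects and morphisms are precisely those homomorphisms $F_E\to B$ and $F_E\to A\rtimes B$ factoring through $\pi_1(\Sigma)$.

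Combining these steps gives bijections between $\Ob\mathcal M^{coH}$ and $\Hom(\pi_1(\Sigma),B)$, and between the morphisms of $\mathcal M^{coH}$ and $\Hom(\pi_1(\Sigma),A\rtimes B)$, with the inherited source, target and composition matching Lemma \ref{Lemma:CategoryFromCocycles} under Lemma \ref{Lemma:GrouphomoIsCocycleAndGrouphomoToB}. The main obstacle will be the identity $F_{f_i}(\sigma)=\sigma(r_{f_i})$: although conceptually transparent, it requires carefully unravelling the iterated composition of $\delta_{\alpha\pm}$, intermediate braidings and antipodes in Definition \ref{def:vertexfaceops} and checking that they assemble, via the semidirect product formula of Theorem \ref{Proposition:CrossedModulesAndGroupObjectsCatEquivalent}, into exactly the word $r_{f_i}$ evaluated by $\sigma$. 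Once this compatibility is established, the remaining identifications are formal.
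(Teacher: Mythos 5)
Your proposal is correct and reaches the same identification as the paper, but by a genuinely different route. The paper first invokes graph independence (Theorem \ref{th:topinv}, backed by the coinvariant-level statements in Corollaries \ref{cor:edgereverssal}, \ref{Coro:EdgeSlideIsoOnProtectedObject} and Proposition \ref{prop:LoopRemovalInducesIso}) to reduce to the standard graph \eqref{eq:standardgraph}; there the comodule structure is the single explicit map $F(a_1,b_1,\ldots,a_g,b_g)=[b_g^\inv,a_g]\cdots[b_1^\inv,a_1]$ already recorded in Example \ref{ex:modcomodpi_1gen}, so Lemma \ref{def:equCat} immediately exhibits the equaliser as the tuples in $B^{\times 2g}$, resp.\ $(A\rtimes B)^{\times 2g}$, satisfying the surface group relation, and Lemma \ref{Lemma:GrouphomoIsCocycleAndGrouphomoToB} finishes. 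You instead keep an arbitrary single-vertex graph, present $\pi_1(\Sigma)$ by the face relators via Van Kampen, and show that the coaction for each face evaluates a homomorphism $F_E\to B$ or $F_E\to A\rtimes B$ on that face's boundary word, so that coinvariance is exactly factoring through $\pi_1(\Sigma)$. What your route buys is independence from Section \ref{sec:graphindependence}; what it costs is the identity $F_{f_i}(\sigma)=\sigma(r_{f_i})$, which you correctly flag as the remaining computation and which the paper gets for free from \eqref{eq:YDmodulegroup}. That identity does hold for group objects in cartesian monoidal categories (the face coaction in Definition \ref{def:vertexfaceops} reduces to multiplying edge labels, with the antipode acting as inversion, in the order of traversal), so there is no gap, only an induction you would need to write out. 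One small wording issue: composition of morphisms in $H^{\times E}$ is the \emph{categorical} composition $(a',\partial(a)b)\circ(a,b)=(a'a,b)$ of Theorem \ref{Proposition:CrossedModulesAndGroupObjectsCatEquivalent}, not the group multiplication of $A\rtimes B$; your phrase ``pointwise multiplication in $A\rtimes B$'' should be read as the pointwise product of the $A$-valued cocycles, which is what Lemma \ref{Lemma:CategoryFromCocycles} uses and what the componentwise composition actually gives.
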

\begin{proof}
By Theorem \ref{th:topinv} it suffices to consider the graph  in \eqref{eq:standardgraph}. 

By Example \ref{ex:modcomodpi_1gen} the coinvariants are  the equaliser of the morphisms
$ \eta \, \epsilon, F: H^{\times 2g}\to H$ in Cat, where $\epsilon: H^{\times 2g}\to \{\cdot\}$ is the terminal morphism, $\eta:\{\cdot\}\to H$ is as in Definition \ref{Def:GroupObjectInCat} and $F: H^{\times 2g}\to H$ is given by $F(a_1,b_1,\ldots, a_g,b_g)=[b_g^\inv, a_g]\cdots [b_1^\inv,a_1]$. 
By Lemma \ref{def:equCat}, this equaliser is the subcategory $\mathcal E\subset H^{\times 2g}$ consisting of objects $C$ and morphisms $f$ with $F(C)=e$ and $F(f)=1_e$. 
For $H=\bigtriangledown(B,A,\brhd,\partial)$, this yields  with Theorem \ref{Proposition:CrossedModulesAndGroupObjectsCatEquivalent}
\begin{align*}
\mathrm{Ob}(\mathcal E)&=\{(a_1,b_1, \dots, a_g, b_g)\in B^{\times 2g}\mid [b_g^\inv, a_g]\cdots[b_1^\inv,a_1]=1\}\\
\mathcal E^{(1)}&=\{(a_1, b_1,\ldots, a_g,b_g)\in (A\rtimes B)^{2g}\mid [b_g^\inv, a_g]\cdots[b_1^\inv,a_1]=1\}.
\end{align*}
Thus, every object $\rho \in \mathrm{Ob}(\mathcal E)$ corresponds to a group homomorphism $\rho:\pi_1(\Sigma)\to B$ and every morphism $\sigma\in \mathcal E^{(1)}$ to a group homomorphism $\sigma: \pi_1(\Sigma)\to A\rtimes B$.
By Lemma \ref{Lemma:GrouphomoIsCocycleAndGrouphomoToB} the latter defines a pair $\sigma=(\phi,\rho)$ of a group homomorphism $\rho:\pi_1(\Sigma)\to B$ and a 1-cocycle 
 $\phi$ for $\rho^*\brhd$. 
\end{proof}

We  now use the  description of the coinvariants in Proposition \ref{Proposition:CoinvariantsAndCocycleCategoryEquivalent} and the description of the image object in Cat from Proposition \ref{Proposition:ImageInCat} to compute the protected object for a surface $\Sigma$ of genus $g\geq 1$ and a crossed module $(B,A,\brhd,\partial)$.

\begin{theorem} \label{th:kitaevexplicit}The protected object for a group object $H=\bigtriangledown (B,A,\brhd,\partial)$ in $\Cat$ and a surface $\Sigma$ of genus $g\geq 1$   is a groupoid  $\mathcal M_{H,\Sigma}$ with
\begin{compactitem}
\item conjugacy classes of group homomorphisms $\rho:\pi_1(\Sigma)\to B$ as objects,
\item equivalence classes of  group homomorphisms $\tau=(\phi,\rho): \pi_1(\Sigma)\to A\rtimes B$  as morphisms  
 from $[\rho]$ to $[(\partial\circ \phi)\cdot \rho]$. \end{compactitem}
 The equivalence relation  is given by
 $\tau_2\circ \tau_1\sim \tau'_2\circ \tau'_1$ for all composable pairs  $(\tau_1,\tau_2)$ and $(\tau'_1,\tau'_2)$ of group homomorphisms $\tau_i,\tau'_i: F_{2g}\to A\rtimes B$ such that $\tau_i, \tau'_i$ are conjugate and  $\tau_2\circ \tau_1, \tau'_2\circ \tau'_1$ define group homomorphisms $\pi_1(\Sigma)\to A\rtimes B$.

\end{theorem}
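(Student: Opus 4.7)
The plan is to compute the protected object by combining the explicit descriptions of (co)invariants and images in $\Cat$ from Section~\ref{sec:cat}. By Theorem~\ref{th:topinv} it suffices to work with the standard graph~\eqref{eq:standardgraph}, for which Example~\ref{ex:modcomodpi_1gen} together with Theorem~\ref{Proposition:CrossedModulesAndGroupObjectsCatEquivalent} identifies the associated Yetter-Drinfeld module as $(H^{\times 2g}, \rhd, F)$ with $\Ob H^{\times 2g}=B^{\times 2g}$, $(H^{\times 2g})^{(1)}=(A\rtimes B)^{\times 2g}$, pointwise conjugation action $\rhd$, and $F(a_1,b_1,\ldots,a_g,b_g)=[b_g^\inv,a_g]\cdots[b_1^\inv,a_1]$. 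By Proposition~\ref{Proposition:CoinvariantsAndCocycleCategoryEquivalent} the coinvariants are isomorphic to the groupoid $\Hom(\pi_1(\Sigma), B\brhd A)$ of Lemma~\ref{Lemma:CategoryFromCocycles}, whose morphisms from $\rho$ to $\rho'=(\partial\circ\phi)\cdot\rho$ correspond, via Lemma~\ref{Lemma:GrouphomoIsCocycleAndGrouphomoToB}, bijectively to group homomorphisms $\tau=(\phi,\rho):\pi_1(\Sigma)\to A\rtimes B$.

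Next, I would apply Proposition~\ref{Proposition:ImageInCat}: the objects of $\mathcal M_{inv}$ are $\pi(\rho)$ for $\rho:\pi_1(\Sigma)\to B$, and the morphisms are $\pi(\tau)$ for $\tau:\pi_1(\Sigma)\to A\rtimes B$ with endpoints projecting appropriately, where $\pi:H^{\times 2g}\to(H^{\times 2g})^H$ is the projection functor. By Proposition~\ref{Prop:InvariantsInCat}, this projection collapses the $B$-orbits on $\Ob H^{\times 2g}=B^{\times 2g}$ on objects and the $A\rtimes B$-orbits on $(H^{\times 2g})^{(1)}=(A\rtimes B)^{\times 2g}$ on morphisms; both actions are pointwise conjugation. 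Translated through the bijection of the previous paragraph, this produces on objects exactly the $B$-conjugacy class $[\rho]$ of $\rho:\pi_1(\Sigma)\to B$ and on single-morphism generators the $A\rtimes B$-conjugacy class of $\tau:\pi_1(\Sigma)\to A\rtimes B$, which clearly goes from $[\rho]$ to $[(\partial\circ\phi)\cdot\rho]$.

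The remaining and principal point is to unpack the composition relation of Proposition~\ref{Prop:InvariantsInCat}: the identity $[\tau_2]\circ[\tau_1]=[\tau_2\circ\tau_1]$ holds for any pair $\tau_1,\tau_2:F_{2g}\to A\rtimes B$ whose underlying morphisms in $H^{\times 2g}$ are literally composable, i.e.\ $t(\tau_1)=s(\tau_2)$ in $B^{\times 2g}$. Since composability is not invariant under the pointwise $A\rtimes B$-action, two group homomorphisms $\tau,\tau':\pi_1(\Sigma)\to A\rtimes B$ yield the same morphism in $\mathcal M_{inv}$ whenever one can pick a decomposition $\tau=\tau_2\circ\tau_1$ as composable morphisms in $H^{\times 2g}$, conjugate each factor by $A\rtimes B$ componentwise to obtain $\tau'_i$, and verify that the recomposition $\tau'_2\circ\tau'_1=\tau'$ again defines a group homomorphism on $\pi_1(\Sigma)$. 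This is the equivalence relation in the statement, and the main obstacle is to show that no further identifications occur: this should follow from the fact that every formal composite of orbits in $\mathcal M^H$ already reduces to a single orbit via this relation, so all identifications in $\mathcal M_{inv}$ decompose into pairwise moves of the type described, and no extra relations are imposed by the image construction beyond restriction to representatives satisfying $F(\tau)=1_e$.
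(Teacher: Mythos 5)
Your proposal is correct and follows essentially the same route as the paper's proof: reduce to the standard graph via Theorem \ref{th:topinv}, identify the Yetter-Drinfeld module from Example \ref{ex:modcomodpi_1gen}, describe the coinvariants via Proposition \ref{Proposition:CoinvariantsAndCocycleCategoryEquivalent}, and then pass to the image using Propositions \ref{Prop:InvariantsInCat} and \ref{Proposition:ImageInCat}. Your final paragraph unpacking why the congruence on morphisms reduces to the stated pairwise equivalence relation is in fact more explicit than the paper, which simply asserts that the projection functor yields the relation in the theorem.
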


\begin{proof}  By Theorem \ref{th:topinv} the protected object of $\Sigma$ is a topological invariant and can be computed from the standard  graph in \eqref{eq:standardgraph}. This  yields a Yetter-Drinfeld module $(\mathcal M,\rhd,\delta)$ over $\bigtriangledown(B,A,\brhd,\partial)$ given by  formula \eqref{eq:YDmodulegroup}. Hence, we have  $\mathcal M^{(1)}=(A\rtimes B)^{2g}\cong \Hom(F_{2g}, A\rtimes B)$ with the module structure given by conjugation  and the comodule structure by the defining relation of $\pi_1(\Sigma)$.
  
By Proposition  \ref{Proposition:CoinvariantsAndCocycleCategoryEquivalent} the associated coinvariants form a groupoid $\mathcal M^{coH}$ with group homomorphisms $\rho:\pi_1(\Sigma)\to B$ as objects and group homomorphisms $\tau=(\phi,\rho):\pi_1(\Sigma)\to A\rtimes B$  as morphisms from $\rho$ to $(\partial\circ\phi)\cdot \rho$.  By Propositions  \ref{Prop:InvariantsInCat}  and  \ref{Proposition:ImageInCat} the associated  image object is the groupoid, whose objects 
 are orbits  of group homomorphisms $\rho:\pi_1(\Sigma)\to B$ under the conjugation action of  $B$ and whose morphisms are the images of
 group homomorphisms $\tau:\pi_1(\Sigma)\to A\rtimes B$ under the projection functor $\pi: \mathcal M\to \mathcal M^H$. 
 The latter is given by  the equivalence relation in the theorem. 
\end{proof}

There are a number of cases in which the protected object has a particularly simple form. They correspond to crossed modules in which part of the data is trivial. The first  corresponds to the case, where the Moore complex of the crossed module has trivial non-abelian homologies, namely $\ker(\partial)=\{1\}$ and $B/\partial(A)=1$. The second is the case where the action of $B$ on $A$ is trivial.

\begin{example}\label{ex:deltaiso} Let $\Sigma$ be a surface of genus $g\geq 1$ and  $(B,A, \brhd, \partial)$ a crossed module, where $\partial$ is an isomorphism. Then the  protected object has
\begin{compactitem}
\item conjugacy classes of group homomorphisms $\rho:\pi_1(\Sigma)\to B$ as objects,
\item exactly one morphism between any two objects.
\end{compactitem}
\end{example}

\begin{proof}
All morphism sets in the groupoid from Lemma \ref{Lemma:CategoryFromCocycles} contain exactly one morphism, since $\Hom(\rho,\sigma)=\{(\partial^\inv(\sigma\cdot\rho^\inv), \rho)\}$ for all group homomorphisms $\rho,\sigma: \pi_1(\Sigma)\to B$.  Conjugating a morphism in $ \Hom(\rho,\sigma)$
 with an element of $(a,b)\in A\rtimes B$ yields the unique  morphism from $b\rho b^\inv$ to $(\partial (a)b)\, \sigma \, (\partial(a)b)^\inv$. This shows that all morphisms from conjugates of a group homomorphism  $\rho:\pi_1(\Sigma)\to B$ to a conjugate of a group homomorphism  $\sigma:\pi_1(\Sigma)\to B$ are conjugated  and hence identified  in $\mathcal M^H$ and in $\mathcal M_{inv}$.
 \end{proof}

\begin{example}\label{cor:catsimple} Let $\Sigma$ be a surface of genus $g\geq 1$ and  $(B,A,\brhd,\partial)$ a crossed module with a trivial group action $\brhd$. Then
  the   protected object  is  $\mathrm{Hom}(\pi_1(\Sigma), A\times B)/A\times B$
with
\begin{compactitem}
\item conjugacy classes of group homomorphisms $\rho:\pi_1(\Sigma)\to B$ as objects,
\item group homomorphisms $\phi:\pi_1(\Sigma)\to A$ as morphisms from $[\rho]$ to $[(\partial\circ \phi)\cdot \rho]$.
\end{compactitem}
\end{example}

\begin{proof} If $\brhd: B\times A\to A$ is  trivial, then conditions \eqref{eq:crossedmod} imply that $A$ is abelian with $\partial(A)\subset Z(B)$.
As $A$ is abelian and $\brhd$ trivial, the 1-cocycles from Definition \ref{Def:Cocycle} are  simply  group homomorphisms $\phi: \pi_1(\Sigma)\to A$ and any 1-coboundary is trivial. 
The groupoid $\mathcal M^{coH}$ from  Lemma \ref{Lemma:CategoryFromCocycles} thus has as objects group homomorphisms $\rho:\pi_1(\Sigma)\to B$ and as morphisms  $\tau=(\phi,\rho): \rho\to (\partial\circ \phi)\cdot \rho$ group homomorphisms $\tau=(\phi,\rho): \pi_1(\Sigma)\to A\times B$.

 As $A$ is abelian and $\brhd$ trivial, 
two group homomorphisms $\tau=(\phi,\rho), \tau'=(\phi',\rho'): \pi_1(\Sigma)\to A\rtimes B$  are conjugate iff $\phi'=\phi$ and $\rho'=b\rho b^\inv$ for some $b\in B$. Thus, the relation on morphisms in Theorem \ref{th:kitaevexplicit} identifies $\tau$ and $\tau'$ iff $\phi=\phi'$ and $[\rho]=[\rho']$.
\end{proof}

In the case of a trivial  group homomorphism $\partial: A\to B$  all morphisms in $\mathcal M$, $\mathcal M^{coH}$, $\mathcal M^H$ and $\mathcal M_{inv}$ are automorphisms. This yields

\begin{example} Let $\Sigma$ be a surface of genus $g\geq 1$ and  $H=\bigtriangledown(B,A,\brhd,\partial)$  with $A$ abelian and a trivial group homomorphism $\partial\equiv 1$. Then
  the associated  protected object  is 
  $$
  \mathcal M_{H,\Sigma}=\amalg_{[\rho]\in \Hom(\pi_1(\Sigma), B)/B}\; G_{[\rho]},
  $$
  where $G_{[\rho]}$ is a  factor group of  $H^1(\pi_1(\Sigma), A,\rho^*\brhd)$.
\end{example}

\begin{proof}
If $\partial$ is trivial and $A$ abelian, then every 1-cocycle $\phi:\pi_1(\Sigma)\to A$ for $\rho^*\brhd$ defines an automorphism of $\rho$ in $\mathcal M^{coH}$, which implies
$\mathcal M^{coH}=\amalg_{\rho\in \Hom(\pi_1(\Sigma), B)} Z^1(\pi_1(\Sigma), A, \rho^*\brhd)$. 

As all morphisms in $\mathcal M^{coH}$ are automorphisms, two morphisms given by group homomorphisms $\tau=(\phi,\rho):\pi_1(\Sigma)\to A\rtimes B$ and $\tau'=(\phi',\rho'):\pi_1(\Sigma)\to A\rtimes B$  are composable iff $\rho=\rho'$.
By Lemma \ref{Lemma:GrouphomoIsCocycleAndGrouphomoToB}, 2.~two group homomorphisms $(\phi,\rho):\pi_1(\Sigma)\to A\rtimes B$ and $(\phi',\rho):\pi_1(\Sigma)\to A\rtimes B$ are related by conjugation with $A\subset A\rtimes B$ iff $\phi,\phi'$ are related by a 1-coboundary. Thus for a group homomorphism $\rho:\pi_1(\Sigma)\to B$, the  automorphism group of  $[\rho]$ in $\mathcal M_{inv}$ is a factor group of $H^1(\pi_1(\Sigma), A, \rho^*\brhd)$.
\end{proof}

By  Theorem \ref{th:kitaevexplicit}   group homomorphisms $\tau,\tau': \pi_1(\Sigma)\to A\rtimes B$ that are conjugated define the same morphism in $\mathcal M_{inv}$. 
This implies in particular  that the morphism in $\mathcal M_{H,\Sigma}$  defined by a group homomorphism $\sigma=(\phi,\rho):\pi_1(\Sigma)\to A\rtimes B$ depends on $\phi$ only up to coboundaries. Modifying $\phi$ with a coboundary yields a group homomorphism $\sigma'=(\phi',\rho)$ conjugated to $\sigma$  by Lemma \ref{Lemma:GrouphomoIsCocycleAndGrouphomoToB}, 2. 

However, except for the  situation in Examples \ref{ex:deltaiso} and  \ref{cor:catsimple}, it is  difficult to describe the category $\mathcal M_{H,\Sigma}$ explicitly, even for genus $g=1$ and crossed modules given by normal subgroups. This is due to the fact that the equivalence relation in Theorem \ref{th:kitaevexplicit} also identifies morphisms in $\mathcal M^{coH}$ in different $A\rtimes B$-orbits.  This is illustrated by the following two examples.

\begin{example} \label{ex:minv}Let $\Sigma$ be the torus with  $\pi_1(\Sigma)=\Z\times\Z$ and consider the crossed module
 $(S_3,A_3,\brhd, \iota)$, where $\iota: A_3\to S_3$ is the inclusion and $\brhd: S_3\times A_3\to A_3$, $b\brhd a=bab^\inv$ the conjugation action. 
 
We specify group homomorphisms $\rho:\Z\times\Z\to S_3$ and 1-cocycles $\phi: \Z\times \Z\to A_3$  by the images of $(1,0)$ and $(0,1)$ and write $\rho=(\rho(1,0),\rho(0,1))$  
for the former and $\phi=\langle \phi(1,0), \phi(0,1)\rangle$ for the latter.  Then the conjugacy classes  of group homomorphisms $\rho:\Z\times\Z\to S_3$ are given by
 
\begin{center}
 \begin{tabular}{|l|l|}
 \hline
$ C_1=\{(\id,\id)\}$ &\\
\hline
 $C_2=\{(\id, c)\mid c\in A_3\setminus\{\id\}\}$ & $C'_2=\{(c,\id)\mid  c\in A_3\setminus\{\id\} \}$\\
 \hline
 $C_3=\{(c,c)\mid c\in A_3\setminus\{\id\}\}$ & \\
 \hline
 $C_4=\{(c,c')\mid c\neq c'\in A_3\setminus\{\id\}\}$ &\\
 \hline
 $C_5=\{(\id,\sigma)\mid \sigma\in S_3\setminus A_3\}$ &  $C'_5=\{(\sigma,\id)\mid \sigma\in S_3\setminus A_3\}$\\
 \hline
 $C_6=\{(\sigma,\sigma)\mid \sigma\in S_3\setminus A_3\}$ &\\
 \hline
 \end{tabular}
 \end{center}

If $\rho(\Z\times\Z)\subset A_3$, then 1-cocycles for $\rho^*\brhd$ are simply group homomorphisms $\phi: \Z\times\Z\to A_3$. If $\rho\in C_5$, $\rho\in C'_5$ or $\rho\in C_6$ then the   1-cocycles  for $\rho^*\brhd$ are given by
\begin{align*}
\langle \id, c \rangle(k,l)=\begin{cases} c & l\text{ odd}\\  \id & l\text{ even}\end{cases}, \; \langle c,\id\rangle (k,l)=\begin{cases} c & k\text{ odd}\\  \id & k\text{ even}\end{cases},\; \langle c,c\rangle (k,l)=\begin{cases} c & k+l\text{ odd}\\  \id & k+l\text{ even}\end{cases},
\end{align*}
respectively, where $c\in A_3$, $k, l\in \Z$.
  
The protected object $\mathcal M_{inv}$ has the objects $C_1$, $C_2$, $C'_2$, $C_3$, $C_4$, $C_5$, $C'_5$, $C_6$. Its morphisms are  equivalence classes of morphisms in $\mathcal M^{coH}$, that is, of 1-cocycles.
 The morphisms in $\mathcal M^{coH}$ starting in $\rho\in C_1$ are  the trivial 1-cocycle $\phi \equiv \id$ as identity morphism and  the   conjugate pairs
 \begin{align*}
&\langle \id, (123)\rangle: (\id,\id)\to (\id, (123)) & &\sim & &\langle \id, (132)\rangle: (\id,\id)\to (\id, (132))\\
&\langle (123), \id\rangle: (\id,\id)\to ((123),\id) & &\sim & &\langle(132), \id\rangle: (\id,\id)\to ((132), \id )\\
&\langle (123), (123)\rangle: (\id,\id)\to ((123),(123)) & &\sim & &\langle(132), (132)\rangle: (\id,\id)\to ((132), (132) )\\
&\langle(123), (132)\rangle: (\id,\id)\to ((123),(132)) & &\sim & &\langle(132), (123)\rangle: (\id,\id)\to ((132), (123) ),
 \end{align*}
 where we use cycle notation for elements of $S_3$.  As each of these pairs defines a single morphism  in $\mathcal M_{inv}$,  there is exactly one morphism from $C_1$ to each of the conjugacy classes $C_2$, $C'_2$, $C_3$, $C_4$. As $\mathcal M_{inv}$ is a groupoid, there is exactly one morphism between any two of these conjugacy classes.
 
Each of the 1-cocycles $\langle \id, c \rangle$, $\langle c,\id \rangle$, $\langle c,c \rangle$ with $c\in A_3$  defines a morphism in $\mathcal M^{coH}$ within  the conjugacy classes $C_5$, $C'_5$, $C_6$.  The morphisms between  objects in $C_5$ in  $\mathcal M^{coH}$ are
 \begin{align*}
&\langle \id,\id\rangle: (\id, (12))\to (\id,(12))  & &\langle\id,\id\rangle: (\id, (13))\to (\id,(13))  & &\langle\id,\id\rangle: (\id, (23))\to (\id,(23))\\ 
&\langle \id, (123)\rangle: (\id,(12))\to (\id,(13)) & &\langle\id, (123)\rangle: (\id, (13))\to (\id, (23)) & &\langle\id, (123)\rangle: (\id, (23))\to (\id, (12))\\
&\langle\id, (132)\rangle: (\id,(12))\to (\id,(23)) & &\langle\id, (132)\rangle: (\id, (13))\to (\id, (12)) & &\langle\id, (132)\rangle: (\id, (23))\to (\id, (13)). 
 \end{align*}
All  morphisms in the first line are conjugate.  The first morphism in the second line is conjugate to  the  morphisms in the second and  third line via cyclic permutations and transpositions.
As
 \begin{align*}
 \langle\id,\id\rangle&=\langle\id, (123)\rangle\circ \langle\id, (132)\rangle: (\id, (12))\to (\id, (12)) \\
\langle\id, (123)\rangle&=\langle\id, (132)\rangle\circ \langle\id, (132)\rangle: (\id, (12))\to (\id, (13)) . 
 \end{align*}
 with $\langle\id, (132)\rangle\sim \langle\id, (123)\rangle$, all morphisms are identified by the relation  in Theorem \ref{th:kitaevexplicit} and define a single morphism in $\mathcal M_{inv}$. Hence, the identity morphism 
  is the only automorphism of $C_5$ in $\mathcal M_{inv}$ and likewise  for $C'_5$ and $C_6$.
Thus $\mathcal M_{inv}$ is the groupoid in Figure \ref{fig:conjpic}. 
\end{example}

\begin{figure}
\begin{center}
\tikzset{every picture/.style={line width=0.75pt}} 
\begin{tikzpicture}[x=0.75pt,y=0.75pt,yscale=-1.8,xscale=1.8]
\draw   (65.95,38.27) .. controls (65.95,34.14) and (69.3,30.8) .. (73.42,30.8) .. controls (77.54,30.8) and (80.89,34.14) .. (80.89,38.27) .. controls (80.89,42.39) and (77.54,45.73) .. (73.42,45.73) .. controls (69.3,45.73) and (65.95,42.39) .. (65.95,38.27) -- cycle ;
\draw   (119.95,45.47) .. controls (119.95,41.34) and (123.3,38) .. (127.42,38) .. controls (131.54,38) and (134.89,41.34) .. (134.89,45.47) .. controls (134.89,49.59) and (131.54,52.93) .. (127.42,52.93) .. controls (123.3,52.93) and (119.95,49.59) .. (119.95,45.47) -- cycle ;
\draw   (125.55,96.27) .. controls (125.55,92.14) and (128.9,88.8) .. (133.02,88.8) .. controls (137.14,88.8) and (140.49,92.14) .. (140.49,96.27) .. controls (140.49,100.39) and (137.14,103.73) .. (133.02,103.73) .. controls (128.9,103.73) and (125.55,100.39) .. (125.55,96.27) -- cycle ;
\draw   (75.84,115.75) .. controls (75.84,111.63) and (79.18,108.29) .. (83.31,108.29) .. controls (87.43,108.29) and (90.77,111.63) .. (90.77,115.75) .. controls (90.77,119.88) and (87.43,123.22) .. (83.31,123.22) .. controls (79.18,123.22) and (75.84,119.88) .. (75.84,115.75) -- cycle ;
\draw   (38.35,79.35) .. controls (38.35,75.23) and (41.7,71.89) .. (45.82,71.89) .. controls (49.94,71.89) and (53.29,75.23) .. (53.29,79.35) .. controls (53.29,83.48) and (49.94,86.82) .. (45.82,86.82) .. controls (41.7,86.82) and (38.35,83.48) .. (38.35,79.35) -- cycle ;
\draw [color={rgb, 255:red, 27; green, 16; blue, 224 }  ,draw opacity=1 ]   (80,33.8) .. controls (88.73,26.97) and (109.05,25.5) .. (120.09,35.98) ;
\draw [shift={(122.09,38.13)}, rotate = 230.91] [fill={rgb, 255:red, 27; green, 16; blue, 224 }  ,fill opacity=1 ][line width=0.08]  [draw opacity=0] (7.14,-3.43) -- (0,0) -- (7.14,3.43) -- cycle    ;
\draw   (187.55,41.07) .. controls (187.55,36.94) and (190.9,33.6) .. (195.02,33.6) .. controls (199.14,33.6) and (202.49,36.94) .. (202.49,41.07) .. controls (202.49,45.19) and (199.14,48.53) .. (195.02,48.53) .. controls (190.9,48.53) and (187.55,45.19) .. (187.55,41.07) -- cycle ;
\draw   (188.75,77.87) .. controls (188.75,73.74) and (192.1,70.4) .. (196.22,70.4) .. controls (200.34,70.4) and (203.69,73.74) .. (203.69,77.87) .. controls (203.69,81.99) and (200.34,85.33) .. (196.22,85.33) .. controls (192.1,85.33) and (188.75,81.99) .. (188.75,77.87) -- cycle ;
\draw   (189.95,116.67) .. controls (189.95,112.54) and (193.3,109.2) .. (197.42,109.2) .. controls (201.54,109.2) and (204.89,112.54) .. (204.89,116.67) .. controls (204.89,120.79) and (201.54,124.13) .. (197.42,124.13) .. controls (193.3,124.13) and (189.95,120.79) .. (189.95,116.67) -- cycle ;
\draw [color={rgb, 255:red, 27; green, 16; blue, 224 }  ,draw opacity=1 ]   (132,52.2) .. controls (138.89,57.05) and (143.39,69.88) .. (139.97,86.07) ;
\draw [shift={(139.29,88.93)}, rotate = 284.76] [fill={rgb, 255:red, 27; green, 16; blue, 224 }  ,fill opacity=1 ][line width=0.08]  [draw opacity=0] (7.14,-3.43) -- (0,0) -- (7.14,3.43) -- cycle    ;
\draw [color={rgb, 255:red, 27; green, 16; blue, 224 }  ,draw opacity=1 ]   (130.57,103.23) .. controls (123.97,115.88) and (112.03,121.41) .. (93.95,119.98) ;
\draw [shift={(91.05,119.69)}, rotate = 6.61] [fill={rgb, 255:red, 27; green, 16; blue, 224 }  ,fill opacity=1 ][line width=0.08]  [draw opacity=0] (7.14,-3.43) -- (0,0) -- (7.14,3.43) -- cycle    ;
\draw [color={rgb, 255:red, 27; green, 16; blue, 224 }  ,draw opacity=1 ]   (74.48,117.12) .. controls (60.85,112.04) and (51.55,103.98) .. (46.36,91.27) ;
\draw [shift={(45.33,88.55)}, rotate = 70.94] [fill={rgb, 255:red, 27; green, 16; blue, 224 }  ,fill opacity=1 ][line width=0.08]  [draw opacity=0] (7.14,-3.43) -- (0,0) -- (7.14,3.43) -- cycle    ;
\draw [color={rgb, 255:red, 27; green, 16; blue, 224 }  ,draw opacity=1 ]   (44.29,70.94) .. controls (45.26,57.56) and (52.49,48.71) .. (63.15,42.02) ;
\draw [shift={(65.62,40.55)}, rotate = 150.26] [fill={rgb, 255:red, 27; green, 16; blue, 224 }  ,fill opacity=1 ][line width=0.08]  [draw opacity=0] (7.14,-3.43) -- (0,0) -- (7.14,3.43) -- cycle    ;
\draw [color={rgb, 255:red, 27; green, 16; blue, 224 }  ,draw opacity=1 ]   (119.62,41.4) .. controls (110.89,35.36) and (100.8,32.06) .. (85.49,37.27) ;
\draw [shift={(82.76,38.26)}, rotate = 338.7] [fill={rgb, 255:red, 27; green, 16; blue, 224 }  ,fill opacity=1 ][line width=0.08]  [draw opacity=0] (7.14,-3.43) -- (0,0) -- (7.14,3.43) -- cycle    ;
\draw [color={rgb, 255:red, 27; green, 16; blue, 224 }  ,draw opacity=1 ]   (133.02,88.8) .. controls (137.99,76.61) and (135.83,65.81) .. (131.1,56.86) ;
\draw [shift={(129.62,54.26)}, rotate = 58.78] [fill={rgb, 255:red, 27; green, 16; blue, 224 }  ,fill opacity=1 ][line width=0.08]  [draw opacity=0] (7.14,-3.43) -- (0,0) -- (7.14,3.43) -- cycle    ;
\draw [color={rgb, 255:red, 27; green, 16; blue, 224 }  ,draw opacity=1 ]   (91.71,114.66) .. controls (103.19,115.87) and (116.44,111.61) .. (124.72,104.25) ;
\draw [shift={(126.76,102.26)}, rotate = 133.03] [fill={rgb, 255:red, 27; green, 16; blue, 224 }  ,fill opacity=1 ][line width=0.08]  [draw opacity=0] (7.14,-3.43) -- (0,0) -- (7.14,3.43) -- cycle    ;
\draw [color={rgb, 255:red, 27; green, 16; blue, 224 }  ,draw opacity=1 ]   (51.05,85.69) .. controls (54.81,96.97) and (60.08,102.44) .. (72.33,110.18) ;
\draw [shift={(74.76,111.69)}, rotate = 211.48] [fill={rgb, 255:red, 27; green, 16; blue, 224 }  ,fill opacity=1 ][line width=0.08]  [draw opacity=0] (7.14,-3.43) -- (0,0) -- (7.14,3.43) -- cycle    ;
\draw [color={rgb, 255:red, 27; green, 16; blue, 224 }  ,draw opacity=1 ]   (68.48,45.12) .. controls (59.61,48.61) and (53.02,54.63) .. (50.13,68.86) ;
\draw [shift={(49.62,71.69)}, rotate = 278.97] [fill={rgb, 255:red, 27; green, 16; blue, 224 }  ,fill opacity=1 ][line width=0.08]  [draw opacity=0] (7.14,-3.43) -- (0,0) -- (7.14,3.43) -- cycle    ;
\draw [color={rgb, 255:red, 27; green, 16; blue, 224 }  ,draw opacity=1 ]   (80.57,42.94) .. controls (98.03,45) and (117.52,71.95) .. (128.38,85.72) ;
\draw [shift={(130.19,87.98)}, rotate = 230.91] [fill={rgb, 255:red, 27; green, 16; blue, 224 }  ,fill opacity=1 ][line width=0.08]  [draw opacity=0] (7.14,-3.43) -- (0,0) -- (7.14,3.43) -- cycle    ;
\draw [color={rgb, 255:red, 27; green, 16; blue, 224 }  ,draw opacity=1 ]   (126.76,90.55) .. controls (110.22,72.35) and (102.99,60.27) .. (80.94,46.61) ;
\draw [shift={(78.48,45.12)}, rotate = 30.76] [fill={rgb, 255:red, 27; green, 16; blue, 224 }  ,fill opacity=1 ][line width=0.08]  [draw opacity=0] (7.14,-3.43) -- (0,0) -- (7.14,3.43) -- cycle    ;
\draw [color={rgb, 255:red, 27; green, 16; blue, 224 }  ,draw opacity=1 ]   (75.71,46.37) .. controls (79.15,56.31) and (83.71,84.66) .. (84.4,103.39) ;
\draw [shift={(84.48,106.26)}, rotate = 269.12] [fill={rgb, 255:red, 27; green, 16; blue, 224 }  ,fill opacity=1 ][line width=0.08]  [draw opacity=0] (7.14,-3.43) -- (0,0) -- (7.14,3.43) -- cycle    ;
\draw [color={rgb, 255:red, 27; green, 16; blue, 224 }  ,draw opacity=1 ]   (79.43,108.66) .. controls (79.87,86.24) and (73.72,64.16) .. (72.86,50.83) ;
\draw [shift={(72.76,47.98)}, rotate = 90] [fill={rgb, 255:red, 27; green, 16; blue, 224 }  ,fill opacity=1 ][line width=0.08]  [draw opacity=0] (7.14,-3.43) -- (0,0) -- (7.14,3.43) -- cycle    ;
\draw [color={rgb, 255:red, 27; green, 16; blue, 224 }  ,draw opacity=1 ]   (118,45.51) .. controls (102.36,47.59) and (71.69,59.93) .. (55.02,71.74) ;
\draw [shift={(52.76,73.4)}, rotate = 322.48] [fill={rgb, 255:red, 27; green, 16; blue, 224 }  ,fill opacity=1 ][line width=0.08]  [draw opacity=0] (7.14,-3.43) -- (0,0) -- (7.14,3.43) -- cycle    ;
\draw [color={rgb, 255:red, 27; green, 16; blue, 224 }  ,draw opacity=1 ]   (54.76,76.26) .. controls (74.37,64.71) and (100.28,54.93) .. (115.9,50) ;
\draw [shift={(118.76,49.12)}, rotate = 163.2] [fill={rgb, 255:red, 27; green, 16; blue, 224 }  ,fill opacity=1 ][line width=0.08]  [draw opacity=0] (7.14,-3.43) -- (0,0) -- (7.14,3.43) -- cycle    ;
\draw [color={rgb, 255:red, 27; green, 16; blue, 224 }  ,draw opacity=1 ]   (121.43,51.23) .. controls (108.93,64.91) and (100.78,80.55) .. (90.98,105.77) ;
\draw [shift={(89.9,108.55)}, rotate = 290.96] [fill={rgb, 255:red, 27; green, 16; blue, 224 }  ,fill opacity=1 ][line width=0.08]  [draw opacity=0] (7.14,-3.43) -- (0,0) -- (7.14,3.43) -- cycle    ;
\draw [color={rgb, 255:red, 27; green, 16; blue, 224 }  ,draw opacity=1 ]   (91.43,111.51) .. controls (100.25,103.61) and (101.56,87.11) .. (124.13,57.16) ;
\draw [shift={(125.9,54.83)}, rotate = 127.69] [fill={rgb, 255:red, 27; green, 16; blue, 224 }  ,fill opacity=1 ][line width=0.08]  [draw opacity=0] (7.14,-3.43) -- (0,0) -- (7.14,3.43) -- cycle    ;
\draw [color={rgb, 255:red, 27; green, 16; blue, 224 }  ,draw opacity=1 ]   (55.14,79.51) .. controls (72.87,74.01) and (106,85.19) .. (121.4,92.71) ;
\draw [shift={(123.9,93.98)}, rotate = 207.95] [fill={rgb, 255:red, 27; green, 16; blue, 224 }  ,fill opacity=1 ][line width=0.08]  [draw opacity=0] (7.14,-3.43) -- (0,0) -- (7.14,3.43) -- cycle    ;
\draw [color={rgb, 255:red, 27; green, 16; blue, 224 }  ,draw opacity=1 ]   (125.62,99.98) .. controls (103.95,89.28) and (80.97,83.85) .. (57.17,83.43) ;
\draw [shift={(54.19,83.4)}, rotate = 360] [fill={rgb, 255:red, 27; green, 16; blue, 224 }  ,fill opacity=1 ][line width=0.08]  [draw opacity=0] (7.14,-3.43) -- (0,0) -- (7.14,3.43) -- cycle    ;
\draw [color={rgb, 255:red, 27; green, 16; blue, 224 }  ,draw opacity=1 ]   (66.29,32.94) .. controls (56.58,16.01) and (70.44,15.41) .. (76.01,27.08) ;
\draw [shift={(77.05,29.69)}, rotate = 252.26] [fill={rgb, 255:red, 27; green, 16; blue, 224 }  ,fill opacity=1 ][line width=0.08]  [draw opacity=0] (7.14,-3.43) -- (0,0) -- (7.14,3.43) -- cycle    ;
\draw [color={rgb, 255:red, 27; green, 16; blue, 224 }  ,draw opacity=1 ]   (127.42,38) .. controls (136.84,16.53) and (151.12,31.39) .. (138.67,43.27) ;
\draw [shift={(136.48,45.12)}, rotate = 323.26] [fill={rgb, 255:red, 27; green, 16; blue, 224 }  ,fill opacity=1 ][line width=0.08]  [draw opacity=0] (7.14,-3.43) -- (0,0) -- (7.14,3.43) -- cycle    ;
\draw [color={rgb, 255:red, 27; green, 16; blue, 224 }  ,draw opacity=1 ]   (140.86,94.37) .. controls (159.42,81.1) and (158.84,113.48) .. (140.31,104.47) ;
\draw [shift={(137.9,103.12)}, rotate = 32.22] [fill={rgb, 255:red, 27; green, 16; blue, 224 }  ,fill opacity=1 ][line width=0.08]  [draw opacity=0] (7.14,-3.43) -- (0,0) -- (7.14,3.43) -- cycle    ;
\draw [color={rgb, 255:red, 27; green, 16; blue, 224 }  ,draw opacity=1 ]   (41.62,87.4) .. controls (32.52,100.64) and (13.31,88.33) .. (34.32,78.98) ;
\draw [shift={(36.76,77.98)}, rotate = 159.23] [fill={rgb, 255:red, 27; green, 16; blue, 224 }  ,fill opacity=1 ][line width=0.08]  [draw opacity=0] (7.14,-3.43) -- (0,0) -- (7.14,3.43) -- cycle    ;
\draw [color={rgb, 255:red, 27; green, 16; blue, 224 }  ,draw opacity=1 ]   (89.05,122.83) .. controls (90.65,142.33) and (75.77,137.86) .. (76.16,124.35) ;
\draw [shift={(76.48,121.4)}, rotate = 100.3] [fill={rgb, 255:red, 27; green, 16; blue, 224 }  ,fill opacity=1 ][line width=0.08]  [draw opacity=0] (7.14,-3.43) -- (0,0) -- (7.14,3.43) -- cycle    ;
\draw [color={rgb, 255:red, 27; green, 16; blue, 224 }  ,draw opacity=1 ]   (192.29,34.37) .. controls (200.23,15.7) and (230.31,28.04) .. (206.29,41.31) ;
\draw [shift={(203.9,42.55)}, rotate = 334.13] [fill={rgb, 255:red, 27; green, 16; blue, 224 }  ,fill opacity=1 ][line width=0.08]  [draw opacity=0] (7.14,-3.43) -- (0,0) -- (7.14,3.43) -- cycle    ;
\draw [color={rgb, 255:red, 27; green, 16; blue, 224 }  ,draw opacity=1 ]   (196.22,70.4) .. controls (212.92,50.52) and (229.13,71.58) .. (207.9,78.86) ;
\draw [shift={(205.05,79.69)}, rotate = 346.12] [fill={rgb, 255:red, 27; green, 16; blue, 224 }  ,fill opacity=1 ][line width=0.08]  [draw opacity=0] (7.14,-3.43) -- (0,0) -- (7.14,3.43) -- cycle    ;
\draw [color={rgb, 255:red, 27; green, 16; blue, 224 }  ,draw opacity=1 ]   (200.29,108.66) .. controls (201.3,89.71) and (233.86,102.58) .. (207.5,115.47) ;
\draw [shift={(204.89,116.67)}, rotate = 336.85] [fill={rgb, 255:red, 27; green, 16; blue, 224 }  ,fill opacity=1 ][line width=0.08]  [draw opacity=0] (7.14,-3.43) -- (0,0) -- (7.14,3.43) -- cycle    ;
\draw (70.74,35.5) node [anchor=north west][inner sep=0.75pt]   [align=left] {{\footnotesize $C_1$}};
\draw (124.31,42.91) node [anchor=north west][inner sep=0.75pt]   [align=left] {{\footnotesize $C_2$}};
\draw (129.9,93) node [anchor=north west][inner sep=0.75pt]   [align=left] {{\footnotesize $C'_2$}};
\draw (80.3,113.2) node [anchor=north west][inner sep=0.75pt]   [align=left] {{\footnotesize $C_3$}};
\draw (42.7,76.4) node [anchor=north west][inner sep=0.75pt]   [align=left] {{\footnotesize $C_4$}};
\draw (192.2,37.8) node [anchor=north west][inner sep=0.75pt]   [align=left] {{\footnotesize $C_5$}};
\draw (192.9,74.6) node [anchor=north west][inner sep=0.75pt]   [align=left] {{\footnotesize $C'_5$}};
\draw (192.1,113.4) node [anchor=north west][inner sep=0.75pt]   [align=left] {{\footnotesize $C_6$}};
\end{tikzpicture}
\end{center}
\caption{The groupoid $\mathcal M_{inv}$ from Example \ref{ex:minv}}
\label{fig:conjpic}
\end{figure}

\begin{figure}
\begin{center}
\tikzset{every picture/.style={line width=0.75pt}}     
\begin{tikzpicture}[x=0.75pt,y=0.75pt,yscale=-1.8,xscale=1.8]
\draw   (33.95,47.7) .. controls (33.95,43.57) and (37.3,40.23) .. (41.42,40.23) .. controls (45.54,40.23) and (48.89,43.57) .. (48.89,47.7) .. controls (48.89,51.82) and (45.54,55.16) .. (41.42,55.16) .. controls (37.3,55.16) and (33.95,51.82) .. (33.95,47.7) -- cycle ;
\draw [color={rgb, 255:red, 27; green, 16; blue, 224 }  ,draw opacity=1 ]   (37.99,40) .. controls (39.54,22.01) and (60.68,27.13) .. (51.07,40.55) ;
\draw [shift={(49.29,42.73)}, rotate = 312.34] [fill={rgb, 255:red, 27; green, 16; blue, 224 }  ,fill opacity=1 ][line width=0.08]  [draw opacity=0] (5.36,-2.57) -- (0,0) -- (5.36,2.57) -- cycle    ;
\draw   (89.67,47.41) .. controls (89.67,43.29) and (93.01,39.94) .. (97.13,39.94) .. controls (101.26,39.94) and (104.6,43.29) .. (104.6,47.41) .. controls (104.6,51.53) and (101.26,54.88) .. (97.13,54.88) .. controls (93.01,54.88) and (89.67,51.53) .. (89.67,47.41) -- cycle ;
\draw [color={rgb, 255:red, 27; green, 16; blue, 224 }  ,draw opacity=1 ]   (93.71,39.71) .. controls (85.25,23.76) and (97.8,21.35) .. (97.35,36.78) ;
\draw [shift={(97.13,39.66)}, rotate = 276.76] [fill={rgb, 255:red, 27; green, 16; blue, 224 }  ,fill opacity=1 ][line width=0.08]  [draw opacity=0] (5.36,-2.57) -- (0,0) -- (5.36,2.57) -- cycle    ;
\draw [color={rgb, 255:red, 27; green, 16; blue, 224 }  ,draw opacity=1 ]   (105.9,48.55) .. controls (125.34,49.9) and (121.06,59.67) .. (107.5,53.02) ;
\draw [shift={(105.05,51.69)}, rotate = 30.65] [fill={rgb, 255:red, 27; green, 16; blue, 224 }  ,fill opacity=1 ][line width=0.08]  [draw opacity=0] (5.36,-2.57) -- (0,0) -- (5.36,2.57) -- cycle    ;
\draw [color={rgb, 255:red, 27; green, 16; blue, 224 }  ,draw opacity=1 ]   (103.9,53.12) .. controls (117.06,63.86) and (109.52,69.3) .. (102.4,57.58) ;
\draw [shift={(101.05,55.12)}, rotate = 63.43] [fill={rgb, 255:red, 27; green, 16; blue, 224 }  ,fill opacity=1 ][line width=0.08]  [draw opacity=0] (5.36,-2.57) -- (0,0) -- (5.36,2.57) -- cycle    ;
\draw [color={rgb, 255:red, 27; green, 16; blue, 224 }  ,draw opacity=1 ]   (98.48,55.4) .. controls (103.09,69.25) and (91.98,75.1) .. (95.02,58.5) ;
\draw [shift={(95.62,55.69)}, rotate = 103.65] [fill={rgb, 255:red, 27; green, 16; blue, 224 }  ,fill opacity=1 ][line width=0.08]  [draw opacity=0] (5.36,-2.57) -- (0,0) -- (5.36,2.57) -- cycle    ;
\draw [color={rgb, 255:red, 27; green, 16; blue, 224 }  ,draw opacity=1 ]   (93.62,54.26) .. controls (88.81,68.69) and (79.51,63.38) .. (88.13,54.22) ;
\draw [shift={(90.19,52.26)}, rotate = 139.4] [fill={rgb, 255:red, 27; green, 16; blue, 224 }  ,fill opacity=1 ][line width=0.08]  [draw opacity=0] (5.36,-2.57) -- (0,0) -- (5.36,2.57) -- cycle    ;
\draw [color={rgb, 255:red, 27; green, 16; blue, 224 }  ,draw opacity=1 ]   (89.13,49.71) .. controls (73.31,61.36) and (70.71,44.15) .. (86.95,46.57) ;
\draw [shift={(89.67,47.12)}, rotate = 194.22] [fill={rgb, 255:red, 27; green, 16; blue, 224 }  ,fill opacity=1 ][line width=0.08]  [draw opacity=0] (5.36,-2.57) -- (0,0) -- (5.36,2.57) -- cycle    ;
\draw [color={rgb, 255:red, 27; green, 16; blue, 224 }  ,draw opacity=1 ]   (89.42,44) .. controls (68.55,38.04) and (79.65,27.5) .. (89.37,39.87) ;
\draw [shift={(91.05,42.26)}, rotate = 237.99] [fill={rgb, 255:red, 27; green, 16; blue, 224 }  ,fill opacity=1 ][line width=0.08]  [draw opacity=0] (5.36,-2.57) -- (0,0) -- (5.36,2.57) -- cycle    ;
\draw [color={rgb, 255:red, 27; green, 16; blue, 224 }  ,draw opacity=1 ]   (105.33,42.83) .. controls (117.82,36.86) and (123.86,40.43) .. (107.39,46.44) ;
\draw [shift={(104.6,47.41)}, rotate = 341.69] [fill={rgb, 255:red, 27; green, 16; blue, 224 }  ,fill opacity=1 ][line width=0.08]  [draw opacity=0] (5.36,-2.57) -- (0,0) -- (5.36,2.57) -- cycle    ;
\draw [color={rgb, 255:red, 27; green, 16; blue, 224 }  ,draw opacity=1 ]   (100.85,40.29) .. controls (107.55,22.08) and (119.23,28.39) .. (106.42,38.62) ;
\draw [shift={(104.19,40.26)}, rotate = 325.62] [fill={rgb, 255:red, 27; green, 16; blue, 224 }  ,fill opacity=1 ][line width=0.08]  [draw opacity=0] (5.36,-2.57) -- (0,0) -- (5.36,2.57) -- cycle    ;

\begin{scope}[shift={(150, 0)}]
\draw   (138.81,47.41) .. controls (138.81,43.29) and (142.15,39.94) .. (146.28,39.94) .. controls (150.4,39.94) and (153.74,43.29) .. (153.74,47.41) .. controls (153.74,51.53) and (150.4,54.88) .. (146.28,54.88) .. controls (142.15,54.88) and (138.81,51.53) .. (138.81,47.41) -- cycle ;
\draw [color={rgb, 255:red, 27; green, 16; blue, 224 }  ,draw opacity=1 ]   (142.85,39.71) .. controls (148.61,21.12) and (161.2,27.01) .. (156.24,41.64) ;
\draw [shift={(155.19,44.28)}, rotate = 294.44] [fill={rgb, 255:red, 27; green, 16; blue, 224 }  ,fill opacity=1 ][line width=0.08]  [draw opacity=0] (5.36,-2.57) -- (0,0) -- (5.36,2.57) -- cycle    ;
\end{scope}
\draw   (189.67,47.12) .. controls (189.67,43) and (193.01,39.66) .. (197.13,39.66) .. controls (201.26,39.66) and (204.6,43) .. (204.6,47.12) .. controls (204.6,51.25) and (201.26,54.59) .. (197.13,54.59) .. controls (193.01,54.59) and (189.67,51.25) .. (189.67,47.12) -- cycle ;
\draw [color={rgb, 255:red, 27; green, 16; blue, 224 }  ,draw opacity=1 ]   (193.71,39.43) .. controls (185.25,23.48) and (197.8,21.07) .. (197.35,36.5) ;
\draw [shift={(197.13,39.37)}, rotate = 276.76] [fill={rgb, 255:red, 27; green, 16; blue, 224 }  ,fill opacity=1 ][line width=0.08]  [draw opacity=0] (5.36,-2.57) -- (0,0) -- (5.36,2.57) -- cycle    ;
\draw [color={rgb, 255:red, 27; green, 16; blue, 224 }  ,draw opacity=1 ]   (205.9,48.26) .. controls (225.34,49.61) and (221.06,59.38) .. (207.5,52.74) ;
\draw [shift={(205.05,51.4)}, rotate = 30.65] [fill={rgb, 255:red, 27; green, 16; blue, 224 }  ,fill opacity=1 ][line width=0.08]  [draw opacity=0] (5.36,-2.57) -- (0,0) -- (5.36,2.57) -- cycle    ;
\draw [color={rgb, 255:red, 27; green, 16; blue, 224 }  ,draw opacity=1 ]   (203.9,52.83) .. controls (217.06,63.58) and (209.52,69.02) .. (202.4,57.29) ;
\draw [shift={(201.05,54.83)}, rotate = 63.43] [fill={rgb, 255:red, 27; green, 16; blue, 224 }  ,fill opacity=1 ][line width=0.08]  [draw opacity=0] (5.36,-2.57) -- (0,0) -- (5.36,2.57) -- cycle    ;
\draw [color={rgb, 255:red, 27; green, 16; blue, 224 }  ,draw opacity=1 ]   (198.48,55.12) .. controls (203.09,68.96) and (191.98,74.81) .. (195.02,58.21) ;
\draw [shift={(195.62,55.4)}, rotate = 103.65] [fill={rgb, 255:red, 27; green, 16; blue, 224 }  ,fill opacity=1 ][line width=0.08]  [draw opacity=0] (5.36,-2.57) -- (0,0) -- (5.36,2.57) -- cycle    ;
\draw [color={rgb, 255:red, 27; green, 16; blue, 224 }  ,draw opacity=1 ]   (193.62,53.98) .. controls (188.81,68.4) and (179.51,63.1) .. (188.13,53.94) ;
\draw [shift={(190.19,51.98)}, rotate = 139.4] [fill={rgb, 255:red, 27; green, 16; blue, 224 }  ,fill opacity=1 ][line width=0.08]  [draw opacity=0] (5.36,-2.57) -- (0,0) -- (5.36,2.57) -- cycle    ;
\draw [color={rgb, 255:red, 27; green, 16; blue, 224 }  ,draw opacity=1 ]   (189.13,49.43) .. controls (173.31,61.08) and (170.71,43.87) .. (186.95,46.29) ;
\draw [shift={(189.67,46.84)}, rotate = 194.22] [fill={rgb, 255:red, 27; green, 16; blue, 224 }  ,fill opacity=1 ][line width=0.08]  [draw opacity=0] (5.36,-2.57) -- (0,0) -- (5.36,2.57) -- cycle    ;
\draw [color={rgb, 255:red, 27; green, 16; blue, 224 }  ,draw opacity=1 ]   (189.42,43.71) .. controls (168.55,37.75) and (179.65,27.22) .. (189.37,39.58) ;
\draw [shift={(191.05,41.98)}, rotate = 237.99] [fill={rgb, 255:red, 27; green, 16; blue, 224 }  ,fill opacity=1 ][line width=0.08]  [draw opacity=0] (5.36,-2.57) -- (0,0) -- (5.36,2.57) -- cycle    ;
\draw [color={rgb, 255:red, 27; green, 16; blue, 224 }  ,draw opacity=1 ]   (205.33,42.55) .. controls (217.82,36.58) and (223.86,40.15) .. (207.39,46.16) ;
\draw [shift={(204.6,47.12)}, rotate = 341.69] [fill={rgb, 255:red, 27; green, 16; blue, 224 }  ,fill opacity=1 ][line width=0.08]  [draw opacity=0] (5.36,-2.57) -- (0,0) -- (5.36,2.57) -- cycle    ;
\draw [color={rgb, 255:red, 27; green, 16; blue, 224 }  ,draw opacity=1 ]   (200.85,40) .. controls (207.55,21.79) and (219.23,28.1) .. (206.42,38.33) ;
\draw [shift={(204.19,39.98)}, rotate = 325.62] [fill={rgb, 255:red, 27; green, 16; blue, 224 }  ,fill opacity=1 ][line width=0.08]  [draw opacity=0] (5.36,-2.57) -- (0,0) -- (5.36,2.57) -- cycle    ;
\draw   (241.38,47.41) .. controls (241.38,43.29) and (244.72,39.94) .. (248.85,39.94) .. controls (252.97,39.94) and (256.32,43.29) .. (256.32,47.41) .. controls (256.32,51.53) and (252.97,54.88) .. (248.85,54.88) .. controls (244.72,54.88) and (241.38,51.53) .. (241.38,47.41) -- cycle ;
\draw [color={rgb, 255:red, 27; green, 16; blue, 224 }  ,draw opacity=1 ]   (245.42,39.71) .. controls (236.97,23.76) and (249.52,21.35) .. (249.07,36.78) ;
\draw [shift={(248.85,39.66)}, rotate = 276.76] [fill={rgb, 255:red, 27; green, 16; blue, 224 }  ,fill opacity=1 ][line width=0.08]  [draw opacity=0] (5.36,-2.57) -- (0,0) -- (5.36,2.57) -- cycle    ;
\draw [color={rgb, 255:red, 27; green, 16; blue, 224 }  ,draw opacity=1 ]   (257.62,48.55) .. controls (277.06,49.9) and (272.77,59.67) .. (259.22,53.02) ;
\draw [shift={(256.76,51.69)}, rotate = 30.65] [fill={rgb, 255:red, 27; green, 16; blue, 224 }  ,fill opacity=1 ][line width=0.08]  [draw opacity=0] (5.36,-2.57) -- (0,0) -- (5.36,2.57) -- cycle    ;
\draw [color={rgb, 255:red, 27; green, 16; blue, 224 }  ,draw opacity=1 ]   (255.62,53.12) .. controls (268.78,63.86) and (261.24,69.3) .. (254.12,57.58) ;
\draw [shift={(252.76,55.12)}, rotate = 63.43] [fill={rgb, 255:red, 27; green, 16; blue, 224 }  ,fill opacity=1 ][line width=0.08]  [draw opacity=0] (5.36,-2.57) -- (0,0) -- (5.36,2.57) -- cycle    ;
\draw [color={rgb, 255:red, 27; green, 16; blue, 224 }  ,draw opacity=1 ]   (250.19,55.4) .. controls (254.8,69.25) and (243.69,75.1) .. (246.73,58.5) ;
\draw [shift={(247.33,55.69)}, rotate = 103.65] [fill={rgb, 255:red, 27; green, 16; blue, 224 }  ,fill opacity=1 ][line width=0.08]  [draw opacity=0] (5.36,-2.57) -- (0,0) -- (5.36,2.57) -- cycle    ;
\draw [color={rgb, 255:red, 27; green, 16; blue, 224 }  ,draw opacity=1 ]   (245.33,54.26) .. controls (240.52,68.69) and (231.22,63.38) .. (239.84,54.22) ;
\draw [shift={(241.9,52.26)}, rotate = 139.4] [fill={rgb, 255:red, 27; green, 16; blue, 224 }  ,fill opacity=1 ][line width=0.08]  [draw opacity=0] (5.36,-2.57) -- (0,0) -- (5.36,2.57) -- cycle    ;
\draw [color={rgb, 255:red, 27; green, 16; blue, 224 }  ,draw opacity=1 ]   (240.85,49.71) .. controls (225.02,61.36) and (222.43,44.15) .. (238.66,46.57) ;
\draw [shift={(241.38,47.12)}, rotate = 194.22] [fill={rgb, 255:red, 27; green, 16; blue, 224 }  ,fill opacity=1 ][line width=0.08]  [draw opacity=0] (5.36,-2.57) -- (0,0) -- (5.36,2.57) -- cycle    ;
\draw [color={rgb, 255:red, 27; green, 16; blue, 224 }  ,draw opacity=1 ]   (241.13,44) .. controls (220.26,38.04) and (231.36,27.5) .. (241.08,39.87) ;
\draw [shift={(242.76,42.26)}, rotate = 237.99] [fill={rgb, 255:red, 27; green, 16; blue, 224 }  ,fill opacity=1 ][line width=0.08]  [draw opacity=0] (5.36,-2.57) -- (0,0) -- (5.36,2.57) -- cycle    ;
\draw [color={rgb, 255:red, 27; green, 16; blue, 224 }  ,draw opacity=1 ]   (257.05,42.83) .. controls (269.53,36.86) and (275.57,40.43) .. (259.1,46.44) ;
\draw [shift={(256.32,47.41)}, rotate = 341.69] [fill={rgb, 255:red, 27; green, 16; blue, 224 }  ,fill opacity=1 ][line width=0.08]  [draw opacity=0] (5.36,-2.57) -- (0,0) -- (5.36,2.57) -- cycle    ;
\draw [color={rgb, 255:red, 27; green, 16; blue, 224 }  ,draw opacity=1 ]   (252.56,40.29) .. controls (259.27,22.08) and (270.95,28.39) .. (258.14,38.62) ;
\draw [shift={(255.9,40.26)}, rotate = 325.62] [fill={rgb, 255:red, 27; green, 16; blue, 224 }  ,fill opacity=1 ][line width=0.08]  [draw opacity=0] (5.36,-2.57) -- (0,0) -- (5.36,2.57) -- cycle    ;

\begin{scope}[shift={(-154, 0)}]
\draw   (293.71,47.41) .. controls (293.71,43.29) and (297.05,39.94) .. (301.17,39.94) .. controls (305.3,39.94) and (308.64,43.29) .. (308.64,47.41) .. controls (308.64,51.53) and (305.3,54.88) .. (301.17,54.88) .. controls (297.05,54.88) and (293.71,51.53) .. (293.71,47.41) -- cycle ;
\draw [color={rgb, 255:red, 27; green, 16; blue, 224 }  ,draw opacity=1 ]   (297.74,39.71) .. controls (289.29,23.76) and (301.84,21.35) .. (301.39,36.78) ;
\draw [shift={(301.17,39.66)}, rotate = 276.76] [fill={rgb, 255:red, 27; green, 16; blue, 224 }  ,fill opacity=1 ][line width=0.08]  [draw opacity=0] (5.36,-2.57) -- (0,0) -- (5.36,2.57) -- cycle    ;
\draw [color={rgb, 255:red, 27; green, 16; blue, 224 }  ,draw opacity=1 ]   (309.94,48.55) .. controls (329.38,49.9) and (325.09,59.67) .. (311.54,53.02) ;
\draw [shift={(309.09,51.69)}, rotate = 30.65] [fill={rgb, 255:red, 27; green, 16; blue, 224 }  ,fill opacity=1 ][line width=0.08]  [draw opacity=0] (5.36,-2.57) -- (0,0) -- (5.36,2.57) -- cycle    ;
\draw [color={rgb, 255:red, 27; green, 16; blue, 224 }  ,draw opacity=1 ]   (307.94,53.12) .. controls (321.1,63.86) and (313.56,69.3) .. (306.44,57.58) ;
\draw [shift={(305.09,55.12)}, rotate = 63.43] [fill={rgb, 255:red, 27; green, 16; blue, 224 }  ,fill opacity=1 ][line width=0.08]  [draw opacity=0] (5.36,-2.57) -- (0,0) -- (5.36,2.57) -- cycle    ;
\draw [color={rgb, 255:red, 27; green, 16; blue, 224 }  ,draw opacity=1 ]   (302.51,55.4) .. controls (307.13,69.25) and (296.01,75.1) .. (299.05,58.5) ;
\draw [shift={(299.66,55.69)}, rotate = 103.65] [fill={rgb, 255:red, 27; green, 16; blue, 224 }  ,fill opacity=1 ][line width=0.08]  [draw opacity=0] (5.36,-2.57) -- (0,0) -- (5.36,2.57) -- cycle    ;
\draw [color={rgb, 255:red, 27; green, 16; blue, 224 }  ,draw opacity=1 ]   (297.66,54.26) .. controls (292.85,68.69) and (283.54,63.38) .. (292.16,54.22) ;
\draw [shift={(294.23,52.26)}, rotate = 139.4] [fill={rgb, 255:red, 27; green, 16; blue, 224 }  ,fill opacity=1 ][line width=0.08]  [draw opacity=0] (5.36,-2.57) -- (0,0) -- (5.36,2.57) -- cycle    ;
\draw [color={rgb, 255:red, 27; green, 16; blue, 224 }  ,draw opacity=1 ]   (293.17,49.71) .. controls (277.35,61.36) and (274.75,44.15) .. (290.99,46.57) ;
\draw [shift={(293.71,47.12)}, rotate = 194.22] [fill={rgb, 255:red, 27; green, 16; blue, 224 }  ,fill opacity=1 ][line width=0.08]  [draw opacity=0] (5.36,-2.57) -- (0,0) -- (5.36,2.57) -- cycle    ;
\draw [color={rgb, 255:red, 27; green, 16; blue, 224 }  ,draw opacity=1 ]   (293.46,44) .. controls (272.59,38.04) and (283.69,27.5) .. (293.41,39.87) ;
\draw [shift={(295.09,42.26)}, rotate = 237.99] [fill={rgb, 255:red, 27; green, 16; blue, 224 }  ,fill opacity=1 ][line width=0.08]  [draw opacity=0] (5.36,-2.57) -- (0,0) -- (5.36,2.57) -- cycle    ;
\draw [color={rgb, 255:red, 27; green, 16; blue, 224 }  ,draw opacity=1 ]   (309.37,42.83) .. controls (321.86,36.86) and (327.9,40.43) .. (311.43,46.44) ;
\draw [shift={(308.64,47.41)}, rotate = 341.69] [fill={rgb, 255:red, 27; green, 16; blue, 224 }  ,fill opacity=1 ][line width=0.08]  [draw opacity=0] (5.36,-2.57) -- (0,0) -- (5.36,2.57) -- cycle    ;
\draw [color={rgb, 255:red, 27; green, 16; blue, 224 }  ,draw opacity=1 ]   (304.89,40.29) .. controls (311.59,22.08) and (323.27,28.39) .. (310.46,38.62) ;
\draw [shift={(308.23,40.26)}, rotate = 325.62] [fill={rgb, 255:red, 27; green, 16; blue, 224 }  ,fill opacity=1 ][line width=0.08]  [draw opacity=0] (5.36,-2.57) -- (0,0) -- (5.36,2.57) -- cycle    ;
\end{scope}

\begin{scope}[shift={(-15, 0)}]
\draw   (338.13,47.74) .. controls (338.13,43.62) and (341.48,40.28) .. (345.6,40.28) .. controls (349.72,40.28) and (353.07,43.62) .. (353.07,47.74) .. controls (353.07,51.87) and (349.72,55.21) .. (345.6,55.21) .. controls (341.48,55.21) and (338.13,51.87) .. (338.13,47.74) -- cycle ;
\draw [color={rgb, 255:red, 27; green, 16; blue, 224 }  ,draw opacity=1 ]   (341.6,40.61) .. controls (342.64,23.43) and (359.21,27.17) .. (354.49,41.98) ;
\draw [shift={(353.45,44.68)}, rotate = 294.51] [fill={rgb, 255:red, 27; green, 16; blue, 224 }  ,fill opacity=1 ][line width=0.08]  [draw opacity=0] (5.36,-2.57) -- (0,0) -- (5.36,2.57) -- cycle    ;
\draw   (367.29,48.36) .. controls (367.29,44.24) and (370.63,40.9) .. (374.75,40.9) .. controls (378.88,40.9) and (382.22,44.24) .. (382.22,48.36) .. controls (382.22,52.49) and (378.88,55.83) .. (374.75,55.83) .. controls (370.63,55.83) and (367.29,52.49) .. (367.29,48.36) -- cycle ;
\draw [color={rgb, 255:red, 27; green, 16; blue, 224 }  ,draw opacity=1 ]   (370.75,40.56) .. controls (368.23,21.41) and (394.79,26.25) .. (383.04,41.97) ;
\draw [shift={(381.11,44.26)}, rotate = 312.88] [fill={rgb, 255:red, 27; green, 16; blue, 224 }  ,fill opacity=1 ][line width=0.08]  [draw opacity=0] (5.36,-2.57) -- (0,0) -- (5.36,2.57) -- cycle    ;
\end{scope}
\draw (37.77,43.4) node [anchor=north west][inner sep=0.75pt]   [align=left] {{\footnotesize $C_1$}};
\draw (92.99,43.4) node [anchor=north west][inner sep=0.75pt]   [align=left] {{\footnotesize $C_2$}};
\draw (142.83,43.4) node [anchor=north west][inner sep=0.75pt]   [align=left] {{\footnotesize $C'_2$}};
\draw (193.34,43.14) node [anchor=north west][inner sep=0.75pt]   [align=left] {{\footnotesize $C_3$}};
\draw (244.6,43.43) node [anchor=north west][inner sep=0.75pt]   [align=left] {{\footnotesize $C_4$}};
\begin{scope}[shift={(-15, 0)}]
\draw (306.42,43.7) node [anchor=north west][inner sep=0.75pt]   [align=left] {{\footnotesize $C_5$}};
\draw (342.12,43.7) node [anchor=north west][inner sep=0.75pt]   [align=left] {{\footnotesize $C'_5$}};
\draw (369.44,44.71) node [anchor=north west][inner sep=0.75pt]   [align=left] {{\footnotesize $C_6$}};
\end{scope}
\end{tikzpicture}

\end{center}
\caption{The groupoid $\mathcal M_{inv}$ from Example \ref{ex:minv2}}
\label{fig:conjpic2}
\end{figure}

\begin{example}\label{ex:minv2} Let $\Sigma$ be the torus  and consider the crossed module
 $(S_3,A_3,\brhd, \partial)$ with the trivial group homomorphism $\partial: A_3\to S_3$, $a\mapsto \id$ and $\brhd: S_3\times A_3\to A_3$, $b\brhd a=bab^\inv$. 
 
 Then the protected object  $\mathcal M_{inv}$ has  the same objects as in Example \ref{ex:minv}. As $\partial$ is trivial, all morphisms in $\mathcal M^{coH}$ and $\mathcal M_{inv}$ are automorphisms. The object $(\id,\id)$ in $\mathcal M^{coH}$ has the  identity morphism $\langle\id,\id\rangle$ and the following four conjugate pairs of automorphisms
 \begin{align*}
&\langle\id, (123)\rangle\sim\langle\id, (132)\rangle, & &\langle(123),\id\rangle\sim \langle(132), \id\rangle,\\
&\langle(123), (123)\rangle\sim\langle(132), (132)\rangle, & &\langle(123), (132)\rangle\sim\langle(132),(123)\rangle. 
 \end{align*}
The  relation for morphisms in Theorem \ref{th:kitaevexplicit} then implies
 \begin{align*}
&\langle\id,\id\rangle=\langle\id, (123)\rangle\circ \langle\id, (132)\rangle \sim\langle\id,(132)\rangle\circ \langle\id,(132)\rangle=\langle\id,(123)\rangle\\
&\langle\id,\id\rangle=\langle(123),\id\rangle\circ \langle(132),\id\rangle \sim\langle(132), \id\rangle\circ \langle(132),\id\rangle=\langle(123),\id\rangle\\
&\langle\id,\id\rangle=\langle(123),(123)\rangle\circ \langle(132),(132)\rangle \sim\langle(132), (132)\rangle\circ \langle(132),(132)\rangle=\langle(123),(123)\rangle\\
&\langle\id,\id\rangle=\langle(123),(132)\rangle\circ \langle(132),(123)\rangle \sim\langle(132), (123)\rangle\circ \langle(132),(123)\rangle=\langle(123),(132)\rangle.
\end{align*}
As all automorphisms of $(\id,\id)$ in $\mathcal M^{coH}$ are identified,  $C_1$ has a single automorphism in $\mathcal M_{inv}$. As in Example \ref{ex:minv}, all morphisms between objects in $C_5$ are identified and likewise for $C'_5$, $C_6$.
 
 In contrast, the automorphism group of each element of $C_2$, $C'_2$, $C_3$, $C_4$ in $\mathcal M$ and $\mathcal M^{coH}$  is $A_3\times A_3$. Automorphisms of these objects in $\mathcal M$ coincide with their automorphisms in $\mathcal M^{coH}$.
 Each automorphism of an object  in one of these conjugacy classes is conjugate  only to itself and to  automorphisms of different objects  in the same conjugacy class. As any composable sequence of morphisms in $\mathcal M$ involves only automorphisms of the same object,  the automorphism groups of these conjugacy classes in $\mathcal M_{inv}$ are given by $A_3\times A_3$. 
Thus,  the groupoid $\mathcal M_{inv}$ is as  in Figure \ref{fig:conjpic2}.
\end{example}

\section{Mapping class group actions}
\label{sec:mapclass}

In this section, we describe the mapping class group actions on the protected objects for connected closed surfaces. 
In the following $\Sigma$ is a   surface of genus $g\geq 1$ and $\Sigma\setminus D$ the associated surface with a disc removed and fundamental group $\pi_1(\Sigma\setminus D)=F_{2g}$. 

The mapping class group of $\Sigma$ is the quotient  of the group $\mathrm{Homeo}_+(\Sigma)$ of orientation preserving homeomorphisms of $\Sigma$ by the normal subgroup $\mathrm{Homeo}_0(\Sigma)$ of homeomorphisms homotopic to the identity.  It is isomorphic to the group of outer automorphisms of the fundamental group $\pi_1(\Sigma)$
$$\mathrm{Map}(\Sigma)=\mathrm{Homeo}_+(\Sigma)/\mathrm{Homeo}_0(\Sigma)\cong\mathrm{Out}(\pi_1(\Sigma))=\mathrm{Aut}(\pi_1(\Sigma))/\mathrm{Inn}(\pi_1(\Sigma)).$$
The mapping class group of $\Sigma\setminus D$ is defined analogously with the additional condition that all homeomorphisms fix the boundary of $D$ pointwise, see  Farb and Margalit \cite[Sec.~2.1]{FM}. The mapping class groups $\mathrm{Map}(\Sigma)$ and $\mathrm{Map}(\Sigma\setminus D)$ can be presented with the same generators but with additional relations for $\mathrm{Map}(\Sigma)$, see for instance the presentation by Gervais \cite{Ge}.

Mapping class group actions associated with protected objects for involutive and, more generally,  pivotal Hopf monoids in a finitely complete and cocomplete symmetric monoidal category  are constructed in  \cite{MV}. It is shown in \cite[Th.~9.2]{MV} that the mapping class group $\mathrm{Map}(\Sigma\setminus D)$ acts on the Yetter-Drinfeld module in Example \ref{ex:modcomodpi_1gen}  by automorphisms. 
By \cite[Th.~9.5]{MV}  this induces an action of 
$\mathrm{Map}(\Sigma)$ by automorphisms of its biinvariants. 

The mapping class group actions in \cite{MV} are obtained from a concrete presentation of the mapping class groups $\mathrm{Map}(\Sigma)$ and $\mathrm{Map}(\Sigma\setminus D)$   in terms of generating Dehn twists and relations. They associate to each generating Dehn twist a finite sequence of edge slides and prove that
resulting automorphisms of  $H^{\oo E}$ from Definition \ref{def:EdgeSlide}  satisfy the relations for $\mathrm{Map}(\Sigma\setminus D)$  in \cite{Ge}. 
The induced automorphisms of the protected object  then satisfy the additional  relations of   $\mathrm{Map}(\Sigma)$  in \cite{Ge}.  

As we established in Theorem \ref{th:topinv} that
the protected object is independent of the choice of the underlying graph, we can reformulate \cite[Th.~9.5]{MV} as follows. 

\begin{theorem}
\label{th:mcgact} Let $H$ be an involutive Hopf monoid in $\mac$ and $\Sigma$ an oriented surface of genus $g\geq 1$. Then the edge slides from Definition \ref{def:EdgeSlide}  induce an action of the mapping class group $\mathrm{Map}(\Sigma)$  by automorphisms of the protected object.
\end{theorem}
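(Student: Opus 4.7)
The plan is to reduce the statement to the corresponding result in \cite{MV} and transport it along the graph-independence isomorphisms from Theorem \ref{th:topinv}. First, I would fix the standard graph $\Gamma$ of \eqref{eq:standardgraph}, which has a single vertex $v$ and a single face $f$, so that the Yetter-Drinfeld module of Example \ref{ex:modcomodpi_1gen} is the only datum attached to $\Gamma$. The protected object for $\Gamma$ is the biinvariants $M_{inv}$ of this Yetter-Drinfeld module, and by Theorem \ref{th:topinv} it is isomorphic to the protected object associated to any other ciliated ribbon graph for $\Sigma$.

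Next, I would invoke the construction of \cite[Th.~9.2]{MV}: using a presentation of $\mathrm{Map}(\Sigma\setminus D)$ in terms of generating Dehn twists à la Gervais \cite{Ge}, \cite{MV} assigns to each generator a finite composite of edge slides on $\Gamma$ and checks the relations of $\mathrm{Map}(\Sigma\setminus D)$. This produces a group homomorphism $\mathrm{Map}(\Sigma\setminus D)\to \mathrm{Aut}_{\mathcal C}(H^{\oo E})$ whose image consists of edge slide composites. Since in the standard graph all cilia can be placed so that the slides used in \cite{MV} do not cross any cilium, Proposition \ref{Prop:EdgeSlideIsIso} applies: each generator acts by an automorphism that is simultaneously a morphism of $H$-modules with respect to $\rhd_v$ and a morphism of $H$-comodules with respect to $\delta_f$, i.e.\ by an automorphism of the Yetter-Drinfeld module $(H^{\oo E},\rhd_v,\delta_f)$.

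I would then apply Lemma \ref{Lemma:IsomorphismOfBiinvariants} to conclude that each such automorphism descends to a uniquely determined automorphism of the biinvariants $M_{inv}$. This yields a group homomorphism $\mathrm{Map}(\Sigma\setminus D)\to \mathrm{Aut}_{\mathcal C}(M_{inv})$. The content of \cite[Th.~9.5]{MV} is that the additional relation(s) distinguishing $\mathrm{Map}(\Sigma)$ from $\mathrm{Map}(\Sigma\setminus D)$, corresponding to filling in the disc $D$, are automatically satisfied on the biinvariants: the offending edge slide composite becomes the identity once one coequalises by $\rhd_v$ and equalises by $\delta_f$, precisely because the associated loop in $\pi_1(\Sigma\setminus D)$ is the face element $[\beta_g^\inv,\alpha_g]\cdots[\beta_1^\inv,\alpha_1]$, on which the Yetter-Drinfeld (co)module structure acts trivially after (co)invariants are taken. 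This produces the desired homomorphism $\mathrm{Map}(\Sigma)\to \mathrm{Aut}_{\mathcal C}(M_{inv})$.

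Finally, to transfer the action to the protected object of an arbitrary ciliated ribbon graph $\Gamma'$ for $\Sigma$, I would use Theorem \ref{th:topinv}: the isomorphism $M_{inv}\cong M'_{inv}$ obtained from a sequence of edge reversals, edge slides, edge contractions and loop deletions allows one to conjugate the $\mathrm{Map}(\Sigma)$-action on $M_{inv}$ to one on $M'_{inv}$, well-defined up to the canonical isomorphisms of Corollaries \ref{cor:edgereverssal}, \ref{Coro:EdgeSlideIsoOnProtectedObject}, \ref{Coro:EdgeContracionIso} and \ref{Cor:loopdel}. The main obstacle is not the present proof, which is essentially an assembly step, but the input from \cite{MV}: verifying that the prescribed composites of edge slides satisfy the Gervais relations in $\mathrm{Map}(\Sigma\setminus D)$ on $H^{\oo E}$, and that the additional disc-filling relations are enforced by passage to the biinvariants; this is where involutivity of $H$ is used.
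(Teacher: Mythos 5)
Your proposal is correct and follows essentially the same route as the paper: the paper's proof of Theorem \ref{th:mcgact} is precisely the reformulation of \cite[Th.~9.2, Th.~9.5]{MV} (edge-slide realisations of the Gervais generators acting on $H^{\oo E}$ for the standard graph, descending to the biinvariants where the disc-filling relations hold), made graph-independent via Theorem \ref{th:topinv}. Your additional remarks on Proposition \ref{Prop:EdgeSlideIsIso} and Lemma \ref{Lemma:IsomorphismOfBiinvariants} only make explicit the descent step that the paper delegates to \cite{MV}.
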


For  group objects in cartesian monoidal categories such as simplicial groups and crossed modules  this mapping class group action admits a   concrete description in terms of mapping class group actions on representation varieties.  
For this, recall that for any group  $G$  the group $\mathrm{Aut}(\pi_1(\Sigma))$ acts   on the set of group homomorphisms $\rho:\pi_1(\Sigma)\to G$  via $(\phi \rhd \rho)(\lambda)=\rho(\phi^\inv(\lambda))$ for all $\lambda\in \pi_1(\Sigma)$ and $\phi\in \mathrm{Aut}(\pi_1(\Sigma))$.  This induces an action of  $\mathrm{Map}(\Sigma)=\mathrm{Out}(\pi_1(\Sigma))=\mathrm{Aut}(\pi_1(\Sigma))/\mathrm{Inn}(\pi_1(\Sigma))$  on the representation variety  $\mathrm{Hom}(\pi_1(\Sigma), G)/G$.

To relate this to the mapping class group actions from  \cite[Th.~9.5]{MV} note that for a group object $H$ in a cartesian monoidal category the formulas for the edge slides in Definition \ref{def:EdgeSlide} and Example \ref{ex:edgeslide} reduce to left and right multiplication with $H$, sometimes composed with inversions.

It follows that any finite  sequence of edge slides from the standard graph to itself  induces an automorphism of $H^{\oo 2g}$ that arises from an automorphism of $F_{2g}=\pi_1(\Sigma\setminus D)$. As it preserves the Yetter-Drinfeld module structure in Example \ref{ex:modcomodpi_1gen}, it induces  automorphisms of $\mathcal M^{coH}$, $\mathcal M^H$ and $\mathcal M_{inv}$. Inner automorphism of $\pi_1(\Sigma)$ induce trivial automorphisms of $\mathcal M_{inv}$. 
For a group $H$ as a group object in $\Set$ it is then directly apparent that the induced action of $\mathrm{Map}(\Sigma)$ on $\mathcal M_{inv}$ is the one on the representation variety $\Hom(\pi_1(\Sigma), H)/H$, see also Examples 9.6 and 9.7  in \cite{MV}.  This result can be applied  to determine the mapping class group action for a simplicial group.

\begin{corollary}\label{prop:mapsimplicial}
Let $H=(H_n)_{n\in\N_0}$ be a simplicial group as a Hopf monoid in $\SSet$.  
Then the 
action of $\mathrm{Map}(\Sigma)$ on the representation varieties $\Hom(\pi_1(\Sigma), H_n)/H_n$ induces an action of $\mathrm{Map}(\Sigma)$ 
 on  $\mathcal M_{inv}$ by simplicial maps, and this coincides with the action in \cite[Th.~9.5]{MV}.
\end{corollary}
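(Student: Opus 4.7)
The plan is to combine the explicit description of $\mathcal M_{inv}$ from Proposition~\ref{th:simplicialprot} with the fact that $\SSet$ is a functor category, so that the mapping class group action from Theorem~\ref{th:mcgact} can be analysed pointwise. First I would note that for each $n\in\N_0$ the group $\mathrm{Aut}(\pi_1(\Sigma))$ acts on $\Hom(\pi_1(\Sigma),H_n)$ by precomposition, that inner automorphisms act by conjugation in $H_n$ and hence trivially on $\Hom(\pi_1(\Sigma),H_n)/H_n$, and that the resulting action of $\mathrm{Map}(\Sigma)=\mathrm{Out}(\pi_1(\Sigma))$ commutes with post-composition by the group homomorphisms $d_i: H_n\to H_{n-1}$ and $s_i:H_n\to H_{n+1}$. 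Since the face maps and degeneracies of $\mathcal M_{inv}$ are precisely given by post-composition with these maps, the collection of actions assembles into an $\mathrm{Map}(\Sigma)$-action on $\mathcal M_{inv}$ by simplicial automorphisms.

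To show that this coincides with the action from \cite[Th.~9.5]{MV} I would recall that the latter is defined via a presentation of $\mathrm{Map}(\Sigma\setminus D)$ and $\mathrm{Map}(\Sigma)$ in terms of generating Dehn twists from the standard graph \eqref{eq:standardgraph} to itself, each realised as a finite sequence of edge slides from Definition~\ref{def:EdgeSlide}. For a group object $H$ in a cartesian monoidal category, by Example~\ref{ex:edgeslide} the edge-slide automorphism of $H^{\times 2g}$ is a composite of multiplications, projections and inversions, and hence is the image under the group-object functor of the corresponding substitution on generators of $F_{2g}=\pi_1(\Sigma\setminus D)$. Consequently, each generating Dehn twist acts on $H^{\times 2g}$ via an automorphism $\phi\in\mathrm{Aut}(F_{2g})$ by $\rho\mapsto\rho\circ\phi^\inv$, and $\phi$ is the very automorphism of $\pi_1(\Sigma\setminus D)$ induced by the Dehn twist (this is the content of the proof in \cite{MV} read for cartesian monoidal $\mac$).

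Since $\SSet=\Set^{\Delta^{op}}$ and the Hopf-monoid equivalence of Lemma~\ref{lemma:HopfMonoidsInFunctorCategory} is symmetric monoidal, the edge slides in $H^{\times 2g}$ act componentwise by the corresponding edge slides in $H_n^{\times 2g}$ for every $n$, and the same holds for the module and comodule structures from Section~\ref{sec:protected space}. Applying the $\Set$-case of  \cite[Ex.~9.6, 9.7]{MV} (reproduced at the start of Section~\ref{sec:mapclass}) level-wise, I obtain that on each $X_n=(\mathcal M_{inv})_n=\Hom(\pi_1(\Sigma),H_n)/H_n$ the \cite{MV}-action of a Dehn twist coincides with the precomposition action of the associated element of $\mathrm{Out}(\pi_1(\Sigma))$ on the representation variety, and these agreements assemble into the required equality of $\mathrm{Map}(\Sigma)$-actions by simplicial maps.

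The main obstacle is the last level-wise identification: one has to check that the description of the \cite{MV}-action as precomposition by $\mathrm{Aut}(F_{2g})$ on $H^{\times 2g}$ really is the hidden content of the proof of \cite[Th.~9.5]{MV} in the cartesian case, and that the passage from $F_{2g}$ to $\pi_1(\Sigma)$ on the biinvariants kills precisely inner automorphisms, so that the resulting action factors through $\mathrm{Out}(\pi_1(\Sigma))=\mathrm{Map}(\Sigma)$. Once this is in place the simplicial compatibility is immediate from the functoriality of precomposition and the fact that $d_i, s_i$ are group homomorphisms.
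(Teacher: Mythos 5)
Your proposal is correct and follows essentially the same route as the paper: the action is simplicial because pre-composition by $\mathrm{Out}(\pi_1(\Sigma))$ commutes with post-composition by the group homomorphisms $d_i,s_i$, and the agreement with \cite[Th.~9.5]{MV} is obtained by reducing degreewise to the $\Set$-case, using that all (co)limits, images and (co)actions in $\SSet$ are computed objectwise. The ``main obstacle'' you flag (edge slides acting as pre-composition by $\mathrm{Aut}(F_{2g})$ and inner automorphisms being killed on the biinvariants) is exactly what the paper establishes in the discussion preceding the corollary, so your argument is complete.
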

\begin{proof}
The induced $\mathrm{Map}(\Sigma)$-action on $\mathcal M_{inv}$ is by simplicial maps,  because
 the face maps and degeneracies of $\mathcal M_{inv}$ act elements of the representation varieties  $\Hom(\pi_1(\Sigma), H_n)/H_n$ by post-composition with the face maps and degeneracies $d_i: H_n\to H_{n-1}$ and $s_i: H_n\to H_{n+1}$, whereas $\mathrm{Map}(\Sigma)$ acts by pre-composition. 
This coincides with the action from \cite[Th.~9.5]{MV}, because the latter reduces to the $\mathrm{Map}(\Sigma)$-action on $\Hom(\pi_1(\Sigma), H_n)/H_n$ for the group $H_n$ as an involutive Hopf monoid in Set, and all (co)limits, images and (co)actions in $\SSet$ are degreewise.
\end{proof}

In the case of a crossed module as a Hopf monoid in $\Cat$, the mapping class group action on the protected object is  induced by the mapping class group action on the representation variety for the associated semidirect product group.

\begin{corollary}\label{cor:mapclasscross} Let $H=(B,A,\brhd,\partial)$ be a crossed module. 
Then the $\mathrm{Map}(\Sigma)$-action  on $\mathcal M_{inv}$ from Theorem \ref{th:mcgact}  is induced by the $\mathrm{Map}(\Sigma)$-action on  $\Hom(\pi_1(\Sigma), A\rtimes B)/A\rtimes B$.
\end{corollary}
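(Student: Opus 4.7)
The plan is to reduce everything to the observation that for a group object in a cartesian monoidal category, the edge slides of Definition \ref{def:EdgeSlide} restrict to multiplications in the group and hence, on each level of the category, act by pre-composition with an automorphism of $F_{2g}=\pi_1(\Sigma\setminus D)$. This is the same mechanism already exploited in the proof of Corollary \ref{prop:mapsimplicial}, so the crossed module case is obtained by applying it separately to the object set $\Ob H^{\times 2g}=B^{\times 2g}$ and to the morphism set $(H^{\times 2g})^{(1)}=(A\rtimes B)^{\times 2g}$ of the Yetter-Drinfeld module $H^{\times 2g}$ from Example \ref{ex:modcomodpi_1gen}.

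First I would identify the morphism-level datum: under the isomorphism $(A\rtimes B)^{\times 2g}\cong \Hom(F_{2g},A\rtimes B)$, the Yetter-Drinfeld module structure from \eqref{eq:YDmodulegroup} is precisely the conjugation action and the ``moment map'' given by the surface relation. Hence passing to coinvariants restricts to $\Hom(\pi_1(\Sigma),A\rtimes B)$ (Lemma \ref{Lemma:CoinvariantsInCat} and Proposition \ref{Proposition:CoinvariantsAndCocycleCategoryEquivalent}), and the subsequent passage to the image $\mathcal M_{inv}$ (Proposition \ref{Proposition:ImageInCat}) yields the equivalence relation on the morphism set described in Theorem \ref{th:kitaevexplicit}. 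The same picture on objects reproduces $\Hom(\pi_1(\Sigma),B)/B$.

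Next I would verify that an automorphism $\phi\in\mathrm{Aut}(F_{2g})$ realized by a sequence of edge slides acts on $H^{\times 2g}$ by pre-composition at both levels. For the object set this is immediate from the formulas in Example \ref{ex:edgeslide} restricted to $A=\{e\}$. For the morphism set it follows because the morphism group of the semidirect product category $H$ is $A\rtimes B$ and edge slides reduce to left/right multiplication in $A\rtimes B$, matching pre-composition of group homomorphisms $F_{2g}\to A\rtimes B$. Since inner automorphisms of $F_{2g}$ act trivially after passing to $\pi_1(\Sigma)$-homomorphisms and modding out by $A\rtimes B$-conjugation (Lemma \ref{Lemma:GrouphomoIsCocycleAndGrouphomoToB}), this descends to the standard $\mathrm{Map}(\Sigma)=\mathrm{Out}(\pi_1(\Sigma))$-action on $\Hom(\pi_1(\Sigma),A\rtimes B)/A\rtimes B$.

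The remaining point, and the one I expect to be the main obstacle, is to show that this action further descends across the equivalence relation of Theorem \ref{th:kitaevexplicit} that identifies morphisms in $\mathcal M_{inv}$: namely, that whenever $\tau_1\circ\tau_2\sim \tau_1'\circ\tau_2'$ via conjugate decompositions, the same holds after pre-composing each factor with $\phi$. This is essentially formal, because conjugation in $A\rtimes B$ commutes with pre-composition by $\phi$ and factorisations $\lambda=\lambda_1\lambda_2$ in $\pi_1(\Sigma)$ are mapped by $\phi^{-1}$ to factorisations $\phi^{-1}(\lambda)=\phi^{-1}(\lambda_1)\phi^{-1}(\lambda_2)$; but it must be checked carefully because the relation is stated via homomorphisms defined on $F_{2g}$ whose composite is a homomorphism on $\pi_1(\Sigma)$. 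Once this is established, the induced functor $\phi_\ast:\mathcal M_{inv}\to\mathcal M_{inv}$ coincides with the one from Theorem \ref{th:mcgact} by construction, proving the claim.
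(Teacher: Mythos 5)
Your proposal is correct and follows essentially the same route as the paper: the paper likewise identifies the edge-slide action on $H^{\times 2g}$ with the action for the group $A\rtimes B$ as a group object in $\Set$, notes that the crossed module structure makes it an action by invertible endofunctors, and then uses Theorem \ref{th:kitaevexplicit} to present $\mathcal M_{inv}$ as a quotient of $\Hom(\pi_1(\Sigma),A\rtimes B)/A\rtimes B$ through which the action descends. One small correction to your last step: the composite $\tau_2\circ\tau_1$ in the relation of Theorem \ref{th:kitaevexplicit} is the pointwise groupoid composition, $(\tau_2\circ\tau_1)(\lambda)=\tau_2(\lambda)\cdot\tau_1(\lambda)$ in $A\rtimes B$, not a factorisation $\lambda=\lambda_1\lambda_2$ in $\pi_1(\Sigma)$, so compatibility with the mapping class group action follows at once from $(\tau_2\circ\tau_1)\circ\phi^{-1}=(\tau_2\circ\phi^{-1})\circ(\tau_1\circ\phi^{-1})$ together with the fact that conjugation commutes with pre-composition; with that adjustment your descent argument is exactly the (unstated) content behind the paper's final sentence.
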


\begin{proof} As the group structure of $H$ as a group object in $\Cat$ is the one of the semidirect product $A\rtimes B$, the 
$\mathrm{Map}(\Sigma\setminus D)$-action on  $\mathcal M=H^{\times 2g}$ for the standard graph \eqref{eq:standardgraph}  can be identified with the $\mathrm{Map}(\Sigma\setminus D)$-action on  $\mathcal M=(A\rtimes B)^{\times 2g}$ one for the group $A\rtimes B$ as a group object in $\Set$. The crossed module structure ensures that this  $\mathrm{Map}(\Sigma\setminus D)$-action respects the category structure of $(A\rtimes B)^{\times 2g}$ and defines a  $\mathrm{Map}(\Sigma\setminus D)$-action by invertible endofunctors.

The  $\mathrm{Map}(\Sigma\setminus D)$-action on $\mathcal M$ induces the $\mathrm{Map}(\Sigma)$-action on the protected object $\mathcal M_{inv}$ for both,  the group $A\rtimes B$ as a group object in $\Set$ and for $H$ as  a group object in $\Cat$. The former is the action on the representation variety $\Hom(\pi_1(\Sigma), A\rtimes B)/A\rtimes B$. As the protected object $\mathcal M_{inv}$  is a quotient of this representation variety by Theorem \ref{th:kitaevexplicit}, its $\mathrm{Map}(\Sigma)$-action  is induced by the $\mathrm{Map}(\Sigma)$-action on the representation  variety.
\end{proof}

\begin{example} We consider the mapping class group action  on the groupoids $\mathcal M_{inv}$ from Example \ref{ex:minv}, \ref{ex:minv2} for the crossed module $(S_3,A_3,\brhd,\partial)$ and the torus.

The mapping class group of the torus $T$ is the group 
\begin{align}
&\mathrm{Map}(T)=\mathrm{SL}(2,\Z)=\langle D_a,D_b\mid D_aD_bD_a=D_bD_aD_b, (D_aD_bD_a)^4=1\rangle.
\end{align}
It is generated by the Dehn twists $D_a, D_b$ along the $a$- and $b$-cycle, which act on $\pi_1(T)=\Z\times\Z$ by 
\begin{align}\label{eq:dtform}
&D_a: a\mapsto a, b\mapsto b-a & &D_b: a\mapsto a+b, b\mapsto b.
\end{align}

In both, Example \ref{ex:minv} and \ref{ex:minv2}, the $\mathrm{SL}(2,\Z)$-action on the objects of $\mathcal M_{inv}$ is 
 the $\mathrm{SL}(2,\mathbb Z)$-action on the representation variety $\Hom(\Z\times\Z,  S_3)/S_3$ with orbits $\{C_1\}$, $\{C_2, C'_2, C_3,C_4\}$ and $\{C_5, C'_5, C_6\}$.
 
In Example \ref{ex:minv} the $\mathrm{SL}(2,\mathbb Z)$-action on $\mathcal M_{inv}$ is determined uniquely by the action on the objects. This follows, because for all choices of  objects $s,t\in \mathrm{Ob}\,\mathcal M_{inv}$ the groupoid
$\mathcal M_{inv}$  has at most one morphism $f: s\to t$.  In Example \ref{ex:minv2} an analogous statement holds for morphisms between the objects  $C_1,C_5,C'_5, C_6$, since all of them are identity morphisms.

In contrast, the $\mathrm{SL}(2,\mathbb Z)$-action on the automorphisms of $C_2, C'_2, C_3, C_4$ in Example \ref{ex:minv2} is non-trivial and can be identified with an orbit of the $\mathrm{SL}(2,\Z)$-action on $\mathrm{Mat}(2\times 2,\Z_3)$ by left multiplication. In this action, $D_a$ and $D_b$ correspond to left-multiplication with the generators
\begin{align*}
A= \begin{pmatrix} 1 & 0\\ 1 & 1 \end{pmatrix}, \quad \quad B=\begin{pmatrix} 1 & -1\\  0 & 1 \end{pmatrix}.
\end{align*}
 Automorphisms of $C_2, C'_2, C_3, C_4$ in $\mathcal M_{inv}$ are given by group homomorphisms  $\tau: \Z\times\Z\to A_3^{\times 2}\cong \Z_3^{\times 2}$,  which are determined by the images $\tau(1,0), \tau(0,1)\in \Z_3\times \Z_3$. Interpreting an element $(c,d)\in \Z_3\times\Z_3$ as an automorphism $c: d\to d$ and taking $\tau(0,1)$ as the first and $\tau(1,0)$ as the second row of a matrix, we find that the $\mathrm{SL}(2,\mathbb Z)$-action induced by \eqref{eq:dtform}
coincides with the $\mathrm{SL}(2,\Z)$-orbit containing those matrices whose second column is non-trivial.
\end{example}

As our construction yields objects equipped with mapping class group actions and assigns the tensor unit to the sphere  $S^2$, it is natural to ask if the protected objects satisfy the axioms of a modular functor from  \cite[Def 5.1.1]{BK}. Although the latter are formulated for categories of vector spaces, they have obvious generalisations to other symmetric monoidal categories.  

 However, the assignment of  protected objects to surfaces can in general not be expected to satisfy these axioms. 
The problem is axiom (iii) in \cite[Def 5.1.1]{BK}, which requires that the object assigned to a disjoint union $\Sigma_1\amalg \Sigma_2$ of surfaces is the tensor product of the objects assigned to  $\Sigma_1, \Sigma_2$. This does in general not hold for the protected objects from Definition \ref{def:KitaevModel}, as they are constructed by taking equalisers and coequalisers.  The tensor product of two (co)equalisers in a symmetric monoidal category $\mac$ is in general not a (co)equaliser of their tensor product. 
This is already apparent in the symmetric monoidal category $\mathrm{Ab}=\Z\mathrm{-Mod}$ with the usual tensor  product that does  not preserve equalisers.
 Nevertheless, the construction satisfies this axiom, if the underlying symmetric monoidal category is $\Set$, $\SSet$ or $\Cat$.

\begin{proposition}\label{Lemma:ProductOfConnectedComponents}
Let $H$ be a group object in $\mathcal C=\Set$, $\SSet$ or $\Cat$ and $\Sigma$ an oriented surface  with connected components $\Sigma_1, \ldots, \Sigma_k$. Then the protected object for $\Sigma$ is the product of the protected objects for $\Sigma_1,\ldots,\Sigma_k$.
\end{proposition}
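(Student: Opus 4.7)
The plan is to exploit the cartesian closedness of $\Set$, $\SSet$ and $\Cat$, which guarantees that finite products preserve both limits and colimits, and then to reduce the statement to the observation that the $(\mathrm{co})$module structures decompose along the connected components of the graph.

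First, I would choose for each connected component $\Sigma_i$ a properly embedded ciliated ribbon graph $\Gamma_i$, so that $\Gamma = \coprod_{i=1}^k \Gamma_i$ is properly embedded in $\Sigma$. By Theorem \ref{th:topinv} the protected object may be computed from $\Gamma$. Writing $E_i, V_i, F_i$ for the edge, vertex and face sets of $\Gamma_i$, one has $E = \coprod_i E_i$, $V = \coprod_i V_i$ and $F = \coprod_i F_i$, and in the cartesian monoidal category $\mac$ the canonical identification
\begin{align*}
H^{\times E} \;\cong\; \prod_{i=1}^k H^{\times E_i}
\end{align*}
holds. Since every edge end at a vertex $v\in V_i$ belongs to $E_i$ and every edge side of a face $f\in F_i$ belongs to $E_i$, the formulas in Definition \ref{def:vertexfaceops} and \ref{def:setact} imply that under this identification the total module and comodule structures factorise as
\begin{align*}
\rhd_V \;=\; \prod_i \rhd_{V_i},
\qquad \delta_F \;=\; \prod_i \delta_{F_i}.
\end{align*}

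Next, I would invoke cartesian closedness. The categories $\Set$, $\SSet$ and $\Cat$ are all cartesian closed, so for every object $X$ the functor $-\times X$ is both a left adjoint (by cartesian closedness) and a right adjoint (as a product), hence preserves arbitrary (co)limits. In particular binary products preserve equalisers and coequalisers. Applying this degreewise to the (co)equaliser definitions of $M^H$ and $M^{coH}$ in Definition \ref{def:invariants} shows that the invariants of $\rhd_V$ and the coinvariants of $\delta_F$ decompose as
\begin{align*}
(H^{\times E})^H \;\cong\; \prod_i (H^{\times E_i})^{H_i},
\qquad (H^{\times E})^{coH} \;\cong\; \prod_i (H^{\times E_i})^{coH_i},
\end{align*}
with $\pi_V = \prod_i \pi_{V_i}$ and $\iota_F = \prod_i \iota_{F_i}$ under these identifications.

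Finally, I need to show that the image construction in Definition \ref{def:biinvariants} commutes with binary products in each of the three categories. In $\Set$ this is immediate: a product of injections is an injection, a product of surjections is a surjection, and the set-theoretic image of $f_1\times f_2$ equals $\mathrm{im}(f_1)\times\mathrm{im}(f_2)$. In $\SSet$ this follows by applying the $\Set$ statement degreewise, since (co)limits, (co)equalisers and images in $\SSet$ are computed objectwise. In $\Cat$, the universal property gives the result directly: from a factorisation of each $\pi_{V_i}\circ\iota_{F_i}$ through the monomorphism $I_i:M_{inv,i}\to M^{H_i}$, one obtains a factorisation of $\pi_V\circ\iota_F$ through the monomorphism $\prod_i I_i$, which by the universal property of the image yields a monomorphism $M_{inv}\hookrightarrow \prod_i M_{inv,i}$; conversely the projections exhibit $\prod_i M_{inv,i}$ as contained in $M_{inv}$, and the two inclusions are mutually inverse. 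Combining these three steps yields $M_{inv}(\Sigma)\cong\prod_i M_{inv}(\Sigma_i)$, which is the desired identification.

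The main obstacle is the last step, showing that images commute with binary products in $\Cat$; while the argument via the universal property is straightforward in principle, it is worth noting that it relies on the fact that a product of monomorphisms in $\Cat$ remains a monomorphism and that a product of the canonical epimorphisms $P_i$ is again an epimorphism onto the product, which follows from the cartesian closedness of $\Cat$ and the compatibility of the nerve functor with products recorded in Corollary \ref{cor:nervegroup}.
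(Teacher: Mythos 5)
Your overall strategy (decompose the graph along connected components, factor the (co)actions, and push the decomposition through the (co)equalisers and the image) is sound and close in spirit to the paper's proof, which carries this out by explicit computation in each of the three categories. But there is a genuine gap at the central step: you claim that cartesian closedness implies that ``binary products preserve equalisers and coequalisers'', and deduce $(H^{\times E})^H\cong\prod_i(H^{\times E_i})^{H}$ from the fact that $-\times X$ is a left adjoint. Cartesian closedness gives preservation of colimits by $-\times X$ \emph{in one variable at a time}; it does not imply that the two-variable product functor $\mac\times\mac\to\mac$ sends a pair of coequaliser diagrams to a coequaliser of the product pair, and that is what you actually need. This fails already in $\Set$: take $f_i,g_i:\{\ast\}\to\{0,1\}$ with $f_i(\ast)=0$ and $g_i(\ast)=1$; each coequaliser is a one-point set, so the product of the coequalisers is a point, but the coequaliser of $f_1\times f_2$ and $g_1\times g_2$ only identifies $(0,0)$ with $(1,1)$ and has three elements. (The equaliser half of your claim is unproblematic, since limits commute with products.)

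The decomposition of the invariants is nevertheless true here, but for a reason specific to module structures: the pair $(\rhd_{\mathcal V},\,\epsilon^{\oo\vert\mathcal V\vert}\oo 1)$ is a \emph{reflexive} pair with common section $\eta^{\oo\vert\mathcal V\vert}\oo 1$, and reflexive coequalisers do commute with finite products when each $-\times X$ preserves them; equivalently, in $\Set$ one checks directly that the orbit equivalence relation of a product action of $H_1\times H_2$ on $M_1\times M_2$ is the product of the two orbit relations, precisely because one may set either group coordinate equal to the unit. The paper's proof does exactly this explicit computation in $\Set$, transports it degreewise to $\SSet$, and reaches $\Cat$ through the nerve (which preserves products as a right adjoint) and the homotopy functor (which preserves finite products). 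If you replace the appeal to preservation of arbitrary coequalisers by either the reflexivity argument or the explicit orbit computation, your proof goes through. I would also tighten the final step in $\Cat$: the phrase ``the projections exhibit $\prod_i M_{inv,i}$ as contained in $M_{inv}$'' should be replaced by a verification that $(\prod_i P_i,\prod_i I_i)$ satisfies the universal property of the image, or by an appeal to the explicit description of biinvariants in $\Cat$ from Proposition \ref{Proposition:ImageInCat}.
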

\begin{proof}  The claim follows by induction over $k$, and it is sufficient to consider $k=2$. 
By Theorem \ref{th:topinv} we can compute the protected object for $\Sigma$ by choosing a standard graph $\Gamma_i$ from \eqref{eq:standardgraph} on each connected component $\Sigma_i$.  We denote by $E_i$ the edge set of $\Gamma_i$ and by $E=E_1\cup E_2$ the edge set of $\Gamma$.

The (co)actions
$\rhd$ and  $\delta$ for $\Gamma$ are then given by formula \eqref{eq:ComposedVertexAction}, and it follows directly that they are the  products of the (co)actions $\rhd_i$, $\delta_i$ for $\Gamma_i$, up to braidings. Lemma \ref{lemma:VertexAndFaceOperatorsCommmute} and some simple computations imply that $(H^{\times E}, \rhd, \delta)$ is a Yetter-Drinfeld module  over  $H\times H$. 

We denote by $F_\Gamma: H^{\times E}\to H\times H$ the morphism  from Example \ref{Ex:YDAndHopfModulesInCat} for $\Gamma$ and  by $F_{\Gamma_i}: H^{\times E_i}\to H$ the corresponding morphisms for $\Gamma_i$. 

 $\bullet \quad \mathcal C=\mathrm{Set}$: As in Example \ref{Ex:ImageInSet,Top} we obtain
\begin{align*}
M^{coH}&= F_\Gamma^\inv(1)=\{(x, y)\in H^{\times E_1}\times H^{\times E_2}: (F_{\Gamma_1}(x), F_{\Gamma_2}(y))=(1,1)\}\, = \, M^{coH}_1\times M^{coH}_2,\\
M^H&= \{H^{\times 2} \rhd (m_1, m_2): m_1\in H^{\times E_1}, m_2\in H^{\times E_2}\}= M^H_1 \times M^H_2
\end{align*}
with inclusions $\iota=(\iota_1, \iota_2): M^{coH}\to H^{\times E}$ and canonical surjections $\pi=(\pi_1, \pi_2): H^{\times E}\to M^H$. As the image of a morphism $f:A \to B$ in $\mathrm{Set}$ is the usual image of a map, we have  $\mathrm{im}((f_1, f_2))=(\mathrm{im}(f_1), \mathrm{im}(f_2))$ and $M_{inv}=M_{inv,1}\times M_{inv,2}$.

$\bullet \quad \mathcal C=\mathrm{SSet}$:  By Proposition \ref{prop:binvsimplicial} 
the  coinvariants are given by the sets
\begin{align*}
M^{coH}_n =\{(m_1, m_2)\in (H^{\times E_1}\times H^{\times E_2})_n: (F_{\Gamma_1,n}(m_1), F_{\Gamma_2,n}(m_2))=(1, 1)\} \, = \, M^{coH}_{1,n} \times M^{coH}_{2,n}.
\end{align*}
Face maps and degeneracies are induced by the ones of $H^{\times E_1} \times H^{\times E_2}$ and the simplicial map $\iota: M^{coH}\to H^{\times E}$ is given by the maps $\iota_n=(\iota_{1,n}, \iota_{2,n})$. As the product in $\mathrm{SSet}$ is objectwise, this yields $M^{coH}=M^{coH}_1 \times M^{coH}_2$. 
An analogous argument shows that the sets $M^H_n$, $(M_{inv})_n$ from Proposition \ref{prop:binvsimplicial} are given by
 $M^H_n=M^H_{1,n}\times M^H_{2,n}$ and  $(M_{inv})_n=(M_{inv,1})_n\times (M_{inv,2})_n$. 

$\bullet \quad \mathcal C=\mathrm{Cat}$: By Lemma \ref{Lemma:CoinvariantsInCat} the coinvariants $M^{coH}$  for the comodule $M=H^{\times E}\cong H^{E_1}\times H^{\times E_2}$ are the subcategory with objects 
\begin{align*}
&\mathrm{Ob}(M^{coH})= \{(A_1, A_2)\mid  A_i\in \text{Ob}(H^{\times E_i}), F_{\Gamma_i}(A_i)=e\}\\
&\mathrm{Hom}_{M^{coH}}((A_1,A_2),(A'_1,A'_2))=\{ (f_1,f_2)\mid    f_i\in \mathrm{Hom}_{M}(A_i, A'_i),  F_{\Gamma_i}(f_i)=1_e\}
\end{align*}
and hence  $M^{coH}$ is the product category $M^{coH}_1 \times M^{coH}_2$. For the invariants we can apply Lemma \ref{def:coequCat} and the results from $\mathrm{SSet}$. 
As a right adjoint, the nerve $N$ preserves products,  and hence $N(\rhd)=N(\rhd_1)\times N(\rhd_2)$, up to braidings,  and the same holds for the trivial actions $\epsilon\oo 1_{H^{\oo E}}$ and $\epsilon\oo 1_{H^{\oo E_i}}$.  It follows that the coequaliser of $N(\rhd)$ and $N(\epsilon^{\times 2} \times 1_{H^{\times E}})$  is the product of the coequalisers of  $N(\rhd_i)$ and  $N(\epsilon\times 1_{H^{\times E_i}})$. As the homotopy functor preserves finite products, see for instance \cite[Prop. 1.3]{Jo}, this yields $M^H=M^H_1 \times M^H_2$ and $M_{inv}=M_{inv,1}\times M_{inv,2}$. 
\end{proof}

\subsection*{Acknowledgements} A.-K.~Hirmer gratefully acknowledges a PhD fellowship of the Erika Giehrl foundation, Friedrich-Alexander-Universit\"at Erlangen-N\"urnberg.

\vspace{-.5cm}

\end{document}